\documentclass{amsart}[12pt]
\AtBeginDocument{\addtocontents{toc}{\protect\setlength{\parskip}{0pt}}}

\usepackage{amssymb, enumitem}
\usepackage{comment}
\usepackage{xcolor}
\usepackage{tikz}
\usepackage{hyperref}
\hypersetup{
   colorlinks,
    linkcolor={red!60!black},
    citecolor={blue!60!black}, 
    urlcolor={blue!90!black}
}

\DeclareMathOperator{\dist}{dist}
\DeclareMathOperator{\spt}{spt}
\DeclareMathOperator{\hdim}{\dim_H}
\DeclareMathOperator{\pdim}{\dim_P}
\DeclareMathOperator{\diam}{diam}
\newcommand{\G}{\mathbb{G}}
\newcommand{\X}{\mathbb{X}}
\newcommand{\A}{\mathbb{A}}

\newcommand{\Z}{\mathbb{Z}}
\newcommand{\R}{\mathbb{R}}
\newcommand{\N}{\mathbb{N}}

\newtheorem{theorem}{Theorem}[section]
\newtheorem*{theorem*}{Theorem}
\newtheorem{corollary}[theorem]{Corollary}
\newtheorem{proposition}[theorem]{Proposition}
\newtheorem{lemma}[theorem]{Lemma}
\newtheorem{terminology}[theorem]{Terminology}

\theoremstyle{definition}
\newtheorem{definition}[theorem]{Definition}
\newtheorem{remark}[theorem]{Remark}

\newcommand{\e}{\varepsilon}
\numberwithin{equation}{section}
\numberwithin{figure}{section}

\title[Spread-out Furstenberg sets II: general configurations]{Spread-out Furstenberg sets II: general configurations and fractal restricted projections}
\author{Alex Cohen}
\author{Pablo Shmerkin}
\author{Hong Wang}
\author{Dmitrii Zakharov}

\begin{document}

\begin{abstract}
    We prove optimal dual Furstenberg estimates for Frostman and Katz--Tao configurations for which the shadings are not locally concentrated near pencils. The starting point is the sticky estimate of Orponen and Shmerkin in a companion paper. We deduce restricted projection theorems in the plane and in higher dimensions. In particular, Ahlfors-regular measures on the circle satisfy a Marstrand-type projection theorem. Applications include restricted projections to surfaces in the $\mathbb{G}(4,2)$ Grassmannian, pinned distances, discretized sum-product, and a new proof of the dimension formula for planar self-affine sets.
\end{abstract}

\maketitle

\tableofcontents

\section{Introduction}

This paper and the companion paper of Orponen and Shmerkin \cite{SpreadOutFurstSticky} study dual Furstenberg sets and restricted projections when the families of planes satisfy a local non-concentration condition with respect to pencils at most point and scales, which we call the \emph{spread-out} condition. The companion paper introduces spread-outness and proves a sharp Furstenberg estimate in the ``sticky case'', where the point set is ``almost Ahlfors-David regular''. Our main task is to remove this regularity assumption, which is achieved in full generality in Theorems \ref{thm:furst-general} and \ref{thm:proj-general}.

The two papers form a single project. We present them separately to clarify the exposition and to distinguish the authorship of the sticky estimate from that of the passage to general configurations. We repeat some introductory material from \cite{SpreadOutFurstSticky} to keep this paper self-contained.

\subsection{Restricted projections}

For $V\in\G(d,k)$, let $\Pi_V\colon\R^d\to V$ denote orthogonal projection onto $V$. The Marstrand-Mattila projection theorem \cite{Mar,MR409774} says that almost every projection has the largest possible dimension. More precisely, if $E \subset \R^{d}$ is Borel and $\gamma_{d,k}$ denotes Lebesgue measure on the Grassmannian $\G(d,k)$ of $k$-planes in $\R^d$, then 
\begin{equation} \label{eq:marstrand-mattila}
    \hdim \Pi_{V}(E) = \min\{\hdim E,k\} \quad\text{ for $\gamma_{d,k}$-a.e. } V \in \G(d,k).
\end{equation}
What happens if the planes are chosen from a much smaller family? In projection theory, one studies $\hdim\Pi_{V}(E)$ for $\nu$-typical $V$, where $\nu$ is a highly singular measure on $\G(d,k)$, sometimes with additional assumptions on $E$. See \cite{Wang26RestrictedProjections} for a survey, including connections with the Kakeya problem.

We ask which measures $\nu$ can replace $\gamma_{d,k}$ in \eqref{eq:marstrand-mattila}. Much is known when the measure is length measure on a smooth curve in $\G(d,k)$; see, for example, \cite{FasslerOrponen14,KaenmakiOrponenVenieri25,PYZ22,GanGuthMaldague24} for $\G(3,1)$, \cite{FasslerOrponen14,GGGHMW22} for $\G(3,2)$, and \cite{GGW22, Du25RestrictedMarstrand, Wang26RestrictedProjections} for higher-dimensional Grassmannians and further references. For smooth curves, the picture is fairly complete in dimension and co-dimension $1$ ($k=1$ or $d-1$). Much less is known for $1<k<d-1$. These proofs rely on smoothness, and it is not clear how to extend them to other measures. Chen \cite{Chen18RestrictedProjections} took a different approach, proving a restricted projection theorem for a class of random measures on $\G(3,1)$. His argument does not extend to $\G(2,1)$.

In particular, no genuine restricted Marstrand-type theorem was known for measures on $\G(2,1)$. Our first result applies to uniformly perfect measures of directions. A locally finite Borel measure $\mu$ on a metric space $X$ is \emph{uniformly perfect} if $\diam(\spt \mu) > 0$ and there are $\beta \in [0,1)$ and $D > 1$ such that $\mu(B(x,r)) \leq \beta \cdot \mu(B(x,Dr))$ for every closed ball $B(x,r) \subset X$ with $\spt \mu \not\subset B(x,Dr)$. Every non-trivial Ahlfors $s$-regular measure with $s > 0$ is uniformly perfect. (A measure $\mu$ on a metric space $X$ is \emph{Ahlfors $s$-regular} if there is $C\ge 1$ such that $C^{-1}r^s \le \mu(B(x,r)) \le Cr^s$ for all $x\in\spt\mu,\, r\in (0,\diam(\spt\mu)].$) 
\begin{theorem} \label{thm:projection-ADR}
    Let $\nu$ be a uniformly perfect measure on $\mathbb{G}(2,1)$. For any Borel set $E\subset\R^2$,
    \begin{displaymath}
        \hdim \Pi_V(E) = \min\{\hdim E,1\} \quad \text{ for $\nu$-a.e. } V \in \G(2,1).
    \end{displaymath}
\end{theorem}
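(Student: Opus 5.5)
The plan is to deduce Theorem \ref{thm:projection-ADR} from a discretized restricted projection theorem for spread-out families of directions (the planar case of Theorem \ref{thm:proj-general}). That theorem is in turn dual to the Furstenberg estimate of Theorem \ref{thm:furst-general}. The main task is to check that a uniformly perfect measure $\nu$ on $\G(2,1)$ satisfies the spread-out hypothesis. In $\G(2,1)$ a ``pencil'' is a single direction, so spread-outness should mean that at most points and scales, $\nu$ is not concentrated in a tiny ball around one direction relative to its mass at a comparable larger scale.

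The first step is to prove the non-concentration estimate. For every $V \in \spt\nu$ and every $r$ below $\diam(\spt\nu)/D$, uniform perfectness gives $\nu(B(V,r)) \le \beta\,\nu(B(V,Dr))$. Iterating, $\nu(B(V,D^{-k}R)) \le \beta^k \nu(B(V,R))$. Hence at every point and every scale, $\nu$ restricted to $B(V,R)$ gives mass at most $\beta^{k}$ to any sub-ball of radius $D^{-k}R$ centered in the support. Using a doubling-type cover argument, the same bound holds, up to constants, for arbitrary centers. This is exactly the statement that $\nu$ is not concentrated near a point (pencil) at any scale. I expect it verifies the spread-out condition in its strongest form, for all points and scales rather than most, with constants depending only on $\beta$ and $D$. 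The condition is also preserved under restriction of $\nu$ to positive-measure subsets only in a ``most points'' sense. The discretized theorem presumably allows an exceptional set, so I will pass to subsets via Lebesgue density or by removing a small-measure bad set.

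The second step is the standard reduction from Hausdorff dimension to a discretized statement. Let $s<\min\{\hdim E,1\}$, choose a Frostman measure $\mu$ on $E$ with $s$-dimensional decay, and suppose for contradiction that $\hdim\Pi_V(E)<s-\epsilon$ for $V$ in a set $G$ with $\nu(G)>0$. For each $V\in G$, cover $\Pi_V(E)$ by dyadic intervals with $\sum r_i^{s-\epsilon}$ small. Pigeonhole a scale $\delta$ and a subset $G_\delta\subset G$ with $\nu(G_\delta)\gtrsim 1/\log(1/\delta)^2$ such that each $\Pi_V(\mu)$, $V\in G_\delta$, gives mass $\gtrsim \delta^{\epsilon}$ to a union of fewer than $\delta^{-(s-\epsilon)}$ intervals of length $\delta$. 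Restricting $\nu$ to $G_\delta$ costs only polylogarithmic factors in mass. By the first step, spread-outness survives up to such losses.

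The discretized theorem, applied to the $\delta$-discretizations of $\mu$ (an $s$-Frostman configuration) and $\nu|_{G_\delta}$, asserts that for most $V$ the projection cannot be covered by $\delta^{-s+\epsilon'}$ intervals while retaining significant mass, once $\epsilon$ is small relative to $s-\epsilon'$. This contradicts the pigeonholed covering, so $\hdim\Pi_V(E)\ge s$ for $\nu$-a.e.\ $V$; the upper bound is trivial.

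The main obstacle is the compatibility between the restriction to $G_\delta$ and the spread-out hypothesis required by Theorem \ref{thm:proj-general}. A subset of relative measure $\delta^{\epsilon}$ can be badly concentrated at some scales. To handle this, I would use the ``most points and scales'' formulation. Multiscale pigeonholing of the $\log(1/\delta)$ scales, combined with the geometric decay $\beta^k$, should show that the branching of $\nu|_{G_\delta}$ is nontrivial on all but an $\epsilon$-fraction of scales at most points. If the theorem's hypothesis is instead stated for all points, I would first extract a regular subtree of $G_\delta$ via uniformization before applying it.
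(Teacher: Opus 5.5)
Your outline has the same skeleton as the paper's proof. It passes to a slope chart (which has full $\nu$-measure, since $\nu$ has no atoms), shows that the normalized restrictions of $\nu$ to small balls are spread out, and runs the Frostman/pigeonholing discretization that reduces everything to Theorem \ref{thm:proj-general}; that last step is the paper's Theorem \ref{thm:continuous-projection}.

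As written, however, the argument does not close. It rests on a misreading of the spread-out hypothesis, and this affects both places where that hypothesis enters.

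Spread-outness (Definition \ref{def:spread-out-measure-k} with $k=1$) does not say that $\nu$ is unconcentrated near points at most locations and scales. It is a set-level bound, for fixed $\beta'\in(0,1]$ and $C$. If $\Theta\subset\spt\nu$ is Borel and every $\theta\in\Theta$ has at least $\mathfrak s$ dyadic scales $r$ at which $\Theta\cap B(\theta,r)$ lies in an interval of length $2\beta' r$, then $\nu(\Theta)\le C2^{-\beta'\mathfrak s}$. Note that the condition is on the test set $\Theta$ itself, not on $\spt\nu$.

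Your iterated decay $\nu(B(x,D^{-k}R))\le\beta^k\nu(B(x,R))$ is the right input. The paper records it as $\mu(B(x,8\e r))\le\frac14\mu(B(x,r))$ for all balls inside a chart ball $B(a,5R)$ small enough for uniform perfectness to apply. But ``I expect it verifies the spread-out condition'' skips the argument that converts decay into the set-level bound. That argument is Lemma \ref{lem:line-decay-spread-out}, which follows the scheme of Proposition \ref{prop:almost-nonplanar-centered-spread-out}:
\begin{itemize}
\item Select a lacunary sequence $r_1(\theta)>r_2(\theta)>\cdots$ of singular scales at each point, and form the nested neighbourhoods $U_j=\bigcup_{\theta\in\Theta}B(\theta,\beta' r_j(\theta))$.
\item Make a greedy bounded-overlap selection of balls $B(\omega,r_j(\omega)/4)$, $\omega\in\Theta$. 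Near each selected $\omega$, the set $U_j$ lies within $O(\beta' r_j(\omega))$ of a single point, so by the decay it carries only a small fraction of $\nu(B(\omega,r_j(\omega)/4))$.
\item By lacunarity, $B(\omega,r_j(\omega)/4)\subset U_{j-1}$. Hence $\nu(U_j)\le\frac12\nu(U_{j-1})$, and so $\nu(\Theta)\lesssim 2^{-c\mathfrak s}$.
\end{itemize}
Lemma \ref{lem:spread-out-scaling} then undoes the rescaling. The projection theorem is applied separately to the normalized restriction on each ball of a countable cover of the support.

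Conversely, the ``main obstacle'' you single out is not an obstacle, and your proposed remedy would not address it anyway. Singular scales are counted relative to the test set, so any $\Theta$ that tests $\nu|_A$ also tests $\nu$. Restriction therefore costs nothing beyond the constant: $\nu(A)^{-1}\nu|_A$ is $(\delta,\beta',C/\nu(A))$-spread-out (Lemma \ref{lem:spread-out-subset}). Your claim that the condition survives restriction only ``in a most-points sense'' is incorrect.

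Since $\nu(G_\delta)\gtrapprox_\delta1$, Lemma \ref{lem:uniform-spread-out-discretization} directly gives a $(\delta,\beta',\lessapprox_\delta1)$-spread-out family $\mathcal J\subset\mathcal D_\delta(G_\delta)$. Any concentration of $G_\delta$ at a few scales is absorbed into the $\delta^{-\e}$ constant that Theorem \ref{thm:proj-general} allows. Extracting a uniform subtree of $G_\delta$, by contrast, only equalizes branching numbers. It says nothing about whether the children of a vertex cluster inside an interval of length $\beta' r$, so it cannot supply the hypothesis.

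Finally, Theorem \ref{thm:proj-general} is stated for sets, so you also need to turn the $s$-Frostman measure into a Frostman set $P$ with large subsets $P_\theta$. The paper does this in the proof of Theorem \ref{thm:continuous-projection}: sort $\delta$-cubes into dyadic mass levels and pigeonhole one level common to most of the bad planes.
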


Kaufman and Mattila \cite{MR407248} constructed singular measures $\nu$ on $\G(2,1)$, of dimension arbitrarily close to $1$, for which the Marstrand-Mattila estimate fails for some compact set $E$ and all directions  in $\spt(\nu)$. Both $E$ and $\nu$ have a special structure. In particular, $\nu$ is supported on a ``well-spaced'' set $\Theta$: for many pairs of scales $0<\delta\ll r$, the set $\Theta\cap B_r$ is contained in a $\delta$-ball for every $r$-ball $B_r$. Uniform perfectness rules out these gaps between scales. 

In higher dimensions, there are simple linear obstructions to a Marstrand-Mattila estimate. Namely, for $W\in\G(d,d-k)$, consider the Schubert cycle
\[
    \mathcal{S}_W = \{V\in\G(d,k):V\cap W\ne\{0\}\}.
\]
Then
\[
    \hdim(\Pi_V(W^\perp)) \le k-1 < k = \min\{\dim W^\perp,k\} \quad \text{for all } V \in \mathcal{S}_W.
\]
In particular, Ahlfors-regularity is not enough to guarantee a Marstrand-Mattila estimate for any $d\ge 3$ and arbitrary $k$. Schubert cycles are essentially the same as the pencils used in projection theory, see \cite[Definition~1.4]{Wang26RestrictedProjections}. 

The Kaufman-Mattila example also shows why a global non-concentration assumption is not enough. In $\G(2,1)$, the Schubert cycles are points. A well-spaced set avoids concentration at any one point globally, but can be concentrated near a point at many scales. Theorem \ref{thm:projection-non-conc-Schubert-cycle} shows that a mild local non-concentration near Schubert cycles at every scale suffices for a Marstrand-Mattila estimate.
\begin{theorem}
\label{thm:projection-non-conc-Schubert-cycle}
    Let $1 \leq k < d$. Let $\nu$ be a nonzero finite Borel measure on $\G(d,k)$. Assume that, for some $\e\in(0,1/2)$,
    \begin{equation}\label{eq:schubert-contraction}
        \nu\bigl(B_V(r)\cap\mathcal{S}_W(\e r)\bigr)
        \le (1-\e) \nu(B_V(r))
    \end{equation}
    for all $V\in\spt\nu$, $W\in\G(d,d-k)$, and $0<r\le1$.
    Then, for any Borel set $E\subset\R^d$,
    \[
        \hdim \Pi_V(E) = \min\{\hdim E,k\} \quad \text{ for $\nu$-a.e. } V \in \G(d,k).
    \]
    In particular, for any $1\le k<d$, there are measures supported on sets of arbitrarily small (positive) dimension in $\G(d,k)$ for which a Marstrand-type projection theorem holds.
\end{theorem}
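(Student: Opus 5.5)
The plan is to reduce Theorem \ref{thm:projection-non-conc-Schubert-cycle} to the general projection theorem, Theorem \ref{thm:proj-general}, which is announced in the introduction. Its expected form is a discretized statement: if $\nu$ is spread-out (not locally concentrated near pencils/Schubert cycles at most points and scales) and $E$ supports a Frostman measure, then $\Pi_V(E)$ has nearly full dimension for $\nu$-most $V$. The upper bound $\hdim\Pi_V(E)\le\min\{\hdim E,k\}$ is trivial because projections are Lipschitz. For the lower bound, fix $s<\min\{\hdim E,k\}$ and take an $s$-Frostman measure $\mu$ on a compact subset of $E$. By the standard argument via $\delta$-covers at dyadic scales and Borel--Cantelli, it suffices to prove a discretized estimate: for $\delta$ small, outside a set of $V$ of $\nu$-measure at most $\delta^{\eta}$, every set $G$ with $\mu(G)\ge\delta^{\eta}$ has $|\Pi_V(G)|_\delta\ge\delta^{-s+\e'}$.

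\textbf{Step 1: verify the spread-out hypothesis.} Condition \eqref{eq:schubert-contraction} holds at every $V\in\spt\nu$ and every scale $r$. Iterating it gives the quantitative non-concentration estimate
\[
\nu\bigl(B_V(r)\cap\mathcal{S}_W(\rho)\bigr)\lesssim (\rho/r)^{c(\e)}\,\nu(B_V(r)).
\]
To see this, cover $B_V(r)\cap\mathcal{S}_W(\e r)$ by balls $B_{V'}(\e r)$ centred at points of $\spt\nu$, then reapply the hypothesis at scale $\e r$ to the same $W$ within each such ball. Each step loses a factor $(1-\e)$ up to covering multiplicity, so after $\log_{1/\e}(r/\rho)$ steps one obtains the power gain. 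This pointwise, all-scale bound is stronger than the ``most points and scales'' spread-out condition presumably required in Theorem \ref{thm:proj-general}. So $\nu$ (suitably normalised and restricted to a ball) is an admissible measure of planes, and we apply Theorem \ref{thm:proj-general} to $\mu$ and $\nu$.

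\textbf{Main obstacle.} The hardest part is the covering in Step 1. The hypothesis controls neighbourhoods of radius $\e r$ of $\mathcal{S}_W$ inside a ball of radius $r$ centred at $V\in\spt\nu$, but the sub-balls arising in the iteration need not be centred on $\spt\nu$, and covering multiplicity could destroy the $(1-\e)$ gain. To handle this, I would center the sub-balls at support points, work at slightly enlarged radii, and use a doubling-type consequence of the hypothesis (or pass to a subtree of dyadic cells) so that multiplicity costs only a constant factor. If that factor is too large, the fallback is to require the $(1-\e)$ gain only at a positive proportion of scales. That version is still enough for the multiscale ``most scales'' formulation.

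\textbf{Final claim (small-dimensional examples).} Build a Cantor-type measure $\nu$ in $\G(d,k)$. At each scale, split each cell into $N$ children placed in ``general position'', meaning no $\e r$-neighbourhood of a Schubert cycle contains all of them; this is possible since Schubert cycles are proper subvarieties. Give each child mass $1/N$. Then each Schubert-cycle neighbourhood captures at most $N-1$ children, so \eqref{eq:schubert-contraction} holds with $1-\e\approx 1-1/N$. The contraction ratio $\lambda$ can be taken as small as we like, so the support has dimension $\log N/\log(1/\lambda)$, which tends to $0$ as $\lambda\to 0$.
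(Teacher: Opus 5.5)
Your architecture matches the paper's: show that $\nu$ is spread-out in big-cell charts, pass through a continuous version of the discretized projection theorem, and exhibit small-dimensional examples. However, Step~1 carries the real content, and it is asserted rather than proved; the route you sketch does not reach it.

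The iteration to $\nu(B_V(r)\cap\mathcal S_W(\rho))\lesssim(\rho/r)^{c}\nu(B_V(r))$ stalls exactly where you say. Bounding the thin set by $(1-\e)\sum_B\nu(B)$ over a cover multiplies the ratio by the overlap constant. Moreover, \eqref{eq:schubert-contraction} is a decay condition, not a doubling condition: taking $W$ with $V\in\mathcal S_W$ gives $\nu(B_V(\e r))\le(1-\e)\nu(B_V(r))$. So there is no doubling constant to pay with, and at best you get $\nu(B_V(Cr))$ on the right.

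More seriously, even a genuine power-law bound for $\nu$ is not ``stronger than'' spread-outness in any immediate sense. The spread-out inequality concerns an arbitrary Borel $\Theta\subset\spt\nu$, whose singular scales are measured relative to $\Theta$ itself ($\Theta\cap B(\theta,r)\subset H(\beta r)$), not relative to $\spt\nu$. At each singular scale, non-concentration only says that $\Theta$ has small relative density in one ball. Turning $\mathfrak s$ such scales at every point into $\nu(\Theta)\le C2^{-\beta\mathfrak s}$ requires a genuinely multiscale argument, which your proposal does not contain. The paper needs this step even when absolute decay is available: Corollary~\ref{cor:small-dim-spread-out} still goes through Lemma~\ref{lem:hyperplane-non-concentration-implies-spreadout}.

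The missing argument is Lemma~\ref{lem:schubert-non-concentration-implies-spreadout}, modelled on Lemma~\ref{lem:hyperplane-non-concentration-implies-spreadout}. It uses the one-step hypothesis directly, with no power law. For each $\theta\in\Theta$ one selects a lacunary subsequence of $\mathfrak s\gtrsim_\e\overline{\mathfrak s}$ singular scales $r_j(\theta)$ and forms nested neighbourhoods $U_j=\bigcup_{\theta\in\Theta}B(\theta,c\,r_j(\theta))$. One then covers $\spt\nu\cap U_j$ by Besicovitch balls $B\subset U_{j-1}$ centred on $\spt\nu$, such that $U_j\cap B$ lies in the $\e\cdot\mathrm{radius}$-neighbourhood of a single Schubert cycle. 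This uses that model hyperplanes are exactly big-cell Schubert divisors, and that the chart and Riemannian metrics are locally bi-Lipschitz.

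The device that defeats multiplicity is to sum the mass that is \emph{removed}: $\nu(B\setminus U_j)\ge\frac{\e}{1-\e}\nu(B\cap U_j)$. Hence $C_0\,\nu(U_{j-1}\setminus U_j)\ge\frac{\e}{1-\e}\nu(U_j)$, and $\nu(U_j)\le\gamma\,\nu(U_{j-1})$ with $\gamma<1$. The overlap constant dilutes the gain instead of multiplying the ratio, and iterating gives $\nu(\Theta)\le\gamma^{\mathfrak s}\nu(B(a,2\rho))$.

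Separately, your Cantor example does not satisfy \eqref{eq:schubert-contraction} with $\e\approx1/N$ once $\lambda\to0$. Take a level-$(n+1)$ cell $Q\ni V$ and $\diam Q/\e<r<\dist(Q,\spt\nu\setminus Q)$; this range is nonempty when $\lambda\ll\e$. Then $B_V(r)\cap\spt\nu\subset B_V(\e r)\subset\mathcal S_W(\e r)$ for any $W$ with $V\in\mathcal S_W$. You must take $\e\ll\lambda$ (times the general-position margin) and check all centres and radii. The paper avoids this by showing directly that self-similar measures with small contraction ratio are spread out (Corollary~\ref{cor:small-dim-spread-out}) and then applying Theorem~\ref{thm:continuous-projection}.
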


He \cite{He20} proved a higher-dimensional projection theorem under a global (rather than local) power-law non-concentration near Schubert cycles, with a dimension bound strictly above $(k/d)\hdim E$ for $0<\hdim E<d$; this is a higher dimensional version of Bourgain's projection theorem \cite{Bo2}. Under the stronger \emph{local} non-concentration assumption \eqref{eq:schubert-contraction}, we obtain the optimal Marstrand-Mattila estimate. 

We prove these results through discretized estimates, which are also useful in applications. Theorem \ref{thm:proj-general} is the discretized version of Theorem \ref{thm:projection-non-conc-Schubert-cycle}.

\subsection{Optimal Furstenberg estimates}

Projection theorems are heuristically (and, at the discretized level, formally) special cases of Furstenberg-type estimates. In the classical Furstenberg problem, one is given a set $E\subset\R^2$ and a family of lines $\mathcal{L}$, and one assumes $\hdim(E\cap\ell)\ge s$ for all $\ell\in\mathcal{L}$. The goal is to estimate $\hdim E$ in terms of $s$ and $\hdim\mathcal{L}$. The sharp version of this problem was established in \cite{23arXiv230808819R}: with $\hdim\mathcal{L}=t$, one has
\[
 \hdim(E)\ge \min\left\{s+t, \frac{3s+t}{2},s+1\right\},
\]
and this is sharp. The $\frac{3s+t}{2}$ term comes from a lattice example, essentially the same construction of Kaufman and Mattila \cite{MR407248}; see also Wolff's survey  \cite{Wolff99}. We ask whether additional conditions on $E\cap\ell$ can rule out this example and remove the term.

Point-line duality leads to an equivalent formulation. Let $E\subset\R^2$ have $\hdim(E)=t$ and, for each $x\in E$, let $\mathcal{L}_x$ be a family of lines through $x$ of dimension $\ge s$. Then
\[
\hdim\left(\bigcup_{x\in E}\mathcal{L}_x\right)\ge \min\left\{s+t, \frac{3s+t}{2},s+1\right\}.
\]
The set on the left is a subset of the affine Grassmannian $\A(2,1)$ of lines in $\R^2$. Roughly speaking, the projection case corresponds to the situation where the set of directions of $\mathcal{L}_x$ is independent of $x$. Very recently, Cs\"ornyei and Stull \cite[Theorem~1.1]{CsornyeiStull26Hyperplanes} partially generalized this estimate to higher dimensions, replacing bundles of lines by bundles of hyperplanes.

We study this dual problem in general dimension, allowing bundles of  $k$-planes in $\R^d$ for any $1\le k<d$. Theorem \ref{thm:Furstenberg-continuous} is the Furstenberg counterpart of Theorems \ref{thm:projection-ADR} and \ref{thm:projection-non-conc-Schubert-cycle}.
Recall that the (lower) Hausdorff dimension of a nonzero finite Borel measure $\mu$ is
\begin{equation} \label{eq:hdim-measure}
    \hdim\mu=\inf\{\hdim A:A\text{ is Borel and }\mu(A)>0\}.
\end{equation}

\begin{theorem} \label{thm:Furstenberg-continuous}
    Let $1 \leq k < d$. Let $E\subset\R^d$ be a Borel set with $\hdim(E)=t>0$. For each $x\in E$, let $\Theta_x\subset\G(d,k)$ be the support of a measure $\nu_x$ satisfying the assumptions of Theorem \ref{thm:projection-non-conc-Schubert-cycle}, with $\hdim\nu_x\ge s$. Then,
    \[
        \hdim\left(\bigcup_{x\in E}\{x+V:V\in \Theta_x\}\right)\ge \min\{t,d-k\}+s.
    \]
\end{theorem}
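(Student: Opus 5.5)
The plan is to reduce Theorem \ref{thm:Furstenberg-continuous} to a discretized dual Furstenberg estimate for spread-out configurations, the analogue for general configurations of the sticky estimate of \cite{SpreadOutFurstSticky}. The discretized statement will be the one proved later in the paper (Theorem \ref{thm:furst-general}), whose projection special case is Theorem \ref{thm:proj-general}. The reduction itself is a standard Hausdorff-content argument, which I carry out in four steps.

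\textbf{Step 1: regularize the data.} Fix $t'<\min\{t,d-k\}$ and $s'<s$. Assume for contradiction that the union $\mathcal{F}=\bigcup_{x\in E}\{x+V:V\in\Theta_x\}\subset\A(d,k)$ has dimension less than $t'+s'$.
\begin{itemize}[leftmargin=*]
\item By Frostman's lemma, pick a $t'$-Frostman measure $\mu$ on a compact subset of $E$.
\item For $\mu$-a.e.\ $x$ we have $\hdim\nu_x\ge s$. After a pigeonholing and Egorov-type argument, restrict to a compact set $K\subset E$ with $\mu(K)>0$ and a scale $r_0>0$ such that two properties hold uniformly in $x\in K$. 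First, each $\nu_x$, after restricting to a subset of large mass and normalizing, satisfies $\nu_x(B(V,r))\lesssim r^{s'}$ for $r<r_0$.
\item Second, hypothesis \eqref{eq:schubert-contraction} holds with the same $\e$. This property is inherited by the restricted measures up to constants. To guarantee this, I would restrict to a density-point region rather than an arbitrary subset, or replace the spread-out condition by its ``most points and scales'' version, which the discretized theorem is designed to handle.
\item Measurability of $x\mapsto\nu_x$ is an issue here. I would handle it by passing to a measurable selection, or by working with the sets $\Theta_x$ and covering-number versions of the hypotheses.
\end{itemize}

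\textbf{Step 2: discretize via a cover of $\mathcal{F}$.} Cover $\mathcal{F}$ by dyadic $\delta$-balls $B_j$ in $\A(d,k)$, with $\delta\le r_0$, such that $\sum_j\delta_j^{t'+s'}<\eta$. Then pigeonhole a single dyadic scale $\delta$. This produces a set $K_\delta\subset K$ with $\mu(K_\delta)\gtrsim 1/\log(1/\delta)^2$ and, for each $x\in K_\delta$, a set of $V\in\Theta_x$ of $\nu_x$-mass $\gtrsim 1/\log(1/\delta)^2$ such that $x+V$ lies in some $B_j$ of radius $\delta$. The number of such balls is $\lesssim\eta\,\delta^{-(t'+s')}$. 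Discretizing $K_\delta$ at scale $\delta$ gives a $(\delta,t')$-set of points $P$. Each $p\in P$ carries a shading of $\delta$-plates which is a $(\delta,s')$-set. The shading inherits the spread-out non-concentration near Schubert cycles at most scales.

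\textbf{Step 3: apply the discretized theorem.} Theorem \ref{thm:furst-general} then yields $|\mathcal{F}_\delta|\gtrsim_{\epsilon'}\delta^{-(t'+s')+\epsilon'}$. For $\eta$ small this contradicts the count from Step 2, since $\epsilon'$ is absorbed by the slack in $t'<\min\{t,d-k\}$ and $s'<s$. The $\min\{t,d-k\}$ appears because a point set in $\R^d$ can only be ``seen'' transversally to $k$-planes in $d-k$ dimensions. Letting $t'\uparrow\min\{t,d-k\}$ and $s'\uparrow s$ gives the claim.

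\textbf{Main obstacle.} The hard part is not this reduction but the discretized estimate itself. Its difficulty is removing the stickiness assumption from the Orponen--Shmerkin bound: one must decompose a general Frostman configuration into multiscale pieces via a branching-function/Lipschitz decomposition, and verify that spread-outness survives rescaling. Within the reduction, the delicate step is ensuring that the local non-concentration \eqref{eq:schubert-contraction} survives the pigeonholing refinements of $\nu_x$. I would first try to use only the robust consequence: at most scales the shading is not concentrated near a Schubert cycle. This property is stable under passing to subsets of mass $\ge\delta^{\epsilon}$.
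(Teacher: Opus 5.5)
\textbf{Overall route.} Your architecture matches the paper's. The paper derives Theorem \ref{thm:Furstenberg-continuous} from the continuous spread-out estimate Theorem \ref{thm:mainGeneralContinuous}, whose proof is essentially your Steps 2--3: cover, pigeonhole one scale, extract a Frostman point set and discretized shadings with $M\gtrapprox\delta^{-s'}$, and apply Theorem \ref{thm:furst-general} with $\underline t<\min\{t,d-k\}$.

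\textbf{The gap.} It lies in the one step specific to this theorem, which you flag as delicate but do not carry out: converting \eqref{eq:schubert-contraction} into a non-concentration property that survives restriction and discretization. Condition \eqref{eq:schubert-contraction} itself is not inherited by restrictions; for example, restrict $\nu_x$ to a thin neighbourhood of a Schubert divisor. So your Step 1 claim that it is ``inherited by the restricted measures up to constants'' is false, and your Step 2 assertion that the shadings ``inherit'' the non-concentration is unjustified. Restricting to density-point regions does not obviously repair this.

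\textbf{The missing argument.} The robust property you name at the end is exactly spread-outness in the sense of Definition \ref{def:perfectSet}. Deriving it is Lemma \ref{lem:schubert-non-concentration-implies-spreadout}, built on Lemma \ref{lem:hyperplane-non-concentration-implies-spreadout}. Work in a big-cell chart, where Schubert divisors are the model hyperplanes and the Grassmannian and Euclidean metrics are locally bi-Lipschitz. Take $\Theta$ all of whose points have at least $\overline{\mathfrak s}$ singular scales. At each point, greedily select $\gtrsim_\e\overline{\mathfrak s}$ of these scales, with consecutive ones separated by a factor $\e/128$. Form the nested neighbourhoods $U_j=\bigcup_{\theta\in\Theta}B(\theta,c\e\, r_j(\theta))$. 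Then Besicovitch-select balls centred at support points in $U_j$; each lies in $U_{j-1}$ and meets $U_j$ only inside a thin strip around one model hyperplane. Applying \eqref{eq:schubert-contraction} to these balls gives $\nu(U_j)\le\gamma\,\nu(U_{j-1})$ with $\gamma=\gamma(\e,d,k)<1$, hence $\nu(\Theta)\le\gamma^{\mathfrak s}$.

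\textbf{Order of operations.} Do this for the original $\nu_x$, where \eqref{eq:schubert-contraction} holds at every support point and scale. The covering step uses it at support points near $\Theta$, not only on $\Theta$. Only afterwards restrict: to a chart neighbourhood, to an $s'$-Frostman piece of relative mass $1/2$, and to the pigeonholed sets of planes. Spread-outness passes to all these restrictions by Lemma \ref{lem:spread-out-subset}. Lemma \ref{lem:uniform-spread-out-discretization} then turns the restricted spread-out measure into a $(\delta,\beta,\lessapprox_\delta 1)_{d,k}$-spread-out family of $\delta$-cubes, which is the hypothesis Theorem \ref{thm:furst-general} actually requires.

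\textbf{Measurability.} The paper sidesteps the issue you raise. It pigeonholes the constants using the Hausdorff content $\mathcal H^{\underline t}_\infty$, which is countably subadditive on arbitrary sets. It then extracts a $(\delta,\underline t)$-Frostman set from a set of large content via \cite[Proposition 3.13]{FasslerOrponen14}, instead of restricting a Frostman measure on $E$ to possibly non-measurable sets.
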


For example (after dualizing again), if $E$ contains a bi-Lipschitz image of the ternary Cantor set in every direction, then $\hdim(E)\ge 1+\log 2/\log 3$. This says that copies of the Cantor set (even after bi-Lipschitz distortion) in different directions have very little overlap. This can be seen as a fractal analog of the classical Kakeya problem.

Theorems \ref{thm:projection-non-conc-Schubert-cycle} and \ref{thm:Furstenberg-continuous} will follow from the more general results in \S\ref{s:continuous-versions}, where spread-outness replaces local Schubert non-concentration.

\subsection{Discretized formulations}
\label{s:discretized-formulations}

A \emph{dyadic scale} is an element of $2^{-\Z}$. For $r\in2^{-\N}$ and $m\in\N$, let $\mathcal D_r(\R^m)=\{r(j+[0,1)^m):j\in\Z^m\}$. For $E\subset\R^m$, let $\mathcal D_r(E)$ be the cubes in $\mathcal D_r(\R^m)$ that meet $E$, write $|E|_r=|\mathcal D_r(E)|$, and denote the lower-left endpoint of $Q\in\mathcal D_r(\R^m)$ by $z(Q)$. For a point $x\in\R^m$, $\mathcal D_r(x)$ denotes the unique cube in $\mathcal D_r(\R^m)$ containing $x$. For a family $\mathcal J$ of dyadic cubes, write $z\mathcal J=\{z(Q):Q\in\mathcal J\}$.

We use the standard notions of \emph{Katz--Tao} and \emph{Frostman} sets. For $\delta\in2^{-\N}$, $t\ge0$, and $C\ge1$, a nonempty bounded set $P\subset\R^d$ is \emph{$(\delta,t,C)$-Frostman} if
\[
|P\cap Q|_\delta\le Cr^t|P|_\delta,
\qquad Q\in\mathcal D_r(\R^d),\quad r\in2^{-\N}\cap[\delta,1],
\]
and \emph{$(\delta,t,C)$-Katz--Tao} if
\[
|P\cap Q|_\delta\le C (r/\delta)^t, \qquad Q\in\mathcal D_r(\R^d),\quad r\in2^{-\N}\cap[\delta,1].
\]

Let  $\theta^*$ denote the transpose of $\theta$. Identify $\R^d=\R^k\times\R^{d-k}$. For $\theta\in\mathbf G_{d,k}=\R^{k\times(d-k)}$, let
\[
V_\theta=\operatorname{Graph}(\theta^*)
=\{(x,\theta^*x):x\in\R^k\}.
\]
The map $\theta\mapsto V_\theta$ identifies $\mathbf G_{d,k}$ with a Zariski-dense open subset of $\G(d,k)$ (this is the higher dimensional analog of the parametrization of lines by $(a,b)\leftrightarrow y=ax+b$). We similarly identify the corresponding big cell $\mathbf A_{d,k}$ in the affine Grassmannian with $\R^{k\times(d-k)}\times\R^{d-k}$ via
\[
(\theta,b)\longmapsto V_{\theta,b}=\{(x,\theta^*x+b):x\in\R^k\},
\]
and use dyadic covering numbers in these coordinates.

\begin{definition}[Plane configurations]\label{def:planeConfigurations}
Let $M\in\N$. A \emph{$(\delta,M)$-plane configuration} is a pair $(P,\mathbf\Theta)$, where $P\subset\delta\Z^d\cap[0,1]^d$ is nonempty and, for each $p\in P$,
\[
\mathbf\Theta(p)\subset\mathcal D_\delta([-1,1]^{k\times(d-k)}),
\qquad M\le|\mathbf\Theta(p)|<2M.
\]
We call $\mathbf\Theta(p)$ the \emph{shading} of $p$ and omit $M$ when it is not needed. We identify $\mathbf{\Theta}(p)$ with its union, so that $\theta\in\mathbf{\Theta}(p)$ means $\theta$ is an element of some cube in $\mathbf{\Theta}(p)$.

The \emph{Furstenberg set} associated to $(P,\mathbf\Theta)$ is defined using the lower-left corners $z\mathbf\Theta(p)$:
\[
\mathcal F(P,\mathbf\Theta)
=\{V_{\theta}+p:p\in P, \theta\in z\mathbf\Theta(p)\}\subset\mathbf A_{d,k}.
\]
\end{definition}
Note that allowing all $\theta\in\mathbf\Theta(p)$ instead of just the lower-left corners $z\mathbf\Theta(p)$ in the definition of $\mathcal F(P,\mathbf\Theta)$ changes its $\delta$-covering number by at most a factor depending on $d,k$, and so in practice we may ignore the difference. 

The \emph{model hyperplanes} in $\mathbf G_{d,k}$ are
\[
H_{[x,y,u]}=\{\theta\in\mathbf G_{d,k}:x\cdot\theta\cdot y=u\},
\qquad x\in\R^k\setminus\{0\},\quad y\in\R^{d-k}\setminus\{0\},\quad u\in\R.
\]
The family of model hyperplanes in $\mathbf G_{d,k}$ is denoted by $\mathfrak{H}_{d,k}$. Each $H\in \mathfrak{H}_{d,k}$ is the intersection with $\mathbf G_{d,k}$ of a Schubert divisor in $\G(d,k)$, and every affine tangent hyperplane to a Schubert divisor at a smooth point  is a model hyperplane in big cell coordinates. See \S\ref{s:bigcell}.

\begin{definition}[Singular scales and spread-out families]\label{def:intro-spreadout}\label{def:spread-out-cube-family}
Fix $\beta\in(0,1]$ and $C>0$, and let $\mathcal J\subset\mathcal D_\delta(\mathbf G_{d,k})$ be a nonempty finite family. All balls and distances in $\mathbf G_{d,k}$ are Euclidean. For $\mathcal J'\subset\mathcal J$ and $\theta\in\mathcal J'$, a scale $r\in2^{-\N}\cap[\delta,1]$ is \emph{$\beta$-singular for $(\mathcal J',\theta)$} if there exists $H\in\mathfrak{H}_{d,k}$ such that
\[
\mathcal J'\cap B(\theta,r)
\subset\{\omega\in\mathbf G_{d,k}:\dist(\omega,H)<\beta r\}.
\]

The family $\mathcal J$ is \emph{$(\delta,\beta,C)_{d,k}$-spread-out} if $|\mathcal J'|\le C2^{-\beta\mathfrak s  }|\mathcal J|$ whenever $\mathcal J'\subset\mathcal J$, $\mathfrak s\in\N\cup\{0\}$, and every $\theta\in\bigcup\mathcal J'$ has at least $\mathfrak s$ scales that are $\beta$-singular for $(\mathcal J',\theta)$. A plane configuration $(P,\mathbf\Theta)$ is \emph{$(\delta,\beta,C,M)$-spread-out} if $\mathbf\Theta(p)$ is $(\delta,\beta,C)_{d,k}$-spread-out for every $p\in P$. 
\end{definition}

The definitions agree with those of \cite{SpreadOutFurstSticky}. We recall spread-out measures and record their basic properties in Section \ref{sec:spread-out}. Examples are given in \cite[\S 6.1]{SpreadOutFurstSticky} and Section \ref{s:non-planar-measures-are-spread-out}.

Our main theorem is the following sharp lower bound for the number of affine $k$-planes in a Furstenberg configuration.
\begin{theorem}\label{thm:furst-general}
Let $1\le k<d$, $s\in(0,1]$, and $t\in(0,d-k]$. For every $\eta>0$ there is $\e=\e(s,t,\eta,d,k)>0$ such that, whenever $\delta\in2^{-\N}$ is sufficiently small in terms of $d,k,s,t,\eta$ and $M\in\N$, every $(\delta,s,\delta^{-\e},M)$-spread-out configuration $(P,\mathbf\Theta)$ such that $P$ is $(\delta,t,\delta^{-\e})$-Frostman satisfies 
\[
    |\mathcal F(P,\mathbf\Theta)|_\delta\ge \delta^{\eta-t}\cdot M.
\]
If, instead, $P$ is $(\delta,t,\delta^{-\e})$-Katz--Tao, then $|\mathcal F(P,\mathbf\Theta)|_\delta\ge \delta^{\eta}|P|_{\delta}\cdot M$.
\end{theorem}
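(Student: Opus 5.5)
The plan is to reduce Theorem \ref{thm:furst-general} to the sticky estimate of Orponen and Shmerkin \cite{SpreadOutFurstSticky} by a multiscale decomposition of $P$. That estimate handles the case where $P$ is (almost) Ahlfors--David regular between $\delta$ and $1$ with some exponent $t$, the shadings are spread-out, and the conclusion is $|\mathcal F|_\delta\gtrsim\delta^{\eta-t}M$.

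\textbf{Step 1: reduce Frostman to Katz--Tao.} A $(\delta,t)$-Frostman set contains a large subset that, after rescaling inside typical $\Delta$-cubes, is Katz--Tao at every intermediate scale. We start from the Katz--Tao version, since the Frostman bound $\delta^{-t}M$ follows from the Katz--Tao bound $|P|_\delta M$ applied to a $(\delta,t,\delta^{-\e})$-Katz--Tao subset of size $\approx\delta^{-t}$. Such a subset can be extracted by standard pigeonholing.

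\textbf{Step 2: multiscale decomposition.} Pigeonhole a large uniform subset of $P$, and apply the piecewise-linear (Lipschitz) approximation of the branching function of a uniform set. This splits $[\delta,1]$ into boundedly many scale blocks $\Delta_{j+1}<\Delta_j$. In each block, the rescaled pieces $S_Q(P\cap Q)$, with $Q\in\mathcal D_{\Delta_j}(P)$, are either sticky ($\approx t_j$-regular with $t_j\le d-k$), or else have slope $> d-k$, which only helps because the Katz--Tao target saturates. Spread-outness must also be transferred to each block. For a block $[\Delta_{j+1},\Delta_j]$ and $p\in Q$, consider the shading $\mathbf\Theta(p)$ at resolution $\Delta_{j+1}/\Delta_j$ relative to $\Delta_j$-tubes in $\mathbf A_{d,k}$. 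The rescaled family of plane directions through $Q$ should remain spread-out at the corresponding scale. I expect this to follow from the basic properties in Section \ref{sec:spread-out}: spread-outness passes to restrictions $\mathbf\Theta(p)\cap B(\theta,r)$ for most $\theta$ and to coarsenings, with $s$-dimensional counting at each scale. The reason is that singular scales are counted additively, so the total $\beta\mathfrak s$ loss splits across blocks.

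\textbf{Step 3: induct on scales.} Bound $|\mathcal F|_\delta$ by the product over blocks of the block-wise sticky estimates, using the standard ``multiscale incidence'' inequality. The number of $\delta$-planes is at least the number of $\Delta_j$-fat planes times the number of $\delta$-planes inside a fat plane. Inside a fat plane, the geometry rescales (via an affine map in big-cell coordinates preserving the model hyperplanes $\mathfrak H_{d,k}$) to a configuration of the same type at scale $\delta/\Delta_j$. Multiplying the per-block gains $(\Delta_j/\Delta_{j+1})^{t_j}\cdot M_j$ gives $|P|_\delta\cdot M$ up to $\delta^{\eta}$, since $\prod M_j\approx M$ by $s$-dimensional uniformity of the shadings.

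\textbf{Main obstacle.} The hardest step is the rescaling in Steps 2 and 3. Inside a fat $k$-plane, the points of $P$ through which the planes pass need not be the rescaled pieces $P\cap Q$, so the induced configuration at the finer scale mixes several cubes. I would handle this as in the Ren--Wang Furstenberg argument: work with $\Delta$-tubes and planes simultaneously, using the Katz--Tao (not Frostman) version as the induction hypothesis, since it is stable under passing to subsets. I would also verify carefully that spread-outness of the induced shadings survives both the rescaling and the refinements, with $\delta^{-\e}$ losses controlled by choosing $\e$ much smaller than the number of blocks allows.
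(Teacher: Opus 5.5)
The gap is in Steps 2--3, at the claim that blocks of slope $>d-k$ ``only help.'' On such a block the sticky theorem, in the uniform form of Lemma~\ref{lem:ahlfors-uniform-exponent}, gives only $(\Delta_j/\Delta_{j+1})^{\min\{t_j,d-k\}}M_j$. Nothing better holds for the block in isolation: with a common shading, any point set produces $\lesssim(\Delta_j/\Delta_{j+1})^{d-k}M_j$ fat planes.

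So the product of your blockwise bounds is only $|P|_\delta M\prod_j(\Delta_{j+1}/\Delta_j)^{(t_j-(d-k))_+}$. The saturation of the target at $d-k$ concerns the global exponent. The loss, however, is incurred block by block, and the Katz--Tao target $|P|_\delta M$ counts the steep branching too.

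Steep blocks cannot be avoided. Take $\delta=\Delta^m$ and $da\le(d-k)m$. A set with one point in each $\Delta^a$-cube of $[0,1]^d$ is $(\delta,d-k,O(1))$-Katz--Tao. Its branching function has slope $d$ on $[0,a]$ and $0$ on $[a,m]$, and your scheme returns $\Delta^{-(d-k)a}M$ instead of $\Delta^{-da}M$. Likewise, a critical $(\delta,d-k)$-Frostman set whose branching has slope $d$ at coarse scales and slope below $d-k$ at fine scales defeats the scheme. This is exactly the non-sticky case. The missing factor must come from the fact that planes through different coarse cubes overlap little at finer scales. That is an interaction between blocks, and no product of blockwise estimates can detect it.

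The paper handles this with a different mechanism, namely induction on the loss exponent:
\begin{itemize}
\item It proves $\mathsf{Furst}(s;\eta)\Rightarrow\mathsf{Furst}(s;\eta')$ for some $\eta'<\eta$ (Lemma~\ref{lem:reduction-to-self-improvement}).
\item It derives the Katz--Tao estimate from the critical Frostman one by random translates (Lemma~\ref{lem:katz-tao-induction}).
\item It bootstraps the hypothesis into strict gains for supercritical Frostman and subcritical Katz--Tao sets (Lemma~\ref{lem:away-from-critical}). This combines the induction hypothesis on near-critical blocks with the sticky theorem on linear blocks.
\end{itemize}
If the uniformized point set is sticky, the sticky theorem applies directly. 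Otherwise some scale $\Delta^n$ satisfies $f(n)-(d-k)n\ge\xi m$, and each $\Delta^n$-slab is rescaled anisotropically into slablets. Then one of two cases holds:
\begin{itemize}
\item The slablets are Frostman. The gain comes from the supercritical configuration at scale $\Delta^n$ (Proposition~\ref{prop:middle-scale-frostman-gain}).
\item The slablets are Katz--Tao with subcritical cardinality at some level $a$. The gain comes from Lemma~\ref{lem:away-from-critical}\textup{(ii)}.
\end{itemize}

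The obstacle you flagged, cubes mixing inside fat planes, is real but is handled by the multiscale product inequality \cite[Corollary~5.9]{SpreadOutFurstSticky}. The genuine difficulty is the non-sticky case, which a pure blockwise reduction to the sticky theorem cannot reach.
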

The Frostman and Katz--Tao formulations are in fact equivalent; see \S\ref{s:frostman-to-katz-tao} for the more challenging implication.

The projection theorem corresponds to the case where every shading $\mathbf\Theta(p)$ is dense in a common family $\Theta$. For $\theta\in\mathbf G_{d,k}$, write $\pi_\theta(x,y)=x+\theta\cdot y$, with $(x,y)\in\R^k\times\R^{d-k}$, so $\pi_\theta$ takes values in $\R^k$ and $\ker\pi_\theta=V_\theta^\perp$. In Theorem \ref{thm:proj-general}, we interchange $k$ and $d-k$: thus $\theta\in\R^{(d-k)\times k}$ and $\pi_\theta:\R^{d-k}\times\R^k\to\R^{d-k}$. 

\begin{theorem} \label{thm:proj-general}
    
Let $s\in(0,1]$, $t\in(0,d-k]$, and $\eta>0$. There exists $\e>0$ such that the following holds whenever $\delta\in 2^{-\N}$ and $\delta\ll_{s,t,\eta}1$.

Let $\mathbf\Theta\subset\mathcal D_\delta([-1,1]^{(d-k)\times k})$ be $(\delta,s,\delta^{-\e})_{d,d-k}$-spread-out. For every $(\delta,t,\delta^{-\e})$-Frostman set $P\subset [0,1]^d$, there is $\mathbf\Theta'\subset\mathbf\Theta$ such that $|\mathbf\Theta'|\ge(1-\delta^\e)|\mathbf\Theta|$, and
\[
 |\pi_{\theta}(P')|_\delta \ge\delta^{\eta-t}
 \quad \text{for all }\theta\in\mathbf\Theta',\ P'\subset P,\
 |P'|_\delta\ge\delta^\e|P|_\delta.
\]
If  $P$ is instead $(\delta,t,\delta^{-\e})$-Katz--Tao, then the same conclusion holds with $|\pi_{\theta}(P)|_\delta\ge\delta^{\eta}|P|_\delta$ as the projection bound.
\end{theorem}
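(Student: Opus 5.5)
We derive Theorem \ref{thm:proj-general} from Theorem \ref{thm:furst-general} by the usual duality between projections and dual Furstenberg configurations, arguing by contradiction. Fix $\eta>0$ and let $\e_0=\e(s,t,\eta/2,d,d-k)$ be the exponent that Theorem \ref{thm:furst-general} provides with the roles of $k$ and $d-k$ interchanged. We will choose $\e\ll\e_0$.

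Let $\mathbf\Theta_{\rm bad}\subset\mathbf\Theta$ be the set of $\theta$ for which some $P_\theta\subset P$ with $|P_\theta|_\delta\ge\delta^\e|P|_\delta$ satisfies $|\pi_\theta(P_\theta)|_\delta<\delta^{\eta-t}$. Suppose $|\mathbf\Theta_{\rm bad}|\ge\delta^\e|\mathbf\Theta|$; it suffices to reach a contradiction.

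\textbf{Step 1: pass to the incidence structure.} The fibres of $\pi_\theta$ are the translates of the $k$-plane $\ker\pi_\theta$. Each point $p\in P_\theta$ lies on the plane $p+\ker\pi_\theta$, which we parametrize in big-cell coordinates of $\mathbf A_{d,k}$ after the coordinate swap. The family of such planes has $\delta$-covering number $\lesssim|\pi_\theta(P_\theta)|_\delta$. Consider the incidence set $\{(p,\theta):\theta\in\mathbf\Theta_{\rm bad},\ p\in P_\theta\}$. It has size at least $\delta^{2\e}|P|_\delta|\mathbf\Theta|$, so by pigeonholing there is $P''\subset P$ with $|P''|_\delta\ge\delta^{2\e}|P|_\delta$ such that each $p\in P''$ has a shading $\mathbf\Theta(p)=\{\theta\in\mathbf\Theta_{\rm bad}:p\in P_\theta\}$ of size at least $\delta^{2\e}|\mathbf\Theta|$. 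A further dyadic pigeonhole makes $M\le|\mathbf\Theta(p)|<2M$ at a loss of $\log(1/\delta)$.

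\textbf{Step 2: check the hypotheses of Theorem \ref{thm:furst-general}.} The set $P''$ is $(\delta,t,\delta^{-3\e})$-Frostman, since it is a $\delta^{2\e}$-dense subset of a Frostman set. Each shading is $(\delta,s,\delta^{-3\e})$-spread-out, because it is a $\delta^{2\e}$-dense subfamily of a spread-out family: the defining inequality $|\mathcal J'|\le C2^{-\beta\mathfrak s}|\mathcal J|$ only involves subsets $\mathcal J'$, so passing to a dense subfamily costs at most the density factor in $C$. I expect to need a lemma from Section \ref{sec:spread-out} confirming this stability. It also has to be checked that the kernel parametrization $\theta\mapsto\ker\pi_\theta$ agrees with $V_\theta$ in the $(d,d-k)$ big cell, possibly up to a transpose or sign, and that model hyperplanes are preserved under this map. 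This bookkeeping is the main technical care point.

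\textbf{Step 3: count planes in two ways.} Theorem \ref{thm:furst-general} gives
\[
|\mathcal F(P'',\mathbf\Theta)|_\delta\ge\delta^{\eta/2-t}M\gtrsim\delta^{\eta/2-t+2\e}|\mathbf\Theta|.
\]
On the other hand, the planes in $\mathcal F$ associated to a fixed $\theta$ are determined up to $\delta$ by $\pi_\theta(p)$, with $p\in P_\theta$, so they number $\lesssim|\pi_\theta(P_\theta)|_\delta<\delta^{\eta-t}$. Summing over $\theta\in\mathbf\Theta_{\rm bad}$ gives
\[
|\mathcal F|_\delta\lesssim|\mathbf\Theta|\,\delta^{\eta-t}.
\]
This contradicts the lower bound once $\e<\eta/8$ and $\delta$ is small.

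The Katz--Tao version is identical, using the second conclusion of Theorem \ref{thm:furst-general}. The only change is that $P''$ remains Katz--Tao with the same constant, since Katz--Tao is a hereditary property.

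The main obstacle is the coordinate identification in Step 2, in particular confirming that spread-outness with respect to $\mathfrak H_{d,d-k}$ matches the Schubert structure of the planes $\ker\pi_\theta$. The counting itself is routine.
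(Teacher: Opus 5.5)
Your argument is correct and is essentially the paper's proof. The steps match: bad cells with one chosen parameter each, double counting to a dense Frostman subset whose points lie in $\gtrsim\delta^{2\e}|\mathbf\Theta|$ bad fibres, Lemma \ref{lem:spread-out-subset} for the shadings, Theorem \ref{thm:furst-general} for the lower bound, and the upper bound $\sum_{\theta}|\pi_\theta(P_\theta)|_\delta$ via \eqref{eq:affine-incidence-coordinates}.

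The bookkeeping you flag is settled by the maps $(x,y)\mapsto(y,x)$ and $\theta\mapsto-\theta^*$. These send $\ker\pi_\theta$ to $V_{-\theta^*}\in\mathbf G_{d,k}$ and the model hyperplane $H_{[x,y,u]}$ to $H_{[y,x,-u]}$. So Theorem \ref{thm:furst-general} is applied with the original $(d,k)$, since the fibres are $k$-planes and $t\le d-k$; it is not applied with $k$ and $d-k$ swapped, as your choice $\e_0=\e(s,t,\eta/2,d,d-k)$ suggests.
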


\subsection{Applications}
\label{s:applications}

Theorem \ref{thm:furst-general} has applications to smooth restricted projections, distance sets, additive combinatorics, and self-affine sets. Section \ref{sec:examples-applications} contains the proofs and further discussion.

\subsubsection{Marstrand for surfaces in \texorpdfstring{$\G(4,2)$}{G(4,2)}}

Although our motivation comes from fractal restricted projections and Furstenberg sets, Theorem \ref{thm:furst-general} also applies to smooth submanifolds of $\G(d,k)$. For $\G(4,2)$, combining the theorem with the results of \cite{GGW22}, we obtain a restricted projection theorem for generic surfaces.
\begin{theorem} \label{thm:intro-projection-G42}
    Let $M$ be the closed unit ball in $\R^2$. There is an open and dense set $\mathcal{U}$ of embeddings $F\colon M\to\G(4,2)$ in the compact-open $C^\infty$ topology such that, for all $F\in\mathcal{U}$ and for any Borel set $E\subset\R^4$,
    \[
        \hdim(\Pi_{F(x)}(E)) = \min\{\hdim(E),2\} \quad \text{for $\mathcal{H}^2$-a.e. } x\in M.
    \]
\end{theorem}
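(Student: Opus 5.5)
We will deduce Theorem \ref{thm:intro-projection-G42} from the projection theorem (Theorem \ref{thm:proj-general}, or more conveniently its continuous counterpart from \S\ref{s:continuous-versions}, where spread-outness replaces Schubert non-concentration) with $d=4$, $k=2$. The plan is to show that for generic $F$, the pushforward $\nu_F=F_*(\mathcal H^2|_M)$ is a spread-out measure in the sense of Section \ref{sec:spread-out}, with exponent $\beta$ uniform in the scale. For a smooth immersed surface, spread-outness at a point $V=F(x)$ and scale $r$ is governed to first order by the tangent plane $T_V=dF_x(\R^2)$. The surface fails to be spread-out at $(V,r)$ essentially only when $F(M)\cap B(V,r)$ lies in an $\beta r$-neighbourhood of a model hyperplane $H$ (a tangent hyperplane to a Schubert divisor). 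Since a surface is $O(r^2)$-close to its tangent plane inside $B(V,r)$, singularity at small scales forces $T_V$ to lie close to a model hyperplane through $V$. The model hyperplanes $H_{[x,y,u]}$ are the affine hyperplanes whose direction is annihilated by a rank-one matrix $x\otimes y$. So the bad event is that $T_V^\perp$ (a 2-plane in the dual space $\R^{2\times2}$) contains a rank-one direction.

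\emph{Step 1 (algebraic genericity).} Rank-one $2\times 2$ matrices form the cone $\{\det=0\}$, a quadric. A 2-plane $L$ in $\R^{2\times 2}$ meets this cone nontrivially unless $\det|_L$ is definite. The planes that avoid the cone therefore form an open set, and this is the relevant non-degeneracy for surfaces in $\G(4,2)$ that \cite{GGW22} isolates. Accordingly, we define $\mathcal U$ as the set of embeddings $F$ such that, for every $x\in M$, the annihilator of $dF_x(\R^2)$, transported to big-cell coordinates around $F(x)$, contains no rank-one direction. This is a pointwise open condition on the 1-jet, so by compactness of $M$ the set $\mathcal U$ is open in $C^\infty$. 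We will check the condition in coordinates using the $PGL$-homogeneity of the big cell.

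\emph{Step 2 (spread-outness from the pointwise condition).} By compactness, the condition in Step 1 gives a uniform angle $c>0$ between $T_{F(x)}$ and every model hyperplane through $F(x)$. Then for $r\le r_0$ and $\beta\ll c$, no scale below $r_0$ is $\beta$-singular. Indeed, $\nu_F(B(V,r))\sim r^2$, and a $\beta r$-neighbourhood of $H$ captures at most an $O(\beta/c)$ fraction of the surface in $B(V,r)$. The scales above $r_0$ contribute only $O(\log(1/r_0))$ singular scales, which is absorbed into the constant $C$. This verifies the $(\delta,\beta,C)$-spread-out condition for all $\delta$-discretizations of $\nu_F$, and the continuous projection theorem then yields the conclusion for $\mathcal H^2$-a.e.\ $x$.

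\emph{Step 3 (density).} This is the main obstacle. The condition "the tangent plane avoids the rank-one cone" may fail on a curve in $M$ for reasons of topology: if the determinant form restricted to the conormal has varying signature, degeneracy is forced somewhere. We therefore must not demand it everywhere. Instead, we take $\mathcal U$ to be the transversality set in which the 1-jet of $F$ meets the degeneracy locus transversally. By Thom's jet transversality theorem this set is open and dense, and the bad set $Z\subset M$ is then a smooth curve, hence $\mathcal H^2$-null. Away from $Z$, Step 2 applies on compact subsets, and exhausting $M\setminus Z$ by countably many compacts gives the a.e.\ statement.

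At this point we need the input of \cite{GGW22}. Near $Z$, transversality should imply that the degeneracy is of a nondegenerate (curvature) type, so that $F$ is \emph{not} contained to second order in a model hyperplane, i.e.\ the curve-type results of \cite{GGW22} apply there. If that is unavailable, we discard $Z$ altogether, since it has measure zero. The key remaining check is that the a.e.\ conclusion only requires spread-outness of $\nu_F|_K$ for compact $K\subset M\setminus Z$, which holds by Step 2.
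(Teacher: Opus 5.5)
Your proposal has a genuine gap, in Step 3. There you treat the non-elliptic locus (the tangent plane, equivalently its orthogonal complement, contains a rank-one matrix) as a degeneracy locus of positive codimension, so that jet transversality would make it a curve $Z\subset M$. That is false. The restriction of $\det$ to a $2$-plane $T\subset\R^{2\times 2}$ can be definite, indefinite, or degenerate, and the indefinite (hyperbolic) planes form a nonempty \emph{open} set. For example, $T=\operatorname{span}\{E_{11},E_{22}\}$ gives $\det(aE_{11}+bE_{22})=ab$. Only the parabolic boundary has positive codimension.

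Consequently, every $F$ that is $C^1$-close to $(u,v)\mapsto\operatorname{diag}(u,v)$ (an open set of embeddings) is hyperbolic at every point of $M$. For such $F$ your $Z$ is all of $M$, and the fallback ``discard $Z$'' gives nothing.

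Nor can spread-outness be rescued on the hyperbolic region. At a hyperbolic point $V$, the rank-one matrix $xy^T\in T_V^\perp$ is the normal of a model hyperplane $H_{[x,y,u]}$ that is tangent to the surface at $V$. By $C^2$ bounds, the surface lies within $O(r^2)$ of that hyperplane inside $B(V,r)$. So every dyadic scale $\delta\le r\lesssim\beta$ is $\beta$-singular at every hyperbolic point, relative to any subset. Hence $\mathcal H^2$ restricted to a positive-area piece of that region is not spread-out with constants uniform in $\delta$, and Step 2 cannot apply there.

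On the hyperbolic region you need a genuinely different mechanism, which the paper supplies in Proposition \ref{prop:G42-hyperbolic-surfaces}:
\begin{enumerate}
\item Integrate one of the two characteristic line fields (the rank-one directions in $T_V\Sigma$) to obtain a foliation.
\item Along a leaf $V(t)$ the velocity has rank one, with kernel line $K(t)$ and a three-plane $W(t)\supset V(t)$.
\item If both induced maps in \eqref{eq:G42-flag-derivatives} are nonzero, choose $x(t)\in K(t)$ and set $\gamma'=x$. Then $V(t)=\operatorname{span}\{\gamma',\gamma''\}$ with $\gamma',\dots,\gamma^{(4)}$ linearly independent, so Theorem \ref{thm:GGW} applies on each leaf.
\item Fubini over the leaves gives the a.e.\ statement.
\end{enumerate}

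Genericity must therefore be imposed on the $2$-jet, not the $1$-jet. The bad set consists of the parabolic jets together with the hyperbolic jets at which a flag map vanishes along a characteristic direction. Each such vanishing is one equation in an off-diagonal entry of $\partial_{xx}F$ or $\partial_{yy}F$. This bad set is closed, semialgebraic and of positive codimension. Transversality of $j^2F$ to a Whitney stratification of it yields an open dense $\mathcal U$ whose exceptional set is a finite union of curves and points.

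Your remark that near $Z$ the degeneracy is ``of curvature type'' does not substitute for this. The GGW input is needed on an open set, not near a curve, and using it requires building a foliation by GGW curves.

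A smaller point about Step 2: ``no scale below $r_0$ is singular for the full support'' does not by itself imply spread-outness, because singular scales are counted relative to arbitrary subsets $\Theta$. The mass bound you state should instead be fed into Lemma \ref{lem:hyperplane-non-concentration-implies-spreadout}, as in Proposition \ref{prop:G42-elliptic-surfaces}.
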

See Section \ref{s:restricted-projections-surfaces} for the proof and context.

\subsubsection{Pinned distance projections}

Given a set $E\subset\R^d$ and $y\in\R^d$, let $\Delta^y(E)=\{|x-y|:x\in E\}$ denote the pinned distance set of $E$ with respect to $y$. A simple reduction from pinned distance projections to orthogonal projections allows us to apply Theorem \ref{thm:proj-general}: if $E\subset\R^d$ is Borel and $\nu$ is a spread-out measure on $\R^d$, then
\[
    \hdim(\Delta^y(E)) = \min\{\hdim(E),1\} \quad \text{for $\nu$-a.e. } y\in\R^d.
\]
See \S\ref{S:distance-sets} for the simple proof and discussion.

\subsubsection{Additive combinatorics and fractals}

It is well-known that projection and Furstenberg estimates lead to additive combinatorial estimates for fractal sets \cite{Bo2}. Extending \cite[Theorem~6.5]{SpreadOutFurstSticky}, we obtain a discretized sum-product estimate for spread-out sets satisfying a Katz--Tao condition. See \S\ref{s:discretized-sum-product} for the statement, proof, and further discussion.

\subsubsection{A new proof of the Bárány-Hochman-Rapaport theorem}

The Ledrappier-Young formula and more general versions connect projection theorems to the dimension theory of dynamical systems. As a direct corollary, we recover the main result of \cite{BHR}: the Hausdorff dimension of a planar, strongly separated self-affine set equals its affinity dimension, provided that the linear parts of the defining affine maps generate a non-compact and totally irreducible subgroup of $\mathrm{GL}(2,\R)$. See \S\ref{s:self-affine} for further discussion.

\subsection{Proof strategy}

The starting point in the proof of Theorem \ref{thm:furst-general} is the sticky Furstenberg estimate in the companion paper, \cite[Theorem~1.9]{SpreadOutFurstSticky}, where $P$ is ``almost Ahlfors-regular''. We recall the definitions from \cite{SpreadOutFurstSticky} in Section \ref{sec:spread-out}. Ren and Wang \cite[\S 6]{23arXiv230808819R} similarly passed from regular to general planar Furstenberg configurations by decomposing branching functions and combining estimates at different scales, although the details differ substantially.

Our argument to reduce the general case to the sticky case has several parallels to the proof of the 3D Kakeya theorem \cite{WangZahl25Kakeya,GuthWangZahl26} and it is significantly simpler, but there are some important differences. We establish the estimate by induction on the loss $\delta^{\eta}$: we show that an estimate with loss $\delta^{\eta}$ implies an estimate with loss $\delta^{\eta'}$ for some $\eta'<\eta$. 

The sticky theorem handles configurations whose branching function is approximately linear. Otherwise, there is an intermediate scale $\rho$ at which the point set has excess branching, $|P|_{\rho} \ge \rho^{-(d-k)-\chi}$ for some positive $\chi$.

We use this excess branching to improve the exponent and close the induction. After some rescaling arguments inspired by \cite{DemeterWang25,WangWu26TwoEnds}, we are led to studying configurations of slablets. A priori we know very little about their branching function, but we show that we can reduce to one of two cases: the slablets satisfy a Frostman condition with error $\delta^{-\e}$ for a small but fixed $\e\ll \rho$, or they satisfy a Katz--Tao condition at a suitable scale. In the Frostman case, the gain comes from the excess branching at scale $\rho$; in the Katz--Tao case, it comes from the subcritical number of slablets that arises from the failure of the Frostman condition with error $\delta^{-\e}$. In both cases, we can close the induction and obtain a better exponent $\eta'<\eta$.


\subsection{Notation}

Given a finite set $P$, we write $m_P$ for the uniform probability measure on $P$.

We write $|P|$ for the cardinality of a finite set $P$. We use the cube and covering notation of Section \ref{s:discretized-formulations}; we also allow non-dyadic scales $\delta>0$. For $E\subset\R^d$, its \emph{$\delta$-discretization} is $E^{(\delta)}=z\mathcal D_\delta(E)\subset\delta\Z^d$, so $|E|_\delta=|E^{(\delta)}|$. Up to a factor depending only on the ambient dimension, $|E|_\delta$ is the minimal number of $\delta$-balls needed to cover $E$.

The open $r$-neighborhood of a set $E\subset\R^d$ is denoted by $E(r)$. We write $B(0,r)$ for the ball of radius $r$ centered at the origin. For $\lambda>0$, $\lambda Q$ and $\lambda B$ denote the concentric dilations of a cube $Q$ and a ball $B$ by the factor $\lambda$.

If $(X,\mathcal{F},\mu)$ is a measure space, and $f \colon X \to Y$ is a map, we write $f\mu$ for the push-forward measure $(f\mu)(B) = \mu(f^{-1}(B))$ on $Y$.

We write $A \lessapprox_{\delta} B$ to mean $A \leq B(\log(1/\delta))^{C}$, where the implicit exponent $C$ is independent of $\delta$, but may depend on the other ambient parameters, which will be specified as needed. We define $\gtrapprox_{\delta}$ similarly.

By $A\gg_{\text{par}} 1$ we mean that $A$ is sufficiently large, depending on the parameters $\text{par}$. We define $A\ll_{\text{par}} 1$ similarly.

We usually work with dyadic scales. When a scale $\delta$ is not dyadic, we may replace $\delta$ by the closest dyadic scale. The resulting constant factors are harmless in every such application.

The dimensions $1\le k\le d$ are fixed in each section. All implicit constants may depend on $d,k$.

Logarithms are always base $2$ unless otherwise specified.

\subsection{AI declaration}

All the main ideas of the paper were conceived by the authors without AI involvement. At the writing stage, PS used several LLMs to draft proofs of technical results in some parts of the paper, and to search for errors and simplifications in the arguments. Some applications involved more substantial AI assistance, particularly in \S\ref{s:restricted-projections-surfaces}. The authors carefully checked all AI-generated text and revised it substantially to ensure correctness and clarity. Any remaining errors or inaccuracies in the paper are the sole responsibility of the authors.

\subsection{Acknowledgments}

AC is supported by a Clay Research Fellowship. 

Part of this project was carried out during visits of PS to MIT and NYU-Courant in 2025. PS is grateful to these institutions and to Larry Guth for their hospitality. PS also acknowledges support from the Simons Foundation via a Simons Fellowship and from a Discovery Grant from the Natural Sciences and Engineering Research Council (NSERC). This material is partially based upon work supported by the Swedish Research Council under grant no. 2021-06594 while PS was in residence at Institut Mittag-Leffler in Djursholm, Sweden, during the semester Interactions between fractal geometry, harmonic analysis, and dynamics in 2026.

HW is supported by NSF CAREER DMS-2238818 and NSF DMS-2055544. HW thanks Larry Guth for hospitality during her visit to MIT. 

DZ was supported by the Simons Dissertation Fellowship. DZ thanks NYU-Courant for the hospitality during his 2025 visit when this project was started.

\section{Spread-out measures and branching functions}
\label{sec:spread-out}

We recall the definitions from \cite{SpreadOutFurstSticky} and record the properties used in this paper. We also review uniformization and the branching function. This section is largely expository, and we refer to \cite{SpreadOutFurstSticky} for proofs.

\subsection{Big cells in the Grassmannian}

\label{s:bigcell}

We include the discussion of big cells from \cite{SpreadOutFurstSticky} to keep the paper self-contained. For $0\le k\le d$, let $\G(d,k)$ and $\A(d,k)$ denote the Grassmannian of $k$-dimensional linear subspaces of $\R^d$ and the affine Grassmannian of $k$-dimensional affine subspaces of $\R^d$, respectively.

The matrix space $\mathbf{G}_{d,k}=\R^{k\times(d-k)}$ can be identified with a Zariski-dense and open subset of $\G(d,k)$ via the embedding
\begin{equation}\label{def:graph}
    \theta \to V_{\theta} = \operatorname{Graph}(\theta^*) =  \{ (x, \theta^* x) : x \in \R^{k}\},
\end{equation}
where $\theta^*$ is the transpose of $\theta$. The orthogonal complement $V_{\theta}^\perp$ equals $\ker(\pi_\theta)$, where $\pi_{\theta} \colon \R^{k} \times \R^{d - k} \to \R^{k}$ is the map $\pi_\theta(x,y)=x+\theta\cdot y$.  We think of $\mathbf{G}_{d,k}$ as a ``big cell'' in the Grassmannian. 

For the remainder of this section, assume that $1\le k<d$. Let $\X_{k,d-k}=\bigl((\R^{k}\setminus \{0\})\times (\R^{d-k}\setminus \{0\})\times \R\bigr)/\sim$, where $\sim$ is the equivalence relation given by $(x,y,t)\sim (x',y',t')$ if there exist $\lambda,\mu\in\R\setminus\{0\}$ such that $x'=\lambda x$, $y'=\mu y$, and $t'=\lambda\mu t$. We write $[x,y,t]$ for the equivalence class of $(x,y,t)$. Given $[x,y,t]\in \X_{k,d-k}$, let 
\begin{displaymath}
    H_{[x,y,t]} = \bigl\{ \theta\in \mathbf{G}_{d,k}: x\cdot \theta\cdot y = t \bigr\}.    
\end{displaymath}
The definition is independent of the representative of the equivalence class. Identifying $\mathbf{G}_{d,k}$ with $\R^{k(d-k)}$, we see that $H_{[x,y,t]}\in \A(k(d-k),k(d-k)-1)$ is an affine hyperplane in $\R^{k (d-k)}$. Viewed as a subset of $\G(d,k)$, the same set $H_{[x,y,t]}$ corresponds to the Schubert divisor
\begin{equation} \label{eq:schubert-variety}
    \{\theta\in \mathbf{G}_{d,k}: \dim(V_{\theta}\cap W_{[x,y,t]})\ge 1\},
\end{equation}
where 
\begin{displaymath}
    W_{[x,y,t]}=\bigl\{(sx,v)\in \R^k\times\R^{d-k}: y\cdot v=ts\bigr\}\in \G(d,d-k).
\end{displaymath}

We compare this terminology with the pencils used in projection theory. For a nonzero proper subspace $L\subset\R^d$, let
\[
    \mathcal P_L=\{V\in\G(d,k):
    \dim(L\cap V^\perp)>\max\{\dim L-k,0\}\}.
\]
See e.g. \cite[Definition~1.4]{Wang26RestrictedProjections}. Thus $\mathcal P_L$ is the Schubert variety on which the orthogonal projection of $L$ onto $V$ has dimension less than $\min\{\dim L,k\}$.y

If $\dim L=k$, then
\[
    \mathcal P_L=\{V\in\G(d,k):V\cap L^\perp\ne\{0\}\},
\]
so $\mathcal P_L$ is the Schubert divisor determined by $L^\perp$. Under the identification \eqref{def:graph}, the divisor \eqref{eq:schubert-variety} is therefore $\mathcal P_{W_{[x,y,t]}^\perp}\cap\mathbf G_{d,k}=H_{[x,y,t]}$.

Pencils with $\dim L\ne k$ are Schubert varieties of codimension $|\dim L-k|+1$, rather than divisors. Moreover, a general codimension one pencil need not be an affine hyperplane in a fixed big-cell chart. At every smooth point its tangent hyperplane has rank-one normal and is in $\mathfrak{H}_{d,k}$. Thus the model hyperplanes are affine local models for arbitrary pencil divisors. 

The set $\mathbf{G}_{d,k}$ has two natural metrics: the one induced from the natural metric on $\G(d,k)$ invariant under the action of the orthogonal group, and the one induced from the Euclidean metric on $\R^{k\times (d-k)}$. The two metrics induce the same topology and are bi-Lipschitz equivalent on compact subsets of $\mathbf{G}_{d,k}$. Unless otherwise specified, we will always use the Euclidean metric on $\mathbf{G}_{d,k}$.

These considerations extend to the affine Grassmannian $\A(d,k)$ as well. The big cell $\mathbf{A}_{d,k}$ of $\A(d,k)$ is the set of affine $k$-planes $V_{(\theta,b)} = \{(x,\theta^* x+b) : x \in \R^{k}\}$, with $\theta\in \mathbf{G}_{d,k}$ and $b\in \R^{d-k}$. The big cell is once again Zariski-dense and open. We identify $\mathbf{A}_{d,k}\cong \R^{(d-k)(k+1)}$ by identifying the plane $V_{\theta,b}$ with the pair $(\theta,b) \in \R^{k(d - k) + (d - k)}$. We will use a dyadic system $\{\mathcal{D}_{\delta}(\mathbf{A}_{d,k}) : \delta \in 2^{-\Z}\}$ on $\mathbf{A}_{d,k}$ which is the standard dyadic system on $\R^{(d - k)(k + 1)}$ under this coordinate identification.  The metrics on $\mathbf{A}_{d,k}$ inherited from $\A(d,k)$ and $\R^{(k+1)(d-k)}$ are bi-Lipschitz equivalent on compact subsets of $\mathbf{A}_{d,k}$.

For $\theta\in\mathbf G_{d,k}$ and $(x,y)\in\R^k\times\R^{d-k}$, write $\rho_\theta(x,y)=y-\theta^*x=\pi_{-\theta^*}(y,x)$. The affine coordinates satisfy
\begin{equation}\label{eq:affine-incidence-coordinates}
    V_\theta+(x,y)\longleftrightarrow(\theta,\rho_\theta(x,y)).
\end{equation}
On fixed bounded sets, this coordinate map is Lipschitz, so changing point and plane representatives by $O(r)$ changes the affine coordinates by $O(r)$. Since the dyadic system on $\mathbf A_{d,k}$ is the product system, for $r>0$, any finite $\Theta\subset r\Z^{k\times(d-k)}$, and sets $P_\theta\subset\R^d$,
\begin{equation}\label{eq:affine-incidence-counting}
    \left|\bigcup_{\theta\in\Theta}\{V_\theta+p:p\in P_\theta\}\right|_r
    =\sum_{\theta\in\Theta}|\rho_\theta(P_\theta)|_r.
\end{equation}

The Grassmannian $\G(d,k)$ and bounded subsets of $\A(d,k)$ admit finite covers by rotations of compact subsets of their big cells. Many questions about $\G(d,k)$ and $\A(d,k)$ therefore reduce to questions in Euclidean coordinates. The advantage for spread-outness is that the model hyperplanes $H_{[x,y,t]}$ are affine hyperplanes in $\mathbf{G}_{d,k}$.

\subsection{Spread-out sets and measures}

The geometric idea of spread-outness is that a set in $\mathbf{G}_{d,k}$ cannot concentrate near model hyperplanes $H_{[x,y,t]}$ at many scales around many points. The definitions quantify this idea by counting singular scales.

\begin{definition}[$\beta$-singular scale]
    Let $\beta\in (0,1]$, let $r \in (0,1]$, let $\Theta \subset \mathbf{G}_{d,k}$, and let $\theta \in \Theta$. We say that $r$ is a \emph{$\beta$-singular scale} for $(\Theta,\theta)$ if there exists $[x,y,t]\in \X_{k,d-k}$ such that $\Theta\cap B(\theta,r)\subset H_{[x,y,t]}(\beta r)$. The balls are with respect to the Euclidean metric on $\R^{k(d-k)}$. (The particular metric plays little role.) Geometrically, $\Theta\cap B(\theta,r)$ lies in a strip of width comparable to $\beta r$ around one of the affine hyperplanes $H_{[x,y,t]}$.

    For $\Theta \subset \R^{k(d-k)}$ and $\delta,\Delta \in 2^{-\N}$ with $\delta \le \Delta$, we denote
    \begin{displaymath}
        \mathfrak{s}_{d,k}(\theta \mid \Theta,\beta,\delta \to \Delta)
        =
        \bigl|\{r \in 2^{-\N} \cap [\delta,\Delta] : r \text{ is a $\beta$-singular scale for } (\Theta,\theta)\}\bigr|.
    \end{displaymath}
    We abbreviate $\mathfrak{s}_{d,k}(\theta \mid \Theta,\beta,\delta) = \mathfrak{s}_{d,k}(\theta \mid \Theta,\beta,\delta \to 1)$ and, when $\delta$ is clear, $\mathfrak{s}_{d,k}(\theta \mid \Theta,\beta) = \mathfrak{s}_{d,k}(\theta \mid \Theta,\beta,\delta)$. We also omit the subscripts $d,k$ when they are clear from context.

    If $\mathcal{J} \subset \mathcal{D}_{\rho}(\mathbf{G}_{d,k})$ is a family of dyadic cubes for some dyadic scale $\rho$, and $\theta \in \bigcup \mathcal{J}$, we write $\mathfrak{s}_{d,k}(\theta \mid \mathcal{J},\beta,\delta \to \Delta)=\mathfrak{s}_{d,k}(\theta \mid \cup \mathcal{J},\beta,\delta \to \Delta)$.
    If $\mathbf{J} \in \mathcal{J}$ is itself a cube, we write
    \begin{displaymath}
        \mathfrak{s}_{d,k}(\mathbf{J} \mid \mathcal{J},\beta,\delta \to \Delta)
        =
        \inf_{\theta \in \mathbf{J}}\mathfrak{s}_{d,k}(\theta \mid \mathcal{J},\beta,\delta \to \Delta).
    \end{displaymath}
    Thus, when no confusion is likely, we silently identify a family of dyadic cubes with its union in singular scale counts.
\end{definition}

\begin{definition}[$(\delta,\beta,C)_k$-spread-out measure]\label{def:spread-out-measure-k}
    Let $\delta \in 2^{-\N}$ and $C>0$, $\beta \in (0,1]$. A finite nonzero Borel measure $\nu$ on $\R^k$, $k \geq 1$, is called \emph{$(\delta,\beta,C)_k$-spread-out} if the following holds. For a Borel set $E \subset \R^k$, a point $x \in E$, and dyadic scales $\delta \leq \Delta \leq 1$, write
    \begin{displaymath}
        \mathfrak{s}_k(x \mid E,\beta,\delta \to \Delta)
        =
        \bigl|\{r \in 2^{-\N} \cap [\delta,\Delta] : \exists H \in \A(k,k-1) \text{ such that } E \cap B(x,r) \subset H(\beta r)\}\bigr|.
    \end{displaymath}
    Let $\mathfrak{s} \in \N\cup\{0\}$, and let $\Theta \subset \spt \nu$ be a Borel set such that
    \begin{displaymath}
        \mathfrak{s}_k(x \mid \Theta,\beta,\delta \to 1) \geq \mathfrak{s} \text{ for all } x \in \Theta.
    \end{displaymath}
    Then $\nu(\Theta) \leq C \cdot 2^{-\beta \mathfrak{s}}$.
    We say that a nonempty bounded set $P \subset \R^k$ is \emph{$(\delta,\beta,C)_k$-spread-out} if $\mathcal D_\delta(P)$ satisfies Definition \ref{def:spread-out-cube-family}, with all affine hyperplanes in $\R^k$ in place of the model hyperplanes.
\end{definition}

Next, we extend this definition to measures on $\mathbf{G}_{d,k}$.
\begin{definition}[$(\delta,\beta,C)_{d,k}$-spread-out measure]\label{def:perfectSet}
    Let $\delta \in 2^{-\N}$, $C>0$, and $\beta\in(0,1]$. A finite nonzero Borel measure $\mu$ on $\mathbf{G}_{d,k}$ is called \emph{$(\delta,\beta,C)_{d,k}$-spread-out} if the following holds. Let $\mathfrak{s} \in \N\cup\{0\}$, and let $\Theta \subset \spt \mu$ be a Borel set such that
    \begin{displaymath}
        \mathfrak{s}_{d,k}(\theta \mid \Theta,\beta,\delta \to 1) \geq \mathfrak{s} \text{ for all } \theta \in \Theta.
    \end{displaymath}
    Then $\mu(\Theta) \leq C \cdot 2^{-\beta \mathfrak{s}}$.
    We say that a nonempty bounded set $\Theta \subset \mathbf{G}_{d,k}$ is \emph{$(\delta,\beta,C)_{d,k}$-spread-out} if $\mathcal D_\delta(\Theta)$ is $(\delta,\beta,C)_{d,k}$-spread-out in the sense of Definition \ref{def:spread-out-cube-family}.

    We omit the subscripts $d,k$ when they are clear from context.
\end{definition}

For measures on the full Grassmannian, we use the following globalization of Definition \ref{def:perfectSet}.
\begin{definition}[Spread-out measures on $\G(d,k)$]\label{def:spreadOutContinuous}
Fix a finite Borel partition $\G(d,k)=\bigsqcup_{j=1}^{L}\Omega_j$ subordinate to compact big-cell charts: for each $j$ there are $U_j\in O(d)$ and a map $\phi_j\colon\Omega_j\to[-1,1]^{k\times(d-k)}$ such that $U_jV=V_{\phi_j(V)}$ for $V\in\Omega_j$. A finite nonzero Borel measure $\mu$ on $\G(d,k)$ is called \emph{$\beta$-spread out} if there exists $C>0$ such that $\phi_j\,(\mu|_{\Omega_j})$ is $(\delta,\beta,C)_{d,k}$-spread out for all $\delta\in2^{-\N}$ and every $j$ with $\mu(\Omega_j)>0$. Finally, $\mu$ is called spread out if it is $\beta$-spread out for some $\beta\in(0,1]$.
\end{definition}

Recall that spread-out families of dyadic cubes were defined in Definition \ref{def:spread-out-cube-family}. In the singular-scale notation, the hypothesis is $\mathfrak{s}_{d,k}(\mathbf J\mid\mathcal J',\beta,\Delta\to1)\ge\mathfrak s$ for every $\mathbf J\in\mathcal J'$.

It is shown in \cite[\S 6.1]{SpreadOutFurstSticky} that uniformly perfect measures on the line are spread out; we extend this to higher dimensions in Section \ref{s:non-planar-measures-are-spread-out}.

\subsection{Regularity}

For $\delta>0$, $\alpha\ge0$, and $C>0$, a finite nonzero Borel measure $\mu$ on $\R^d$ is \emph{$(\delta,\alpha,C)_d$-Frostman} if
    \[
        \mu(B(x,r))\le Cr^\alpha
        \qquad(x\in\R^d,\ r\ge\delta).
    \]
    We call $\mu$ \emph{$(\alpha,C)_d$-Frostman} if this holds for every $r>0$. Note that we do not assume that $\mu$ is a probability measure and we do not normalize by the total mass. The subscript $d$ is omitted when clear from context.

    \begin{definition} \label{def:Ahlfors-regular}
        We say that a non-trivial finite measure $\mu$ on $\R^d$ is \emph{$(\delta,\alpha,C)_d$-regular} if it is $(\delta,\alpha,C)_d$-Frostman and
        \[
            |\spt\mu\cap B(x,R)|_r\le C(R/r)^\alpha
            \qquad(x\in\R^d,\ \delta\le r\le R).
        \]
        A non-empty bounded set $P\subset\R^d$ is \emph{$(\delta,\alpha,C)_d$-regular} if $m_{P^{(\delta)}}$ is $(\delta,\alpha,C)_d$-regular. The subscript $d$ is omitted when clear from context.
    \end{definition}

    \begin{remark}
        In \cite[Definition~6.1]{OS23}, regularity consists of a Frostman condition and a global covering bound at scale $\delta^{1/2}$ only. In Definition \ref{def:Ahlfors-regular}, we require the covering bound for all $\delta\le r\le R$.
    \end{remark}

\subsection{Basic properties}
\label{s:spread-out-properties}

Under the natural identifications $\mathbf G_{k+1,k}\cong\R^k\cong\mathbf G_{k+1,1}$, the model hyperplanes are precisely the affine hyperplanes in $\R^k$, so the spread-out notions coincide. We recall the basic properties of spread-out measures and sets from \cite{SpreadOutFurstSticky}.

\begin{lemma}\label{lem:spread-out-subset}
If $\mu$ is $(\delta,\beta,C)_{d,k}$-spread-out and $A$ is a Borel set with $\mu(A)>0$, then $\mu|_A$ is $(\delta,\beta,C)_{d,k}$-spread-out, and $\mu(A)^{-1}\mu|_A$ is $(\delta,\beta,C/\mu(A))_{d,k}$-spread-out.

If $\Theta$ is a bounded $(\delta,\beta,C)_{d,k}$-spread-out set and $\varnothing\ne\Theta'\subset\Theta$, then $\Theta'$ is $(\delta,\beta,C|\Theta|_\delta/|\Theta'|_\delta)_{d,k}$-spread-out. The same holds for subfamilies of dyadic $\delta$-cubes, with cardinalities in place of covering numbers.
\end{lemma}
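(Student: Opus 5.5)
The lemma follows directly from the definitions, because the spread-out hypothesis is monotone under passing to subsets. I would treat the measure statement and the set statement separately.

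\emph{Measures.} Put $\mu_A=\mu|_A$. Then $\spt\mu_A\subset\spt\mu$, since $\mu_A\le\mu$. Take $\mathfrak s\in\N\cup\{0\}$ and a Borel set $\Theta\subset\spt\mu_A$ with $\mathfrak s_{d,k}(\theta\mid\Theta,\beta,\delta\to1)\ge\mathfrak s$ for every $\theta\in\Theta$. The singular-scale count depends only on $\Theta$ and $\theta$, not on the measure. So $\Theta$ is also an admissible test set for $\mu$, and the spread-out property of $\mu$ gives
\[
\mu_A(\Theta)=\mu(\Theta\cap A)\le\mu(\Theta)\le C2^{-\beta\mathfrak s}.
\]
Hence $\mu_A$ is $(\delta,\beta,C)_{d,k}$-spread-out. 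The normalized measure $\mu(A)^{-1}\mu_A$ has the same support and therefore the same admissible test sets, and each bound is multiplied by $\mu(A)^{-1}$. This gives the constant $C/\mu(A)$. The only thing to check is that restricting to a smaller test set cannot increase singular-scale counts, but here we never restrict: we use the same $\Theta$ throughout, so this issue does not arise.

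\emph{Sets and cube families.} Let $\mathcal J=\mathcal D_\delta(\Theta)$ and $\mathcal J_0=\mathcal D_\delta(\Theta')\subset\mathcal J$, so that $|\mathcal J|=|\Theta|_\delta$ and $|\mathcal J_0|=|\Theta'|_\delta$. Let $\mathcal J'\subset\mathcal J_0$ be a subfamily such that every $\theta\in\bigcup\mathcal J'$ has at least $\mathfrak s$ $\beta$-singular scales for $(\mathcal J',\theta)$. This condition refers only to $\mathcal J'$ itself, and $\mathcal J'\subset\mathcal J$, so the spread-out property of $\mathcal J$ gives
\[
|\mathcal J'|\le C2^{-\beta\mathfrak s}|\mathcal J|=\bigl(C|\mathcal J|/|\mathcal J_0|\bigr)2^{-\beta\mathfrak s}|\mathcal J_0|.
\]
This is exactly the claimed spread-out bound for $\mathcal J_0$, and the statement for subfamilies of dyadic $\delta$-cubes is the same computation with cardinalities. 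There is no genuine obstacle in this proof. The only point to verify is that in Definition~\ref{def:spread-out-cube-family} the singular scales are computed relative to the test subfamily $\mathcal J'$ rather than the ambient family. That is how the definition is stated, and it is what makes the condition inherit to subsets.
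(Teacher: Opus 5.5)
Your proof is correct. The paper itself only cites \cite[Lemma~2.9]{SpreadOutFurstSticky}, and your argument is the direct verification from the definitions that this citation stands for: every test set for $\mu|_A$ (resp.\ every test subfamily of $\mathcal D_\delta(\Theta')$) is already a test set for $\mu$ (resp.\ for $\mathcal D_\delta(\Theta)$), since singular scales are computed relative to the test set alone. One harmless slip appears in an aside you never use: passing to a subset can only \emph{increase} singular-scale counts (a subset of a strip stays in the strip), so the property you had in mind is that restriction cannot \emph{decrease} them.
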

\begin{proof}
See \cite[Lemma~2.9]{SpreadOutFurstSticky}.
\end{proof}

\begin{lemma}\label{lem:spread-out-scaling}
For $\lambda\ne0$ and $a\in\mathbf G_{d,k}$, let $T_{\lambda,a}(\theta)=\lambda^{-1}(\theta-a)$. If $\mu$ is $(\delta,\beta,C)_{d,k}$-spread-out, then:
\begin{enumerate}[label=\textup{(\roman*)}]
    \item if $|\lambda|\in2^{-\N}\cap[\delta,1]$, then $T_{\lambda,a}\mu$ is $(\delta/|\lambda|,\beta,C)_{d,k}$-spread-out;
    \item if $\delta\le|\lambda|\le\delta^{-1}$, then $T_{\lambda,a}\mu$ is $(\delta,\beta/2,C')_{d,k}$-spread-out, where $C'\lesssim_\beta C\max\{|\lambda|,|\lambda|^{-1}\}^{\beta}$.
\end{enumerate}
\end{lemma}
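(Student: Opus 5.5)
The plan is to check directly how singular scales transform under the affine map $T=T_{\lambda,a}$, and then transfer the defining inequality. The key geometric fact is that $T$ is a similarity of $\mathbf G_{d,k}\cong\R^{k(d-k)}$ that maps model hyperplanes to model hyperplanes. Indeed,
\[
T(H_{[x,y,u]})=H_{[x,y,\lambda^{-1}(u-x\cdot a\cdot y)]}.
\]
Moreover, $T$ maps $H(\beta r)$ onto $T(H)(\beta r/|\lambda|)$, and $B(\theta,r)$ onto $B(T\theta,r/|\lambda|)$. Consequently, for any $\Theta$ and $\theta\in\Theta$, the scale $r$ is $\beta$-singular for $(\Theta,\theta)$ if and only if $r/|\lambda|$ is $\beta$-singular for $(T\Theta,T\theta)$. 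The same holds for cube families, up to the harmless identification of a family with its union.

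For (i), let $\Theta'\subset\spt T\mu=T(\spt\mu)$ be such that every point of $\Theta'$ has at least $\mathfrak s$ $\beta$-singular scales in $[\delta/|\lambda|,1]$. Set $\Theta=T^{-1}\Theta'\subset\spt\mu$. Since $|\lambda|$ is dyadic, the scale correspondence $r\mapsto r/|\lambda|$ is a bijection from $2^{-\N}\cap[\delta,|\lambda|]$ onto $2^{-\N}\cap[\delta/|\lambda|,1]$. Hence each $\theta\in\Theta$ has at least $\mathfrak s$ singular scales in $[\delta,1]$. The spread-out hypothesis on $\mu$ then gives $T\mu(\Theta')=\mu(\Theta)\le C2^{-\beta\mathfrak s}$, which is exactly (i).

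For (ii), the scaling factor is no longer dyadic and may exceed $1$, so two issues arise.

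First, when $|\lambda|$ is not dyadic, $r/|\lambda|$ is not dyadic. I handle this by comparing neighbouring dyadic scales. Suppose $\rho$ is $\beta$-singular for $(T\Theta,T\theta)$, and let $r$ be the largest dyadic scale with $r\le|\lambda|\rho$, so that $r\ge|\lambda|\rho/2$. Then
\[
\Theta\cap B(\theta,r)\subset H\bigl(\beta|\lambda|\rho\bigr)\subset H(2\beta r),
\]
so $r$ is $2\beta$-singular rather than $\beta$-singular. This is why the exponent drops. Going from $\beta$ to $\beta/2$ on the target side means that a $\beta/2$-singular scale for $T\mu$ yields a $\beta$-singular scale for $\mu$, and distinct dyadic $\rho$ map to distinct $r$.

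Second, when $|\lambda|>1$, scales $\rho\in[\delta,1]$ with $|\lambda|\rho>1$ have no counterpart in $[\delta,1]$. There are at most $\log|\lambda|+O(1)$ such scales. Similarly, when $|\lambda|<1$, the range $[\delta,\delta/|\lambda|]$ on the source side is lost, giving at most $\log(1/|\lambda|)+O(1)$ missing scales.

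Combining these, a set $\Theta'$ in which every point has $\mathfrak s$ $(\beta/2)$-singular scales in $[\delta,1]$ pulls back to a set in which every point has at least $\mathfrak s-\log\max\{|\lambda|,|\lambda|^{-1}\}-O(1)$ $\beta$-singular scales in $[\delta,1]$. The spread-out hypothesis then gives
\[
T\mu(\Theta')\le C2^{-\beta\mathfrak s}\,2^{\beta\log\max\{|\lambda|,|\lambda|^{-1}\}+O(\beta)},
\]
which is at most $C'2^{-(\beta/2)\mathfrak s}$ with $C'\lesssim_\beta C\max\{|\lambda|,|\lambda|^{-1}\}^\beta$.

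The main obstacle is the bookkeeping in (ii): getting the strip widths right under the non-dyadic rounding, and making sure the lost scales cost only $\max\{|\lambda|,|\lambda|^{-1}\}^{\beta}$. The room given by halving $\beta$ comfortably absorbs the rounding and the constant factors. If the width mismatch is worse than a factor $2$ because of the chosen rounding direction, I would round $r$ to the nearest dyadic scale from the other side instead.
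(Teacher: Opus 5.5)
Your proof is correct and is essentially the direct verification behind the paper's argument. The paper cites the companion paper's Lemma~2.12 for $\lambda>0$ and handles $\lambda<0$ by noting that $\theta\mapsto-\theta$ preserves singular scale counts. Your observation that $T_{\lambda,a}$ is a similarity of ratio $|\lambda|^{-1}$ sending model hyperplanes to model hyperplanes covers both signs at once. Rounding $|\lambda|\rho$ down to a dyadic scale, together with the at most $\log\max\{|\lambda|,|\lambda|^{-1}\}+O(1)$ lost scales, gives exactly the parameters $\beta/2$ and $C'$ in (ii).

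Your closing fallback of rounding the other way is unnecessary and would fail. Rounding up enlarges the ball $B(\theta,r)$ and destroys the containment in the strip, so the downward rounding you chose is the right one.
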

\begin{proof}
For $\lambda>0$, see \cite[Lemma~2.12]{SpreadOutFurstSticky}. Reflection $\theta\mapsto-\theta$ preserves singular scale counts.
\end{proof}

\begin{lemma}\label{lem:spread-out-sub-cube}
Let $\Theta\subset\mathbf G_{d,k}$ be a bounded $(\delta,\beta,C)_{d,k}$-spread-out set, and let $\Delta\in2^{-\N}\cap[\delta,1]$. There is a family $\mathcal Q\subset\mathcal D_\Delta(\Theta)$ with $\sum_{Q\in\mathcal Q}|\Theta\cap Q|_\delta\ge|\Theta|_\delta/2$ such that $\Delta^{-1}((\Theta\cap Q)-z(Q))$ is $(\delta/\Delta,\beta,C')_{d,k}$-spread-out for every $Q\in\mathcal Q$, where $C'\lesssim_{d,k}C(1+\log(\Delta/\delta))$.
\end{lemma}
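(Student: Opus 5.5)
The plan is a pigeonholing argument over singular-scale counts, using the spread-out hypothesis to show that almost all mass lies in cubes where few new singular scales are created at scales $\ge\Delta$. We regard $\mathcal J=\mathcal D_\delta(\Theta)$ as a spread-out family. For $Q\in\mathcal D_\Delta(\Theta)$, write $\Theta_Q=\Delta^{-1}((\Theta\cap Q)-z(Q))$. Rescaling by $\Delta$ maps $B(\theta,r)$ onto a ball of radius $r/\Delta$ and maps model hyperplanes to model hyperplanes; this holds because $T_{\lambda,a}$ sends $H_{[x,y,u]}$ to $H_{[x,y,\lambda^{-1}(u-x\cdot a\cdot y)]}$. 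Hence, for $\theta\in\Theta\cap Q$ and $r\le\Delta$, the scale $r$ is $\beta$-singular for $(\Theta\cap Q,\theta)$ exactly when $r/\Delta$ is $\beta$-singular for $(\Theta_Q,T\theta)$. We also use monotonicity: if $\mathcal J''\subset\mathcal J'$, then every scale singular for $(\mathcal J',\theta)$ is singular for $(\mathcal J'',\theta)$.

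The key step is as follows. Suppose that, for some $Q$, the rescaled family $\Theta_Q$ fails to be $(\delta/\Delta,\beta,C')$-spread-out. Then there are $\mathfrak s$ and a subfamily $\mathcal J'_Q\subset\mathcal D_\delta(\Theta\cap Q)$ with
\[
|\mathcal J'_Q|>C'2^{-\beta\mathfrak s}|\mathcal D_\delta(\Theta\cap Q)|,
\]
such that every $\theta\in\mathcal J'_Q$ has at least $\mathfrak s$ singular scales in $[\delta,\Delta]$ relative to $\mathcal J'_Q$. Call such $Q$ bad, with witness $(\mathfrak s_Q,\mathcal J'_Q)$.

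We pigeonhole $\mathfrak s_Q$ into dyadic ranges, or simply into its at most $\log(\Delta/\delta)+1$ possible values. This yields a value $\mathfrak s$ such that the bad cubes with $\mathfrak s_Q=\mathfrak s$ carry at least a $1/(\log(\Delta/\delta)+1)$ share of the bad mass. Suppose, for contradiction, that the bad cubes carry more than half of $|\Theta|_\delta$. Set $\mathcal J'=\bigcup_Q\mathcal J'_Q$ over these cubes. For $\theta\in\mathcal J'_Q$ and $r<\Delta$, the ball $B(\theta,r)$ may leak into neighbouring $\Delta$-cubes, so intersecting with $\mathcal J'$ rather than $\mathcal J'_Q$ could destroy singularity. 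To handle this, we restrict to scales $r\le\Delta/2$ and to points at distance $\ge\Delta/2^{j}$ from $\partial Q$, or, more simply, shift the dyadic grid.

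The cleanest fix is to count only scales $r\le \dist(\theta,\partial Q)$. Points within distance $\Delta 2^{-m}$ of the boundary make up a small fraction of $\Theta$ only on average over grid translates. So we may instead average over $O_{d,k}(1)$ translated grids (or over $\sim\log(\Delta/\delta)$ dyadic layers) and absorb the loss into $C'$. This boundary leakage is the step I expect to be the main obstacle; it is why $C'$ carries the $\lesssim_{d,k}$ and logarithmic factors.

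After the fix, each $\theta\in\mathcal J'$ has at least $\mathfrak s-O(1)$ singular scales relative to $\mathcal J'$. Spread-outness of $\Theta$ then gives
\[
|\mathcal J'|\le C2^{-\beta(\mathfrak s-O(1))}|\Theta|_\delta .
\]
On the other hand,
\[
|\mathcal J'|> C'2^{-\beta\mathfrak s}\cdot\frac{|\Theta|_\delta}{2(\log(\Delta/\delta)+1)} .
\]
Choosing $C'=A_{d,k}C(1+\log(\Delta/\delta))$ with $A_{d,k}$ large gives a contradiction. Therefore the good cubes carry at least half of $|\Theta|_\delta$. Taking $\mathcal Q$ to be the good cubes finishes the proof, with the same rescaling bookkeeping as in Lemma \ref{lem:spread-out-scaling}(i).
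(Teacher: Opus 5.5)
Your overall architecture is sound. You take a witness $(\mathfrak s_Q,\mathcal J'_Q)$ for each bad cube, pigeonhole $\mathfrak s_Q$ (the source of the factor $1+\log(\Delta/\delta)$), glue the witnesses, and contradict the spread-out hypothesis on $\Theta$.

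The gap is the step you flag yourself, the boundary leakage, and neither remedy you propose works.
\begin{itemize}
\item Shifting or averaging over translated grids is not available, because the lemma concerns the fixed grid $\mathcal D_\Delta$. Good cubes of a translated grid say nothing about the sets $\Delta^{-1}((\Theta\cap Q)-z(Q))$ with $Q\in\mathcal D_\Delta$.
\item Counting only scales $r\le\dist(\theta,\partial Q)$ can destroy every singular scale. The faces of $Q$ lie on coordinate hyperplanes $\{\theta_{ij}=c\}=H_{[e_i,e_j,c]}$, which are model hyperplanes. So a typical bad cube is one where the grid cuts off a thin sliver of an otherwise spread-out $\Theta$, lying within $O(\delta)$ of a face of $Q$. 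For such a witness every point has $\dist(\theta,\partial Q)\lesssim\delta$, and no bound of the form $\mathfrak s-O(1)$ survives.
\end{itemize}

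The missing idea is a separation argument.
\begin{itemize}
\item Write $Q=\Delta(j+[0,1)^{k(d-k)})$ and split $\mathcal D_\Delta(\Theta)$ into $2^{k(d-k)}$ classes according to $j\bmod 2$. Two distinct cubes in the same class have index vectors differing by at least $2$ in some coordinate, so any of their points are at distance $>\Delta$.
\item Let $\mathcal J'$ be the union of the witnesses over bad cubes of one class. Then $B(\theta,r)\cap\bigcup\mathcal J'=B(\theta,r)\cap\bigcup\mathcal J'_Q$ for $\theta\in\bigcup\mathcal J'_Q$ and $r\le\Delta$. So every scale in $[\delta,\Delta]$ that is $\beta$-singular for $(\mathcal J'_Q,\theta)$ remains $\beta$-singular for $(\mathcal J',\theta)$, with no loss in $\mathfrak s$ at all.
\item Pigeonhole jointly over the class and over $\mathfrak s$. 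If the bad cubes carried more than half of $|\Theta|_\delta$, some choice would give $|\mathcal J'|>C'2^{-\beta\mathfrak s}|\Theta|_\delta/(2^{k(d-k)+1}(\log(\Delta/\delta)+2))$.
\item With $C'=2^{k(d-k)+1}(\log(\Delta/\delta)+2)C$, this contradicts $|\mathcal J'|\le C2^{-\beta\mathfrak s}|\Theta|_\delta$.
\end{itemize}
The $d,k$-dependence of $C'$ comes from the number of classes, not from boundary layers.
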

\begin{proof}
See \cite[Lemma~2.8]{SpreadOutFurstSticky}.
\end{proof}

\begin{lemma}\label{lem:spread-out-coarsening}
Let $\mathbf\Theta\subset\mathcal D_\delta(\mathbf G_{d,k})$ be $(\delta,\beta,C)_{d,k}$-spread-out, and let $r\in2^{-\N}\cap[\delta,1]$. If the nonzero cardinalities $|\{\mathbf I\in\mathbf\Theta:\mathbf I\subset\mathbf J\}|$, $\mathbf J\in\mathcal D_r(\mathbf\Theta)$, differ by a factor of at most $K$, then $\mathcal D_r(\mathbf\Theta)$ is $(r,\beta,2KC)_{d,k}$-spread-out.
\end{lemma}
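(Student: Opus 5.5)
The plan is to verify the definition of $(r,\beta,2KC)_{d,k}$-spread-out directly for $\mathcal D_r(\mathbf\Theta)$. Write $N(\mathbf J)=|\{\mathbf I\in\mathbf\Theta:\mathbf I\subset\mathbf J\}|$ for $\mathbf J\in\mathcal D_r(\mathbf\Theta)$, and let $m=\min_{\mathbf J}N(\mathbf J)$, so that $N(\mathbf J)\le Km$ for all $\mathbf J$. In particular
\[
|\mathbf\Theta|=\sum_{\mathbf J}N(\mathbf J)\le Km\,|\mathcal D_r(\mathbf\Theta)|.
\]
Fix $\mathcal J'\subset\mathcal D_r(\mathbf\Theta)$ and $\mathfrak s\ge 0$ such that every $\theta\in\bigcup\mathcal J'$ has at least $\mathfrak s$ scales in $[r,1]$ that are $\beta$-singular for $(\mathcal J',\theta)$. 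The natural candidate is the lifted family
\[
\mathcal I'=\{\mathbf I\in\mathbf\Theta:\mathbf I\subset\mathbf J \text{ for some }\mathbf J\in\mathcal J'\}.
\]

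The key geometric step is monotonicity of singular scales. Since $\bigcup\mathcal I'\subset\bigcup\mathcal J'$, if $r'\in[r,1]$ is $\beta$-singular for $(\mathcal J',\theta)$, witnessed by some $H$, then
\[
\mathcal I'\cap B(\theta,r')\subset\mathcal J'\cap B(\theta,r')\subset H(\beta r').
\]
Hence $r'$ is also $\beta$-singular for $(\mathcal I',\theta)$. Every $\theta\in\bigcup\mathcal I'$ lies in $\bigcup\mathcal J'$, so it has at least $\mathfrak s$ singular scales in $[r,1]\subset[\delta,1]$ for $\mathcal I'$. One point to check is the convention that $\theta$ ranges over points of the cubes rather than only their corners. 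With the definition as stated (points $\theta\in\bigcup\mathcal J'$), this causes no issue.

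Applying the spread-out hypothesis of $\mathbf\Theta$ at scale $\delta$ gives
\[
|\mathcal I'|\le C2^{-\beta\mathfrak s}|\mathbf\Theta|.
\]
On the other hand, $|\mathcal I'|=\sum_{\mathbf J\in\mathcal J'}N(\mathbf J)\ge m|\mathcal J'|$. Combining the two bounds,
\[
|\mathcal J'|\le m^{-1}C2^{-\beta\mathfrak s}|\mathbf\Theta|\le KC2^{-\beta\mathfrak s}|\mathcal D_r(\mathbf\Theta)|,
\]
which is even better than the claimed constant $2KC$. The factor $2$ leaves room for boundary effects if the convention requires cubes to meet rather than be contained in $\mathbf J$; with dyadic nesting, containment holds exactly.

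The only delicate point, and the step I would check most carefully, is the monotonicity step. It must hold with the exact same $\beta$ and $H$, and with the ball $B(\theta,r')$ intersecting the union of the cubes. It does, since passing to a subset only shrinks the intersection. No rescaling of $\beta$ is needed, unlike in Lemma~\ref{lem:spread-out-scaling}.
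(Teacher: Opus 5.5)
Your proof is correct, and it actually gives the constant $KC$, so the factor $2$ in the statement is not needed. The two steps that carry it are both sound:
\begin{itemize}
\item Since $\bigcup\mathcal I'\subset\bigcup\mathcal J'$, every scale in $[r,1]$ that is $\beta$-singular for $(\mathcal J',\theta)$ is singular for $(\mathcal I',\theta)$, with the same $H$. This holds at every point $\theta\in\bigcup\mathcal I'$, which is what the definition quantifies over.
\item The comparability $m\le N(\mathbf J)\le Km$ turns the fine bound $|\mathcal I'|\le C2^{-\beta\mathfrak s}|\mathbf\Theta|$ into $|\mathcal J'|\le KC2^{-\beta\mathfrak s}|\mathcal D_r(\mathbf\Theta)|$.
\end{itemize}
Dyadic nesting makes each $\mathbf I\in\mathbf\Theta$ lie in exactly one $\mathbf J\in\mathcal D_r(\mathbf\Theta)$, so the lifting is exact.

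The paper does not prove the lemma here; it cites \cite[Corollary~5.4]{SpreadOutFurstSticky}. That corollary sits next to the companion paper's lemma on $\alpha$-almost covers, \cite[Lemma~5.3]{SpreadOutFurstSticky}, which is also used in the proof of Lemma~\ref{lem:select-shading-from-average}. That lemma says a family covering several spread-out families with comparable multiplicity is spread-out with constant $\alpha^{-1}C$. Judging from how that lemma is used, the companion paper most likely runs the same mechanism as yours, only in greater generality: subfamilies inherit singular scales, then one double counts using comparable multiplicities. The general formulation handles averaging many shadings at once. Your direct lifting argument is the single-family special case: it is shorter, self-contained, and slightly sharper in the constant.
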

\begin{proof}
See \cite[Corollary~5.4]{SpreadOutFurstSticky}.
\end{proof}

\begin{lemma}\label{lem:uniform-spread-out-discretization}
Let $R\ge1$, $\delta\in2^{-\N}$ with $\delta\ll_R1$, and let $\mu$ be a $(\delta,\beta,C)_{d,k}$-spread-out probability measure on $[-R,R]^{k\times(d-k)}$. For every Borel set $E$ with $\mu(E)>0$, there is a nonempty family $\mathcal J\subset\mathcal D_\delta(E)$ which is $(\delta,\beta,C')_{d,k}$-spread-out, where $C'\lessapprox_\delta C/\mu(E)$. If $\mu$ is also $(\delta,t,C)$-Frostman with $t>0$, then $|\mathcal J|\gtrapprox_\delta\mu(E)C^{-1}\delta^{-t}$. The implicit constants may depend on $R,\beta,t$.
\end{lemma}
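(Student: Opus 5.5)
The idea is to pass from the measure to a family of dyadic cubes on which $\mu$ is roughly uniform, so that the cube family inherits spread-outness from $\mu$ essentially by restriction.

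\textbf{Step 1: uniformize.} Cover $E$ by the cubes $\mathcal D_\delta(E)$. Since $\mu$ lives on $[-R,R]^{k\times(d-k)}$, there are $O_R(\delta^{-k(d-k)})$ such cubes. Pigeonhole the values $\mu(E\cap Q)$ into dyadic classes $\mu(E\cap Q)\in[2^{-j},2^{-j+1})$. Discard the classes with $2^{-j}\le \delta^{k(d-k)+1}\mu(E)$; these carry total mass at most $O_R(\delta)\mu(E)$. There are $O(\log(1/\delta))$ remaining classes, so one of them, call it $\mathcal J$, satisfies
\[
\mu\Bigl(E\cap\bigcup\mathcal J\Bigr)\gtrsim \mu(E)/\log(1/\delta),
\qquad \mu(E\cap Q)\sim 2^{-j}=:m \text{ for all } Q\in\mathcal J .
\]
Set $E'=E\cap\bigcup\mathcal J\cap\spt\mu$. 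Then $|\mathcal J|\,m\sim\mu(E')$.

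\textbf{Step 2: transfer spread-outness.} Let $\mathcal J'\subset\mathcal J$ be such that every $\theta\in\bigcup\mathcal J'$ has at least $\mathfrak s$ $\beta$-singular scales for $(\mathcal J',\theta)$. Put $\Theta=E'\cap\bigcup\mathcal J'\subset\spt\mu$. For $\theta\in\Theta$, the inclusion $\Theta\subset\bigcup\mathcal J'$ gives $\Theta\cap B(\theta,r)\subset(\bigcup\mathcal J')\cap B(\theta,r)$. Hence every scale that is singular for $(\mathcal J',\theta)$ is singular for $(\Theta,\theta)$, with the same hyperplane. This monotonicity (a subset of a slab lies in the slab) is exactly why the definitions match.

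There is one technicality: the cube-family definition counts $r\ge\delta$ inside balls around points of the cubes, while the measure definition uses $\Theta$. Both are taken over $r\in[\delta,1]$ and with the same $\beta$, so no loss occurs. Thus $\mathfrak s(\theta\mid\Theta,\beta,\delta\to1)\ge\mathfrak s$, and by Definition \ref{def:perfectSet}, $\mu(\Theta)\le C2^{-\beta\mathfrak s}$. On the other hand $\mu(\Theta)\gtrsim m|\mathcal J'|$ by uniformity. Therefore
\[
|\mathcal J'|\lesssim C2^{-\beta\mathfrak s}/m\sim \frac{C|\mathcal J|}{\mu(E')}2^{-\beta\mathfrak s}\lessapprox_\delta \frac{C}{\mu(E)}\,2^{-\beta\mathfrak s}|\mathcal J|,
\]
which is the claimed spread-out constant $C'\lessapprox_\delta C/\mu(E)$.

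\textbf{Step 3: Frostman count.} If $\mu$ is $(\delta,t,C)$-Frostman, then $m\lesssim\mu(Q)\lesssim C\delta^t$, since each cube lies in a ball of radius $\sim\delta$. Hence
\[
|\mathcal J|\sim\mu(E')/m\gtrapprox_\delta \mu(E)C^{-1}\delta^{-t}.
\]

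\textbf{Main obstacle.} The only delicate point is Step 2: making sure the uniformization really is the right way to compare cardinality of cubes with $\mu$-mass. The uniformization is essential here, because without it light cubes could make $|\mathcal J'|$ large while $\mu(\Theta)$ stays small. I would also double-check that restricting to $\spt\mu$ does not spoil the singular-scale inclusion; it does not, since passing to a subset only preserves or increases singular scales. Nonemptiness of $\mathcal J$ follows from $\mu(E)>0$ and $\delta\ll_R1$.
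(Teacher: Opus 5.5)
Your proof is correct. Pigeonholing the masses $\mu(E\cap Q)$ into a single dyadic class and noting that singular scales of $(\mathcal J',\theta)$ stay singular for the smaller set $E\cap\operatorname{spt}\mu\cap\bigcup\mathcal J'$ is the whole argument. Comparing $m|\mathcal J'|$ with the $\mu$-mass of that set then gives both the spread-out constant $C'\lessapprox_\delta C/\mu(E)$ and the Frostman count. The paper does not reprove this lemma; it cites \cite[Lemma~2.11]{SpreadOutFurstSticky}, and your argument is the natural route to that statement.
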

\begin{proof}
See \cite[Lemma~2.11]{SpreadOutFurstSticky}.
\end{proof}

\begin{lemma}\label{lem:line-decay-spread-out}
Let $\mu$ be a locally finite Borel measure on $\R$ with $\mu(B(0,1))>0$. Suppose that, for some $\e\in(0,1/8)$,
\[
    \mu(B(x,8\e R))\le\frac14\mu(B(x,R))
    \qquad\text{whenever }B(x,R)\subset B(0,5).
\]
Then $\mu(B(0,1))^{-1}\mu|_{B(0,1)}$ is $(\delta,\beta,C)_1$-spread-out for every $\delta\in2^{-\N}$, with $\beta>0$ and $C\ge1$ depending only on $\e$ and $\mu(B(0,5))/\mu(B(0,1))$.
\end{lemma}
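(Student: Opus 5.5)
We will verify directly the definition of $(\delta,\beta,C)_1$-spread-out for $\nu=\mu(B(0,1))^{-1}\mu|_{B(0,1)}$. In $\R$ the affine hyperplanes are points, so a scale $r$ is $\beta$-singular for $(\Theta,x)$ exactly when $\Theta\cap B(x,r)$ lies in an interval of length $2\beta r$. Take $\beta\le\e$. Then $\Theta\cap B(x,r)\subset B(y,\beta r)$ for some $y$ with $|y-x|\le\beta r$, which gives $\Theta\cap B(x,r)\subset B(x,2\beta r)\subset B(x,8\e r)$. We need to show: if $\Theta\subset\spt\nu$ and every $x\in\Theta$ has at least $\mathfrak s$ singular dyadic scales in $[\delta,1]$, then $\nu(\Theta)\le C2^{-\beta\mathfrak s}$.

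\textbf{Step 1: a one-scale decay lemma.} Fix $x\in\Theta$ and a singular scale $r$. The part of $\Theta$ inside $B(x,r)$ is contained in $B(x,8\e r)$, and $\mu(B(x,8\e r))\le\frac14\mu(B(x,r))$ by hypothesis. The hypothesis needs $B(x,r)\subset B(0,5)$. This holds because $x\in B(0,1)$ and $r\le1$ (shrink the constants if necessary).

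\textbf{Step 2: covering/tree argument.} Build a multiscale covering of $\Theta$ by intervals, in the style of a mass-distribution or Vitali argument.
\begin{itemize}
\item Start from $B(0,2)$, which has mass at most $\mu(B(0,5))$.
\item At each $x\in\Theta$, list its singular scales $r_1>r_2>\dots>r_{\mathfrak s}$. Thin them to a subsequence separated by factors at least $(8\e)^{-1}$. This costs a factor $\sim\log(1/\e)$ in the count, so at least $\mathfrak s/\lceil\log_2(1/(8\e))\rceil$ separated singular scales remain.
\item Define the covering generation by generation. At generation $j$, $\Theta$ is covered by a family $\mathcal B_j$ of balls $B(x,8\e r_j(x))$. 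Use the Vitali/Besicovitch covering lemma in $\R$, which gives bounded overlap with a constant of order $1$.
\end{itemize}
The key inequality is $\sum_{\mathcal B_{j+1}}\mu(B)\le c\,\tfrac14\sum_{\mathcal B_j}\mu(\cdot)$ up to overlap constants. This comes from the decay lemma: each ball of $\mathcal B_{j+1}$ around $x$ has a parent ball $B(x,r_{j+1}(x))$, and $\Theta\cap B(x,r_{j+1})\subset B(x,8\e r_{j+1})$. To make it work, compare $\mu$ of the small ball with the large ball at the same center, not with the previous generation's ball. Iterating then gives
\[
\mu(\Theta)\le\mu(B(0,5))\,(c/4)^{m},\qquad m\gtrsim_\e \mathfrak s .
\]
This yields the exponent $\beta\sim\log(4/c)/\log(1/\e)$, after which we normalize by $\mu(B(0,1))$.

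\textbf{Main obstacle.} The hard step is making the generations nest coherently, since different points have singular scales at different places. The overlap constant $c$ must also stay below $4$, which is where the factor $\frac14$ is used.

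To handle nesting, I would argue dyadically rather than by centered balls. Say a dyadic interval $I$ of length $r$ is singular if $\Theta\cap 3I$ lies within a subinterval of length $\sim\e r$. By the hypothesis, centered at a point of that subinterval, the mass of $\Theta$ near $I$ then drops by a factor $\frac14$ relative to $\mu(5I)$. Then define $f(I)=\mu(\Theta\cap I)$. Descending the dyadic tree, each singular ancestor contributes a factor $\frac14$ relative to a neighbor-enlarged ancestor. Bounded overlap of enlargements (each point lies in $O(1)$ sets $5I$ per scale) then converts this into a global factor per step.

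If the constant $O(1)$ from overlaps exceeds $4$, group $\log(1/\e)$ consecutive singular scales: nested applications of the hypothesis across a chain of well-separated scales give the product $4^{-n}$, while overlaps only enter once per chain. If the dyadic bookkeeping becomes messy, I would fall back on a Frostman-type argument, in which the separated-scale constants are absorbed into $C$.
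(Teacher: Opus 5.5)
Your framework is the right one. It is the covering argument of \cite[Proposition~6.2]{SpreadOutFurstSticky} that the paper cites, and the proof of Proposition~\ref{prop:almost-nonplanar-centered-spread-out} is its higher-dimensional form. You use singularity to get $\Theta\cap B(x,r)\subset B(x,2\beta r)$, thin to separated scales, compare a small ball with the large ball at the same center, and note that overlap $2$ in $\R$ is beaten by the factor $\frac14$.

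However, the step that actually produces the decay is the one you flag as the main obstacle and leave unresolved, and as written it does not go through. Besicovitch applied to the small balls $B(x,8\e r_j(x))$ controls only the overlap of those balls. Nothing in your construction controls the overlap of the comparison balls $B(x,r_j(x))$. These are $(8\e)^{-1}$-dilates, so a priori they may overlap on the order of $\e^{-1}$ times, which destroys the factor $\frac14$.

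Nesting is also unjustified. A center $x$ of $\mathcal B_{j+1}$ lies in a generation-$j$ ball centered at some other point $x'$, and you give no relation between $r_{j+1}(x)$, $r_j(x')$ and $|x-x'|$. So $B(x,r_{j+1}(x))\subset\bigcup\mathcal B_j$ does not follow.

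The suggested repairs do not supply this. Grouping scales into chains cannot help. $\beta$-singularity of $r_1>\dots>r_n$ at one point does not put $\Theta\cap B(x,r_1)$ inside $B(x,2\beta r_n)$, because $\Theta$ may meet the annuli $B(x,2\beta r_i)\setminus B(x,r_{i+1})$. So the iterated bound $\mu(B(x,(8\e)^nR))\le4^{-n}\mu(B(x,R))$, which holds at every center whether singular or not, says nothing about the mass near $\Theta$. The dyadic and ``Frostman-type'' alternatives are not specified. Upper bounds on $\mu(B(x,r))$ alone cannot suffice, since Frostman measures need not be spread-out.

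The missing ingredient is a selection rule that lets singularity at the chosen center do the work. At generation $j$, take $\omega_1\in\Theta$ with $r_j(\omega_1)$ maximal, then $\omega_2\in\Theta\setminus B(\omega_1,8\e r_j(\omega_1))$ with $r_j$ maximal, and so on. Since $\Theta\cap B(\omega_i,r_j(\omega_i))\subset B(\omega_i,2\beta r_j(\omega_i))\subset B(\omega_i,8\e r_j(\omega_i))$, each later center lies outside $B(\omega_i,r_j(\omega_i))$ and has scale at most $r_j(\omega_i)$. Hence the large balls have centers outside each other's balls and overlap at most twice in $\R$. The set $V_j=\bigcup_iB(\omega_i,8\e r_j(\omega_i))\supset\Theta$ then satisfies $\mu(V_j)\le\frac14\sum_i\mu(B(\omega_i,r_j(\omega_i)))\le\frac12\mu\bigl(\bigcup_iB(\omega_i,r_j(\omega_i))\bigr)$.

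For nesting, a point $\theta\in\Theta$ first covered by the ball of $\omega$ has $r_j(\theta)\le r_j(\omega)$, and by singularity at $\omega$ it has $|\theta-\omega|<2\beta r_j(\omega)$. So if the thinned scales satisfy $r_{j+1}\le4\e r_j$ and $\beta\le2\e$, then $B(\theta,r_{j+1}(\theta))\subset B(\omega,8\e r_j(\omega))\subset V_j$.

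Starting from $V_0=B(0,5)$, this yields $\mu(\Theta)\le2^{-\mathfrak s'}\mu(B(0,5))$ with $\mathfrak s'\ge\mathfrak s/\lceil\log_2(1/(4\e))\rceil$. That gives the lemma with $\beta=\min\{2\e,1/\lceil\log_2(1/(4\e))\rceil\}$ and $C=\mu(B(0,5))/\mu(B(0,1))$.

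The paper's version, in the proof of Proposition~\ref{prop:almost-nonplanar-centered-spread-out}, is set up slightly differently. It covers by the large balls $B(\omega,r_j(\omega)/4)$ at greedily chosen points of maximal scale. It takes $U_j=\bigcup_{\theta\in\Theta}B(\theta,\eta r_j(\theta)/8)$ over all of $\Theta$, so that $B(\omega,r_j(\omega)/4)\subset U_{j-1}$ follows from scale separation at the single point $\omega$.
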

\begin{proof}
See the covering argument in \cite[Proposition~6.2]{SpreadOutFurstSticky}, which starts from \cite[Equation (6.1)]{SpreadOutFurstSticky}.
\end{proof}

\subsection{Selecting a spread-out shading from an average}

\begin{lemma}\label{lem:select-shading-from-average}
    Let $\delta\in2^{-\N}$, $n\in\N$, $N\ge1$, $s\in(0,1]$, and $C>0$. Let $\mathbf{\Xi}_{1},\ldots,\mathbf{\Xi}_{n}\subset\mathcal{D}_{\delta}(\R^{k\times(d-k)})$ be $(\delta,s,C)_{d,k}$-spread-out families supported in a common bounded set independent of $\delta$, and suppose that $N\le |\mathbf{\Xi}_{i}|<2N$ for $i=1,\ldots,n$. Then there is a family $\mathbf{\Xi}\subset\mathcal{D}_{\delta}(\R^{k\times(d-k)})$ such that:
    \begin{enumerate}[label=\textup{(\roman*)}]
        \item $\mathbf{\Xi}\subset\bigcup_{i=1}^{n}\mathbf{\Xi}_{i}$;
        \item $\mathbf{\Xi}$ is $(\delta,s,C')_{d,k}$-spread-out, with $C'\lessapprox_{\delta} C$;
        \item there is $H\in 2^{\N_0}$ such that $H\le|\{i:\theta\in\mathbf\Xi_i\}|<8H$ for every $\theta\in\mathbf\Xi$, and $H|\mathbf\Xi|\approx_\delta nN$.
    \end{enumerate}
\end{lemma}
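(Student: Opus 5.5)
We form the multiplicity-weighted measure and pass from it to a spread-out family via Lemma \ref{lem:uniform-spread-out-discretization}, after a dyadic pigeonholing in the multiplicity. For each cube $\theta\in\mathcal D_\delta$, let $m(\theta)=|\{i:\theta\in\mathbf\Xi_i\}|$, and let $\mu$ be the measure on $\R^{k\times(d-k)}$ with density
\[
\frac{1}{nN}\sum_{i=1}^n \delta^{-k(d-k)}\mathbf 1_{\cup\mathbf\Xi_i}.
\]
Each normalized piece $\mu_i=(N\delta^{k(d-k)})^{-1}\mathbf 1_{\cup\mathbf\Xi_i}$ is spread out with constant $\lesssim C$. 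To see this, take a Borel set $\Theta\subset\spt\mu_i$ with at least $\mathfrak s$ singular scales at every point. Then $\Theta\subset\cup\mathbf\Xi_i$, and the family $\mathbf\Xi_i'=\{\mathbf J\in\mathbf\Xi_i:\mathbf J\cap\Theta\neq\varnothing\}$ is a candidate in Definition \ref{def:spread-out-cube-family}. One small point needs care: singular scales for $\Theta$ at a point should transfer to scales for $\cup\mathbf\Xi_i'$ at the points of a cube. Enlarging $\Theta$ to the union of the cubes it meets costs at most $O(1)$ change in the strip width at scales $r\ge\delta$. I will handle this by accepting a constant change of $\beta$, or a loss of $O(1)$ scales, which is harmless. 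This gives $\mu_i(\Theta)\lesssim C2^{-s\mathfrak s}$. Since the spread-out condition is linear in the measure (the bound $\mu(\Theta)\le C2^{-\beta\mathfrak s}$ is preserved under averaging, and the support of the average contains each support), the average $\mu=\frac1n\sum_i\mu_i\cdot(|\mathbf\Xi_i|/N)$ is a probability-like measure of mass $\approx1$ that is $(\delta,s,O(C))$-spread-out.

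Next we pigeonhole the multiplicity. Partition the cubes with $m(\theta)\ge1$ according to the dyadic value $H\in2^{\N_0}$ with $H\le m(\theta)<2H$; only $O(\log n)$ classes occur. We may assume $n\lesssim\delta^{-O(1)}$, since $m(\theta)\le n$ and the number of distinct families $\mathbf\Xi_i$ is at most $2^{\delta^{-O(1)}}$. The remaining difficulty with large $n$ is that $\log n$ need not be $\lessapprox_\delta 1$. To avoid this, note that $\mu$-mass is carried by cubes with $m(\theta)\ge \delta^{O(1)}n$ up to a negligible error: the total mass of cubes with $m\le \delta^{C_0}n$ is at most $\delta^{C_0}n\cdot\delta^{-k(d-k)}/(nN)\cdot O(1)$, which is tiny after using $N\ge1$ and choosing $C_0$ large. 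So only $O(\log(1/\delta))$ dyadic classes matter, and one class $E_H=\bigcup\{\theta:H\le m(\theta)<2H\}$ has $\mu(E_H)\gtrapprox_\delta1$. On $E_H$ the density of $\mu$ is comparable to $H/(nN)$ per cube, so $\mu(E_H)\approx H|\mathcal D_\delta(E_H)|/(nN)$.

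Apply Lemma \ref{lem:uniform-spread-out-discretization} to $\mu/\mu(\R^{k\times(d-k)})$ and $E=E_H$. This yields a nonempty spread-out family $\mathbf\Xi\subset\mathcal D_\delta(E_H)$ with constant $C'\lessapprox_\delta C/\mu(E_H)\lessapprox_\delta C$. Then (i) and (ii) hold, and the multiplicity bound in (iii) holds with $8H$ to spare. The remaining part of (iii) is $H|\mathbf\Xi|\approx_\delta nN$. The upper bound is immediate, since $H|\mathbf\Xi|\le H|\mathcal D_\delta(E_H)|\lesssim nN$.

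The lower bound is the main obstacle, because Lemma \ref{lem:uniform-spread-out-discretization} gives a cardinality bound only under a Frostman hypothesis. First I will check whether its proof provides $|\mathcal J|\gtrapprox_\delta|\mathcal D_\delta(E)|$ directly, i.e.\ that $\mathcal J$ carries a $\gtrapprox_\delta1$ fraction of $\mu|_E$. If it does not, I will redo the selection by hand. Let $\mathbf\Xi^{(\mathfrak s)}$ be the cubes of $E_H$ with at least $\mathfrak s$ singular scales relative to $E_H$. The spread-out bound gives $\mu(\mathbf\Xi^{(\mathfrak s)})\lesssim C2^{-s\mathfrak s}$, so removing $\mathfrak s\ge\mathfrak s_0=O(\log C)$ leaves half the mass. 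Among the levels $\mathfrak s<\mathfrak s_0$, we pigeonhole one level of mass $\gtrapprox_\delta1$, and iterate the singular-scale count on the surviving subset as in the proof of Lemma \ref{lem:spread-out-subset}. Each step keeps a $\gtrapprox_\delta 1$ fraction of $E_H$, so $|\mathbf\Xi|\gtrapprox_\delta|\mathcal D_\delta(E_H)|\approx nN/H$.
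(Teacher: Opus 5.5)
Your multiplicity pigeonholing is correct, including the truncation to $m(\theta)\ge\delta^{C_0}n$, which keeps the number of dyadic classes at $O(\log(1/\delta))$. (The preliminary claim that one may assume $n\lesssim\delta^{-O(1)}$ is unjustified, since the $\mathbf\Xi_i$ may repeat, but the truncation makes it unnecessary.) The estimate $H|\mathcal D_\delta(E_H)|\approx_\delta nN$ is also correct.

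The gap is in how you then produce $\mathbf\Xi$. The detour through $\mu$ and Lemma~\ref{lem:uniform-spread-out-discretization} creates two problems that the proposal does not resolve.
\begin{enumerate}
\item The family-to-measure transfer is not lossless in the exponent. If $\Theta\cap B(\theta,r)\subset H(sr)$, the cubes meeting $\Theta$ lie only in $H(sr+O(\delta))$, and only on a slightly smaller ball around a point $\theta'$ of the cube. So you never obtain an $s$-singular scale for $\bigcup\mathbf\Xi_i'$. You are forced to test $\mu_i$ with a smaller parameter, say $s/4$, discarding $O(\log(1/s))$ scales. Lemma~\ref{lem:uniform-spread-out-discretization} then returns a family that is spread-out only with that smaller parameter, whereas (ii) requires exponent $s$. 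So ``a constant change of $\beta$'' is not harmless for this statement.
\item As you note, Lemma~\ref{lem:uniform-spread-out-discretization} gives no lower bound on $|\mathcal J|$ without a Frostman hypothesis, and your fallback does not supply one. Removing cubes with many singular scales \emph{relative to $E_H$} and ``iterating'' does not address what Definition~\ref{def:spread-out-cube-family} tests. That definition tests subfamilies $\mathcal J'$ against their \emph{own} singular scales, and you never show the surviving family passes that test.
\end{enumerate}

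The missing observation is that no further selection is needed: $\mathbf\Xi:=\mathcal D_\delta(E_H)$ is already spread-out, and you can stay with cube families throughout. Suppose $\mathcal J'\subset\mathbf\Xi$ and every point of $\bigcup\mathcal J'$ has at least $\mathfrak s$ $s$-singular scales for $(\mathcal J',\cdot)$. Then each $\mathcal J'\cap\mathbf\Xi_i$ has the same singular scales at its points, since a subfamily lies in the same strips. Hence $|\mathcal J'\cap\mathbf\Xi_i|\le C2^{-s\mathfrak s}|\mathbf\Xi_i|$. Summing over $i$ and using $m(\theta)\ge H$ on $\mathcal J'$ gives
\[
H|\mathcal J'|\le\sum_{i=1}^n|\mathcal J'\cap\mathbf\Xi_i|\le 2CnN\,2^{-s\mathfrak s}\lessapprox_\delta C\,2^{-s\mathfrak s}H|\mathbf\Xi|.
\]
This yields (ii) with the same exponent $s$ and $C'\lesssim C\log(1/\delta)$, while (i) and (iii) hold by construction. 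This is essentially the almost-cover argument of \cite[Proposition~5.5 and Lemma~5.3]{SpreadOutFurstSticky} that the paper invokes.
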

\begin{proof}
    Apply \cite[Proposition~5.5]{SpreadOutFurstSticky} with equal scales, using a fixed dyadic dilation to place the common support in $[-1,1]^{k\times(d-k)}$, and pull the almost cover back. We obtain $\mathbf\Xi\subset\bigcup_i\mathbf\Xi_i$, an $\alpha$-almost cover with $\alpha\gtrapprox_\delta1$ and dyadic multiplicity $H$ satisfying
    \[
        H\le|\{i:\theta\in\mathbf\Xi_i\}|<2H
        \quad(\theta\in\mathbf\Xi),\qquad
        H|\mathbf\Xi|\approx_\delta\sum_{i=1}^n|\mathbf\Xi_i|\sim nN.
    \]
    By \cite[Lemma~5.3]{SpreadOutFurstSticky}, $\mathbf\Xi$ is $(\delta,s,\alpha^{-1}C)_{d,k}$-spread-out. All three conclusions follow.
\end{proof}

\subsection{Uniformization and the branching function}
\label{s:uniformization}

We recall the definitions and some basic facts about uniform sets and the branching function.

\begin{definition}[Uniform sets and branching numbers]\label{def:uniform-set}
Let $\Delta\in2^{-\N}$ and $m\in\N$, and let $P\subset[0,1)^d$. We say that $P$ is \emph{$(\Delta,m)$-uniform with branching numbers $(N_j)_{j=0}^{m-1}$} if every cube $Q\in\mathcal D_{\Delta^j}(P)$ contains exactly $N_j$ cubes from $\mathcal D_{\Delta^{j+1}}(P)$, for each $0\le j\le m-1$.
\end{definition}

\begin{definition}[Branching function]\label{def:branching-function}
If $P$ is nonempty and $(\Delta,m)$-uniform with branching numbers $(N_j)_{j=0}^{m-1}$, its \emph{branching function} is the piecewise-linear function $f=f_P:[0,m]\to\R$ determined by
\[
    f(i)=\log_{1/\Delta}|P|_{\Delta^i}
    =\sum_{j=0}^{i-1}\log_{1/\Delta}N_j,
    \qquad i=0,\ldots,m.
\]
It satisfies $f(0)=0$ and is non-decreasing and $d$-Lipschitz.
\end{definition}

\begin{lemma}\label{lem:uniformization}
Fix $\Delta\in2^{-\N}$ and $m\in\N$, and write $\delta=\Delta^m$. For any set $P\subset[0,1)^d$ there is a $(\Delta,m)$-uniform subset $P'\subset P$ such that
\[
    |P'|_\delta\ge(4(d+1)\log(1/\Delta))^{-m}|P|_\delta.
\]
\end{lemma}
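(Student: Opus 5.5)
The plan is the standard bottom-up pigeonholing argument, run from the finest scale $\delta=\Delta^m$ up to scale $1$. Replace $P$ by its $\delta$-discretization, so that $|P|_\delta=|P|$ with $P\subset\delta\Z^d\cap[0,1)^d$; every subset of this finite set is identified with its family of $\delta$-cubes. We build a decreasing chain $P=P_m\supset P_{m-1}\supset\dots\supset P_0=P'$ with the following property: for each $i$, the set $P_i$ is uniform at all levels $j\ge i$. That is, every cube in $\mathcal D_{\Delta^j}(P_i)$ contains exactly $N_j$ cubes of $\mathcal D_{\Delta^{j+1}}(P_i)$ for $i\le j\le m-1$. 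We also require that $|P_{i}|\ge(4(d+1)\log(1/\Delta))^{-1}|P_{i+1}|$. Iterating the second property $m$ times gives the claimed bound.

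\emph{Inductive step.} Suppose $P_{i+1}$ has been constructed, with $i+1\le m$.
\begin{enumerate}
\item For each $Q\in\mathcal D_{\Delta^{i}}(P_{i+1})$, let $n(Q)=|\mathcal D_{\Delta^{i+1}}(P_{i+1}\cap Q)|$. Then $1\le n(Q)\le \Delta^{-d}$.
\item By uniformity at the finer levels, every $\Delta^{i+1}$-cube of $P_{i+1}$ contains the same number $K=\prod_{j>i}N_j$ of $\delta$-points. Hence $|P_{i+1}\cap Q|=K\,n(Q)$.
\item Partition the cubes $Q$ into dyadic classes $2^{l}\le n(Q)<2^{l+1}$, for $0\le l\le d\log(1/\Delta)$. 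There are at most $d\log(1/\Delta)+1\le (d+1)\log(1/\Delta)$ classes, since $\Delta\le1/2$.
\item Pick the class $l$ carrying the largest number of points of $P_{i+1}$. It carries at least a $((d+1)\log(1/\Delta))^{-1}$ fraction of them.
\item In each cube $Q$ of this class, keep exactly $N_i:=2^l$ of its $\Delta^{i+1}$-subcubes, chosen arbitrarily. Since $n(Q)<2^{l+1}$, this retains at least half of the points in $Q$.
\item Let $P_i$ be the union of the retained $\Delta^{i+1}$-cubes intersected with $P_{i+1}$.
\end{enumerate}

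Now verify the two properties for $P_i$. At levels $j>i$, uniformity is inherited, because we kept or discarded whole $\Delta^{i+1}$-cubes, and below them nothing changed. At level $i$, every surviving $\Delta^i$-cube now has exactly $N_i$ children. The mass loss is a factor of at most $2(d+1)\log(1/\Delta)\le 4(d+1)\log(1/\Delta)$.

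\emph{Base case.} The step from level $i$ up to level $0$ requires nothing extra, since $[0,1)^d$ is the single cube of $\mathcal D_1$.

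The only point needing care is step 2, the equal-mass property of the $\Delta^{i+1}$-cubes. This is exactly why the construction proceeds from fine scales to coarse ones. It is what makes the pigeonholing on counts of children equivalent to pigeonholing on point mass, and it ensures that the chosen class retains the stated proportion of $|P_{i+1}|$.
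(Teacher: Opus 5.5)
Your proof is correct and is essentially the argument the paper relies on. The paper defers to the bottom-up pigeonholing in \cite[Lemma~7.3]{OS23}: uniformize from the finest scale upward, pigeonhole the child counts dyadically, and use the uniformity already secured at finer levels so that child counts are proportional to mass. You even get the slightly better factor $2(d+1)\log(1/\Delta)$. The only cosmetic point is that the lemma asks for $P'\subset P$ literally, so at the end set $P'=P\cap\bigcup\{\text{retained }\delta\text{-cubes}\}$; your identification of subsets with their families of $\delta$-cubes already does this implicitly.
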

\begin{proof}
See the proof of \cite[Lemma~7.3]{OS23}, with $d$-dimensional cubes in place of squares.
\end{proof}

For $f:[0,m]\to\R$ and $0\le a<b\le m$, write $s_f(a,b)=(f(b)-f(a))/(b-a)$.
\begin{definition}
Given $f:[a,b]\to\R$ and numbers $\e>0,\sigma$, we say that $(f,a,b)$ is:
\begin{itemize}
    \item \emph{$(\sigma,\e)$-superlinear} if $f(x)-f(a)\ge \sigma(x-a)-\e(b-a)$ for all $x\in[a,b]$.
    \item \emph{$(\sigma,\e)$-sublinear} if $f(b)-f(x)\le \sigma(b-x)+\e(b-a)$ for all $x\in[a,b]$.
    \item \emph{$(\sigma,\e)$-linear} if $|f(x)-f(a)-\sigma(x-a)|\le \e(b-a)$ for all $x\in[a,b]$.
\end{itemize}
\end{definition}

The next lemma translates geometric properties of $P$ into inequalities for its branching function. See, for example, \cite[Lemma 8.3]{OS23} for some of the short deductions; the others are similar and omitted.
\begin{lemma} \label{lem:branching-translation}
    Write $\delta=\Delta^m$. Let $P\subset \delta \Z^d$ be a $(\Delta,m)$-uniform set with branching function $f$. Then,  writing $C_\Delta(1)$ for a constant depending only on $\Delta$ and $d$ that may differ between the implications,
    \begin{enumerate}[label=\textup{(\roman*)},ref=\roman*]
        \item $P$ is $(\delta,t,C_\Delta(1)\delta^{-\e})$-Frostman if and only if $(f,0,m)$ is $(t,\e)$-superlinear;
        \item $P$ is $(\delta,t,C_\Delta(1)\delta^{-\e})$-Katz--Tao if and only if $(f,0,m)$ is $(t,\e)$-sublinear;
        \item\label{it:branching-linear-regular} If $(f,0,m)$ is $(t,\e)$-linear, then $P$ is $(\delta,t,C_\Delta(1)\delta^{-2\e})$-regular.
    \end{enumerate}
\end{lemma}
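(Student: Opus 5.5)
The plan is to pass from the geometry of $P$ to the combinatorics of its branching numbers by counting dyadic cubes along the uniform tree, treating the three items in turn.

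\emph{Key counting identity.} Fix a dyadic $r=\Delta^{i}$, $0\le i\le m$. By $(\Delta,m)$-uniformity, every $Q\in\mathcal D_{\Delta^i}(P)$ satisfies
\[
|P\cap Q|_\delta=\prod_{j=i}^{m-1}N_j=\Delta^{-(f(m)-f(i))},\qquad |P|_\delta=\Delta^{-f(m)}.
\]
For a general dyadic $r\in[\delta,1]$, I choose $i$ with $\Delta^{i+1}<r\le\Delta^{i}$. Then $Q\in\mathcal D_r$ lies in a single $\Delta^i$-cube and contains at most $O_d(\Delta^{-d})$ cubes of $\mathcal D_{\Delta^{i+1}}$. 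Hence restricting to $r\in\{\Delta^i\}$ costs only a factor $C_\Delta(1)$. Cubes $Q$ not meeting $P$ are trivial.

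\emph{Item (i).} The Frostman inequality at $r=\Delta^x$ reads $\Delta^{-(f(m)-f(x))}\le C\Delta^{x t}\Delta^{-f(m)}$. Since $\Delta<1$, this is equivalent to
\[
f(x)\ge tx-\log_{1/\Delta}C.
\]
With $C=C_\Delta(1)\delta^{-\e}$ we have $\log_{1/\Delta}C=\e m+O_\Delta(1)$. So the condition at integer $x$ is $(t,\e)$-superlinearity up to an additive $O(1)$.

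To make this exact, I handle the additive constant in each direction:
\begin{itemize}
\item Superlinear $\Rightarrow$ Frostman: absorb the additive $O(1)$ into $C_\Delta(1)$.
\item Frostman $\Rightarrow$ superlinear: the additive constant cannot be removed from the slope loss, so it must be folded into the phrase ``$C_\Delta(1)$ may differ between implications.'' I read that clause as permitting $C_\Delta(1)\le 1$ in this direction, i.e.\ taking $C=\Delta^{O(1)}$-type constants. Alternatively it can be absorbed in $\e m$ once $m\gtrsim1$.
\end{itemize}
Since $f$ is piecewise linear, checking the inequality at integer $x$ suffices for all $x\in[0,m]$.

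\emph{Item (ii).} The Katz--Tao bound at $r=\Delta^x$ is $\Delta^{-(f(m)-f(x))}\le C\Delta^{-(m-x)t}$, i.e.
\[
f(m)-f(x)\le t(m-x)+\e m+O(1).
\]
This is exactly sublinearity, with the same handling of constants as in (i).

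\emph{Item (iii).} Linearity implies both superlinearity and sublinearity with parameter $\e$, so by (i) the uniform measure $m_P$ is $(\delta,t,C\delta^{-\e})$-Frostman. For the covering bound I take $x\in\R^d$ and $\delta\le r\le R$ dyadic, $R\approx\Delta^{a}$ and $r\approx\Delta^{b}$. The ball $B(x,R)$ meets $O(1)$ cubes of $\mathcal D_{\Delta^a}(P)$, each containing $\Delta^{-(f(b)-f(a))}$ cubes of $\mathcal D_{\Delta^b}(P)$. Linearity gives
\[
f(b)-f(a)\le t(b-a)+2\e m,
\]
which yields $|P\cap B(x,R)|_r\lesssim_\Delta (R/r)^t\delta^{-2\e}$.

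The only mild obstacle is bookkeeping the non-lattice scales and the additive constants. Both are absorbed by $C_\Delta(1)$, which is allowed to depend on $\Delta$ and $d$.
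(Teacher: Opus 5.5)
Your proposal is correct and is the same direct counting along the uniform tree that the paper delegates to \cite[Lemma~8.3]{OS23}; the Frostman/Katz--Tao $\Rightarrow$ super/sublinear directions hold exactly with $C_\Delta(1)=1$, as you say, whereas your ``absorb into $\e m$'' alternative would only give $(t,2\e)$ and needs $m\gg_{\Delta,\e}1$. One harmless slip: $(t,\e)$-linearity gives only $(t,2\e)$-sublinearity, not $(t,\e)$-sublinearity, but you never use that claim, since it is your direct count with $f(b)-f(a)\le t(b-a)+2\e m$ that produces the $\delta^{-2\e}$ in (iii).
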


\section{Proof of Theorem \ref{thm:furst-general}}

\subsection{A self-improvement formulation}

Fix $1\le k<d$. We prove Theorem \ref{thm:furst-general} by improving any estimate with a positive loss in the exponent. We record the estimate to be improved.
\begin{terminology}[$ \mathsf{Furst}(s;\eta) $]
    Let $\mathsf{Furst}(s;\eta)$ denote the following statement: for every $\rho>0$, there is $\e=\e(s,\eta,\rho)>0$ such that, for all dyadic $\delta\ll_{s,\eta,\rho,\e}1$, if the configuration $(P,\mathbf\Theta)$ is $(\delta,s,\delta^{-\e},M)$-spread-out, and $P$ is $(\delta,d-k,\delta^{-\e})$-Frostman, then $|\mathcal F(P,\mathbf\Theta)|_\delta\ge \delta^{\eta+\rho-(d-k)}M$.
\end{terminology}


The assertion $\mathsf{Furst}(s;0)$ is the case $t=d-k$ of Theorem~\ref{thm:furst-general}. By Lemma \ref{lem:reduction-to-self-improvement}, it will be enough to improve $\mathsf{Furst}(s;\eta)$: we must show that $\mathsf{Furst}(s;\eta)$ implies $\mathsf{Furst}(s;\eta')$ for some $0\le\eta'<\eta$ depending on $s$ and $\eta$.

\subsection{Katz--Tao sets}
\label{s:frostman-to-katz-tao}

We first pass from $\mathsf{Furst}(s;\eta)$ to an estimate for $(\delta,d-k)$-Katz--Tao sets, as defined in \S\ref{s:discretized-formulations}. The idea is to take enough random translates of the point set to obtain a Frostman set. The argument is similar to \cite[Lemma 3.1]{DemeterWang25}.
\begin{lemma} \label{lem:katz-tao-induction}
    Assume $\mathsf{Furst}(s;\eta)$. For every $\rho>0$, there is $\e=\e(s,\eta,\rho)>0$ such that the following holds whenever $\delta\in2^{-\N}$ satisfies $\delta\ll_{s,\eta,\rho,\e}1$, $(P,\mathbf\Theta)$ is a $(\delta,s,\delta^{-\e},M)$-spread-out configuration, and $P$ is a $(\delta,d-k,\delta^{-\e})$-Katz--Tao set:
    \begin{equation}\label{eq:katz-tao-induction-closed}
        |\mathcal F(P,\mathbf\Theta)|_\delta
        \gtrsim \delta^{\eta+\rho}|P|M.
    \end{equation}
\end{lemma}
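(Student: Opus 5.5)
We plan to reduce the Katz--Tao case to the Frostman case by taking a union of many random translates of $P$, following \cite[Lemma 3.1]{DemeterWang25}. Write $N=|P|$. Since $P$ is $(\delta,d-k,\delta^{-\e})$-Katz--Tao, we have $N\lesssim\delta^{-\e-(d-k)}$. If $N\ge\delta^{2\e-(d-k)}$, then $P$ is already $(\delta,d-k,\delta^{-O(\e)})$-Frostman: indeed $|P\cap Q|\le\delta^{-\e}(r/\delta)^{d-k}\le\delta^{-3\e}r^{d-k}N$. In that case we apply $\mathsf{Furst}(s;\eta)$ directly and get $|\mathcal F|_\delta\ge\delta^{\eta+\rho-(d-k)}M\gtrsim\delta^{\eta+\rho+\e}NM$, which suffices after adjusting $\rho$.

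In general, set $K\sim\delta^{-(d-k)}/N$. We may assume $K\ge1$ up to $\delta^{-\e}$ losses. Since the Furstenberg estimate concerns affine $k$-planes with fixed directions $\theta$, translating a point $p$ by a vector $v$ translates the planes $V_\theta+p$. We therefore choose $K$ translation vectors $v_1,\dots,v_K$ at random, uniformly from $\delta\Z^d\cap[0,1]^d$, with the directions of the second factor $\R^{d-k}$ being the relevant ones. We then form $\tilde P=\bigcup_j(P+v_j)$ with shadings $\tilde{\mathbf\Theta}(p+v_j)=\mathbf\Theta(p)$, and rescale into $[0,1]^d$ by a harmless constant factor. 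Overlaps, where a point lies in several translates, are handled by picking one shading; only a small fraction of points are affected, and Lemma \ref{lem:spread-out-subset} is not even needed since each shading is kept whole. The key probabilistic step is to show that, with positive probability, $\tilde P$ is $(\delta,d-k,\delta^{-O(\e)})$-Frostman with $|\tilde P|\approx KN\approx\delta^{-(d-k)}$. For a cube $Q$ of side $r$, the expected count is $\mathbb E|\tilde P\cap Q|\lesssim K N r^{d}$ from translates landing generically, but the Katz--Tao bound controls each single translate by $\delta^{-\e}(r/\delta)^{d-k}$. The number of translates meeting $Q$ is concentrated around $Kr^{d}\cdot(\text{size of }P\text{ neighbourhood})$. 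Chernoff bounds combined with a union bound over $\lesssim\delta^{-O(d)}$ dyadic cubes then give $|\tilde P\cap Q|\lesssim\delta^{-2\e}r^{d-k}\delta^{-(d-k)}$. This is the Frostman condition, since $|\tilde P|\approx\delta^{-(d-k)}$.

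Applying $\mathsf{Furst}(s;\eta)$ to $(\tilde P,\tilde{\mathbf\Theta})$ with $\e$ replaced by $O(\e)$, we get $|\mathcal F(\tilde P,\tilde{\mathbf\Theta})|_\delta\ge\delta^{\eta+\rho-(d-k)}M$. On the other hand, $\mathcal F(\tilde P,\tilde{\mathbf\Theta})=\bigcup_j(\mathcal F(P,\mathbf\Theta)+v_j)$, where translation acts on affine planes. By \eqref{eq:affine-incidence-coordinates}, translation by $v$ acts in coordinates by $(\theta,b)\mapsto(\theta,b+\rho_\theta(v))$, which is a Lipschitz bijection with Lipschitz inverse. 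Hence $|\mathcal F(P,\mathbf\Theta)+v_j|_\delta\lesssim|\mathcal F(P,\mathbf\Theta)|_\delta$, and so $K|\mathcal F(P,\mathbf\Theta)|_\delta\gtrsim\delta^{\eta+\rho-(d-k)}M$. Dividing by $K\sim\delta^{-(d-k)}/N$ yields \eqref{eq:katz-tao-induction-closed}.

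The main obstacle is the Frostman verification at large scales $r$. For $r\ge K^{-1/d}$ the random translates spread evenly, which is easy. For small $r$, a cube $Q$ sees only $O(1)$ translates on average, and there we need the Katz--Tao bound for each translate together with a concentration bound on how many translates hit $Q$. The two regimes must be combined so that the total loss is $\delta^{-O(\e)}$; if necessary we will accept a $\log(1/\delta)$ factor, or restrict to the event that the translates are $K^{-1/d}$-separated.
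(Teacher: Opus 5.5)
Your proposal is correct and is essentially the paper's argument. It uses $K=\lceil N^{-1}\delta^{-(d-k)}\rceil$ independent uniform translates from $\delta\Z^d\cap[0,1]^d$, and a first-moment collision count (expected number $O(K^2N^2\delta^d)=O(\delta^kKN)$) to get $|\tilde P|\gtrsim KN$. It then uses Chernoff plus a union bound for the Frostman condition, and finally $\mathsf{Furst}(s;\eta)$ together with the $O(1)$ distortion of covering numbers under translation. The paper passes to a unit dyadic cube containing a $3^{-d}$ fraction of the union rather than rescaling, and needs no separate large-$N$ case.

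The scale split you worry about at the end is unnecessary. For a cube $Q$ of side $r$, the mean $\mathbb E\sum_j|(P+v_j)\cap Q|\lesssim KNr^d\lesssim\delta^{-\e-(d-k)}r^d\le\delta^{-\e}(r/\delta)^{d-k}$ is already no larger than the per-translate Katz--Tao bound. A single Chernoff bound for independent bounded summands therefore gives $|\tilde P\cap Q|\le\delta^{-2\e}(r/\delta)^{d-k}$ at every scale at once. Your heuristic that small cubes meet only $O(1)$ translates can in fact fail when $P$ is dense at scale $r$, but the argument does not need it.
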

\begin{proof}    
    Choose $0<\e<\rho/2$ so that $\mathsf{Furst}(s;\eta)$ applies with $\rho/2$ in place of $\rho$ and Frostman and spread-out constants $\delta^{-3\e}$.

    Let $R=\lceil |P|^{-1}\delta^{-(d-k)}\rceil$, choose $v_1,\ldots,v_R$ independently and uniformly from $\delta\Z^d\cap[0,1]^d$, and set $X=\bigcup_{\ell=1}^R(P+v_\ell)$. If $R=1$, then $|X|\ge\delta^{-(d-k)}/2$. If $R>1$, the expected number of pairs of distinct representations of the same point is at most $O(R^2|P|^2\delta^d)=O(\delta^kR|P|)$, so Markov's inequality yields $|X|\gtrsim R|P|\gtrsim\delta^{-(d-k)}$ with probability at least $9/10$. For a dyadic cube $Q$ meeting $[0,2]^d$ and having side length $r\in[\delta,1]$, the sum $\sum_{\ell=1}^R|(P+v_\ell)\cap Q|$ has expectation and summands bounded by $O(\delta^{-\e}(r/\delta)^{d-k})$. By Chernoff's inequality \cite[Theorem 1.8]{MR2289012} and a union bound, there is a realization such that
    \begin{equation}\label{eq:katz-tao-random-translation-bounds}
        |X|\gtrsim R|P|,
        \qquad
        |X\cap Q|\le\delta^{-2\e}(r/\delta)^{d-k}
    \end{equation}
    simultaneously for all such $Q$.

    Choose a unit dyadic cube $Q_0$ containing at least $3^{-d}|X|$ points of $X$, and let $X_0=(X\cap Q_0)-z(Q_0)$. For each $x\in X_0$, choose a representation $x+z(Q_0)=p+v_\ell$ and set $\mathbf\Theta_0(x)=\mathbf\Theta(p)$. The choice of $Q_0$ and \eqref{eq:katz-tao-random-translation-bounds} show that $X_0$ is $(\delta,d-k,\delta^{-3\e})$-Frostman. The plane families are unchanged, so $(X_0,\mathbf\Theta_0)$ is $(\delta,s,\delta^{-\e},M)$-spread-out. Moreover, $\mathcal F(X_0,\mathbf\Theta_0)$ is covered by the translates $V\mapsto V+v_\ell-z(Q_0)$ of $\mathcal F(P,\mathbf\Theta)$, so $|\mathcal F(X_0,\mathbf\Theta_0)|_\delta\lesssim R|\mathcal F(P,\mathbf\Theta)|_\delta$. We have $|\mathcal F(X_0,\mathbf\Theta_0)|_\delta\ge \delta^{\eta+\rho/2-(d-k)}M$ by $\mathsf{Furst}(s;\eta)$. Since $R\le 1+|P|^{-1}\delta^{-(d-k)}$ and the Katz--Tao condition yields $|P|\lesssim\delta^{-\e-(d-k)}$, we have $\delta^{-(d-k)}/R\gtrsim\delta^\e|P|$. Hence $|\mathcal F(P,\mathbf\Theta)|_\delta\gtrsim \delta^{\eta+\rho/2+\e}|P|M$, establishing \eqref{eq:katz-tao-induction-closed} since $\e<\rho/2$.
\end{proof}

\subsection{Self-improvement implies the general Furstenberg estimate}

The self-improvement property $\mathsf{Furst}(s;\eta)\implies\mathsf{Furst}(s;\eta')$ for some $0\le\eta'<\eta$ leads to Theorem \ref{thm:furst-general} by a closedness argument. 

\begin{lemma} \label{lem:reduction-to-self-improvement}
    Assume that, for every $s\in(0,1]$ and $\eta\in(0,d-k]$, there exists $\eta'=\eta'(\eta,s)\in[0,\eta)$ such that $\mathsf{Furst}(s;\eta)\implies\mathsf{Furst}(s;\eta')$. Then Theorem \ref{thm:furst-general} holds.
\end{lemma}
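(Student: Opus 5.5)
The proof has two parts: first show $\mathsf{Furst}(s;0)$ by a closedness argument on the set of admissible losses, then reduce the general statement of Theorem \ref{thm:furst-general} (arbitrary $t\in(0,d-k]$, Frostman or Katz--Tao) to this case $t=d-k$.

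\emph{Step 1: closedness.} Fix $s$ and let $\eta_*=\inf\{\eta\ge0:\mathsf{Furst}(s;\eta)\text{ holds}\}$. The set is nonempty. The trivial bound $|\mathcal F(P,\mathbf\Theta)|_\delta\ge M$ (fix one $p$) gives $\mathsf{Furst}(s;d-k)$. It is also upward closed. The key observation is that the infimum is attained, because $\mathsf{Furst}(s;\eta)$ already quantifies over an arbitrary $\rho>0$. Indeed, given $\rho>0$, pick $\eta\in(\eta_*,\eta_*+\rho/2)$ with $\mathsf{Furst}(s;\eta)$. Apply it with $\rho/2$. This yields $|\mathcal F|_\delta\ge\delta^{\eta+\rho/2-(d-k)}M\ge\delta^{\eta_*+\rho-(d-k)}M$, which is $\mathsf{Furst}(s;\eta_*)$. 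If $\eta_*>0$, the hypothesis gives $\mathsf{Furst}(s;\eta')$ with $\eta'<\eta_*$, a contradiction. Hence $\mathsf{Furst}(s;0)$ holds, which is the Frostman case $t=d-k$ of Theorem \ref{thm:furst-general} with $\eta=\rho$. By Lemma \ref{lem:katz-tao-induction} applied with $\eta=0$, the Katz--Tao case $t=d-k$ follows as well, absorbing the implicit constant by shrinking $\rho$ and $\delta$.

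\emph{Step 2: general $t$, Katz--Tao version.} Let $P$ be $(\delta,t,\delta^{-\e})$-Katz--Tao with $t<d-k$. Then $P$ is also $(\delta,d-k,\delta^{-\e})$-Katz--Tao, since $(r/\delta)^t\le(r/\delta)^{d-k}$. The Katz--Tao case $t=d-k$ from Step 1 then gives $|\mathcal F|_\delta\ge\delta^{\eta}|P|M$ directly.

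\emph{Step 3: general $t$, Frostman version.} This is the main obstacle. A $(\delta,t,\delta^{-\e})$-Frostman set need not be Katz--Tao, so I would pass to a Katz--Tao subset of size about $\delta^{-t}$. First pigeonhole/uniformize $P$ via Lemma \ref{lem:uniformization} at scale $\Delta=\delta^{\e'}$, losing $\delta^{-O(\e')}$. Then select greedily, scale by scale, a subset $P'\subset P$ whose branching function equals $\min(f,\,t\cdot)$-type truncations. This produces $P'$ that is $(\delta,t,\delta^{-O(\e')})$-Katz--Tao with $|P'|\ge\delta^{O(\e'+\e)-t}$; Lemma \ref{lem:branching-translation}(i)--(ii) translates the branching conditions into the Katz--Tao bound. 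This uses the standard fact that a $(t,\e)$-superlinear function dominates a $(t,\e)$-sublinear one with the same endpoint value up to $\e m$. The spread-out hypothesis passes to the subconfiguration $(P',\mathbf\Theta|_{P'})$ unchanged, because shadings are per point. Step 2 then gives $|\mathcal F(P,\mathbf\Theta)|_\delta\ge|\mathcal F(P',\mathbf\Theta)|_\delta\ge\delta^{\eta/2}|P'|M\ge\delta^{\eta-t}M$ once $\e,\e'$ are small in terms of $\eta$.
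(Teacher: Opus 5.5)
Your Steps 1 and 2 are fine. Step 1 is the paper's closedness argument: the admissible $\eta$ form a closed upper interval because $\mathsf{Furst}(s;\eta)$ carries the extra $\rho$. Step 2 is a correct one-line deduction that the paper leaves implicit.

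The gap is in Step 3. The paper does not build the Katz--Tao piece by hand. It invokes \cite[Lemma~3.9]{SpreadOutFurstSticky} to get a $(\delta,t,\delta^{-O(\e)})$-Frostman $P'\subset P$ with $\delta^{-t+O(\e)}\le|P'|_\delta\le\delta^{-t}$. Such a $P'$ is automatically $(\delta,d-k,\delta^{-O(\e)})$-Katz--Tao, since $|P'\cap Q|_\delta\le\delta^{-O(\e)}r^t|P'|_\delta\le\delta^{-O(\e)}(r/\delta)^t$, and the paper then applies Lemma \ref{lem:katz-tao-induction} with $\eta=0$.

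Your construction, as described, does not produce such a piece. A subset of a uniform set can only have branching increments bounded by those of $f$, and the truncation $\min\{f(x),tx\}$ violates this. Let $f$ have slope $d$ on $[0,m/4]$, slope $0$ on $[m/4,m/2]$, and slope $t$ on $[m/2,m]$, with $t\le d/2$. Then $f(x)\ge tx$ everywhere, so $\min\{f,tx\}=tx$, which grows on $[m/4,m/2]$ where every cube has a single child. The realizable coarse-to-fine greedy (keep children while the cumulative count stays below $tx$) ends at $g(m)=3tm/4$, which is a polynomial loss: $|P'|\approx\delta^{-3t/4}$.

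The ``standard fact'' you quote is also false as stated. The $(t,0)$-sublinear function that rises with slope $d$ to $tm$ and then stays flat exceeds the $(t,0)$-superlinear $f(x)=tx$ by $\gtrsim m$. What is true is only that a sublinear function with $g(m)\approx tm$ is superlinear.

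The repair is to truncate from the fine end, and this does not even need uniformization. Moving upward from scale $\delta$, in each occupied dyadic cube $Q$ of side $r$, keep an arbitrary subset of at most $(r/\delta)^t$ of the points surviving from its children. Retained sets only shrink going up, so the final $P'$ is $(\delta,t,1)$-Katz--Tao. Unwinding the recursion $v(Q)=\min\{\lfloor(r/\delta)^t\rfloor,\sum_{Q'\subset Q\text{ child}}v(Q')\}$ gives
\[
|P'|\ge\frac12\min_{\mathcal A}\sum_{Q\in\mathcal A}(\ell(Q)/\delta)^t,
\]
where $\mathcal A$ ranges over dyadic covers of $P$. By the Frostman hypothesis this is at least $\frac12\delta^{-t}\sum_{Q\in\mathcal A}\delta^{\e}|P\cap Q|_\delta/|P|_\delta\ge\frac12\delta^{\e-t}$.

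Alternatively, keep each point independently with probability $\min\{1,\delta^{-t}/|P|\}$. Chernoff and a union bound, as in the proof of Lemma \ref{lem:katz-tao-induction}, give a $(\delta,t,\delta^{-2\e})$-Katz--Tao subset of size $\gtrsim\min\{|P|,\delta^{-t}\}\ge\delta^{\e-t}$. Here the last inequality holds because the Frostman condition at $r=\delta$ forces $|P|\ge\delta^{\e-t}$.

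With either repair, your Step 2 finishes the argument exactly as you wrote it.
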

\begin{proof}

Fix $s\in(0,1]$. The trivial bound $|\mathcal F|_\delta\gtrsim M$ establishes $\mathsf{Furst}(s;d-k)$. By the definition of $\mathsf{Furst}(s;\eta)$, the exponents $\eta\ge0$ for which $\mathsf{Furst}(s;\eta)$ holds form a nonempty closed upper interval. Self-improvement forces its minimum to be zero. Thus $\mathsf{Furst}(s;0)$ holds.
 
Fix $t\in(0,d-k]$ and $\eta>0$, and choose $0<\e\ll_{s,t,\eta}1$. Let $(P,\mathbf\Theta)$ satisfy the hypotheses of Theorem \ref{thm:furst-general}. By \cite[Lemma~3.9]{SpreadOutFurstSticky} with $K=2$, there is a piece $P'$ which is $(\delta,t,\delta^{-O(\e)})$-Frostman and satisfies $\delta^{-t+O(\e)}\le |P'|_\delta\le\delta^{-t}$. Since $t\le d-k$, the set $P'$ is $(\delta,d-k,\delta^{-O(\e)})$-Katz--Tao. Restricting the shadings to $P'$ and applying Lemma \ref{lem:katz-tao-induction} with $\eta=0$ and $\eta/2$ in place of $\rho$, we obtain
\[
 |\mathcal F(P,\mathbf\Theta)|_\delta
 \ge |\mathcal F(P',\mathbf\Theta|_{P'})|_\delta
 \gtrsim\delta^{\eta/2}|P'|_\delta M
 \ge\delta^{\eta/2-t+O(\e)}M.
\]
For $\e\ll_\eta1$ and then $\delta\ll_{s,t,\eta,\e}1$, we conclude that $|\mathcal F(P,\mathbf\Theta)|_\delta\ge\delta^{\eta-t}M$.
\end{proof}

\subsection{Reduction to the uniform non-sticky case}

\label{s:furst-nonsticky-reduction}

The goal of this section is to reduce Theorem~\ref{thm:furst-general} to the case where $P$ is uniform and non-sticky in a quantitative sense, as described in the following proposition.
\begin{proposition}
\label{prop:non-sticky-furst-gain}

Assume $\mathsf{Furst}(s;\eta)$ holds and fix $\xi>0$. Then, there exists $\eta'=\eta'(s,\eta,\xi)\in[0,\eta)$ such that, for all $\rho>0$, there is $\e=\e(s,\eta,\rho,\xi)>0$ such that the following holds.  Fix $\Delta\in 2^{-\N}$ such that 
\[
\log_{1/\Delta}(4(d+1)\log(1/\Delta))<\e/2.
\]
Let $m\gg_{\eta,\rho,\xi,\e,\Delta} 1$ and set $\delta=\Delta^m$. Let $(P,\mathbf\Theta)$ be a $(\delta,s,\delta^{-\e},M)$-spread-out configuration, where $P$ is $(\Delta,m)$-uniform and $(\delta,d-k,\delta^{-\e})$-Frostman. Let $f$ be the branching function of $P$, and suppose that
\begin{equation} \label{eq:Frostman-and-KT}
f(m)  \le (d-k)m + O_{\Delta}(1),
\end{equation}
and
\begin{equation} \label{eq:non-sticky}
f(n)-(d-k)n \ge \xi m \quad\text{for some } n\in[0,m].
\end{equation}
Then,
\[
|\mathcal F(P,\mathbf\Theta)|_\delta \ge \delta^{\eta'+\rho-(d-k)}M.
\] 
\end{proposition}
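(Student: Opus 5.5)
We split the configuration at the scale $\rho_0=\Delta^n$ where, by \eqref{eq:non-sticky}, $P$ has excess branching: $|P|_{\rho_0}\ge\rho_0^{-(d-k)}\delta^{-\xi}$. This is a two-scale induction in the spirit of the strategy outlined in the introduction. By \eqref{eq:Frostman-and-KT}, the Frostman condition, and uniformity, each $\rho_0$-cube $Q\in\mathcal D_{\rho_0}(P)$ contains about $|P|/|P|_{\rho_0}\lesssim (\rho_0/\delta)^{d-k}\delta^{\xi}$ points, i.e.\ subcritically many. Moreover, the rescaled piece $P_Q=\rho_0^{-1}(P\cap Q-z(Q))$ is uniform with branching function $f(n+\cdot)-f(n)$. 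The main bookkeeping is a decomposition of the Furstenberg set. We first count $\rho_0$-planes, then count $\delta$-planes inside each $\rho_0$-plane. Each fat plane $\mathbf T\in\mathcal F(P,\mathbf\Theta)$ at scale $\rho_0$, which is a $\rho_0$-slab, is associated with a sub-configuration: the points $p\in P\cap\mathbf T$ together with those $\theta\in\mathbf\Theta(p)$ lying in the $\rho_0$-cube of directions of $\mathbf T$. After the affine rescaling of $\mathbf T$ to unit size (which acts anisotropically: it expands the $d-k$ normal directions by $\rho_0^{-1}$), this becomes a configuration of \emph{slablets} at scale $\delta/\rho_0$. The $\delta$-planes of the original configuration inside $\mathbf T$ correspond to planes of the rescaled configuration.

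\textbf{Coarse scale.} We coarsen the shadings to $\rho_0$ and apply Lemma \ref{lem:spread-out-coarsening} after dyadic pigeonholing to make the counts uniform. The result is a $(\rho_0,s,\rho_0^{-O(\e)})$-spread-out configuration at scale $\rho_0$. Its point set $P_{\rho_0}$ is Katz--Tao-violating; it has too many points. We therefore pass to a maximal $(\rho_0,d-k)$-Frostman subset, or apply Lemma \ref{lem:katz-tao-induction} / $\mathsf{Furst}(s;\eta)$ directly since $P_{\rho_0}$ is $(\rho_0,d-k,\rho_0^{-O(\e)})$-Frostman (inherited from $P$). This yields at least $\rho_0^{\eta+\rho-(d-k)}M_{\rho_0}$ fat planes.

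\textbf{Fine scale.} For each fat plane we rescale. By Lemma \ref{lem:spread-out-sub-cube} and Lemma \ref{lem:spread-out-scaling}, the rescaled shadings remain spread-out with loss $(\delta/\rho_0)^{-O(\e)}$. We then apply the Katz--Tao or Frostman case of $\mathsf{Furst}(s;\eta)$ at scale $\delta/\rho_0$. The key dichotomy, as described in the introduction, concerns the point set inside a typical slab $\mathbf T$. Either it is $(\delta/\rho_0,d-k,\delta^{-\e})$-Frostman after rescaling, or it fails this at some intermediate scale. If it fails, we localize to the cubes where it is denser and obtain a Katz--Tao set with too few elements. In the Frostman case the fine count is $(\delta/\rho_0)^{\eta+\rho-(d-k)}M/M_{\rho_0}$. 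Combined with the coarse count, this gives $\delta^{\eta+\rho-(d-k)}M$ times a multiplicative gain coming from the excess $\delta^{-\xi}$ in $|P|_{\rho_0}$. The gain arises because the incidence count is forced up by the surplus of $\rho_0$-cubes, each contributing disjoint planes, via \eqref{eq:affine-incidence-counting}. In the Katz--Tao case we apply Lemma \ref{lem:katz-tao-induction}, which gives a bound proportional to $|P\cap\mathbf T|$ rather than to the critical $(\rho_0/\delta)^{d-k}$. The failure of Frostman with error $\delta^{-\e}$ then produces a gain $\delta^{-c\e}$ after summing over slabs.

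\textbf{Closing the estimate.} In both cases we obtain $|\mathcal F|_\delta\ge\delta^{\eta+\rho-(d-k)-c(\xi,\e)}M$, and we set $\eta'=\eta-c$ with $c=c(s,\eta,\xi)>0$ chosen uniformly. The main obstacle is the slablet step. The rescaled configuration inside $\mathbf T$ is not a priori a plane configuration of the required form, because points are anisotropically stretched. The product structure must be arranged via the coordinates \eqref{eq:affine-incidence-coordinates} so that the problem reduces to projections $\rho_\theta$ of sets in $\R^{d-k}$. We must also verify that the loss parameters ($\e$ versus $\rho_0$, and the $\Delta$ condition absorbing uniformization losses from Lemma \ref{lem:uniformization}) are compatible. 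I would first try choosing $\e\ll\xi\cdot\min(n/m,1-n/m)$, with $n/m$ bounded away from $0$ and $1$ using $f$ Lipschitz and $f(m)\approx(d-k)m$.
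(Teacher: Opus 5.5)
Your architecture (split at $\rho_0=\Delta^n$, rescale slabs to slablets, then a Frostman versus Katz--Tao dichotomy for the slablets) matches the paper's outline. The step that is supposed to produce the gain, however, does not work.

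\textbf{Coarse scale.} Here you apply $\mathsf{Furst}(s;\eta)$, which gives $\rho_0^{\eta+\rho-(d-k)}M_{\rho_0}$ fat planes no matter how many $\rho_0$-cubes there are. Multiplied by your fine count $(\delta/\rho_0)^{\eta+\rho-(d-k)}M/M_{\rho_0}$, this is exactly $\delta^{\eta+\rho-(d-k)}M$, with no improvement. Your claim that the surplus of $\rho_0$-cubes forces extra planes ``via \eqref{eq:affine-incidence-counting}'' is unfounded. That identity is bookkeeping for fixed directions, and many $\rho_0$-cubes (up to $\rho_0^{-k}$ in a single slab) can share the same fat planes. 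In fact no configuration can exceed $\delta^{-(d-k)}M$, so supercritical cardinality can only lower the \emph{loss}. The induction hypothesis and Lemma~\ref{lem:katz-tao-induction} are blind to excess or deficit in cardinality, so they cannot lower it by themselves. The only loss-free input in the whole argument is the sticky theorem, and your proposal never invokes it in the non-sticky case.

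The paper's coarse gain \eqref{eq:middle-scale-frostman-gain} comes from part (i) of Lemma~\ref{lem:away-from-critical}, applied with $t=d-k+\xi/4$ to the $\Delta^n$-discretization of $P$. That proof:
\begin{itemize}
\item decomposes the branching function via Lemma~\ref{lem:combinatorial-mixed-super-linear};
\item applies the sticky estimate, Lemma~\ref{lem:ahlfors-uniform-exponent}, on the linear pieces of slope at least about $d-k$;
\item uses the Katz--Tao form of $\mathsf{Furst}(s;\eta)$ only on the nearly critical superlinear pieces, whose total length is at most about $(d-t)/k$ of the scale range.
\end{itemize}
Your Katz--Tao case has the same defect. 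Lemma~\ref{lem:katz-tao-induction} gives a bound proportional to the number of points, so a subcritical slab yields a \emph{weaker} bound, not a gain.

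\textbf{Katz--Tao case.} What is missing is part (ii) of Lemma~\ref{lem:away-from-critical}, again built on the sticky theorem, and it must be used in a specific way.
\begin{itemize}
\item The paper regularizes the slablet branching function $g$ uniformly over all slabs and takes $a$ maximizing $(d-k)h-g(h)$.
\item In the case $(d-k)a-g(a)\ge\sigma(m-n)$, it applies part (ii) to the level-$a$ slablets in each slab. This shows that a single $\delta$-plane is incident to points in at most $\Delta^{-(\eta-\lambda_0)a-O(\e m)}$ cubes of $\mathcal D_{\Delta^a}(P)$, which is \eqref{eq:slablet-prefix-collision-target}.
\item Separately, Lemma~\ref{lem:katz-tao-induction} applied to each isotropically rescaled $P\cap Q$, $Q\in\mathcal D_{\Delta^a}(P)$, gives \eqref{eq:point-fiber-entropy-target}.
\item Double counting then yields a gain $\lambda_0 a\gtrsim\lambda_0\sigma\xi m$.
\end{itemize}

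\textbf{Threshold.} The dichotomy threshold cannot be $\delta^{-\e}$ as you propose. A Frostman defect of size $\delta^{-\e}$ yields a gain of order $\e$, and $\e$ depends on $\rho$, so $\eta'$ would depend on $\rho$; your $c(\xi,\e)$ versus $c(s,\eta,\xi)$ reflects this. The paper instead uses a fixed $\sigma$, with $2000\sigma$ equal to the $\e$ supplied by $\mathsf{Furst}(s;\eta)$ for $\rho=\eta\xi^2/(1024dk)$. This $\sigma$ is small enough that $\mathsf{Furst}(s;\eta)$ still applies to slablets that are Frostman with error $\Delta^{-\sigma(m-n)}$, yet it does not depend on $\rho$.
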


In the rest of this subsection we show that Proposition~\ref{prop:non-sticky-furst-gain} implies Theorem~\ref{thm:furst-general}. Proposition~\ref{prop:non-sticky-furst-gain} then be proved in the rest of this section. Figure \ref{fig:sticky-vs-non-sticky} illustrates the main idea: either $|f(n)-(d-k)n|\le \xi m$ for all $n\in[0,m]$, in which case we can apply the sticky theorem \cite[Theorem~1.9]{SpreadOutFurstSticky}, or $f(n)-(d-k)n>\xi m$ for some $n\in[0,m]$, in which case we can apply Proposition~\ref{prop:non-sticky-furst-gain}.

\begin{figure}[htbp]
\centering
\begin{tikzpicture}[x=1cm,y=1cm,>=stealth,line cap=round,line join=round]
    \node[font=\small] at (3.45,5.05) {sticky};
    \node[font=\small] at (10.75,5.05) {non-sticky at scale $\Delta^n$};

    \fill[blue!8] (0.65,2.00) rectangle (6.25,2.62);
    \draw[blue!35,dashed] (0.65,2.00) -- (6.25,2.00);
    \draw[blue!35,dashed] (0.65,2.62) -- (6.25,2.62);
    \draw[->] (0.65,2.31) -- (6.40,2.31) node[right,font=\scriptsize] {$x$};
    \draw[->] (0.65,0.90) -- (0.65,4.35) node[above,font=\scriptsize] {$h(x)=f(x)-(d-k)x$};
    \draw[blue!70!black,very thick] plot coordinates {
        (0.65,2.31) (1.35,2.43) (2.05,2.21) (2.75,2.49)
        (3.45,2.33) (4.15,2.12) (4.85,2.39) (5.55,2.25) (6.25,2.34)};
    \node[font=\scriptsize,blue!65!black,anchor=west,fill=white,inner sep=1pt] at (1.00,3.08)
        {$|h(x)|\le\xi m$};
    \node[font=\scriptsize,below] at (0.65,2.27) {$0$};
    \node[font=\scriptsize,below] at (6.25,2.27) {$m$};
    \node[draw=blue!55!black,fill=blue!7,rounded corners=2pt,
        font=\scriptsize] at (3.45,0.55) {apply the sticky theorem};

    \draw[->] (7.85,2.31) -- (13.65,2.31) node[right,font=\scriptsize] {$x$};
    \draw[->] (7.85,0.90) -- (7.85,4.35) node[above,font=\scriptsize] {$h(x)=f(x)-(d-k)x$};
    \draw[orange!85!black,very thick] plot coordinates {
        (7.85,2.31) (8.68,2.54) (9.48,3.13) (10.35,3.77)
        (10.85,4.02) (11.55,3.78) (12.48,2.68) (13.55,2.34)};
    \draw[purple,densely dashed] (10.85,2.31) -- (10.85,4.02);
    \node[font=\scriptsize,anchor=west,fill=white,inner sep=2pt] at (11.35,4.15)
        {$h(n)>\xi m$};
    \fill[purple] (10.85,4.02) circle (1.5pt);
    \draw[purple] (10.85,2.23) -- (10.85,2.39);
    \node[font=\scriptsize,below] at (7.85,2.27) {$0$};
    \node[font=\scriptsize,below,purple] at (10.85,2.17) {$n$};
    \node[font=\scriptsize,below] at (13.55,2.27) {$m$};
    \draw[<->,green!45!black,thick] (7.85,1.28) -- (10.85,1.28)
        node[midway,below,font=\scriptsize] {$n\ge\xi m/(2d)$};
    \draw[<->,green!45!black,thick] (10.85,1.28) -- (13.55,1.28)
        node[midway,below,font=\scriptsize] {$m-n\ge\xi m/(2d)$};
    \node[draw=orange!70!black,fill=orange!9,rounded corners=2pt,
        font=\scriptsize] at (10.75,0.55) {apply Proposition \ref{prop:non-sticky-furst-gain}};
\end{tikzpicture}
\caption{If $f(x)$ stays uniformly close to $(d-k)x$ we apply the sticky Theorem; otherwise, there is a scale $\Delta^n$ at which $f(n)$ is much larger than $(d-k)n$, and Proposition~\ref{prop:non-sticky-furst-gain} applies.}
\label{fig:sticky-vs-non-sticky}
\end{figure}

\begin{proof} [Proof of Theorem~\ref{thm:furst-general} assuming Proposition~\ref{prop:non-sticky-furst-gain}]

Recall the definitions of $(\Delta,m)$-uniform sets and branching functions in \S\ref{s:uniformization}.

Assume $\mathsf{Furst}(s;\eta)$ holds. By Lemma~\ref{lem:reduction-to-self-improvement}, it suffices to show that $\mathsf{Furst}(s;\eta')$ holds for some $0\le\eta'<\eta$ depending on $s$ and $\eta$.

By the sticky theorem \cite[Theorem~1.9]{SpreadOutFurstSticky}, there exists $\xi=\xi(s,\eta/2,d-k)>0$ such that, for all $\delta\ll_{s,\eta}1$, if $(P,\mathbf\Theta)$ is a $(\delta,s,\delta^{-3\xi},M)$-spread-out configuration, and $P$ is $(\delta,d-k,\delta^{-3\xi})$-regular, then
\begin{equation} \label{eq:sticky-gain}
    |\mathcal F(P,\mathbf\Theta)|_\delta\ge \delta^{\eta/2-(d-k)}M.
\end{equation}

Let $\eta'$ be the maximum of $\eta/2$ and the exponent provided by Proposition~\ref{prop:non-sticky-furst-gain} for this $\xi$; then $\eta'\in[\eta/2,\eta)$ depends on $d,s,\eta$ only. Fix $\rho>0$, and let $\e$ be the minimum of $\xi$ and the value provided by Proposition~\ref{prop:non-sticky-furst-gain}. Choose $\Delta\in 2^{-\N}$ such that
\[
\log_{1/\Delta}(4(d+1)\log(1/\Delta))<\e/2. 
\]
Let $(P,\mathbf\Theta)$ be a $(\delta,s,\delta^{-\e},M)$-spread-out configuration, where $P$ is $(\delta,d-k,\delta^{-\e/2})$-Frostman. We may assume without loss of generality that $\delta=\Delta^m$ for some $m\gg_{\eta,\rho,\xi,\e,\Delta}1$ (interpolating between powers of $\Delta$ incurs a harmless $O_\Delta(1)$ loss). 

By Lemma \ref{lem:uniformization} and our choice of $\Delta$, the set $P$ contains a $(\Delta,m)$-uniform subset $P_0$ with $|P_0|\ge\delta^{\e/2}|P|$. Therefore $P_0$ is $(\delta,d-k,\delta^{-\e})$-Frostman. If $|P_0|_{\delta}\le \delta^{-(d-k)}$, let $P'=P_0$. Otherwise, let $\Delta^j$ be the largest scale such that $|P_0|_{\Delta^j}\ge\delta^{-(d-k)}$, and choose one point of $P_0$ from each cube in $\mathcal D_{\Delta^j}(P_0)$ to form $P'$. Then $P'$ is $(\Delta,m)$-uniform and $(\delta,d-k,\delta^{-\e})$-Frostman, with $\delta^{-(d-k)}\le|P'|_\delta\le\Delta^{-d}\delta^{-(d-k)}$.

Let $f$ be the branching function of $P'$. Then \eqref{eq:Frostman-and-KT} holds for $f$ because $|P'|_{\delta}\lesssim_{\Delta} \delta^{-(d-k)}$. 

If $f(x)-(d-k)x\le \frac{3}{2}\xi m$ for every $x\in[0,m]$, then the Frostman condition on $P'$ and the definition of the branching function show that $P'$ is $(\delta,d-k,\delta^{-3\xi})$-regular if $m\gg 1$. Hence \eqref{eq:sticky-gain} applies to $(P',\mathbf\Theta|_{P'})$, and
\[
 |\mathcal F(P,\mathbf\Theta)|_\delta \ge |\mathcal F(P',\mathbf\Theta|_{P'})|_\delta \ge \delta^{\eta/2-(d-k)}M.
\]
Since $\eta'\ge\eta/2$, this establishes the bound in $\mathsf{Furst}(s;\eta')$ under the sticky assumption on $P'$. 

Otherwise, the non-sticky condition \eqref{eq:non-sticky} holds for $P'$, and Proposition~\ref{prop:non-sticky-furst-gain} applies to $(P',\mathbf\Theta|_{P'})$ to establish the bound in $\mathsf{Furst}(s;\eta')$.

\end{proof}

\subsection{Gains away from the critical exponent}

The critical size for the point set $P$ is $\delta^{-(d-k)}$. Assuming $\mathsf{Furst}(s;\eta)$, we get a gain in the Furstenberg bound for Frostman sets with $|P|_{\delta}\ge\delta^{-t}$ for some $t>d-k$, and for Katz--Tao sets with $|P|_{\delta}\le\delta^{-t}$ for some $t<d-k$. This will be a key ingredient in the proof of Proposition \ref{prop:non-sticky-furst-gain}.
\begin{lemma}\label{lem:away-from-critical}
Let $s\in(0,1]$ and $\eta>0$, and assume $\mathsf{Furst}(s;\eta)$.
\begin{enumerate}[label=\textup{(\roman*)},ref=\roman*]
    \item\label{it:frostman-away-from-critical}
    Let $t\in(d-k,d]$. There exists $\e>0$ such that the following holds whenever $\Delta\in2^{-\N}$ satisfies $\Delta\ll_{s,\eta,t}1$ and $m\gg_{s,\eta,t,\e,\Delta}1$. Write $\delta=\Delta^m$. If $(P,\mathbf\Theta)$ is a $(\delta,s,\delta^{-\e},M)$-spread-out configuration, where $P$ is a $(\delta,d-k,\delta^{-\e})$-Frostman set satisfying $|P|_\delta\ge\delta^{-t}$, then
    \begin{equation}\label{eq:frostman-away-conclusion}
        |\mathcal F(P,\mathbf\Theta)|_\delta
        \ge \delta^{\eta-\frac{3\eta(t-d+k)}{8k}-(d-k)}M.
    \end{equation}
    \item\label{it:katz-tao-away-from-critical}
    Let $t\in(0,d-k)$. There exists $\e>0$ such that the following holds whenever $\Delta\in2^{-\N}$ satisfies $\Delta\ll_{s,\eta,t}1$ and $m\gg_{s,\eta,t,\e,\Delta}1$. Write $\delta=\Delta^m$. If $(P,\mathbf\Theta)$ is a $(\delta,s,\delta^{-\e},M)$-spread-out configuration, where $P$ is a $(\delta,d-k,\delta^{-\e})$-Katz--Tao set satisfying $|P|_\delta\le\delta^{-t}$, then
    \begin{equation}\label{eq:katz-tao-away-conclusion}
        |\mathcal F(P,\mathbf\Theta)|_\delta
        \ge \delta^{\eta-\frac{3\eta(d-k-t)}{8(d-k)}}|P|_\delta M.
    \end{equation}
\end{enumerate}
\end{lemma}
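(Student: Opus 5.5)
The plan is a two-scale argument. The hypothesis $\mathsf{Furst}(s;\eta)$ is used at only one of two scales $\delta<r<1$. At the other scale we use the trivial bound $|\mathcal F|\gtrsim(\text{number of points})\cdot(\text{shading size})$, which is sharp because that scale has no excess or deficit relative to the critical exponent. Then the loss $\delta^{\eta}$ is replaced by $\delta^{\lambda\eta}$ for some $\lambda<1$, and the gain $\delta^{-(1-\lambda)\eta}$ is quantified in terms of $|t-(d-k)|$. Throughout we work with $\delta=\Delta^m$ and intermediate scales $r=\Delta^{j}$. We use Lemma~\ref{lem:uniformization} to pass to a $(\Delta,m)$-uniform subset at a cost of $\delta^{\e/2}$. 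Then Lemma~\ref{lem:branching-translation} lets us read Frostman and Katz--Tao conditions at every pair of scales off the branching function $f$.

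\emph{Case (i), Frostman with $|P|_\delta\ge\delta^{-t}$.} Since $f$ is $d$-Lipschitz with $f(0)=0$ and $f(m)\ge tm$, we locate a scale $r=\Delta^{j}$ with $r\approx\delta^{1-\lambda}$. Here $1-\lambda\gtrsim(t-d+k)/k$ is chosen so that the excess $f(m)-(d-k)m$ is concentrated between scales $r$ and $\delta$. That is, $f(m)-f(j)\ge (d-k)(m-j)+(t-d+k)m-O(\e m)$. Since the slope of $f$ is at most $d$ and the excess slope over $d-k$ is at most $k$, this is where a $1/k$ appears. Above scale $r$ the set $P^{(r)}$ is $(r,d-k,\delta^{-\e})$-Frostman, and after discarding points (as in the proof of Theorem~\ref{thm:furst-general} from the Proposition) it is also of critical size. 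Inside each $r$-cube, the rescaled set $r^{-1}(P\cap Q)$ is a set at scale $\delta/r$ with $\ge (\delta/r)^{-(d-k)-k}$-type density, i.e.\ essentially full in the $k$ excess directions.

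\emph{Step 1 (coarse scale).} Coarsen the shadings to $\mathcal D_r$, using Lemma~\ref{lem:spread-out-coarsening} after uniformizing the shading multiplicities (pigeonholing costs $\lessapprox_\delta 1$). This gives a spread-out $(r,s,r^{-O(\e)})$ configuration with shadings of size $M_r$. We then apply $\mathsf{Furst}(s;\eta)$ at scale $r$ with $\rho$ small, obtaining $|\mathcal F|_r\ge r^{\eta+\rho-(d-k)}M_r$.

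\emph{Step 2 (fine scale).} For each $r$-plane $\mathbf V$ in the coarse Furstenberg set, consider its preimage at scale $\delta$. Rescale by $r$ the pieces of $P$ in the $r$-cubes incident to $\mathbf V$, and use Lemma~\ref{lem:spread-out-sub-cube} to keep the rescaled shadings spread-out. Inside the cell, we bound the number of $\delta$-planes trivially by counting distinct translates: the $\delta$-points in an $r$-cube have full $(d-k)$-dimensional spread transverse to the planes. So the number of distinct $\delta$-planes is $\gtrsim (r/\delta)^{d-k}\cdot M/M_r$, without any loss. Multiplying Steps 1 and 2 gives $|\mathcal F|_\delta\gtrsim r^{\eta+\rho}\delta^{-(d-k)}M=\delta^{(1-\lambda')\eta+\rho'-(d-k)}M$. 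Choosing $\lambda'$ as above with constant $3/8$ (leaving room for the $O(\e)$ and $\rho$ losses) gives \eqref{eq:frostman-away-conclusion}.

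\emph{Case (ii), Katz--Tao with $|P|_\delta\le\delta^{-t}$.} The roles of the scales swap. The deficit $(d-k-t)m$ lets us find a scale $r$ such that above $r$ the branching is essentially trivial (few $r$-cubes, each handled by the trivial bound, with no overlaps between different $r$-cubes at that scale). Below $r$, the rescaled pieces are Katz--Tao of critical size. Then we apply Lemma~\ref{lem:katz-tao-induction} only inside $r$-cubes, at the effective scale $\delta/r=\delta^{\lambda}$, so the loss is $(\delta/r)^{\eta}$. The gain is in terms of $(d-k-t)/(d-k)$, since now the slope deficit below $d-k$ is at most $d-k$.

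The main obstacle is the multiplicative combination of Steps 1 and 2. It requires that distinct coarse $r$-planes, together with the fine counts inside them, produce distinct $\delta$-planes without overcounting. It also requires that the coarsened and rescaled shadings stay spread-out with $\delta^{-O(\e)}$ constants and roughly uniform multiplicities. I would try first to control this by pigeonholing $M_r$ and the per-$r$-plane incidence counts dyadically, and by using Lemma~\ref{lem:select-shading-from-average} to extract a uniformly spread-out shading in each cell.
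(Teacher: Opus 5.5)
Your proposal has a genuine gap: it assumes you can pick one scale $r=\Delta^j$ such that all of the excess $f(m)-(d-k)m$ sits in the block $[j,m]$, and such that $P$ is essentially full (branching slope $d$) on that block. The "trivial" count there would then be lossless. The hypotheses do not allow this choice. The Frostman condition gives only the lower bound $f(x)\ge(d-k)x-\e m$. Your inequality $f(m)-f(j)\ge(d-k)(m-j)+(t-d+k)m-O(\e m)$ would also need $f(j)\le(d-k)j+O(\e m)$, and nothing provides that.

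The decisive case is a $(\delta,t,\delta^{-\e})$-regular $P$ with $d-k<t<d$. Then $f(x)\approx tx$, your inequality forces $j=O(\e m/(t-d+k))$, i.e.\ $r\approx1$, and every block of scales has slope $t$. On such a block the genuinely trivial bound, $|\mathcal F|_\delta\gtrsim\delta^{k}|P|_\delta M$ (each $\delta$-slab meets $O(\delta^{-k})$ points), gives only $\delta^{k-t}M\ll\delta^{-(d-k)}M$. Your claim that the $\delta$-points of an $r$-cube have full $(d-k)$-dimensional spread transverse to the prescribed planes is a restricted projection statement for those directions. It follows from counting only when $P\cap Q$ is essentially full. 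Applying $\mathsf{Furst}(s;\eta)$ to critical pieces of such a $P$ gives no gain either, so for this $P$ your scheme has no block on which to gain. The same objection applies to (ii): a $t$-regular $P$ with $0<t<d-k$ has no block of trivial branching on which your coarse count would be lossless.

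The missing ingredient is the sticky estimate, Lemma~\ref{lem:ahlfors-uniform-exponent} (from \cite[Theorem~1.9]{SpreadOutFurstSticky}). It loses only $\delta^{\rho}$ on regular pieces of any exponent $t_j$, not just $t_j=d$. The paper uniformizes $P$ and applies Lemma~\ref{lem:combinatorial-mixed-super-linear} with slopes $d-k-\zeta$ and $d-k$. This splits all but $O(\zeta m)$ of $[0,m]$ into two kinds of block:
\begin{enumerate}[label=\textup{(\alph*)}]
\item blocks where $f$ has slope $\approx d-k$ and is $(d-k,O(\zeta))$-superlinear, so the rescaled pieces are critical Katz--Tao sets and Lemma~\ref{lem:katz-tao-induction} applies with loss $\eta$;
\item blocks where $f$ is linear with slope $t_j\ge d-k-2\zeta$, so the pieces are regular and the sticky lemma applies with loss $\rho$.
\end{enumerate}
The blocks are glued with the multiscale inequality \cite[Corollary~5.9]{SpreadOutFurstSticky}. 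Since $f$ is $d$-Lipschitz and $f(m)\ge(t-\zeta^2)m$, the critical blocks have total length at most $(d-t)m/k+O(\zeta m)$. This is where your $1/k$ really enters: as a bound on the total length of the critical blocks wherever they lie, not on the position of a single block.

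Part (ii) is the mirror image: reverse the branching function, use slopes $k-\zeta$ and $k$, and use the trivial bound $|\mathcal F_j|_{r_j}\ge M_j$ only on linear blocks with $t_j<\rho$. Your full-slope block (resp.\ trivial-branching block) is just the degenerate case $t_j=d$ (resp.\ $t_j\approx0$) of a linear block. The gluing issue you identify as the main obstacle is handled by the multiscale corollary, so it is not where the real difficulty lies.
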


Before proving the lemma, we recall some preliminaries. 

We use the following form of the Lipschitz-function decomposition in \cite[Lemma 8.5]{OS23}.
\begin{lemma} \label{lem:combinatorial-mixed-super-linear}
    Fix $d\in\N$ and $0<s<t<d$. For every $\e>0$ there is $\tau=\tau(\e,s,t)>0$ such that the following holds: for any non-decreasing $d$-Lipschitz function $f:[0,m]\to\R$ such that $f(0)=0$ and $f(x)\ge tx-\e m$ for every $x\in[0,m]$, there exist a sequence of intervals $\{[a_j,b_j]\}_{j=1}^{n}\subset [0,m]$ with $b_{j}\le a_{j+1}$ for all $1\le j<n$, such that:
\begin{enumerate}[label=\textup{(\roman*)},ref=(\roman*)]
    \item\label{it:combinatorial-mixed-block-types} For each $j$, one of the following alternatives holds:
        \begin{enumerate}[label=\textup{(\alph*)},ref=(\alph*)]
            \item $s_f(a_{j},b_{j}) =: t_j\ge s$ and $(f,a_{j},b_{j})$ is $(t_j,\e)$-linear.
            \item $s_f(a_{j},b_{j}) = s$ and $(f,a_{j},b_{j})$ is $(s,\e)$-superlinear.
        \end{enumerate}
    \item\label{it:combinatorial-mixed-length} $b_j - a_j \ge \tau m$ for all $j$.
    \item\label{it:combinatorial-mixed-coverage}
    \[
        \bigl|[0,m]\setminus \cup_j[a_j,b_j]\bigr|
        \lesssim \frac{\e}{t-s}m.
    \]
    \end{enumerate}
\end{lemma}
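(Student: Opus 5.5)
The plan is a rising-sun (one-sided minimum) decomposition of $f$ relative to the slope-$s$ line, followed by a standard almost-linear decomposition of what remains. I take as available the basic fact behind \cite[Lemma 8.5]{OS23}: for a $d$-Lipschitz $f$ on an interval $I$ and $\e'>0$, one can cut $I$ into $O_{\e',d}(1)$ intervals, all of length $\ge\tau_0(\e',d)|I|$. On all of them except a set of total length $\le\e'|I|$, $f$ is $(s_f,\e')$-linear.

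\emph{Step 1 (rising sun).} Set $h(x)=f(x)-sx$, which is $d$-Lipschitz, and $L(x)=\min_{y\in[x,m]}h(y)$. The set $U=\{h>L\}$ is relatively open, so it is a countable union of intervals $(a,b)$. On each one, $h(a)=h(b)=L(a)$ and $h(x)\ge h(a)$ for $x\in[a,b]$. Hence $s_f(a,b)=s$ and $f(x)-f(a)\ge s(x-a)$, so $(f,a,b)$ is exactly $(s,0)$-superlinear, which is alternative (b). On the complement $[0,m]\setminus U$ we have $h=L$, and $L$ is non-decreasing. So between any two points of the complement, $f$ has average slope $\ge s$.

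\emph{Step 2 (the hypothesis controls the bad length).} First, $L(0)=\min h\ge\min_x\bigl((t-s)x-\e m\bigr)\ge-\e m$. Also $L(m)=h(m)$, and $h(m)\ge(t-s)m-\e m$. Thus $L$ rises by at least $(t-s)m-O(\e m)$. Next I bound the length of the short components of $U$ (length $<\tau m$) and of the "fragmented" part of the complement. I would like to merge consecutive components of $U$ separated by short gaps into a single superlinear block with $\e$ slack. The intended mechanism is that inside a gap, $h$ rises by at most $d\cdot$(gap length). A merged block $[a,b']$ then has $h(b')-h(a)$ equal to the total rise of $L$ inside it. Its slope is then not exactly $s$, so I move $b'$ back to the last point where $h$ returns to within the slack. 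Blocks that remain shorter than $\tau m$ after merging are discarded. Their number is $O(1/\tau)$, but their lengths must be controlled via the increase of $L$ and the lower bound $f\ge tx-\e m$. The target is a total discarded length $\lesssim \e m/(t-s)$. The heuristic is that wherever $f$ fails to be locally steep, the slope-$t$ lower bound forces $L$ to have less total room to climb. Quantitatively, a region of length $\ell$ on which $h$ grows at rate less than $(t-s)/2$ costs $\gtrsim(t-s)\ell$ against a budget of $O(\e m)$.

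\emph{Step 3 (linear pieces on the rest).} On each maximal interval of the complement of the retained (b)-blocks, $h$ is essentially non-decreasing, so $f$ has slope $\ge s$ there. Apply the almost-linear decomposition with $\e'=\e$, on intervals of length $\ge\tau_0 \tau m$. Each resulting piece has slope $t_j\ge s$, up to an $O(\e)$ correction, and is $(t_j,\e)$-linear. Pieces whose slope dips slightly below $s$ can be absorbed into adjacent (b)-blocks or discarded, at cost $O(\e m/(t-s))$ by the same budget argument. Finally, choose $\tau=\tau(\e,s,t)$ as the product of the lengths from both steps; the ordering $b_j\le a_{j+1}$ is automatic.

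\emph{Main obstacle.} The main difficulty is the bookkeeping in Step 2. I need exact slope $s$ in (b) together with lengths $\ge\tau m$ and a coverage loss that is only $\lesssim\e m/(t-s)$, rather than depending on $\tau$. If the merging argument becomes messy, my first fallback is to iterate. I would apply the decomposition at a coarse resolution, then recurse inside the discarded short regions using the self-similarity of the hypothesis (a restricted copy of $f$ still lies above a slope-$t$ line up to $\e m$). This gives a geometric series of losses.
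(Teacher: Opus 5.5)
\textbf{There is a genuine gap.} Step 1 is sound except for one component. The argument breaks down in Step 2, and again in Step 3, which reuses it: the ``budget'' against which you charge discarded length does not exist.

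The hypothesis $f(x)\ge tx-\e m$ only limits how far $h=f-sx$ can fall below the line of slope $t-s$ through the origin. So it penalizes slow growth of $h$ only on initial segments; slow growth later on can be paid for by fast growth earlier. For instance, $f(x)=\min\{dx,tm\}$ satisfies the hypothesis with no error at all, yet $h$ decreases on an interval of length $(1-t/d)m$. Replacing the flat part by a fine sawtooth with slopes $0$ and $d$ and average slope slightly above $s$ gives a region of that length on which $h$ grows at a rate far below $(t-s)/2$, and every component of $U$ there is shorter than $\tau m$.

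The one place where the hypothesis genuinely enters is the rising-sun component containing $0$, and Step 1 mishandles it. If $0\in U$, that component is $[0,b_0)$ with $h(0)=0>L(0)=h(b_0)$. It is of neither type (a) nor (b) and must be discarded. Then $(t-s)b_0-\e m\le h(b_0)<0$ gives $b_0<\e m/(t-s)$. This is exactly the paper's remark on (iii): the initial segment is discarded only when its length is at most $\e m/(t-s)$, and the remaining discarded pieces have total length $O(\e m)$, uniformly in $t-s$. That second bound must come from a combinatorial mechanism that does not draw on the slope-$t$ hypothesis, and your proposal does not supply one.

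The gap also cannot be closed by ``taking $\tau$ as a product'' of the lengths from the two steps, because the parameters are circular. Suppose you retain the components of $U$ of length $\ge\lambda m$. There can then be about $1/\lambda$ gaps, so only gaps shorter than about $\e\lambda m$ may be discarded. The linear pieces in the remaining gaps then have length only $\gtrsim\tau_0\e\lambda m$. That is much less than the non-retained components of length just below $\lambda m$, which those pieces must absorb or avoid.

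The standard way out is to pigeonhole. Among $\lceil 1/\e\rceil$ disjoint windows $[r^{j+1}m,r^jm)$, with $r=r(\e,d)$ small, choose one whose components of $U$ have total length at most $\e m$. Retain the components longer than the window, and charge to the window whatever its components obstruct.

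Merging components across gaps also destroys the exact slope $s$ required in (b). Once $L$ has risen across a gap, $h$ stays strictly above $h(a)$ from then on, so there is no admissible point to move $b'$ back to. Absorbing pieces of slope slightly below $s$ into (b)-blocks has the same defect.

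A clean repair avoids merging altogether:
\begin{enumerate}
\item Keep the long components as exact $(s,0)$-superlinear blocks.
\item Apply the linear decomposition with error $c\e$ in the long gaps.
\item Move the endpoints of each linear piece into the closed set $K=\{h=L\}$. Crossing components below the window costs negligibly; discard a piece if more than half of it is lost to window components.
\end{enumerate}
Since $h=L$ is non-decreasing on $K$, each surviving piece has slope $\ge s$ exactly and is still $(t_j,\e)$-linear. The total loss is $\e m/(t-s)+O(\e m)\lesssim_d\e m/(t-s)$.
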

We comment on the bound in \ref{it:combinatorial-mixed-coverage}, which is not explicitly stated in \cite[Lemma 8.5]{OS23}. In the construction in \cite{OS23}, the initial segment $[0,a]$ is discarded only when  $a\le \e m/(t-s)$. The remaining discarded pieces have total length $O(\e m)$. This yields \ref{it:combinatorial-mixed-coverage}.

For sticky point sets, \cite[Theorem~1.9]{SpreadOutFurstSticky} and compactness yield a Furstenberg bound uniform over compact intervals of exponents, which we record in the following lemma.
\begin{lemma}\label{lem:ahlfors-uniform-exponent}
    Let $s\in(0,1]$, let $I\subset(0,d]$ be a non-empty compact interval, and let $\eta>0$. There is $\e=\e(s,I,\eta)>0$ such that the following holds for every $t\in I$ and $\delta\in2^{-\N}$ with $0<\delta\ll_{s,I,\eta,\e}1$. If $(P,\mathbf\Theta)$ is a $(\delta,s,\delta^{-\e},M)$-spread-out configuration and $P$ is $(\delta,t,\delta^{-\e})$-regular, then $|\mathcal F(P,\mathbf\Theta)|_\delta\ge\delta^{\eta-\min\{t,d-k\}}M$.
\end{lemma}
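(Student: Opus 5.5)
The plan is a compactness argument over $t\in I$ built on the sticky theorem \cite[Theorem~1.9]{SpreadOutFurstSticky}. That theorem gives, for each fixed exponent $t_0\in(0,d]$ and each $\eta_0>0$, a threshold $\e_0=\e_0(s,t_0,\eta_0)>0$ with the following property: every $(\delta,s,\delta^{-\e_0},M)$-spread-out configuration with $P$ being $(\delta,t_0,\delta^{-\e_0})$-regular satisfies
\[
|\mathcal F(P,\mathbf\Theta)|_\delta\ge\delta^{\eta_0-\min\{t_0,d-k\}}M
\]
for $\delta\ll_{s,t_0,\eta_0}1$. The only issue is that $\e_0$ depends on $t_0$. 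To remove this dependence, we show that regularity at exponent $t$ implies regularity at a nearby exponent $t_0$ with only a small loss in the constant.

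The key step is a perturbation estimate. Suppose $P$ is $(\delta,t,\delta^{-\e})$-regular and $|t-t_0|\le\kappa$. The Frostman bound $m_{P^{(\delta)}}(B(x,r))\le\delta^{-\e}r^{t}$ implies $m_{P^{(\delta)}}(B(x,r))\le\delta^{-\e}r^{t_0}\cdot r^{t-t_0}\le\delta^{-\e-\kappa}r^{t_0}$ for $r\ge\delta$. Similarly, the covering bound $|P\cap B(x,R)|_r\le\delta^{-\e}(R/r)^{t}$ implies $|P\cap B(x,R)|_r\le\delta^{-\e-\kappa}(R/r)^{t_0}$, since $R/r\le\delta^{-1}$. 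Hence $P$ is $(\delta,t_0,\delta^{-\e-\kappa})$-regular.

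The covering argument then goes as follows. For each $t_0\in I$, set $\e_0(t_0)=\e_0(s,t_0,\eta/2)$ and $\kappa(t_0)=\min\{\e_0(t_0)/2,\eta/2\}$. The open intervals $(t_0-\kappa(t_0),t_0+\kappa(t_0))$ cover the compact set $I$, so finitely many of them suffice; call their centres $t_1,\dots,t_N$. Define
\[
\e=\min_i\e_0(t_i)/2,
\]
and require $\delta$ to be small enough for all $N$ applications of the sticky theorem. Given $t\in I$, pick $i$ with $|t-t_i|<\kappa(t_i)$. The perturbation estimate shows that $P$ is $(\delta,t_i,\delta^{-\e_0(t_i)})$-regular, and the configuration is $(\delta,s,\delta^{-\e},M)$-spread-out with $\e\le\e_0(t_i)$. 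The sticky theorem therefore gives
\[
|\mathcal F|_\delta\ge\delta^{\eta/2-\min\{t_i,d-k\}}M.
\]
Finally, $\min\{\cdot,d-k\}$ is $1$-Lipschitz, so $\min\{t_i,d-k\}\ge\min\{t,d-k\}-\eta/2$, and hence $\delta^{\eta/2-\min\{t_i,d-k\}}\ge\delta^{\eta-\min\{t,d-k\}}$.

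There are two points where this could fail. The first is whether \cite[Theorem~1.9]{SpreadOutFurstSticky} covers all exponents $t_0\in(0,d]$, including $t_0>d-k$. If it does not, that range can be reduced to $t_0=d-k$ by passing to a regular subset of exponent $d-k$, but this should be unnecessary given how the theorem is stated. The second is that its notion of regularity may not be literally Definition \ref{def:Ahlfors-regular}. Any such mismatch should likewise be absorbed into the $\delta^{-O(\kappa)}$ perturbation of the constants.
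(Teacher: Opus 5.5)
Your proposal is correct and is essentially the paper's argument: the paper obtains this lemma in one line from \cite[Theorem~1.9]{SpreadOutFurstSticky} together with compactness in $t$, and your exponent-perturbation estimate plus finite subcover supply exactly the details it omits. One small fix: the bounds $r^{t-t_0}\le\delta^{-\kappa}$ and $R/r\le\delta^{-1}$ do not hold literally for $r,R>1$. Because $P\subset[0,1]^d$ and $m_{P^{(\delta)}}$ is a probability measure, those ranges cost at most an $O_d(1)$ factor, which you absorb by leaving some slack (e.g.\ covering $I$ by intervals of radius $\kappa(t_0)/2$, so the threshold on $\delta$ stays uniform in $t$).
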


\begin{proof}[Proof of Lemma \ref{lem:away-from-critical}]
    In both cases we assume $P\subset [0,1)^d$ without loss of generality.

\textup{(\ref{it:frostman-away-from-critical})}
    The gain from $t>d-k$ is analogous to the gain for Frostman tube families with more than $\delta^{-2}$ tubes in \cite[\S8]{GuthWangZahl26}. Let $\rho=\eta(t-d+k)/(8k)$ and note that $\frac{d+k-t}{2k}\eta+\rho=\eta-3\eta(t-d+k)/(8k)<\eta$.

    Choose $0<\zeta\ll_{s,t,\eta,\rho}1$. Let $\tau>0$ be the parameter in Lemma \ref{lem:combinatorial-mixed-super-linear}\textup{\ref{it:combinatorial-mixed-length}} for slopes $d-k-\zeta$ and $d-k$ and error $\zeta^2$. Set $\e=\tau\zeta^2/100$. Take $\Delta\ll_{s,\eta,t}1$ so that \cite[Corollary~5.9]{SpreadOutFurstSticky} applies with $J_0=\lceil20/\tau+20\rceil$ and parameter $\e$, and $\log_{1/\Delta}(4(d+1)\log(1/\Delta))\le\e$.

    Apply Lemma \ref{lem:uniformization} to obtain a $(\Delta,m)$-uniform subset $P'\subset P$. For $m\gg_{s,\eta,t,\e,\Delta}1$, the choices of $\Delta$ and $\e$ ensure that $|P'|_\delta\ge\delta^{\zeta^2-t}$ and that $P'$ is $(\delta,d-k,\delta^{-\zeta^2})$-Frostman. By Lemma \ref{lem:branching-translation}, its branching function $f$ satisfies
    \begin{equation}\label{eq:frostman-away-uniformized-branching}
        f(x)\ge(d-k)x-\zeta^2m\quad(0\le x\le m),
        \qquad f(m)\ge(t-\zeta^2)m.
    \end{equation}
    Lemma \ref{lem:combinatorial-mixed-super-linear}\ref{it:combinatorial-mixed-coverage} provides a decomposition of all but $O(\zeta m)$ of $[0,m]$ into disjoint intervals $[a_j,b_j]$. By \ref{it:combinatorial-mixed-length}, we have $b_j-a_j\ge\tau m$, and \ref{it:combinatorial-mixed-block-types} distinguishes two types of interval:
    \begin{enumerate}[label=\textup{(\alph*)},ref=(\alph*)]
        \item\label{it:frostman-away-superlinear-block} $|s_f(a_j,b_j)-(d-k-\zeta)|\le\zeta^2$ and $(f,a_j,b_j)$ is $(d-k,O(\zeta))$-superlinear;
        \item\label{it:frostman-away-linear-block} $(f,a_j,b_j)$ is $(t_j,O(\zeta))$-linear for some $t_j\ge d-k-2\zeta$.
    \end{enumerate}
    Let $\mathcal S$ denote the indices satisfying \ref{it:frostman-away-superlinear-block}. The lower bound on $f(m)$ in \eqref{eq:frostman-away-uniformized-branching} bounds the total length of the intervals with $j\in\mathcal S$. Using their slopes and the $d$-Lipschitz bound on their complement, we obtain
    \begin{equation}\label{eq:frostman-away-critical-length}
        \sum_{j\in\mathcal S}(b_j-a_j)
        \le\frac{d-t}{k}m+O(\zeta m).
    \end{equation}
    Rounding the endpoints, we may assume that $a_j,b_j\in\Z$.

    Apply \cite[Corollary~5.9]{SpreadOutFurstSticky}, with $J_0=\lceil20/\tau+20\rceil$ and parameter $\e$, to the increasing list formed by $0,m$ and the distinct scales $a_j,b_j$. Denote the intermediate configuration associated with $[a_j,b_j]$ by $(P_j,\mathbf\Theta_j)$, its shading parameter by $M_j$, and its Furstenberg set by $\mathcal F_j$, and write $r_j=\Delta^{b_j-a_j}$. The ratios in \cite[Eq. (5.10)]{SpreadOutFurstSticky} corresponding to the complementary scale intervals are $\gtrsim1$ and may be omitted. Hence
    \begin{equation}\label{eq:frostman-away-product}
        \frac{|\mathcal F(P',\mathbf\Theta|_{P'})|_\delta}{M}
        \gtrapprox_\delta
        \prod_j\frac{|\mathcal F_j|_{r_j}}{M_j}.
    \end{equation}

    The two types of interval play different roles. On intervals of type \ref{it:frostman-away-superlinear-block}, we use the Katz--Tao bound in Lemma \ref{lem:katz-tao-induction}. On intervals of type \ref{it:frostman-away-linear-block}, we use the sticky bound in Lemma \ref{lem:ahlfors-uniform-exponent}. The gain on the linear intervals, together with \eqref{eq:frostman-away-critical-length}, will improve the product in \eqref{eq:frostman-away-product}.

    Let $g_j$ be the branching function of $P_j$. The choice of $\e$ and the bound $b_j-a_j\ge\tau m$ imply
    \begin{equation}\label{eq:frostman-away-intermediate-spread-out}
        \mathbf\Theta_j(p)\text{ is }(r_j,s,r_j^{-\zeta^2})_{d,k}\text{-spread-out}
        \qquad(p\in P_j).
    \end{equation}
    We also have, by \cite[(5.11)]{SpreadOutFurstSticky},
    \begin{equation}\label{eq:frostman-away-intermediate-branching}
        |g_j(i)-f(a_j+i)+f(a_j)|
        \le\zeta (b_j-a_j),
        \qquad 0\le i\le b_j-a_j.
    \end{equation}

    If $j\in\mathcal S$, we deduce from condition \ref{it:frostman-away-superlinear-block} that
    \[
    \begin{aligned}
        g_j(b_j-a_j)-g_j(i)&\le(d-k)(b_j-a_j-i)+O(\zeta)(b_j-a_j)
            &&(0\le i\le b_j-a_j),\\
        g_j(b_j-a_j)&\ge(d-k-O(\zeta))(b_j-a_j).
    \end{aligned}
    \]
    Thus
    \begin{equation}\label{eq:frostman-away-superlinear-configuration}
        P_j\text{ is }(r_j,d-k,r_j^{-O(\zeta)})\text{-Katz--Tao},
        \qquad |P_j|\ge r_j^{-(d-k)+O(\zeta)}.
    \end{equation}
    By \eqref{eq:frostman-away-intermediate-spread-out} and \eqref{eq:frostman-away-superlinear-configuration}, Lemma \ref{lem:katz-tao-induction} applies with $\rho/10$ in place of $\rho$. Hence
    \[
        \frac{|\mathcal F_j|_{r_j}}{M_j}
        \gtrsim r_j^{\eta+\rho/10-(d-k)+O(\zeta)}.
    \]
    If $j\notin\mathcal S$, condition \ref{it:frostman-away-linear-block} yields
    \[
        |g_j(i)-t_j i|\le O(\zeta)(b_j-a_j),
        \qquad 0\le i\le b_j-a_j,
    \]
    so $P_j$ is $(r_j,t_j,r_j^{-O(\zeta)})$-regular by Lemma \ref{lem:branching-translation}\textup{(\ref{it:branching-linear-regular})}. For $\zeta\ll_{s,t,\eta,\rho}1$, Lemma \ref{lem:ahlfors-uniform-exponent} applies with $I=[(d-k)/2,d]$, exponent $t_j$, and $\rho$ in place of $\eta$, using \eqref{eq:frostman-away-intermediate-spread-out}. Since $t_j\ge d-k-2\zeta$, we obtain
    \[
        \frac{|\mathcal F_j|_{r_j}}{M_j}
        \ge r_j^{\rho-(d-k)+O(\zeta)}.
    \]

    The $O(\zeta (b_j-a_j))$ terms in the lower bounds for $|\mathcal F_j|_{r_j}/M_j$, the contribution of $[0,m]\setminus\bigcup_j[a_j,b_j]$, and the polylogarithmic loss in \eqref{eq:frostman-away-product} contribute $O_{t,\eta,\rho}(\zeta)$ to the exponent of $\delta=\Delta^m$. Using \eqref{eq:frostman-away-critical-length} and taking $\zeta\ll_{t,\eta,\rho}1$, we conclude \eqref{eq:frostman-away-conclusion}.

\medskip
\noindent\textup{(\ref{it:katz-tao-away-from-critical})}
If $\eta\ge d-k$, then
\[
    \eta-\frac{3\eta(d-k-t)}{8(d-k)}-t
    \ge\frac{5(d-k-t)}8>0,
\]
so $|P|_\delta\le\delta^{-t}$ and $|\mathcal F(P,\mathbf\Theta)|_\delta\ge M$ already establish \eqref{eq:katz-tao-away-conclusion}. Assume henceforth that $0<\eta<d-k$.

We again separate linear and superlinear intervals, this time after reversing the branching function. Let $\rho=\eta(d-k-t)/(8(d-k))$, so $0<\rho<d$. Choose $\zeta,\tau,\e$, and $\Delta$ as in part~\textup{(\ref{it:frostman-away-from-critical})}, with $I=[\rho,d]$ in Lemma \ref{lem:ahlfors-uniform-exponent} and slopes $k-\zeta$ and $k$ in Lemma \ref{lem:combinatorial-mixed-super-linear}.

Apply Lemma \ref{lem:uniformization} to obtain a $(\Delta,m)$-uniform subset $P'\subset P$. For $m\gg_{s,\eta,t,\e,\Delta}1$, we have $|P'|_\delta\ge\delta^{\zeta^2}|P|_\delta$, and $P'$ is $(\delta,d-k,\delta^{-\zeta^2})$-Katz--Tao. Its branching function $f$ satisfies
    \begin{equation}\label{eq:katz-tao-away-uniformized-branching}
        f(m)-f(x)\le(d-k)(m-x)+\zeta^2m
        \quad(0\le x\le m),
        \qquad f(m)\le tm.
    \end{equation}

    Apply Lemma \ref{lem:combinatorial-mixed-super-linear} to $dy-f(m)+f(m-y)$ and reflect the resulting intervals about $m/2$. If $\mathcal S$ denotes the reflected superlinear blocks, then
    \begin{equation}\label{eq:katz-tao-away-block-properties}
    \begin{aligned}
        s_f(a_j,b_j)&=d-k+\zeta,\quad
        (f,a_j,b_j)\text{ is }(d-k+\zeta,\zeta^2)\text{-sublinear}
        &&(j\in\mathcal S),\\
        (f,a_j,b_j)&\text{ is }(t_j,\zeta^2)\text{-linear for some }t_j\le d-k+\zeta
        &&(j\notin\mathcal S).
    \end{aligned}
    \end{equation}
    Since $f$ is non-decreasing, we deduce from \eqref{eq:katz-tao-away-uniformized-branching} that
    \begin{equation}\label{eq:katz-tao-away-critical-length}
        \sum_{j\in\mathcal S}(b_j-a_j)
        \le\frac{t}{d-k+\zeta}m
        \le\frac{t}{d-k}m.
    \end{equation}

    Apply \cite[Corollary~5.9]{SpreadOutFurstSticky} to $0,m$ and the distinct scales $a_j,b_j$. Use the same notation as in part \textup{(\ref{it:frostman-away-from-critical})}, so that \eqref{eq:frostman-away-product}, \eqref{eq:frostman-away-intermediate-spread-out}, and \eqref{eq:frostman-away-intermediate-branching} remain valid.

    For $j\in\mathcal S$, \eqref{eq:katz-tao-away-block-properties} and \eqref{eq:frostman-away-intermediate-branching} show that
    \begin{equation}\label{eq:katz-tao-away-superlinear-configuration}
        P_j\text{ is }(r_j,d-k,r_j^{-O(\zeta)})\text{-Katz--Tao}.
    \end{equation}
    By \eqref{eq:frostman-away-intermediate-spread-out} and \eqref{eq:katz-tao-away-superlinear-configuration}, we may apply Lemma \ref{lem:katz-tao-induction} to get
    \begin{equation}\label{eq:katz-tao-away-superlinear-bound}
        \frac{|\mathcal F_j|_{r_j}}{M_j}
        \gtrsim\Delta^{(\eta+\rho/10)(b_j-a_j)-g_j(b_j-a_j)}.
    \end{equation}
    For $j\notin\mathcal S$ with $t_j\ge\rho$, equations \eqref{eq:frostman-away-intermediate-branching} and \eqref{eq:katz-tao-away-block-properties}, together with Lemma \ref{lem:branching-translation}\textup{(\ref{it:branching-linear-regular})}, show that $P_j$ is $(r_j,t_j,r_j^{-O(\zeta)})$-regular. Taking $\zeta\ll_{s,t,\eta,\rho}1$ and using \eqref{eq:frostman-away-intermediate-spread-out}, we may apply Lemma \ref{lem:ahlfors-uniform-exponent} with $\rho$ in place of $\eta$. When $t_j<\rho$, use $|\mathcal F_j|_{r_j}\ge M_j$. In either case,
    \begin{equation}\label{eq:katz-tao-away-linear-bound}
        \frac{|\mathcal F_j|_{r_j}}{M_j}
        \ge\Delta^{-g_j(b_j-a_j)+(\rho+O(\zeta))(b_j-a_j)}.
    \end{equation}
    Multiplying \eqref{eq:katz-tao-away-superlinear-bound} and \eqref{eq:katz-tao-away-linear-bound} over their respective index sets in \eqref{eq:frostman-away-product}, and using \eqref{eq:katz-tao-away-critical-length}, we find
    \[
        \frac{|\mathcal F(P',\mathbf\Theta|_{P'})|_\delta}{M}
        \gtrapprox_\delta
        \Delta^{-f(m)+[\rho+(\eta+\rho/10)t/(d-k)+O_{t,\eta,\rho}(\zeta)]m}.
    \]
    Since
    \[
        \rho+\left(\eta+\frac{\rho}{10}\right)\frac{t}{d-k}
        <\eta-3\rho,
        \qquad
        \Delta^{-f(m)}=|P'|_\delta\ge\delta^{\zeta^2}|P|_\delta,
    \]
    for $\zeta\ll_{t,\eta,\rho}1$, \eqref{eq:katz-tao-away-conclusion} follows.
\end{proof}

\subsection{Refinement combinatorics}
\label{s:entropy-setup}

Let $(P,\mathbf\Theta)$ be as in Proposition \ref{prop:non-sticky-furst-gain}. We will iteratively refine the incidence set $\mathcal I_0=\{(p,\theta):p\in P,\ \theta\in z\mathbf\Theta(p)\}$.
A subset $\mathcal{J}'\subset\mathcal{J}\subset\mathcal{I}_0$ is a \emph{refinement} of $\mathcal J$. If $|\mathcal J'|\ge\kappa|\mathcal J|$, it is a \emph{$\kappa$-refinement}, denoted $\mathcal J'\prec_\kappa\mathcal J$.

For $\mathcal J\subset\mathcal I_0$, let $P_{\mathcal J}$ be its projection to the $P$-coordinate, and write
\[
    \Theta_{\mathcal J}(p)=\{\theta:(p,\theta)\in\mathcal J\},
    \qquad
    \mathcal F_{\mathcal J}=\{V_\theta+p:(p,\theta)\in\mathcal J\}.
\]
We regard expressions in $p,\theta$ as maps on $\mathcal J$. If $X,Y$ are such maps, define
\[
    X(\mathcal J\mid Y=y)
    =\{X(p,\theta):(p,\theta)\in\mathcal J,\ Y(p,\theta)=y\}.
\]
For $0\le i\le m$, denote the smallest shading size at scale $\Delta^i$ by $M_{\mathcal J}(i)=\min_{p\in P_{\mathcal J}}|\mathcal D_{\Delta^i}(\Theta_{\mathcal J}(p))|$.

We also define
\[
    \mathcal E_{\mathcal J}(i)=
    \log_{1/\Delta}
    \frac{|\mathcal D_{\Delta^i}(\mathcal F_{\mathcal J})|}
    {M_{\mathcal J}(i)},
    \qquad 0\le i\le m.
\]
Our goal is to show that $\mathcal E_{\mathcal I_0}(m)\ge(d-k-\eta'-\rho)m$. We will prove the bound for a suitable refinement $\mathcal J$. To transfer it to $\mathcal I_0$, we need to keep $M_{\mathcal J}(m)$ roughly constant through the refinements.

\begin{definition}[Protected maps]
Let $Y$ be a map on $\mathcal{J}\subset\mathcal{I}_0$. For $\mathcal J'\subset\mathcal J$, we say that $Y$ is \emph{$\kappa$-protected from $\mathcal J$ to $\mathcal J'$} if
\[
    |Y^{-1}(y)\cap \mathcal{J}'|\ge\kappa\cdot |Y^{-1}(y)\cap\mathcal{J}| \quad\text{for every }y\in Y(\mathcal J').
\]
\end{definition}

Protection is multiplicative: if $Y$ is $\kappa_1$-protected from $\mathcal J_0$ to $\mathcal J_1$ and $\kappa_2$-protected from $\mathcal J_1$ to $\mathcal J_2$, then $Y$ is $\kappa_1\kappa_2$-protected from $\mathcal J_0$ to $\mathcal J_2$.

\begin{definition}\label{def:uniform-pair}
For $K\ge1$ and maps $X,Y$ on $\mathcal J\subset\mathcal{I}_0$, we say that $X$ is \emph{$K$-uniform over $Y$} if, for every $y\in Y(\mathcal J)$ and $x_1,x_2\in X(\mathcal J\mid Y=y)$,
\[
    \bigl| (X,Y)^{-1}(x_2,y)\cap \mathcal{J}\bigr| \le K
    |(X,Y)^{-1}(x_1,y)\cap \mathcal{J}|.
\]

\end{definition}

Refining an incidence set can greatly reduce individual fibers, even when the total loss is small. Lemma \ref{lem:protected-fiber-refinement} discards the fibers that lose too many incidences.
\begin{lemma}\label{lem:protected-fiber-refinement}
    Let $\emptyset\ne\widetilde{\mathcal J}\subset\mathcal J$, and let $Z_1,\ldots,Z_R$ be maps on $\mathcal J$. There is $\mathcal J'\prec_{1/2}\widetilde{\mathcal J}$ such that each $Z_i$ is $\frac{|\widetilde{\mathcal J}|}{2R|\mathcal J|}$-protected from $\mathcal J$ to $\mathcal J'$.
\end{lemma}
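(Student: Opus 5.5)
The plan is a single deletion step: for each map $Z_i$ we discard the fibers of $Z_i$ on which $\widetilde{\mathcal J}$ retains too small a fraction of $\mathcal J$, and then check that the total mass discarded is at most half of $\widetilde{\mathcal J}$. Set $\kappa=\frac{|\widetilde{\mathcal J}|}{2R|\mathcal J|}$. For each $i$, call a value $z$ \emph{bad for $Z_i$} if
\[
|Z_i^{-1}(z)\cap\widetilde{\mathcal J}|<\kappa\,|Z_i^{-1}(z)\cap\mathcal J|.
\]
Define
\[
\mathcal J'=\widetilde{\mathcal J}\setminus\bigcup_{i=1}^R\bigcup_{z\text{ bad for }Z_i}Z_i^{-1}(z).
\]

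First we bound the mass removed. For a fixed $i$, summing over the bad $z$ gives
\[
\sum_{z\text{ bad}}|Z_i^{-1}(z)\cap\widetilde{\mathcal J}|<\kappa\sum_z|Z_i^{-1}(z)\cap\mathcal J|\le\kappa|\mathcal J|=\frac{|\widetilde{\mathcal J}|}{2R}.
\]
Summing over $i=1,\dots,R$, at most $|\widetilde{\mathcal J}|/2$ elements are removed. Hence $|\mathcal J'|\ge|\widetilde{\mathcal J}|/2$, i.e. $\mathcal J'\prec_{1/2}\widetilde{\mathcal J}$.

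Next we check protection, which is where the deletion must be done with care. If we only removed the bad fibers of $Z_i$, the later removals for other indices $j$ could thin the surviving good fibers of $Z_i$. A single pass with the threshold measured against $\widetilde{\mathcal J}$ does not control this directly. There are two ways to handle it. The simpler fix is to make the threshold self-consistent by iterating: repeatedly delete any fiber $Z_i^{-1}(z)$ (for any $i$) whose current intersection with the remaining set is less than $\kappa|Z_i^{-1}(z)\cap\mathcal J|$. This process terminates, since the set strictly shrinks.

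At termination, every surviving value $z\in Z_i(\mathcal J')$ satisfies $|Z_i^{-1}(z)\cap\mathcal J'|\ge\kappa|Z_i^{-1}(z)\cap\mathcal J|$, which is exactly $\kappa$-protection from $\mathcal J$ to $\mathcal J'$.

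For the size bound, each fiber of each $Z_i$ is deleted at most once. When a fiber is deleted, it contains fewer than $\kappa|Z_i^{-1}(z)\cap\mathcal J|$ remaining elements. The total number of deleted elements is therefore less than
\[
\sum_i\sum_z\kappa|Z_i^{-1}(z)\cap\mathcal J|=R\kappa|\mathcal J|=|\widetilde{\mathcal J}|/2,
\]
so $|\mathcal J'|>|\widetilde{\mathcal J}|/2$. This also shows $\mathcal J'\neq\emptyset$.

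The only delicate point is the interaction between deletions for different maps, and the iterative greedy deletion, charged fiber by fiber against $\mathcal J$, handles it. The rest is routine counting.
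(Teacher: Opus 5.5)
Your final argument is correct and is essentially the paper's proof: iteratively delete any fiber $Z_i^{-1}(z)$ whose surviving part is below $\kappa\,|Z_i^{-1}(z)\cap\mathcal J|$, then charge each fiber at most once against $\mathcal J$ (the opening single-pass construction is superfluous and can be dropped). One small fix: delete a fiber only when its current intersection with the remaining set is nonempty, as the paper does by requiring this intersection to be positive, since otherwise ``the set strictly shrinks'' and ``each fiber is deleted at most once'' are not literally true.
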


\begin{proof}
    Start with $\mathcal J_0=\widetilde{\mathcal J}$. Whenever a fiber $A=Z_i^{-1}(z)$ satisfies $0<|\mathcal J_j\cap A|<|\widetilde{\mathcal J}||A|/(2R|\mathcal J|)$, delete $\mathcal J_j\cap A$. Stop when no such fiber remains. Each fiber is deleted at most once, so the deletions for each $i$ remove at most $|\widetilde{\mathcal J}|/(2R)$ incidences. Summing over $i$ completes the proof.
\end{proof}

\subsection{Sparse scales and tree pigeonholing}
\label{s:hyperdyadic-scales}

For $0<\tau<1/10$ and $n\in\N$, define
\[
    \mathcal H_\tau(n)=
    \left\{\left\lfloor\frac{jn}{\lceil1/\tau\rceil}\right\rfloor:
    j=0,\ldots,\lceil1/\tau\rceil\right\}.
\]
Then $0,n\in\mathcal H_\tau(n)$, $|\mathcal H_\tau(n)|=O_\tau(1)$, and consecutive elements differ by at most $\tau n+1$. We will use the sparse set of scales $\mathcal H_\tau(n)$ in order to simultaneously uniformize the branching of many trees. Taking $\tau$ small enough, we will be able to interpolate to all integer scales with a small loss.


 This choice of scales is inspired by \cite[Lemma~2.13]{WangWu26TwoEnds} and \cite[\S2.3]{GuthWangZahl26}. There are few enough indices to regularize all the relevant fibers with $\Delta^{o(n)}$ loss. At the same time, the gaps are short enough to interpolate to every integer scale with a loss of at most $\Delta^{-O(\tau n)}$.

\begin{definition}
    Let $Y$ and $(X_h)_{h\in\mathcal H_\tau(n)}$ be maps on $\mathcal J\subset\mathcal I_0$. We say that $(X_h)_{h\in\mathcal H_\tau(n)}$ is an \emph{$\mathcal H_\tau(n)$-indexed $\mathcal J$-tree over $Y$} if $X_0$ is determined by $Y$, and $X_h$ is determined by $X_{h'}$ whenever $h<h'$ are consecutive in $\mathcal H_\tau(n)$.
\end{definition}

For fixed $y\in Y(\mathcal J)$, the sets $X_h(\mathcal J\mid Y=y)$ are levels of a rooted tree. The level-$0$ set is a singleton, and every other vertex has a unique parent. It is useful to think that the tree is defined also at values $h\notin \mathcal H_\tau(n)$, but we only record the sparse set of levels $\mathcal H_\tau(n)$. In the applications, the vertices are nested cubes or slablets.

The number of offspring may vary from vertex to vertex and from one value of $y$ to another. Lemma \ref{lem:hyperdyadic-fiber-regularization} refines the incidence set so that a single branching function describes the descendant counts in each family of trees, up to a factor $\Delta^{-o(n)}$.

\begin{lemma}\label{lem:hyperdyadic-fiber-regularization}
Fix $0<\tau<1/10$ and integers $n,R\ge1$. Let $\emptyset\ne\mathcal J\subset\mathcal I_0$. For each $1\le r\le R$, let $Y_r$ and $(X_h^r)_{h\in\mathcal H_\tau(n)}$ be maps on $\mathcal J$, with $(X_h^r)_h$ a tree over $Y_r$. Let $\mathcal Z$ be a finite family of maps on $\mathcal J$, and set
\[
    A=\max\left\{R,\frac{|\mathcal Z|}{|\mathcal H_\tau(n)|},
                     \frac{\log_{1/\Delta}|\mathcal J|}{n}\right\}.
\]
If $n\gg_{A,\tau,\Delta}1$, there is $\mathcal J'\prec_{\Delta^{\tau n}}\mathcal J$ such that:
\begin{enumerate}[label=\textup{(\alph*)},ref=\alph*]
    \item\label{it:hyperdyadic-fiber-comparability} the nonempty fibers of every $Y_r$ have mutually $\Delta^{-\tau n}$-comparable cardinalities;
    \item\label{it:hyperdyadic-fiber-branching} for each $r$ there is a non-decreasing $g_r:\mathcal H_\tau(n)\to\R$, with $g_r(0)=0$, such that
    \[
        \left|\log_{1/\Delta}
        \bigl|\,X_{h'}^r(\mathcal J'\mid (Y_r,X_h^r)=(y,x))\,\bigr|
        -[g_r(h')-g_r(h)]\right|\le\tau n
    \]
    whenever $h\le h'$ belong to $\mathcal H_\tau(n)$ and $(y,x)\in(Y_r,X_h^r)(\mathcal J')$;
    \item\label{it:hyperdyadic-fiber-protection} every map in
    $\mathcal Z\cup\{Y_r:1\le r\le R\}$ is $\Delta^{\tau n}$-protected from $\mathcal J$ to $\mathcal J'$.
\end{enumerate}
\end{lemma}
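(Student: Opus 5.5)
The plan is to prove Lemma \ref{lem:hyperdyadic-fiber-regularization} by iterated dyadic pigeonholing. The number of pigeonholing steps will be bounded by a quantity depending only on $R$, $|\mathcal Z|$ and $|\mathcal H_\tau(n)|=O_\tau(1)$. Each step loses a factor polynomial in $\log|\mathcal J|\lesssim An\log(1/\Delta)$. The total loss is therefore $(An)^{O_{A,\tau}(1)}$, which is at most $\Delta^{-\tau n/10}$ once $n\gg_{A,\tau,\Delta}1$. The protection in (\ref{it:hyperdyadic-fiber-protection}) is obtained at the end, and compatibility between protection and further refinement is handled by Lemma \ref{lem:protected-fiber-refinement}.

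\emph{Step 1 (uniformizing the trees).} Fix $r$ and $y$, and process the levels $h_0=0<h_1<\dots<h_L=n$ of $\mathcal H_\tau(n)$ from the bottom up, as in the proof of Lemma \ref{lem:uniformization}. At the deepest level, each vertex $x\in X^r_{h_L}$ carries the weight $|(Y_r,X^r_{h_L})^{-1}(y,x)\cap\mathcal J|$. Pigeonhole a dyadic range for this weight, keeping the range that retains a $\gtrsim 1/\log|\mathcal J|$ fraction of the mass. Then, for each vertex at level $h_{L-1}$, pigeonhole the number of surviving children into a dyadic range, again keeping a popular range. Continue up to the root.

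The result for this $y$ is that every vertex at level $h_\ell$ has between $N_\ell(y)$ and $2N_\ell(y)$ children, and the leaves have comparable weights. Next pigeonhole over $y$ the vector $(N_\ell(y))_\ell$ together with $\log$ of the total fiber size. This vector takes $O(\log|\mathcal J|)^{L+2}$ values, so one popular value can be fixed. This yields (\ref{it:hyperdyadic-fiber-comparability}) for $Y_r$, and (\ref{it:hyperdyadic-fiber-branching}) for $r$ with $g_r(h_\ell)=\sum_{i<\ell}\log_{1/\Delta}N_i$ up to additive $O(L)$. Descendant counts from $h$ to $h'$ are then products of the branching numbers. Repeat this for $r=1,\dots,R$ successively.

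\emph{Step 2 (stability under later refinements).} This is the main obstacle. Pigeonholing for tree $r'$ destroys the uniformity already obtained for tree $r<r'$, and the protection step destroys it as well. I would handle this by an outer loop. At each round, re-uniformize all $R$ trees and then apply Lemma \ref{lem:protected-fiber-refinement} to $\mathcal Z\cup\{Y_r\}$ together with all $(Y_r,X^r_h)$. Track a potential, such as the vector of the $O(RL)$ quantities $\log_{1/\Delta}$ of the branching numbers. This potential is monotone non-increasing under refinement, up to the constant errors from pigeonholing. Whenever a round destroys the previous uniformity by more than a factor $\Delta^{-\tau n/(10RL)}$, some coordinate must drop by at least $\tau n/(10RL)$ in the exponent. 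Since each coordinate lies in $[0,dn]$, only $O_{R,\tau}(1)$ destructive rounds can occur. After that, the estimates in (\ref{it:hyperdyadic-fiber-comparability}) and (\ref{it:hyperdyadic-fiber-branching}) hold with error $\tau n$.

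If that potential argument is awkward, the alternative is a cleaner route: make a $1/2$-refinement that keeps the popular parameters, and argue that losing only half the mass changes fiber counts only on fibers that Lemma \ref{lem:protected-fiber-refinement} deletes anyway.

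\emph{Step 3 (bookkeeping).} The total density lost is $\Delta^{O(\tau n/10)}\cdot(An)^{-O_{A,\tau}(1)}\ge\Delta^{\tau n}$ for $n$ large. Protection of each map is multiplicative across rounds, with constant factor losses per round. Monotonicity of $g_r$ with $g_r(0)=0$ holds because $X_0$ is determined by $Y$, and child counts are at least $1$.
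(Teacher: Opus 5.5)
There is a genuine gap in Step~2. Your termination argument depends on the claim that any round which breaks the uniformity of some tree by more than $\Delta^{-\tau n/(10RL)}$ forces one of the branching exponents to drop by $\tau n/(10RL)$. That claim is false. Suppose tree $r$ loses uniformity because the pigeonholing for another tree stripped children from vertices that carry little mass. Then the next re-uniformization selects the same popular dyadic class and discards those vertices, so every $N_\ell$ is unchanged. The pigeonholing for the other trees can then break uniformity again. Nothing bounds the number of such rounds. Each round only guarantees that a $(\log|\mathcal J|)^{-O(RL)}$ fraction survives, so the bookkeeping in Step~3 fails. Your fallback ``cleaner route'' is too vague to replace the argument.

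The missing idea is that Lemma~\ref{lem:protected-fiber-refinement} does not destroy uniformity, contrary to what you assert. It transfers uniformity, provided protection is measured against a superset on which the fibers are already comparable. Concretely, Step~1 done sequentially gives $\mathcal J\supset\mathcal J_1\supset\cdots\supset\mathcal J_R$. In $\mathcal J_r$, the nonempty $(Y_r,X^r_h)$-fibers have sizes within $2^{O(L)}$ of some $c_{r,h}$. Now apply Lemma~\ref{lem:protected-fiber-refinement} once, to $\widetilde{\mathcal J}=\mathcal J_R\subset\mathcal J$, with all the maps $(Y_r,X^r_h)$ and those in $\mathcal Z$. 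Protection from $\mathcal J$ with $\kappa\ge(An\log(1/\Delta))^{-O_{A,\tau}(1)}$ implies protection from every intermediate $\mathcal J_r\supset\mathcal J'$. Hence every nonempty $(Y_r,X^r_h)$-fiber in $\mathcal J'$ has size in $[\kappa 2^{-O(L)}c_{r,h},\,2^{O(L)}c_{r,h}]$. The $(Y_r,X^r_h)$-fiber of $(y,x)$ in $\mathcal J'$ is partitioned into nonempty $(Y_r,X^r_{h'})$-fibers. So its number of level-$h'$ descendants is within a factor $\kappa^{-1}2^{O(L)}\le\Delta^{-\tau n}$ of $c_{r,h}/c_{r,h'}$. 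This yields (a)--(c) with no outer loop and no potential function.

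The paper is more direct still: it never uniformizes the trees bottom-up. It pigeonholes each incidence by the vector of dyadic classes of its $\mathcal J$-fiber cardinalities under all $O_{A,\tau}(1)$ maps $(Y_r,X^r_h)$; there are only $(n+2)^{O_{A,\tau,\Delta}(1)}$ such classes. It then applies Lemma~\ref{lem:protected-fiber-refinement} once and reads off $g_r$ from ratios of fiber sizes.
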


\begin{proof}
The joint pigeonholing of fiber cardinalities is analogous to the common branching regularization in \cite[Lemma~1.10]{WangWu24Restriction}. Pigeonhole jointly the $\mathcal J$-fiber cardinalities of $(Y_r,X_h^r)$, $1\le r\le R$, $h\in\mathcal H_\tau(n)$. Since $R|\mathcal H_\tau(n)|=O_{A,\tau}(1)$ and $|\mathcal J|\le\Delta^{-An}$, there are $(n+2)^{O_{A,\tau,\Delta}(1)}$ joint dyadic classes. Choose one determining $\widetilde{\mathcal J}\prec_{\Delta^{o_{A,\tau,\Delta}(n)}}\mathcal J$; for each $(Y_r,X_h^r)$, the $\mathcal J$ fibers meeting $\widetilde{\mathcal J}$ have cardinalities differing by a factor less than $2$.

Apply Lemma \ref{lem:protected-fiber-refinement} to $\widetilde{\mathcal J}\subset\mathcal J$ with the maps $(Y_r,X_h^r)$ and those in $\mathcal Z$. We obtain $\mathcal J'\prec_{\Delta^{o_{A,\tau,\Delta}(n)}}\mathcal J$, with every map in $\mathcal Z\cup\{(Y_r,X_h^r):1\le r\le R,\ h\in\mathcal H_\tau(n)\}$ $\Delta^{o_{A,\tau,\Delta}(n)}$-protected from $\mathcal J$ to $\mathcal J'$. Protection and the factor-$2$ comparability of the $\mathcal J$ fibers imply that the nonempty $(Y_r,X_h^r)$ fibers on $\mathcal J'$ have mutually $\Delta^{-o_{A,\tau,\Delta}(n)}$-comparable cardinalities. Since $(Y_r,X_0^r)$ and $Y_r$ have the same fibers, this establishes \textup{(\ref{it:hyperdyadic-fiber-comparability})} and \textup{(\ref{it:hyperdyadic-fiber-protection})} for $n\gg_{A,\tau,\Delta}1$.

For fixed $r$ and $h$, the numbers of level-$n$ descendants of the nonempty $(Y_r,X_h^r)$ fibers are $\Delta^{-o(n)}$-comparable. Set $g_r(n)-g_r(h)$ equal to the maximum of their $\log_{1/\Delta}$-cardinalities, with $g_r(0)=0$. By nesting, $g_r$ is non-decreasing, and partitioning level-$n$ descendants according to their $X_{h'}^r$ values proves \textup{(\ref{it:hyperdyadic-fiber-branching})}. 
\end{proof}

\subsection{Anisotropic rescaling and slablets}

Fix $\xi\in(0,1/2)$ and let $n$ satisfy the non-sticky condition \eqref{eq:non-sticky}. For $m\gg_{\xi,\Delta}1$, the Katz-Tao condition \eqref{eq:Frostman-and-KT} and the $d$-Lipschitz property of $x\mapsto f(x)-(d-k)x$ yield
\begin{equation}\label{eq:non-sticky-scale-lengths}
    \min\{n,m-n\}\ge\frac{\xi m}{2d}.
\end{equation}
For each
$\mathbf{F}\in\mathcal D_{\Delta^n}(\mathcal F_{\mathcal I_0})$, let $(\theta_{\mathbf{F}},b_{\mathbf{F}})=z(\mathbf{F})$, and define
\[
    T_{\mathbf{F}}(\theta,b)=
    \Delta^{-n}(\theta-\theta_{\mathbf{F}},b-b_{\mathbf{F}}).
\]
If $(p,\theta)\in\mathcal I_0$ and $\mathcal D_{\Delta^n}(V_\theta+p)=\mathbf{F}$, then $\theta\mapsto\Delta^{-n}(\theta-\theta_{\mathbf{F}})$ maps $\mathcal D_{\Delta^i}(\theta)$ onto a dyadic $\Delta^{i-n}$-cell for $n\le i\le m$, while $T_{\mathbf{F}}$ maps $\mathcal D_{\Delta^{m}}(V_\theta+p)$ onto a dyadic $\Delta^{m-n}$-cell containing $T_{\mathbf{F}}(\theta,\rho_\theta(p))$. We use these image cells as the local plane and affine cells inside $\mathbf{F}$.

The map $T_{\mathbf{F}}$ on affine plane parameters is induced by the following change of physical coordinates on $\R^d=\R^k\times\R^{d-k}$:
\[ 
    A_{\mathbf{F}}(x,y)=\bigl(x,\Delta^{-n}(y-\theta_{\mathbf{F}}^*x-b_{\mathbf{F}})\bigr),
    \qquad (x,y)\in\R^k\times\R^{d-k}.
\]
It sends $V_{\theta,b}$ to
$V_{\Delta^{-n}(\theta-\theta_{\mathbf{F}}),\Delta^{-n}(b-b_{\mathbf{F}})}$.

For $\mathcal J\subset\mathcal I_0$ and $(q,\mathbf F)\in(p,\mathcal D_{\Delta^n}(V_\theta+p))(\mathcal J)$, define the normalized plane family
\begin{equation}\label{eq:normalized-plane-family}
    \Xi_{\mathcal J}(q,\mathbf F)=
    \{\mathcal D_{\Delta^{m-n}}(\Delta^{-n}(\theta-\theta_{\mathbf F})):
    (q,\theta)\in\mathcal J,\ \mathcal D_{\Delta^n}(V_\theta+q)=\mathbf F\}.
\end{equation}

For $\mathbf{F}\in\mathcal D_{\Delta^n}(\mathcal F_{\mathcal I_0})$, define
\[
    S_0^{\mathbf{F}}(p)=\ast,
    \qquad
    S_i^{\mathbf{F}}(p)=\mathcal D_{\Delta^i}(A_{\mathbf{F}}(p)),
    \quad 1\le i\le m-n.
\]
On $\mathcal I_0$, write
$S_i(p,\theta)=S_i^{\mathcal D_{\Delta^n}(V_\theta+p)}(p)$.

For $(p,\theta)\in\mathcal I_0$ with $\mathcal D_{\Delta^n}(V_\theta+p)=\mathbf{F}$ and $1\le i\le m-n$, the cell $S_i^{\mathbf{F}}(p)$ contains $A_{\mathbf{F}}(p)$. Its inverse image under $A_{\mathbf{F}}$ is a slablet: it has side length $\Delta^i$ in the $k$ directions parallel to $V_{\theta_{\mathbf{F}},b_{\mathbf{F}}}$ and side length $\Delta^{n+i}$ in the $d-k$ transverse directions. We regard $S_0^{\mathbf{F}}=\ast$ as the ambient $\Delta^n$-slab $\mathbf{F}$. Thus $S_0^{\mathbf{F}},\ldots,S_{m-n}^{\mathbf{F}}$ describe nested slablets inside $\mathbf{F}$. At the finest level, $A_{\mathbf{F}}^{-1}(S_{m-n}^{\mathbf{F}}(p))$ has tangential side length $\Delta^{m-n}$ and transverse side length $\delta$. See Figure \ref{fig:slablet-geometry}.

\begin{figure}[htbp]
\centering
\begin{tikzpicture}[x=1cm,y=1cm,>=stealth,line cap=round,line join=round]
    \node[font=\small] at (3.05,4.45) {physical coordinates};
    \node[font=\small] at (11.35,4.45) {$A_{\mathbf{F}}$-coordinates};

    \fill[blue!7] (0.25,0.90) -- (5.95,2.60) -- (5.95,3.42) -- (0.25,1.72) -- cycle;
    \draw[blue!55!black] (0.25,0.90) -- (5.95,2.60) -- (5.95,3.42) -- (0.25,1.72) -- cycle;
    \draw[blue!55!black,dashed] (0.25,1.31) -- (5.95,3.01);
    \node[font=\scriptsize,blue!55!black,anchor=west] at (0.45,3.62)
        {$\Delta^n$-slab $\mathbf{F}$ about $V_{\theta_{\mathbf{F}},b_{\mathbf{F}}}$};
    \draw[->,blue!55!black] (1.65,3.53) -- (1.25,1.73);

    \fill[orange!18] (1.60,1.40) -- (4.80,2.36) -- (4.80,2.86) -- (1.60,1.90) -- cycle;
    \draw[orange!80!black,thick] (1.60,1.40) -- (4.80,2.36) -- (4.80,2.86) -- (1.60,1.90) -- cycle;
    \fill[green!18] (2.55,1.80) -- (4.35,2.34) -- (4.35,2.63) -- (2.55,2.09) -- cycle;
    \draw[green!45!black,thick] (2.55,1.80) -- (4.35,2.34) -- (4.35,2.63) -- (2.55,2.09) -- cycle;
    \fill[red!18] (3.15,2.06) -- (4.05,2.33) -- (4.05,2.47) -- (3.15,2.20) -- cycle;
    \draw[red!70!black,thick] (3.15,2.06) -- (4.05,2.33) -- (4.05,2.47) -- (3.15,2.20) -- cycle;

    \node[font=\scriptsize,orange!80!black] at (2.30,0.58)
        {$A_{\mathbf{F}}^{-1}(S_i^{\mathbf{F}}(p))$};
    \draw[<->,orange!80!black] (1.60,1.25) -- (4.80,2.21)
        node[midway,below,sloped,font=\scriptsize] {$\Delta^i$};
    \draw[<->,orange!80!black] (4.95,2.36) -- (4.95,2.86)
        node[midway,right,font=\scriptsize,fill=blue!7,inner sep=0.8pt] {$\Delta^{n+i}$};
    \node[font=\scriptsize,red!70!black,anchor=west,fill=white,inner sep=0.8pt] at (4.12,3.80)
        {level-$(m-n)$ slablet: $\Delta^{m-n}\times\delta$};
    \draw[->,red!70!black] (4.32,3.69) -- (3.91,2.45);

    \foreach \x/\y in {1.15/1.42,2.25/1.77,3.60/2.24,3.78/2.31,5.20/2.78}
        \fill[black!65] (\x,\y) circle (0.035);

    \draw[->,very thick] (6.35,2.35) -- (8.10,2.35)
        node[midway,above,font=\small] {$A_{\mathbf{F}}$};
    \node[font=\scriptsize,align=center] at (7.23,1.82)
        {transverse scale\\multiplied by $\Delta^{-n}$};

    \fill[orange!12] (9.65,0.65) rectangle (13.05,4.05);
    \draw[orange!80!black,thick] (9.65,0.65) rectangle (13.05,4.05);
    \fill[green!16] (10.075,1.075) rectangle (12.625,3.625);
    \draw[green!45!black,thick] (10.075,1.075) rectangle (12.625,3.625);
    \fill[red!18] (10.625,1.625) rectangle (12.075,3.075);
    \draw[red!70!black,thick] (10.625,1.625) rectangle (12.075,3.075);
    \node[font=\scriptsize,orange!80!black,anchor=north west] at (9.70,0.58)
        {$S_i^{\mathbf{F}}(p)$};
    \draw[<->,orange!80!black] (13.17,0.65) -- (13.17,4.05)
        node[midway,right,font=\scriptsize,fill=white,inner sep=0.8pt] {side $\Delta^i$};
    \node[font=\scriptsize,red!70!black,align=center] at (11.35,2.35)
        {$S_{m-n}^{\mathbf{F}}(p)$\\side $\Delta^{m-n}$};
\end{tikzpicture}
\caption{$A_{\mathbf{F}}$ maps each level-$i$ slablet, of tangential length $\Delta^i$ and transverse width $\Delta^{n+i}$, to a square of side $\Delta^i$.}
\label{fig:slablet-geometry}
\end{figure}

\subsection{Fiber regularization at the non-sticky scale}

We now regularize the plane and slablet trees simultaneously. The refinement must preserve the shading at each point and the spread-out property of the normalized planes. Write $\mathcal H=\mathcal H_\e(m-n)$.
\begin{proposition}\label{prop:middle-scale-regularization}
  If $\Delta\ll_{\xi,\e}1$ and $m\gg_{\xi,\e,\Delta}1$, there is $\mathcal I\prec_{\delta^{O(\e)}}\mathcal I_0$ such that
    \begin{enumerate}[label=\textup{(\roman*)},ref=\roman*]
        \item\label{it:middle-scale-point-protection} The map $p$ is $\delta^{O(\e)}$-protected from $\mathcal I_0$ to $\mathcal I$.
        \item\label{it:middle-scale-fiber-comparability} The nonempty fibers of each of the maps $p$ and $(p,\mathcal D_{\Delta^n}(\theta))$ have mutually $\Delta^{-O(\e m)}$-comparable cardinalities.
        \item\label{it:middle-scale-slablet-branching} There is a non-decreasing $g:\mathcal H\to\R$ with $g(0)=0$ such that
        \[
            \left|\log_{1/\Delta}
            \bigl|\,S_{h'}^{\mathbf{F}}(\mathcal I\mid (\mathcal D_{\Delta^n}(V_\theta+p),S_h^{\mathbf{F}})=(\mathbf{F},\Sigma_h))\,\bigr|
            -[g(h')-g(h)]\right|\le O(\e m)
        \]
        for $h\le h'$ in $\mathcal H$, $\mathbf{F}\in\mathcal D_{\Delta^n}(\mathcal F_{\mathcal I})$, and
        $\Sigma_h\in S_h^{\mathbf{F}}(\mathcal I\mid \mathcal D_{\Delta^n}(V_\theta+p)=\mathbf{F})$.
        \item\label{it:middle-scale-plane-tree} There is a non-decreasing $\widetilde g:\mathcal H\to\R$ with $\widetilde g(0)=0$ such that
        \[
            \left|\log_{1/\Delta}
            \bigl|\,\mathcal D_{\Delta^{m}}(\theta)(\mathcal I\mid (p,\mathcal D_{\Delta^n}(V_\theta+p),\mathcal D_{\Delta^{n+h}}(\theta))
            =(q,\mathbf{F},J))\,\bigr|
            -[\widetilde g(m-n)-\widetilde g(h)]\right|\le O(\e m)
        \]
       for $h\in\mathcal H$ and
        $(q,\mathbf{F},J)\in(p,\mathcal D_{\Delta^n}(V_\theta+p),\mathcal D_{\Delta^{n+h}}(\theta))(\mathcal I)$. Moreover, each family $\Xi_{\mathcal I}(q,\mathbf F)$ defined in \eqref{eq:normalized-plane-family} is $(\Delta^{m-n},s,\Delta^{-O(\e m)})_{d,k}$-spread-out.
    \end{enumerate}
\end{proposition}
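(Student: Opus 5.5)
The plan is to apply Lemma~\ref{lem:hyperdyadic-fiber-regularization} once, on $\mathcal J=\mathcal I_0$, with $\tau=\e$ and the sparse scale set $\mathcal H=\mathcal H_\e(m-n)$ (with $m-n$ in place of $n$). Then I restore spread-outness of the normalized plane families with a short separate argument.

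\textbf{Setting up the regularization lemma.} I will use two trees.
\begin{enumerate}
\item The slablet tree: $Y_1=\mathcal D_{\Delta^n}(V_\theta+p)$ with $X^1_h=(Y_1,S_h)$ for $h\in\mathcal H$. The slablets are nested in $h$, and $S_0=\ast$ is determined by $Y_1$.
\item The plane tree: $Y_2=(p,\mathcal D_{\Delta^n}(V_\theta+p))$ with $X^2_h=\mathcal D_{\Delta^{n+h}}(\theta)$ for $h\in\mathcal H$. To make the top level the identity on incidences, I append the level $\mathcal D_{\Delta^m}(\theta)$, which is the $h=m-n$ level.
\end{enumerate}
A subtlety in (ii) is that $X^2_0=\mathcal D_{\Delta^n}(\theta)$ should be determined by $Y_2$. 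This holds up to $O(1)$ choices, because $\mathcal D_{\Delta^n}(V_\theta+p)$ determines $\theta$ to precision $\Delta^n$. If needed, I pigeonhole this bounded ambiguity first or include it in $Y_2$.

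I put into $\mathcal Z$ the maps $p$ and $(p,\mathcal D_{\Delta^n}(\theta))$. Conclusion (\ref{it:hyperdyadic-fiber-comparability}) then needs these as $Y$-maps too. So I add the trivial trees $Y_3=p$ and $Y_4=(p,\mathcal D_{\Delta^n}(\theta))$, each with all levels equal to $Y$.

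The parameter $A$ is $O_{d,\xi}(1)$. Here $R=4$, $|\mathcal Z|=O(1)$, and $\log_{1/\Delta}|\mathcal I_0|/(m-n)\le O(d)m/(m-n)=O_{\xi}(1)$ by \eqref{eq:non-sticky-scale-lengths}. The lemma then gives $\mathcal I'\prec_{\Delta^{\e(m-n)}}\mathcal I_0$ with the following properties:
\begin{itemize}
\item protection of $p$, which is (\ref{it:middle-scale-point-protection});
\item comparability of the fibers of $p$ and $(p,\mathcal D_{\Delta^n}(\theta))$, which is (\ref{it:middle-scale-fiber-comparability});
\item the branching functions $g=g_1$ and $\widetilde g=g_2$.
\end{itemize}
For (\ref{it:middle-scale-slablet-branching}), read off part (\ref{it:hyperdyadic-fiber-branching}) with $Y_1$ fixed to $\mathbf F$. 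For (\ref{it:middle-scale-plane-tree}), take $h'=m-n$ in the plane tree. All losses are $\Delta^{O(\e m)}=\delta^{O(\e)}$.

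\textbf{Spread-outness.} This is the main obstacle. Each original shading $\mathbf\Theta(q)$ is $(\delta,s,\delta^{-\e})$-spread-out. By Lemma~\ref{lem:spread-out-sub-cube}, the rescaled pieces $\Delta^{-n}(\mathbf\Theta(q)\cap J-z(J))$ for $J\in\mathcal D_{\Delta^n}$ are $(\Delta^{m-n},s,\delta^{-\e}O(m))$-spread-out, but only for a family of $J$ carrying half the mass. I will apply it before the regularization, discarding the bad cubes $J$ for each $q$. This keeps half of each $p$-fiber, so $p$ stays $1/2$-protected.

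Next I need to pass from $J=\mathcal D_{\Delta^n}(\theta)$ to the slab $\mathbf F=\mathcal D_{\Delta^n}(V_\theta+q)$. For fixed $q$, the slab $\mathbf F$ determines $\theta$ to within $O(\Delta^n)$, so $\Xi(q,\mathbf F)$ is, up to a bounded translation, a union of $O(1)$ such rescaled pieces. Spread-outness survives bounded unions up to constants: a singular-scale count for a union bounds counts for each piece after restricting, up to $O(1)$ scales. Alternatively I can pigeonhole a single $J$ per $(q,\mathbf F)$.

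Finally I pass to the refinement $\mathcal I$. By Lemma~\ref{lem:spread-out-subset}, a subfamily of density $\kappa$ is spread-out with constant multiplied by $\kappa^{-1}$. So it suffices that each $(q,\mathbf F)$ fiber keeps a $\Delta^{O(\e m)}$ fraction. To guarantee this, I include $(p,\mathcal D_{\Delta^n}(V_\theta+p))$ in $\mathcal Z$; part (\ref{it:hyperdyadic-fiber-protection}) then makes it $\Delta^{\e(m-n)}$-protected. This gives constant $\Delta^{-O(\e m)}$, as required.
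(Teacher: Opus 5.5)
Your regularization set-up matches the paper's and correctly gives (i)--(iii) and the branching part of (iv): the two trees over $\mathcal D_{\Delta^n}(V_\theta+p)$ and $(p,\mathcal D_{\Delta^n}(V_\theta+p))$, depth $m-n$ with $\tau=\e$, protection of $p$, and Lemma~\ref{lem:spread-out-sub-cube} applied beforehand. The gap is in the spread-out assertion of (iv).

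You have the correspondence between $J=\mathcal D_{\Delta^n}(\theta)$ and $\mathbf F=\mathcal D_{\Delta^n}(V_\theta+q)$ backwards.
\begin{itemize}
\item The dyadic system on $\mathbf A_{d,k}$ is the product system, so $\mathbf F$ determines $J$ exactly, with $\theta_{\mathbf F}=z(J)$.
\item Conversely, for fixed $q$ a single $J$ is split among $O(1)$ slabs $\mathbf F$, because $\rho_\theta(q)$ varies by $O(\Delta^n)$ as $\theta$ ranges over $J$.
\end{itemize}
Hence $\Xi(q,\mathbf F)$ is not a union of $O(1)$ spread-out pieces. It is a \emph{subset} of the single piece $\Delta^{-n}((\Theta_{\mathcal I_0}(q)\cap J)-z(J))$, cut out by the condition on $\rho_\theta(q)$, and it may carry a negligible fraction of that piece. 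For example, it could be a thin sliver along one of the cutting hyperplanes $\{x\cdot\theta\cdot e_j=c\}$. These are model hyperplanes, so such a sliver is singular at essentially all scales in $[\Delta^{m-n},1]$.

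In that situation Lemma~\ref{lem:spread-out-subset} gives no useful constant. Protecting $(p,\mathcal D_{\Delta^n}(V_\theta+p))$ does not help either: it only controls density relative to the pre-regularization $(q,\mathbf F)$-fiber, which you never showed is spread-out. Your fallback of pigeonholing one $J$ per $(q,\mathbf F)$ is vacuous, since $\mathbf F$ already determines $J$.

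\textbf{The paper's fix.} Before regularizing, pigeonhole in the other direction: for each $(q,J)$, keep only the incidences whose slab is the most popular of the $O(1)$ candidates $\mathbf F$. This costs a constant factor in every $(q,J)$-fiber and every $p$-fiber. It makes $(p,\mathcal D_{\Delta^n}(\theta))$ and $(p,\mathcal D_{\Delta^n}(V_\theta+p))$ have identical fibers, and makes each $\Xi(q,\mathbf F)$ a $\gtrsim1$-dense subset of a spread-out piece. Protection of $(p,\mathcal D_{\Delta^n}(V_\theta+p))$ through the regularization then finishes the argument.

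\textbf{A repair inside your own set-up.} This also works, but you would need to argue it explicitly.
\begin{itemize}
\item Conclusion~(a) of Lemma~\ref{lem:hyperdyadic-fiber-regularization} for your $Y_2$ makes all nonempty $(p,\mathcal D_{\Delta^n}(V_\theta+p))$-fibers of $\mathcal I$ mutually $\Delta^{-\e(m-n)}$-comparable.
\item Each $(q,J)$-fiber is a union of $O(1)$ such fibers, so every surviving $(q,\mathbf F)$-fiber holds a $\Delta^{O(\e m)}$ fraction of its $(q,J)$-fiber in $\mathcal I$.
\item Combine this with protection of $(p,\mathcal D_{\Delta^n}(\theta))$ from the set obtained after Lemma~\ref{lem:spread-out-sub-cube} to $\mathcal I$. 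Then $\Xi_{\mathcal I}(q,\mathbf F)$ is a $\Delta^{O(\e m)}$-dense subset of the spread-out piece, and Lemma~\ref{lem:spread-out-subset} applies.
\end{itemize}
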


\begin{proof}
We first select $\Delta^n$-cells whose normalized descendants remain spread-out, with a slightly worse constant. For every $q\in P$, apply Lemma \ref{lem:spread-out-sub-cube} to $\Theta_{\mathcal I_0}(q)$ at scales $\Delta^{m}$ and $\Delta^n$. Select the resulting cells $J\in\mathcal D_{\Delta^n}(\Theta_{\mathcal I_0}(q))$, which account for at least half the $\Delta^{m}$-cells and satisfy
\[
    \Delta^{-n}\bigl((\Theta_{\mathcal I_0}(q)\cap J)-z(J)\bigr)
    \quad\text{is }(\Delta^{m-n},s,\Delta^{-O(\e m)})_{d,k}\text{-spread-out}.
\]
Let $\mathcal J_1$ consist of the incidences $(q,\theta)\in\mathcal I_0$ for which $\mathcal D_{\Delta^n}(\theta)$ is selected for $q$. Since $\theta\in\delta\Z^{k\times(d-k)}$, each nonempty $(p,\mathcal D_{\Delta^{m}}(\theta))$ fiber is a singleton. Thus $\mathcal J_1$ contains at least half of every $p$ fiber of $\mathcal I_0$.

We next arrange that the plane and affine cells determine each other at each point. For each $(q,J)\in(p,\mathcal D_{\Delta^n}(\theta))(\mathcal J_1)$, select a value $\mathbf F$ of $\mathcal D_{\Delta^n}(V_\theta+p)$ occurring in the largest number of incidences in the $(q,J)$ fiber, and let $\mathcal J_2$ consist of the selected incidences. There are $O(1)$ choices of $\mathbf F$ for each $(q,J)$. The plane-coordinate projection of $\mathbf F$ is exactly $J$, so $(p,\mathcal D_{\Delta^n}(\theta))$ and $(p,\mathcal D_{\Delta^n}(V_\theta+p))$ have the same fibers on $\mathcal J_2$. In particular, $\mathcal J_2$ has density $\gtrsim1$ in every $p$ fiber of $\mathcal I_0$.

The refinement $\mathcal J_1\to\mathcal J_2$ preserves a fixed proportion of each $(p,\mathcal D_{\Delta^n}(\theta))$ fiber. Since $\theta_{\mathbf F}=z(J)$, Lemma \ref{lem:spread-out-subset} shows that each $\Xi_{\mathcal J_2}(q,\mathbf F)$ is $(\Delta^{m-n},s,\Delta^{-O(\e m)})_{d,k}$-spread-out.

Apply Lemma \ref{lem:hyperdyadic-fiber-regularization} with depth $m-n$ to $\mathcal J_2$, using $\tau=\e$, the trees
\[
    \bigl(\mathcal D_{\Delta^{n+h}}(\theta)\bigr)_{h\in\mathcal H}
    \quad\text{over }(p,\mathcal D_{\Delta^n}(V_\theta+p)),
    \qquad
    (S_h)_{h\in\mathcal H}\quad\text{over }\mathcal D_{\Delta^n}(V_\theta+p),
\]
and the additional protected map $p$. Denote the resulting refinement by $\mathcal I$, and the branching functions by $\widetilde g$ and $g$. The $p$ fibers of $\mathcal J_2$ have cardinalities $\sim M$, so Lemma \ref{lem:hyperdyadic-fiber-regularization}\textup{(\ref{it:hyperdyadic-fiber-protection})} implies \textup{(\ref{it:middle-scale-point-protection})} and their comparability in \textup{(\ref{it:middle-scale-fiber-comparability})}. Conclusion \textup{(\ref{it:hyperdyadic-fiber-comparability})} implies the $(p,\mathcal D_{\Delta^n}(\theta))$-fiber comparison in \textup{(\ref{it:middle-scale-fiber-comparability})}, and \textup{(\ref{it:hyperdyadic-fiber-branching})} implies \textup{(\ref{it:middle-scale-slablet-branching})} and the branching assertion in \textup{(\ref{it:middle-scale-plane-tree})}. Protection of $(p,\mathcal D_{\Delta^n}(V_\theta+p))$ preserves a $\Delta^{O(\e m)}$ fraction of every $\Xi_{\mathcal J_2}(q,\mathbf F)$, so Lemma \ref{lem:spread-out-subset} proves the spread-out assertion in \textup{(\ref{it:middle-scale-plane-tree})}.
\end{proof}

\subsection{Gain at the coarse scales}

We begin the proof of Proposition \ref{prop:non-sticky-furst-gain} in earnest, by using Lemma~\ref{lem:away-from-critical}\textup{(\ref{it:frostman-away-from-critical})} to get a gain on the scales $[\Delta^n, 1]$. We note that this by itself is insufficient because there could be a loss on the scales $[\delta, \Delta^n]$.
\begin{proposition}\label{prop:middle-scale-frostman-gain}
There is $C_1\ge10$, depending only on $d,k,s,\eta,\xi$, such that, whenever $\e\ll_\xi1$, $\Delta\ll_{\xi,\e}1$, and $m\gg_{\xi,\e,\Delta}1$, one may choose the refinement $\mathcal I$ and branching function $g$ in Proposition \ref{prop:middle-scale-regularization} with the following properties:
\begin{enumerate}[label=\textup{(\roman*)},ref=\roman*]
    \item
    \begin{equation}\label{eq:middle-scale-frostman-gain}
        \log_{1/\Delta}\frac{
        |\mathcal D_{\Delta^n}(\mathcal F_{\mathcal I})|}
        {\displaystyle\min_{q\in P_{\mathcal I}}
        |\mathcal D_{\Delta^n}(\Theta_{\mathcal I}(q))|}
        \ge\left(d-k-\eta+\frac{\eta\xi}{16k}\right)n-2C_1\e m.
    \end{equation}
    \item\label{it:middle-scale-frostman-point-fibers} For every $q\in P_{\mathcal I}$,
    \[
        |\{(p,\theta)\in\mathcal I:p=q\}|
        \ge \Delta^{C_1\e m}|\{(p,\theta)\in\mathcal I_0:p=q\}|.
    \]
\end{enumerate}
\end{proposition}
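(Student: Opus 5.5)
We plan to apply Lemma \ref{lem:away-from-critical}\textup{(\ref{it:frostman-away-from-critical})} at scale $\Delta^n$ to a coarse configuration built from the refinement $\mathcal I$ of Proposition \ref{prop:middle-scale-regularization}. Conclusion \textup{(\ref{it:middle-scale-frostman-point-fibers})} should come almost for free. By Proposition \ref{prop:middle-scale-regularization}\textup{(\ref{it:middle-scale-point-protection})}, the map $p$ is $\delta^{O(\e)}=\Delta^{O(\e m)}$-protected from $\mathcal I_0$ to $\mathcal I$. So we take $C_1$ at least the implicit constant there and in the other $O(\e m)$ losses below, enlarged so that $C_1\ge 10$.

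\emph{Coarse point set.} Since $|\mathcal I|\ge\delta^{O(\e)}|\mathcal I_0|$ and the $p$-fibers have size $\sim M$ on both sides, we get $|P_{\mathcal I}|\ge\delta^{O(\e)}|P|$. Because $P$ is $(\Delta,m)$-uniform, every $\Delta^n$-cube of $P$ contains the same number of points. Hence $|P_{\mathcal I}|_{\Delta^n}\ge\delta^{O(\e)}|P|_{\Delta^n}\ge\Delta^{-(d-k)n-\xi m+O(\e m)}$, using \eqref{eq:non-sticky}. Write $\bar t=d-k+\xi/2$, so that this is at least $(\Delta^n)^{-\bar t}$ once $\e\ll\xi$. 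For each $Q\in\mathcal D_{\Delta^n}(P_{\mathcal I})$ we choose a representative $q_Q\in P_{\mathcal I}\cap Q$, and let $\bar P$ be the rescaled centers. Frostman for $\bar P$ at scale $\Delta^n$ with exponent $d-k$ follows from the superlinearity of $f$ on $[0,n]$ (Lemma \ref{lem:branching-translation}). Passing to a $\delta^{O(\e)}$-dense subset of $\Delta^n$-cubes costs only $\Delta^{-O(\e m)}=(\Delta^n)^{-O(\e/\xi)}$ in the constant, by \eqref{eq:non-sticky-scale-lengths}.

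\emph{Coarse shadings.} We set $\bar{\mathbf\Theta}(q_Q)=\mathcal D_{\Delta^n}(\Theta_{\mathcal I}(q_Q))$. By Proposition \ref{prop:middle-scale-regularization}\textup{(\ref{it:middle-scale-fiber-comparability})}, the $(p,\mathcal D_{\Delta^n}(\theta))$-fibers have $\Delta^{-O(\e m)}$-comparable sizes. Also, $\Theta_{\mathcal I}(q)$ is a $\Delta^{O(\e m)}$-fraction of the $(\delta,s,\delta^{-\e})$-spread-out $\mathbf\Theta(q)$, so it is $(\delta,s,\Delta^{-O(\e m)})$-spread-out by Lemma \ref{lem:spread-out-subset}. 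Lemma \ref{lem:spread-out-coarsening} then makes $\bar{\mathbf\Theta}(q_Q)$ $(\Delta^n,s,\Delta^{-O(\e m)})$-spread-out. By \eqref{eq:non-sticky-scale-lengths} this constant is $(\Delta^n)^{-O(\e/\xi)}$, which is admissible for the Lemma once $\e\ll_\xi \e_{\rm lemma}$. The shading sizes $|\bar{\mathbf\Theta}(q)|$ are also $\Delta^{-O(\e m)}$-comparable across $q$, by the same comparability. After pigeonholing a dyadic value $\bar M$, which costs only a $\log$ factor, we have $\bar M\gtrsim\min_q|\mathcal D_{\Delta^n}(\Theta_{\mathcal I}(q))|$ up to $\Delta^{O(\e m)}$.

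\emph{Applying the gain.} Lemma \ref{lem:away-from-critical}\textup{(\ref{it:frostman-away-from-critical})} with $t=\bar t$ at scale $\Delta^n$ requires $\Delta\ll_{s,\eta,\xi}1$ and $n\ge\xi m/(2d)$ large. It gives $|\mathcal F(\bar P,\bar{\mathbf\Theta})|_{\Delta^n}\ge\Delta^{-n(d-k-\eta+3\eta\xi/(16k))}\bar M$. Each plane $V_\theta+q_Q$ with $\theta$ in a cell of $\bar{\mathbf\Theta}(q_Q)$ lies, up to $O(\Delta^n)$ in affine coordinates by \eqref{eq:affine-incidence-coordinates}, in a cell of $\mathcal D_{\Delta^n}(\mathcal F_{\mathcal I})$. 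Hence $|\mathcal D_{\Delta^n}(\mathcal F_{\mathcal I})|\gtrsim|\mathcal F(\bar P,\bar{\mathbf\Theta})|_{\Delta^n}$. Dividing by the minimum shading and absorbing the $\Delta^{O(\e m)}$ losses into $2C_1\e m$ yields \eqref{eq:middle-scale-frostman-gain}, with room to spare since $3/16>1/16$.

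The main obstacle is the coarse spread-out property, which needs uniform child counts inside each $\Delta^n$-cell of each shading. This is exactly what Proposition \ref{prop:middle-scale-regularization}\textup{(\ref{it:middle-scale-fiber-comparability})} supplies, so the remaining work is bookkeeping of the $\e$–$\xi$ dependencies, ensuring that $C_1$ depends only on $d,k,s,\eta,\xi$.
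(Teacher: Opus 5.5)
Your proposal is correct and follows the paper's proof essentially step by step. You coarsen $P_{\mathcal I}$ to scale $\Delta^n$ using uniformity, the Frostman condition and \eqref{eq:non-sticky}. You make the $\Delta^n$-shadings spread-out via protection of $p$, Lemma \ref{lem:spread-out-subset}, and Lemma \ref{lem:spread-out-coarsening} combined with the fiber comparability of Proposition \ref{prop:middle-scale-regularization}\textup{(\ref{it:middle-scale-fiber-comparability})}. You then apply Lemma \ref{lem:away-from-critical}\textup{(\ref{it:frostman-away-from-critical})}, transfer affine cells with $O(1)$ multiplicity, and read off \textup{(\ref{it:middle-scale-frostman-point-fibers})} from protection.

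The differences are cosmetic. The paper takes $t=d-k+\xi/4$ and truncates every coarse shading to exactly $M_{\mathcal I}(n)$ cells, using comparability across $q$. You take $t=d-k+\xi/2$ and pigeonhole a dyadic value $\bar M$, which costs only a logarithmic factor in the number of coarse points.
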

\begin{proof}
Choose $C_1\gg_{\xi} 1$; in particular, we will require it to be much larger than a number of implicit constants. Take $\e\ll_{\xi}1$, $\Delta\ll_{\xi,\e}1$, and $m\gg_{\xi,\e,\Delta}1$ so that Lemma \ref{lem:away-from-critical}\textup{(\ref{it:frostman-away-from-critical})} with $t=d-k+\xi/4$ and Proposition \ref{prop:middle-scale-regularization} apply.

Apply Proposition \ref{prop:middle-scale-regularization} to obtain $\mathcal I$ and the branching function $g$ in \textup{(\ref{it:middle-scale-slablet-branching})}. Thus $g$ describes the branching of the nested slablets inside every $\mathbf F\in\mathcal D_{\Delta^n}(\mathcal F_{\mathcal I})$.

Since the $p$ fibers on $\mathcal I_0$ have size  $\sim M$, Proposition \ref{prop:middle-scale-regularization} yields $|P_{\mathcal I}|\ge\delta^{O(\e)}|P|$. By uniformity of $P$, we also have $|\mathcal D_{\Delta^n}(P_{\mathcal I})|\ge\delta^{O(\e)}|\mathcal D_{\Delta^n}(P)|$,  and
\[
    |\mathcal D_{\Delta^n}(P_{\mathcal I}\cap Q)|
    \le\Delta^{-[f(n)-f(i)]}
    \quad(0\le i\le n,\ Q\in\mathcal D_{\Delta^i}(P_{\mathcal I})).
\]
We deduce from the Frostman condition on $P$ and \eqref{eq:non-sticky} that $P_{\mathcal I}$ is $(\Delta^n,d-k,\Delta^{-C_1\e m/10})$-Frostman and  $|P_{\mathcal I}|_{\Delta^n}\ge\Delta^{-(d-k+\xi/4)n}$.

By Lemma \ref{lem:spread-out-subset} and Proposition \ref{prop:middle-scale-regularization}\textup{(\ref{it:middle-scale-point-protection})}, the shading $\Theta_{\mathcal I}(q)$ is $(\delta,s,\delta^{-C_1\e/1000})_{d,k}$-spread-out for every $q\in P_{\mathcal I}$. Since $\Theta_{\mathcal I}(q)\subset\delta\Z^{k\times(d-k)}$, Proposition \ref{prop:middle-scale-regularization}\textup{(\ref{it:middle-scale-fiber-comparability})} shows that the nonzero cardinalities
\[
    |\{I\in\mathcal D_{\Delta^{m}}(\Theta_{\mathcal I}(q)):I\subset J\}|,
    \qquad J\in\mathcal D_{\Delta^n}(\Theta_{\mathcal I}(q))
\]
are mutually $\Delta^{-C_1\e m/1000}$-comparable. By Lemma \ref{lem:spread-out-coarsening}, the families of occupied $\Delta^n$-plane cells are $(\Delta^n,s,\Delta^{-C_1\e m/50})_{d,k}$-spread-out.

Double counting each $p$ fiber over its $\mathcal D_{\Delta^n}(\theta)$ values, using both comparisons in Proposition \ref{prop:middle-scale-regularization}\textup{(\ref{it:middle-scale-fiber-comparability})}, we deduce
\begin{equation}\label{eq:frostman-slablet-coarse-plane-cardinality}
    M_{\mathcal I}(n)
    \le |\mathcal D_{\Delta^n}(\Theta_{\mathcal I}(q))|
    \le\Delta^{-C_1\e m/10}M_{\mathcal I}(n),
    \qquad q\in P_{\mathcal I}.
\end{equation}

For each $Q\in\mathcal D_{\Delta^n}(P_{\mathcal I})$, choose $q\in P_{\mathcal I}\cap Q$, and restrict $\mathcal D_{\Delta^n}(\Theta_{\mathcal I}(q))$ to exactly $M_{\mathcal I}(n)$ cells. Equation \eqref{eq:frostman-slablet-coarse-plane-cardinality} and Lemma \ref{lem:spread-out-subset} show that these restricted families are $(\Delta^n,s,\Delta^{-C_1\e m/5})_{d,k}$-spread-out. They form a $(\Delta^n,s,\Delta^{-C_1\e m/5},M_{\mathcal I}(n))$-spread-out configuration on $\mathcal D_{\Delta^n}(P_{\mathcal I})$.

For every $Q\in\mathcal D_{\Delta^n}(P_{\mathcal I})$ and each plane cell $J$ selected for $Q$, choose $(q,\theta)\in\mathcal I$ with $q\in Q$ and $\mathcal D_{\Delta^n}(\theta)=J$. The affine coordinates of $V_{z(J)}+z(Q)$ and $V_\theta+q$ differ by $O(\Delta^n)$. Hence the affine cells of the configuration on $\mathcal D_{\Delta^n}(P_{\mathcal I})$ map with $O(1)$ multiplicity into $\mathcal D_{\Delta^n}(\mathcal F_{\mathcal I})$. Applying Lemma~\ref{lem:away-from-critical}\textup{(\ref{it:frostman-away-from-critical})} with $t=d-k+\xi/4$, we obtain
\[
    \log_{1/\Delta}\frac{|\mathcal D_{\Delta^n}(\mathcal F_{\mathcal I})|}
    {M_{\mathcal I}(n)}
    \ge\left(d-k-\eta+\frac{3\eta\xi}{32k}\right)n-O(1).
\]
We have established \eqref{eq:middle-scale-frostman-gain} for $m\gg_{\xi,\e,\Delta} 1$. Finally, Proposition \ref{prop:middle-scale-regularization}\textup{(\ref{it:middle-scale-point-protection})} yields \textup{(\ref{it:middle-scale-frostman-point-fibers})} if $C_1\gg 1$.
\end{proof}

\subsection{Parameter selection}
\label{s:slablet-parameters}

Let $2000\sigma$ be the value of $\e$ provided by $\mathsf{Furst}(s;\eta)$ for $\rho=\eta\xi^2/(1024dk)$. Let $C_1$ be large enough that Proposition \ref{prop:middle-scale-frostman-gain} holds (in particular, $C_1=C_1(\xi)$), and also large enough in terms of $\sigma$ and many absolute implicit constants as required in the proof. Define $\lambda_0$ and then $\eta'$ by
\begin{equation}\label{eq:non-sticky-gain-parameter}
    \lambda_0=\frac{\eta\sigma}{8(d-k)},\qquad
    \eta-\eta'=\min\left\{\frac{\eta\xi^2}{256dk},
    \frac{\lambda_0\sigma\xi}{16d(d-k)}\right\}.
\end{equation}
Fix $\rho>0$ and let $\e \ll_{\xi,\eta-\eta',\sigma,\lambda_0,C_1,\rho} 1$. Choose $\Delta\in2^{-\N}$ with $\Delta\ll_{\xi,\e}1$ so that $\mathsf{Furst}(s;\eta)$ with $\eta\xi^2/(1024dk)$ in place of $\rho$, Lemma \ref{lem:katz-tao-induction} with $\rho=(\eta-\eta')/8$, Lemma~\ref{lem:away-from-critical}\textup{(\ref{it:katz-tao-away-from-critical})} with $t=d-k-\sigma/2$, and Proposition \ref{prop:middle-scale-frostman-gain} apply; then choose $m\gg_{\xi,\e,\Delta,\rho}1$.

With $d,k,s,\eta$ fixed, the parameters are chosen in the following order:
\[
\begin{aligned}
    &\xi\longrightarrow\sigma\longrightarrow\lambda_0,\qquad
     (\xi,\sigma)\longrightarrow C_1, \qquad
    (\xi,\lambda_0,\sigma)\longrightarrow\eta',
   \\
    &(\xi,\eta-\eta',\sigma,\lambda_0,C_1,\rho)\longrightarrow\e,\qquad
    (\xi,\e)\longrightarrow\Delta,\qquad
    (\xi,\e,\Delta,\rho)\longrightarrow m.
\end{aligned}
\]

For the rest of the section, fix the incidence set $\mathcal I$ and branching function $g$ from Proposition \ref{prop:middle-scale-frostman-gain} with these parameters. By Proposition \ref{prop:middle-scale-regularization}\textup{(\ref{it:middle-scale-slablet-branching})},
\begin{equation}\label{eq:middle-scale-slablet-descendants}
    \left|\log_{1/\Delta}
    \bigl|\,S_{h'}^{\mathbf{F}}(\mathcal I\mid (\mathcal D_{\Delta^n}(V_\theta+p),S_h^{\mathbf{F}})=(\mathbf{F},\Sigma_h))\,\bigr|
    -[g(h')-g(h)]\right|
    \le C_1\e m/1000
\end{equation}
for $h\le h'$ in $\mathcal H$, $\mathbf F\in\mathcal D_{\Delta^n}(\mathcal F_{\mathcal I})$, and $\Sigma_h\in S_h^{\mathbf F}(\mathcal I\mid \mathcal D_{\Delta^n}(V_\theta+p)=\mathbf F)$.

\subsection{Critical scale for slablet branching}

We compare the slablet branching function with the line of slope $d-k$. Choose $a\in\mathcal H$ maximizing $(d-k)h-g(h)$ and set
\begin{equation}\label{eq:slablet-branching-deficit}
    R=(d-k)a-g(a)
    =\max_{h\in\mathcal H}\bigl((d-k)h-g(h)\bigr)\ge0.
\end{equation}
By maximality of $a$,
\begin{equation}
 \label{eq:slablet-branching-F} 
 g(a)-g(h)\le(d-k)(a-h)\quad(h\in\mathcal H,\ h\le a).
\end{equation}
Fix $0\le i\le m-n$. Recall the construction of $\mathcal H_\e(m-n)$ in \S\ref{s:hyperdyadic-scales}, and let $h\in\mathcal H$ be the largest element with $h\le i$. Then $h$ is well-defined and $i-h\le\e(m-n)+1$.

If $R\le\sigma(m-n)$, maximality in \eqref{eq:slablet-branching-deficit} yields $g(h)\ge(d-k)h-\sigma(m-n)$ for every $h\in\mathcal H$. Since every level-$i$ cell has a unique level-$h$ ancestor, \eqref{eq:middle-scale-slablet-descendants} implies that, for every $\mathbf{F}\in\mathcal D_{\Delta^n}(\mathcal F_{\mathcal I})$,
\begin{equation}\label{eq:slablet-frostman-branching}   
    S_{m-n}^{\mathbf{F}}(\mathcal I\mid \mathcal D_{\Delta^n}(V_\theta+p)=\mathbf{F})
    \text{ is }(\Delta^{m-n},d-k,\Delta^{-\sigma(m-n)-C_1\e m/400})\text{-Frostman}.
\end{equation}
We will use this to apply $\mathsf{Furst}(s;\eta)$ directly at level $m-n$.

If $R\ge\sigma(m-n)$, then \eqref{eq:slablet-branching-deficit} yields
\begin{equation}\label{eq:slablet-deficit-scale}
    a\ge\frac{\sigma(m-n)}{d-k},
    \qquad
    g(a)\le(d-k-\sigma)a.
\end{equation}
Arguing as in the case $R\le \sigma(m-n)$, \eqref{eq:middle-scale-slablet-descendants}, and \eqref{eq:slablet-branching-F} show that, for every $\mathbf{F}\in\mathcal D_{\Delta^n}(\mathcal F_{\mathcal I})$,
\begin{equation}\label{eq:slablet-katz-tao-branching}
    S_a^{\mathbf{F}}(\mathcal I\mid \mathcal D_{\Delta^n}(V_\theta+p)=\mathbf{F})    \text{ is }(\Delta^a,d-k,\Delta^{-2C_1\e m})\text{-Katz--Tao}.
\end{equation}
Proposition \ref{prop:middle-scale-regularization}\textup{(\ref{it:middle-scale-slablet-branching})} and \eqref{eq:slablet-deficit-scale} yield
\begin{equation}\label{eq:slablet-prefix-cardinality}
    \bigl|\,S_a^{\mathbf{F}}(\mathcal I\mid \mathcal D_{\Delta^n}(V_\theta+p)=\mathbf{F})\,\bigr|
    \le \Delta^{-(d-k-\sigma/2)a}.
\end{equation}
The Katz--Tao bound \eqref{eq:slablet-katz-tao-branching} and the subcritical cardinality \eqref{eq:slablet-prefix-cardinality} will allow us to apply Lemma~\ref{lem:away-from-critical}\textup{(\ref{it:katz-tao-away-from-critical})} at level $a$ to obtain a gain in this case. See Figure \ref{fig:slablet-branching} for a depiction of the two cases.

\begin{figure}[htbp]
\centering
\begin{tikzpicture}[x=1cm,y=1cm,>=stealth,line cap=round,line join=round]
    \node[font=\small] at (3.50,5.15) {small drop: $R\le\sigma(m-n)$};
    \node[font=\small] at (10.80,5.15) {large drop: $R\ge\sigma(m-n)$};

    \draw[->] (0.65,1.35) -- (6.35,1.35) node[right,font=\scriptsize] {$h$};
    \draw[->] (0.65,1.35) -- (0.65,4.75) node[above,font=\scriptsize] {$g(h)$};
    \draw[black!55,dashed] (0.65,1.35) -- (6.05,4.55)
        node[pos=.82,above,sloped,font=\scriptsize,fill=white,inner sep=0.8pt] {$g(h)=(d-k)h$};
    \draw[black!35,densely dashed] (1.55,1.35) -- (6.05,4.00)
        node[pos=.72,below,sloped,font=\scriptsize,fill=white,inner sep=0.8pt] {$g(h)=(d-k)h-\sigma(m-n)$};
    \draw[blue!70!black,very thick] plot coordinates {
        (0.65,1.35) (1.55,1.69) (2.45,2.10) (3.35,2.64)
        (4.25,3.13) (5.15,3.61) (6.05,4.10)};
    \draw (6.05,1.29) -- (6.05,1.41);
    \node[font=\scriptsize,below] at (6.05,1.30) {$m-n$};
    \draw[->,blue!70!black] (3.50,1.25) -- (3.50,0.78);
    \node[draw=blue!55!black,fill=blue!7,rounded corners=2pt,
        font=\scriptsize,align=center] at (3.50,0.38)
        {Frostman branching\\through level $m-n$};

    \draw[->] (7.95,1.35) -- (13.65,1.35) node[right,font=\scriptsize] {$h$};
    \draw[->] (7.95,1.35) -- (7.95,4.75) node[above,font=\scriptsize] {$g(h)$};
    \draw[black!55,dashed] (7.95,1.35) -- (13.35,4.55)
        node[pos=.60,above,sloped,font=\scriptsize,fill=white,inner sep=0.8pt] {$g(h)=(d-k)h$};
    \draw[orange!80!black,densely dashed] (9.34,1.35) -- (11.35,2.54);
    \node[font=\scriptsize,orange!80!black] at (9.30,0.97)
        {$(d-k)h-R$};
    \draw[orange!85!black,very thick] plot coordinates {
        (7.95,1.35) (8.85,1.55) (9.75,1.78) (10.65,2.10)
        (11.35,2.54) (12.25,3.08) (13.35,3.72)};
    \draw[densely dotted] (11.35,1.35) -- (11.35,3.37);
    \draw[<->,red!70!black,thick] (11.35,2.58) -- (11.35,3.37);
    \node[font=\scriptsize,red!70!black] at (12.25,2.05) {$R\ge\sigma(m-n)$};
    \draw (11.35,1.29) -- (11.35,1.41);
    \node[font=\scriptsize,below] at (11.35,1.30) {$a$};
    \draw (13.35,1.29) -- (13.35,1.41);
    \node[font=\scriptsize,below] at (13.35,1.30) {$m-n$};
    \node[font=\scriptsize,orange!85!black,anchor=west,align=left] at (8.45,4.32)
        {$g(a)-g(h)\le(d-k)(a-h)$\\for $h\le a$};
    \draw[->,orange!85!black] (10.80,1.25) -- (10.80,0.78);
    \node[draw=orange!70!black,fill=orange!9,rounded corners=2pt,
        font=\scriptsize,align=center] at (10.80,0.38)
        {subcritical Katz--Tao\\branching at level $a$};
\end{tikzpicture}
\caption{The maximal drop $R=(d-k)a-g(a)$ separates two cases. A small drop forces Frostman slablet branching through level $m-n$; a large drop forces subcritical Katz--Tao branching at level $a$. Both cases lead to a gain.}
\label{fig:slablet-branching}
\end{figure}

\subsection{Frostman slablets: the case \texorpdfstring{$R\le\sigma(m-n)$}{R <= sigma(m-n)}}

For every $\mathbf{F}\in\mathcal D_{\Delta^n}(\mathcal F_{\mathcal I})$, the maps $S_0^{\mathbf{F}},\ldots,S_{m-n}^{\mathbf{F}}$ describe nested slablets inside the $\Delta^n$-slab $\mathbf{F}$. Since $R\le\sigma(m-n)$, the lower-left corners of the level-$(m-n)$ slablets form a $(d-k)$-Frostman set in the $A_{\mathbf{F}}$ coordinates.

The gain comes from the scales $[\Delta^n,1]$: under the non-sticky assumption, Proposition \ref{prop:middle-scale-frostman-gain} gives us \eqref{eq:middle-scale-frostman-gain}. On the scales $[\delta,\Delta^n]$, we use the Frostman property of the slablets and incur only a small loss. This is the only case that uses the non-sticky assumption \eqref{eq:non-sticky}.
\begin{lemma}\label{lem:slablet-entropy-below-threshold}
Suppose that $R\le\sigma(m-n)$. Then
\[
    \log_{1/\Delta}\frac{
    |\mathcal D_{\Delta^{m}}(\mathcal F_{\mathcal I})|}
    {\displaystyle\min_{q\in P_{\mathcal I}}
    |\mathcal D_{\Delta^{m}}(\Theta_{\mathcal I}(q))|}
    \ge
    \left(d-k-\eta+\frac{\eta\xi^2}{64dk}-8C_1\e\right)m.
\]
\end{lemma}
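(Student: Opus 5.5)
The plan is to factor the count of $\delta$-affine cells through the $\Delta^n$-slabs $\mathbf F\in\mathcal D_{\Delta^n}(\mathcal F_{\mathcal I})$. We will write
\[
|\mathcal D_{\Delta^m}(\mathcal F_{\mathcal I})|=\sum_{\mathbf F}|\mathcal D_{\Delta^m}(\mathcal F_{\mathcal I}\cap\mathbf F)|,
\]
and bound each summand from below by applying $\mathsf{Furst}(s;\eta)$ to a rescaled configuration inside $\mathbf F$. The rescaled configuration lives at scale $\Delta^{m-n}$ in $A_{\mathbf F}$-coordinates. Its point set is the family of lower-left corners of the level-$(m-n)$ slablets $S^{\mathbf F}_{m-n}(\mathcal I\mid \mathcal D_{\Delta^n}(V_\theta+p)=\mathbf F)$. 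For each slablet we pick one representative point $q$, and give it the shading $\Xi_{\mathcal I}(q,\mathbf F)$ from \eqref{eq:normalized-plane-family}, pruned to a common cardinality.

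By Proposition \ref{prop:middle-scale-regularization}\textup{(\ref{it:middle-scale-plane-tree})}, these shadings are $(\Delta^{m-n},s,\Delta^{-O(\e m)})$-spread-out, and their sizes are $\Delta^{-O(\e m)}$-comparable to $\Delta^{-(\widetilde g(m-n))}$. By \eqref{eq:slablet-frostman-branching}, the point set is $(\Delta^{m-n},d-k,\Delta^{-\sigma(m-n)-C_1\e m/400})$-Frostman. Since $\e\ll\sigma$ and $m-n\ge\xi m/(2d)$, this error is at most $(\Delta^{m-n})^{-2000\sigma}$ for the appropriate comparison. So $\mathsf{Furst}(s;\eta)$, with $\rho=\eta\xi^2/(1024dk)$, applies.

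For the transfer, we use that $A_{\mathbf F}$ maps $V_\theta+q$ to $V_{\Delta^{-n}(\theta-\theta_{\mathbf F}),\,\cdot}$ and that $T_{\mathbf F}$ carries $\delta$-cells of $\mathbf F$ to $\Delta^{m-n}$-cells bijectively. Together these give
\[
|\mathcal D_{\Delta^m}(\mathcal F_{\mathcal I}\cap\mathbf F)|\gtrsim \Delta^{(\eta+\rho)(m-n)-(d-k)(m-n)+O(\sigma m)}\,\Delta^{-\widetilde g(m-n)}.
\]
Here the $O(\sigma m)$ comes from the Frostman error, which we must absorb. Since $\sigma$ was chosen via $\mathsf{Furst}$ so that the Frostman loss is exactly the admissible $\e$ there, the loss is in fact already absorbed into the statement of $\mathsf{Furst}$, leaving only $O(C_1\e m)$.

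Summing over $\mathbf F$ gives
\[
|\mathcal D_{\Delta^m}(\mathcal F_{\mathcal I})|\gtrsim|\mathcal D_{\Delta^n}(\mathcal F_{\mathcal I})|\,\Delta^{-(d-k-\eta-\rho)(m-n)-\widetilde g(m-n)+O(C_1\e m)}.
\]
Next we need the denominator. By Proposition \ref{prop:middle-scale-regularization}\textup{(\ref{it:middle-scale-fiber-comparability})} and \textup{(\ref{it:middle-scale-plane-tree})}, for each $q$,
\[
|\mathcal D_{\Delta^m}(\Theta_{\mathcal I}(q))|\approx|\mathcal D_{\Delta^n}(\Theta_{\mathcal I}(q))|\cdot\Delta^{-\widetilde g(m-n)}
\]
up to $\Delta^{-O(\e m)}$. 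Dividing and using \eqref{eq:middle-scale-frostman-gain} yields an exponent of
\[
(d-k-\eta+\tfrac{\eta\xi}{16k})n+(d-k-\eta-\rho)(m-n)-O(C_1\e m).
\]
With $n\ge\xi m/(2d)$, the gain is $\frac{\eta\xi}{16k}\cdot\frac{\xi m}{2d}-\rho m\ge\frac{\eta\xi^2}{32dk}m-\frac{\eta\xi^2}{1024dk}m$, which exceeds $\frac{\eta\xi^2}{64dk}m$. The constant in front of $C_1\e$ is then bookkeeping.

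The main obstacle is the rescaling step, and specifically two issues in it. The first is making sure distinct $(\theta,q)$ in a slab give distinct rescaled affine cells with only $O(1)$ multiplicity. The second is that the representative chosen in each level-$(m-n)$ slablet carries a shading comparable to the full shading restricted to $\mathbf F$. For the multiplicity, I would check that $A_{\mathbf F}$ moves $q$ within a slablet by $O(\Delta^{m-n})$ in rescaled coordinates, so the affine coordinates change by $O(\Delta^{m-n})$, using \eqref{eq:affine-incidence-coordinates}. For the comparability, I would use the uniformity of the $(p,\mathcal D_{\Delta^n}(V_\theta+p))$ fibers, which holds because $\mathcal D_{\Delta^n}(\theta)$ and $\mathbf F$ determine each other on $\mathcal I$.
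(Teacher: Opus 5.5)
Your proposal is correct and follows essentially the same route as the paper. Both arguments apply $\mathsf{Furst}(s;\eta)$ with $\rho=\eta\xi^2/(1024dk)$ to the rescaled level-$(m-n)$ slablets in each $\mathbf F$, with representative shadings pruned to the common size $M_*$. Both then transfer back through $T_{\mathbf F}^{-1}$ with $O(1)$ multiplicity, bound $|\mathcal D_{\Delta^m}(\Theta_{\mathcal I}(q))|$ by the level-$n$ count times $\Delta^{-\widetilde g(m-n)-O(\e m)}$, and combine the result with \eqref{eq:middle-scale-frostman-gain} and \eqref{eq:non-sticky-scale-lengths}. The intermediate $O(\sigma m)$ loss you write down never actually occurs: the Frostman constant $\Delta^{-\sigma(m-n)-C_1\e m/400}$ already lies within the $(\Delta^{m-n})^{-2000\sigma}$ tolerance of $\mathsf{Furst}(s;\eta)$.
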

\begin{proof}

By \eqref{eq:slablet-frostman-branching}, the level-$(m-n)$ slablets are $(\Delta^{m-n},d-k,\Delta^{-\sigma(m-n)-C_1\e m/400})$-Frostman for all $\mathbf{F}\in\mathcal D_{\Delta^n}(\mathcal F_{\mathcal I})$.

Proposition \ref{prop:middle-scale-regularization}\textup{(\ref{it:middle-scale-plane-tree})}, with $h=0$, yields
\begin{equation}\label{eq:frostman-slablet-plane-descendants}
    \left|\log_{1/\Delta}|\Xi_{\mathcal I}(q,\mathbf{F})|-\widetilde g(m-n)\right|
    \le C_1\e m\quad\text{for all }(q,\mathbf F)\in(p,\mathcal D_{\Delta^n}(V_\theta+p))(\mathcal I).
\end{equation}
Moreover, by Proposition \ref{prop:middle-scale-regularization}\textup{(\ref{it:middle-scale-plane-tree})}, each $\Xi_{\mathcal I}(q,\mathbf{F})$ is a $(\Delta^{m-n},s,\Delta^{-C_1\e m/1000})_{d,k}$-spread-out family.

Let
\[
    M_*=\min_{(q,\mathbf F)\in(p,\mathcal D_{\Delta^n}(V_\theta+p))(\mathcal I)}
    |\Xi_{\mathcal I}(q,\mathbf F)|.
\]
For every $(\mathbf{F},\Sigma_{m-n})\in(\mathcal D_{\Delta^n}(V_\theta+p),S_{m-n})(\mathcal I)$, choose an $M_*$-element family $\Xi'_{\mathbf F}(\Sigma_{m-n})\subset\Xi_{\mathcal I}(q,\mathbf F)$ for some $(q,\mathbf F,\Sigma_{m-n})\in(p,\mathcal D_{\Delta^n}(V_\theta+p),S_{m-n})(\mathcal I)$. By \eqref{eq:frostman-slablet-plane-descendants}, $\Xi'_{\mathbf F}(\Sigma_{m-n})$ has relative cardinality at least $\Delta^{2C_1\e m}$. Lemma \ref{lem:spread-out-subset} shows that $\Xi'_{\mathbf F}(\Sigma_{m-n})$ is $(\Delta^{m-n},s,\Delta^{-3C_1\e m})_{d,k}$-spread-out.

By \eqref{eq:non-sticky-scale-lengths} and \eqref{eq:slablet-frostman-branching}, $\mathsf{Furst}(s;\eta)$ applies to the level-$(m-n)$ slablets in $\mathbf F$ with shadings $\Xi'_{\mathbf F}(\Sigma_{m-n})$, with $\rho=\eta\xi^2/(1024dk)$ and $1000\sigma$ in place of $\e$.

For each $I\in\Xi'_{\mathbf F}(\Sigma_{m-n})$, choose an incidence $(q,\theta)\in\mathcal I$ with $\mathcal D_{\Delta^n}(V_\theta+q)=\mathbf F$, $A_{\mathbf F}(q)\in\Sigma_{m-n}$, and $I=\mathcal D_{\Delta^{m-n}}(\Delta^{-n}(\theta-\theta_{\mathbf F}))$. 
Applying $T_{\mathbf{F}}^{-1}$, the affine cells in the conclusion of $\mathsf{Furst}(s;\eta)$ map with $O(1)$ multiplicity into $\mathcal D_{\Delta^{m}}(V_\theta+p)(\mathcal I\mid \mathcal D_{\Delta^n}(V_\theta+p)=\mathbf{F})$. Consequently,
\begin{equation}\label{eq:frostman-slablet-fine-gain}
    \log_{1/\Delta}\frac{
    \bigl|\,\mathcal D_{\Delta^{m}}(V_\theta+p)(\mathcal I\mid \mathcal D_{\Delta^n}(V_\theta+p)=\mathbf{F})\,\bigr|}{M_*}
    \ge\left(d-k-\eta-\frac{\eta\xi^2}{512dk}\right)(m-n)-2C_1\e m,
\end{equation}
for every $\mathbf{F}\in\mathcal D_{\Delta^n}(\mathcal F_{\mathcal I})$.

We next compare the plane counts at levels $n$ and $m$. For fixed $q$, equation \eqref{eq:affine-incidence-coordinates} shows that each $\Delta^n$-plane cell occurs with $O(1)$ affine $\Delta^n$-cells. Equations \eqref{eq:frostman-slablet-coarse-plane-cardinality} and \eqref{eq:frostman-slablet-plane-descendants}  therefore imply
\begin{equation}\label{eq:frostman-slablet-plane-cardinality}
    |\mathcal D_{\Delta^{m}}(\Theta_{\mathcal I}(q))|
    \le\Delta^{-4C_1\e m}M_{\mathcal I}(n)M_*,
    \qquad q\in P_{\mathcal I}.
\end{equation}
Summing \eqref{eq:frostman-slablet-fine-gain} over $\mathbf F\in\mathcal D_{\Delta^n}(\mathcal F_{\mathcal I})$ and using \eqref{eq:frostman-slablet-plane-cardinality}, we obtain
\[
    \mathcal E_{\mathcal I}(m)\ge
    \mathcal E_{\mathcal I}(n)+\left(d-k-\eta-\frac{\eta\xi^2}{512dk}\right)(m-n)-6C_1\e m.
\]
Together with \eqref{eq:non-sticky-scale-lengths} and \eqref{eq:middle-scale-frostman-gain}, this proves the lemma.
\end{proof}

\subsection{Katz--Tao slablets at \texorpdfstring{scale $\Delta^a$}{the Katz-Tao scale}: the case \texorpdfstring{$R\ge\sigma(m-n)$}{R > sigma(m-n)}}

When $R\ge\sigma(m-n)$, the gain comes from the subcritical number of slablets at level $a$. We first describe how this gain enters the argument.

Maximizing over $a$ in \eqref{eq:slablet-branching-deficit}, we obtain $g(a)-g(h)\le(d-k)(a-h)$ for $h\in\mathcal H$, $h\le a$. This is Katz--Tao non-concentration for the slablets. More precisely, by \eqref{eq:slablet-katz-tao-branching}, for $0\le i\le a$ and $\mathbf{F}\in\mathcal D_{\Delta^n}(\mathcal F_{\mathcal I})$, every occupied level-$i$ slablet inside $\mathbf{F}$ contains at most $\Delta^{-O(\e m)}\left(\frac{\Delta^i}{\Delta^a}\right)^{d-k}$ occupied level-$a$ slablets. Also, $R\ge\sigma(m-n)$, so $g(a)\le(d-k-\sigma)a$. By \eqref{eq:slablet-prefix-cardinality}, the total number of occupied level-$a$ slablets inside $\mathbf{F}$ is at most $\Delta^{-(d-k-\sigma/2)a}$, strictly below the critical size $\Delta^{-(d-k)a}$.

For each $\mathbf{F}\in\mathcal D_{\Delta^n}(\mathcal F_{\mathcal I})$, renormalize the slablets to cubes using $A_{\mathbf{F}}$. Lemma~\ref{lem:away-from-critical}\textup{(\ref{it:katz-tao-away-from-critical})} then provides a bound for the number of level-$a$ point cells that can occur with one value of $\mathcal D_{\Delta^{m}}(V_\theta+p)$, as stated in \eqref{eq:slablet-prefix-collision-target} below. The other main bound, stated in  \eqref{eq:point-fiber-entropy-target}, uses the Katz--Tao property of $P\cap Q$ for an occupied $\Delta^a$-cube $Q$, after rescaling. Morally, \eqref{eq:point-fiber-entropy-target} produces at worst a negligible loss. This small loss is offset by the gain in \eqref{eq:slablet-prefix-collision-target}.
\begin{proposition}
\label{prop:slablet-entropy-above-threshold}
Suppose that $R\ge\sigma(m-n)$. There is $\mathcal I'\subset\mathcal I_0$ such that, for every $q\in P_{\mathcal I'}$,
\begin{equation}\label{eq:slablet-point-fibers-above-threshold}
    |\{(p,\theta)\in\mathcal I':p=q\}|\ge
    \delta^{C_1\e}|\{(p,\theta)\in\mathcal I_0:p=q\}|,
\end{equation}
\begin{equation}\label{eq:slablet-prefix-collision-target}
    \bigl|\,\mathcal D_{\Delta^a}(p)(\mathcal I'\mid \mathcal D_{\Delta^{m}}(V_\theta+p)=\mathbf{F}_{m})\,\bigr|\le
    \Delta^{-(\eta-\lambda_0)a-2C_1\e m}
    \qquad(\mathbf{F}_{m}\in\mathcal D_{\Delta^{m}}(\mathcal F_{\mathcal I'})),
\end{equation}
and
\begin{equation}\label{eq:point-fiber-entropy-target}
    \sum_{Q\in\mathcal D_{\Delta^a}(P_{\mathcal I'})}
    \bigl|\,\mathcal D_{\Delta^{m}}(V_\theta+p)(\mathcal I'\mid \mathcal D_{\Delta^a}(p)=Q)\,\bigr|
    \ge
    \Delta^{-(d-k-\eta-(\eta-\eta')/2)m-\eta a}
    \min_{q\in P_{\mathcal I'}}
    |\mathcal D_{\Delta^{m}}(\Theta_{\mathcal I'}(q))|.
\end{equation}
\end{proposition}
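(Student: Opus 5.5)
The plan is to build $\mathcal I'$ from $\mathcal I$ by one further refinement inside each $\Delta^n$-slab $\mathbf F$, and then to prove the two counting bounds by applying the induction hypothesis at two different levels. The collision bound \eqref{eq:slablet-prefix-collision-target} will come from Lemma~\ref{lem:away-from-critical}\textup{(\ref{it:katz-tao-away-from-critical})} at the coarse slablet level $a$ inside $\mathbf F$. The entropy bound \eqref{eq:point-fiber-entropy-target} will come from Lemma~\ref{lem:katz-tao-induction} applied inside each occupied $\Delta^a$-cube $Q$.

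\emph{Renormalized configuration at level $a$.} Fix $\mathbf F\in\mathcal D_{\Delta^n}(\mathcal F_{\mathcal I})$. In $A_{\mathbf F}$-coordinates the occupied level-$a$ slablets are $\Delta^a$-cubes. By \eqref{eq:slablet-katz-tao-branching} they form a $(\Delta^a,d-k,\Delta^{-2C_1\e m})$-Katz--Tao set, and by \eqref{eq:slablet-prefix-cardinality} it has cardinality at most $\Delta^{-(d-k-\sigma/2)a}$. Since $a\ge\sigma(m-n)/(d-k)\gtrsim\sigma\xi m$, the error $\Delta^{-2C_1\e m}$ is $\Delta^{-O(\e/(\sigma\xi))a}$, which is admissible once $\e\ll$ the threshold of Lemma~\ref{lem:away-from-critical}\textup{(\ref{it:katz-tao-away-from-critical})} with $t=d-k-\sigma/2$.

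Shade each level-$a$ slablet $\Sigma_a$ with the $\Delta^{a}$-coarsening of the normalized planes $\Xi_{\mathcal I}(q,\mathbf F)$, for some $q$ with $A_{\mathbf F}(q)\in\Sigma_a$. Lemma~\ref{lem:spread-out-coarsening} applies by the uniform branching in Proposition~\ref{prop:middle-scale-regularization}\textup{(\ref{it:middle-scale-plane-tree})}. Using Lemma~\ref{lem:select-shading-from-average} I would first extract a common spread-out subshading of size $\approx$ the average, so that each slablet has a $(\Delta^a,s,\Delta^{-O(C_1\e m)})$-spread-out shading.

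Lemma~\ref{lem:away-from-critical}\textup{(\ref{it:katz-tao-away-from-critical})} then says the number of $\Delta^{n+a}$ affine cells in $\mathbf F$ is at least $\Delta^{-(\eta-3\eta\sigma/(16(d-k)))a}$ times (number of slablets)$\times$(shading). The factor $3\eta\sigma/(16(d-k))\ge\lambda_0$ is the gain. In incidence language, a typical affine $\Delta^{n+a}$-cell contains at most $\Delta^{-(\eta-\lambda_0)a}$ slablets. Pigeonholing the incidences so that affine cells at level $n+a$ with too many level-$a$ point cells are discarded (a Markov argument, losing a constant fraction), and propagating the $O(\e m)$ fiber errors, I get \eqref{eq:slablet-prefix-collision-target}. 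Here I use that a $\Delta^m$ affine cell lies in a unique $\Delta^{n+a}$ affine cell, and that the $A_{\mathbf F}$ level-$a$ slablet determines $\mathcal D_{\Delta^a}(p)$ up to $O(1)$ choices given $\mathbf F$. Finally Lemma~\ref{lem:protected-fiber-refinement} with the map $p$ yields \eqref{eq:slablet-point-fibers-above-threshold}.

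\emph{Entropy within $\Delta^a$-cubes.} For \eqref{eq:point-fiber-entropy-target}, fix $Q\in\mathcal D_{\Delta^a}(P_{\mathcal I'})$ and rescale $Q$ to the unit cube by $\Delta^{-a}$. The point set $P\cap Q$ is $(\Delta^{m-a},d-k,\Delta^{-O(\e m)})$-Katz--Tao, because $P$ is Frostman and uniform and $f(m)\le (d-k)m+O(1)$ forces $f(m)-f(a)\le(d-k)(m-a)+O(\e m)$. The shadings restricted to a $\Delta^a$-plane cell are spread-out after rescaling by Lemma~\ref{lem:spread-out-sub-cube}. Applying Lemma~\ref{lem:katz-tao-induction} with $\rho=(\eta-\eta')/8$ gives, for each $Q$, a count of $\Delta^m$ affine cells of at least $\Delta^{(\eta+\rho)(m-a)}|P\cap Q|\cdot$(shading). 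Summing over $Q$, using $|P_{\mathcal I'}|\gtrsim\Delta^{O(\e m)}\Delta^{-(d-k)m}$ and comparing shading sizes as in \eqref{eq:frostman-slablet-plane-cardinality}, yields \eqref{eq:point-fiber-entropy-target}. The $\Delta^{-\eta a}$ factor is harmless since the exponent $-(d-k-\eta)m$ appears with $(m-a)$ replaced by $m$.

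\emph{Main obstacle.} The hardest step is the first one: producing a genuinely spread-out shading on the renormalized level-$a$ slablets, and matching the anisotropic geometry. The slablet cells at level $a$ together with the $\Delta^a$-plane cells must correspond to affine $\Delta^{n+a}$-cells in $\mathbf F$ with bounded multiplicity under $T_{\mathbf F}$. I would verify this via \eqref{eq:affine-incidence-coordinates}, since $A_{\mathbf F}$ is affine and exactly intertwines with $T_{\mathbf F}$.
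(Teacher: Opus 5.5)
Your overall architecture matches the paper's: Lemma \ref{lem:away-from-critical}\textup{(\ref{it:katz-tao-away-from-critical})} with $t=d-k-\sigma/2$ on the renormalized level-$a$ slablets gives \eqref{eq:slablet-prefix-collision-target}, and Lemma \ref{lem:katz-tao-induction} on each blown-up cube $Q$ gives \eqref{eq:point-fiber-entropy-target}. The gap is in turning the first application into the pointwise bound \eqref{eq:slablet-prefix-collision-target}.

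Lemma \ref{lem:away-from-critical}\textup{(\ref{it:katz-tao-away-from-critical})} only gives a lower bound on the total number of occupied $\Delta^{n+a}$ affine cells in $\mathbf F$. (The factor is $\Delta^{+(\eta-3\eta\sigma/(16(d-k)))a}$, not $\Delta^{-(\cdots)a}$ as you wrote.) Equivalently, the \emph{average} number of slablets per occupied affine cell is at most $K=\Delta^{-(\eta-\lambda_0)a}$. Markov's inequality then bounds how many affine cells contain more than $CK$ slablets, not how many incidences those cells carry. The averaged bound is consistent with a $K^{-1}$-fraction of the incidences lying in distinct affine cells and the rest in a few very heavy ones. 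Discarding the heavy cells would then keep only a $K^{-1}$-fraction, not a constant fraction. That destroys \eqref{eq:slablet-point-fibers-above-threshold} and the lower bound on $|P_{\mathcal I'}|$ that your second part needs.

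The missing idea is that the lemma must be applied to a configuration in which maximum and average already agree. The paper does this in Lemma \ref{lem:slablet-above-threshold-refinement}, before invoking Lemma \ref{lem:away-from-critical}. It jointly pigeonholes the fibers and protects them via Lemma \ref{lem:protected-fiber-refinement}, so that two counts become $\Delta^{-O(\e m)}$-comparable:
\begin{itemize}
\item the number of level-$a$ slablets over each pair $(\mathbf F_n,\mathbf F_{n+a})$;
\item the number of $\Delta^{n+a}$-plane cells over each $(\mathbf F,\Sigma_a)$.
\end{itemize}
The full plane families over $(\mathbf F,\Sigma_a)$ remain spread-out after normalization. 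Double counting then turns the lower bound on affine cells into a bound valid for every affine cell. Alternatively, you could pigeonhole the affine cells by slablet count and re-apply the lemma to the dominant class, but re-verifying its hypotheses there is the same work. Also, if you shade $\Sigma_a$ only by the planes of a single $q$, your collision bound covers only incidences whose plane cell lies in that shading, so $\mathcal I'$ must be restricted accordingly.

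The second part has a related problem. The blow-up $p\mapsto\Delta^{-a}(p-z(Q))$ is isotropic and leaves plane directions unchanged. So the shading for Lemma \ref{lem:katz-tao-induction} at scale $\Delta^{m-a}$ is the coarsening $\mathcal D_{\Delta^{m-a}}(\Theta_{\mathcal I'}(q))$, truncated to a common size within $Q$. It is not a restriction to a $\Delta^a$-plane cell rescaled via Lemma \ref{lem:spread-out-sub-cube}: restricting and rescaling $\theta$ corresponds to an anisotropic map like $A_{\mathbf F}$, which sends $Q$ to a thin slab rather than a unit cube.

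The spread-outness of this coarsened shading comes from Lemma \ref{lem:spread-out-coarsening}. That lemma needs the numbers of $\Delta^m$-plane descendants of the nonempty $(p,\mathcal D_{\Delta^{m-a}}(\theta))$ fibers to be comparable. The same comparability, together with comparability of $|\Theta_{\mathcal I'}(q)|_{\Delta^{m-a}}$ across $q$, is what converts, via \eqref{eq:rescaled-affine-correspondence}, the count of rescaled $\Delta^{m-a}$ affine cells into a count of $\Delta^m$ affine cells with the factor $\min_q|\mathcal D_{\Delta^m}(\Theta_{\mathcal I'}(q))|$.

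A single $\mathcal I'$ must satisfy all three conclusions. So these fibers, the $p$ fibers, and the slablet and affine-cell fibers from the first part must all be regularized and protected in one joint refinement, as the paper does with the six maps in \eqref{eq:slablet-above-threshold-regularized-maps}. Protecting only $p$ at the end, as you propose, does not preserve them.
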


Before proving Proposition \ref{prop:slablet-entropy-above-threshold}, we produce a refinement that simultaneously uniformizes a number of fiber cardinalities.
\begin{lemma}\label{lem:slablet-above-threshold-refinement}
Suppose that $R\ge\sigma(m-n)$. There is $\mathcal I'\subset\mathcal I$ with the following properties.
\begin{enumerate}[label=\textup{(\roman*)},ref=\roman*]
    \item\label{it:above-threshold-refinement-point-protection}
    \eqref{eq:slablet-point-fibers-above-threshold} holds.

    \item\label{it:above-threshold-refinement-plane-shadings}
    The nonempty sets
    \[
        \mathcal D_{\Delta^{n+a}}(\theta)\bigl(\mathcal I'\mid (\mathcal D_{\Delta^n}(V_\theta+p),S_a^{\mathbf{F}})=(\mathbf{F},\Sigma_a)\bigr)
    \]
    have mutually $\Delta^{-C_1\e m/1000}$-comparable cardinalities. For each $(\mathbf F,\Sigma_a)$, their images under $\theta\mapsto\Delta^{-n}(\theta-\theta_{\mathbf F})$ form a $(\Delta^a,s,\Delta^{-C_1\e m/5})_{d,k}$-spread-out family of $\Delta^a$-cells.

    \item\label{it:above-threshold-refinement-affine-slablet-fibers}
    The nonzero cardinalities
    \[
        \bigl|\,S_a^{\mathbf{F}_n}(\mathcal I'\mid (\mathcal D_{\Delta^n}(V_\theta+p),\mathcal D_{\Delta^{n+a}}(V_\theta+p))
        =(\mathbf{F}_n,\mathbf{F}_{n+a}))\,\bigr|
    \]
    are mutually $\Delta^{-C_1\e m/1000}$-comparable.

    \item\label{it:above-threshold-refinement-point-planes}
    For every $q\in P_{\mathcal I'}$, the set
    $\Theta_{\mathcal I'}(q)$ is
    $(\Delta^{m-a},s,\Delta^{-6C_1\e m})_{d,k}$-spread-out, and the covering numbers $|\Theta_{\mathcal I'}(q)|_{\Delta^{m-a}}$ are mutually $\Delta^{-C_1\e m/1000}$-comparable as $q$ ranges over $P_{\mathcal I'}$.

    \item\label{it:above-threshold-refinement-point-fibers}
    The nonzero cardinalities
    \[
        \bigl|\,\mathcal D_{\Delta^{m}}(\theta)(\mathcal I'\mid (p,\mathcal D_{\Delta^{m-a}}(\theta))=(q,J))\,\bigr|
    \]
    are mutually $\Delta^{-C_1\e m/1000}$-comparable, and
    $|P_{\mathcal I'}|\ge\Delta^{-(d-k)m+C_1\e m}$.
\end{enumerate}
\end{lemma}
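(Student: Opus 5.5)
The plan is to obtain $\mathcal I'$ by a small, fixed number of successive refinements of $\mathcal I$. Each step either pigeonholes a dyadic class of fiber cardinalities or deletes badly depleted fibers via Lemma \ref{lem:protected-fiber-refinement}. At every step the point map $p$ is protected. Since there are $O(1)$ steps and each costs at most $\Delta^{O(\e m)}$, protection multiplies, and together with Proposition \ref{prop:middle-scale-frostman-gain}\textup{(\ref{it:middle-scale-frostman-point-fibers})} this gives \textup{(\ref{it:above-threshold-refinement-point-protection})} once $C_1$ is large. The key tool is a variant of Lemma \ref{lem:hyperdyadic-fiber-regularization} with the two-point scale set $\{0,a\}$ (or $\{0,a,m-n\}$) in place of $\mathcal H$. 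This choice keeps the number of dyadic classes polynomial in $m$, so the pigeonholing loss is $\Delta^{-o(m)}$. Crucially, the pigeonholing is carried out jointly for all the maps below, so that a single refinement achieves the comparabilities in \textup{(\ref{it:above-threshold-refinement-plane-shadings})}, \textup{(\ref{it:above-threshold-refinement-affine-slablet-fibers})}, \textup{(\ref{it:above-threshold-refinement-point-planes})} and \textup{(\ref{it:above-threshold-refinement-point-fibers})}.

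The maps I would regularize are the following. For \textup{(\ref{it:above-threshold-refinement-plane-shadings})}, the fibers of $\mathcal D_{\Delta^{n+a}}(\theta)$ over $(\mathcal D_{\Delta^n}(V_\theta+p),S_a)$. For \textup{(\ref{it:above-threshold-refinement-affine-slablet-fibers})}, the fibers of $S_a$ over $(\mathcal D_{\Delta^n}(V_\theta+p),\mathcal D_{\Delta^{n+a}}(V_\theta+p))$. For \textup{(\ref{it:above-threshold-refinement-point-planes})}, the map $p$ together with the counts $|\mathcal D_{\Delta^{m-a}}(\Theta(q))|$. For \textup{(\ref{it:above-threshold-refinement-point-fibers})}, the fibers of $\mathcal D_{\Delta^m}(\theta)$ over $(p,\mathcal D_{\Delta^{m-a}}(\theta))$. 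After the joint pigeonhole, I apply Lemma \ref{lem:protected-fiber-refinement} to protect every map in this list as well as $p$, the pair $(p,\mathcal D_{\Delta^n}(V_\theta+p))$, and $(\mathcal D_{\Delta^n}(V_\theta+p),S_a)$. Protection converts the factor-$2$ comparability of the fibers of the pre-refinement set into $\Delta^{-O(\e m)}$ comparability on $\mathcal I'$, and this is exactly what the statements ask for.

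The spread-out assertions come from Lemma \ref{lem:spread-out-subset} applied to large subfamilies of families already known to be spread-out, followed by Lemma \ref{lem:spread-out-coarsening} to pass to coarser scales. I treat the two assertions separately.

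For \textup{(\ref{it:above-threshold-refinement-plane-shadings})}, note that a fixed $(\mathbf F,\Sigma_a)$ fiber contains points $q$ whose normalized families $\Xi_{\mathcal I}(q,\mathbf F)$ are spread-out at scale $\Delta^{m-n}$ by Proposition \ref{prop:middle-scale-regularization}\textup{(\ref{it:middle-scale-plane-tree})}. The branching $\widetilde g$ makes the descendant counts of $\Delta^a$-cells uniform, so Lemma \ref{lem:spread-out-coarsening} gives spread-outness at scale $\Delta^a$ for a single $q$. The family indexed by $(\mathbf F,\Sigma_a)$ is a union over $q$ that contains one such family with comparable cardinality. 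To control the union itself, I use Lemma \ref{lem:select-shading-from-average} on the families of the $q$'s inside the fiber. This yields a common spread-out subfamily carrying a $\gtrapprox$ fraction of the incidences, and I restrict the incidences to it.

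For \textup{(\ref{it:above-threshold-refinement-point-planes})}, I use that $\Theta_{\mathcal I}(q)$ is $(\delta,s,\delta^{-O(\e)})$-spread-out and that the $\Delta^{m-a}$-descendant counts are comparable by \textup{(\ref{it:above-threshold-refinement-point-fibers})}. Lemma \ref{lem:spread-out-coarsening} then gives the claim, with the constant $\Delta^{-6C_1\e m}$ absorbing the accumulated losses.

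Finally, the bound $|P_{\mathcal I'}|\ge\Delta^{-(d-k)m+C_1\e m}$ follows from protection of $p$: every $p$ fiber has size about $M$, and $|P|\ge\delta^{-(d-k)+\e}$. The main obstacle is the union step in \textup{(\ref{it:above-threshold-refinement-plane-shadings})}, since union families need not be spread-out; if Lemma \ref{lem:select-shading-from-average} does not apply cleanly, my fallback is to first pigeonhole so that the $(\mathbf F,\Sigma_a)$ fibers at the $\Delta^{n+a}$-plane level are essentially determined by a single representative point.
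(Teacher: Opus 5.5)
Your proposal is essentially the paper's proof. The paper likewise handles the union step in (ii) first: it weights each $\mathcal D_{\Delta^{n+a}}(\theta)$ value by its fiber cardinality over $(\mathbf F,\Sigma_a)$, observes that these weights are, up to the comparability from Proposition \ref{prop:middle-scale-regularization}(iv), a sum of the coarsened spread-out families $\mathcal D_{\Delta^a}(\Xi_{\mathcal I}(q,\mathbf F))$, and keeps one dyadic weight class (the mechanism packaged in Lemma \ref{lem:select-shading-from-average}); only then does it jointly pigeonhole six fiber maps and apply Lemma \ref{lem:protected-fiber-refinement} relative to $\mathcal I$, so make sure your union step comes before the joint pigeonholing, and obtain (i) from Proposition \ref{prop:middle-scale-regularization}(i), whose $\delta^{O(\e)}$ loss has an absolute constant that $C_1$ absorbs, rather than from Proposition \ref{prop:middle-scale-frostman-gain}(ii), which already uses the full factor $\Delta^{C_1\e m}$ and leaves no room for your further $m^{-O(1)}$ losses.
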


\begin{proof}

Since $\theta\in\delta\Z^{k\times(d-k)}$, Proposition \ref{prop:middle-scale-regularization}\textup{(\ref{it:middle-scale-plane-tree})}, with $h=a$, shows that the nonempty
\[
    (p,\mathcal D_{\Delta^n}(V_\theta+p),
    \mathcal D_{\Delta^{n+a}}(\theta))
\]
fibers of $\mathcal I$ have mutually $\Delta^{-C_1\e m/1000}$-comparable cardinalities.

 Lemma \ref{lem:spread-out-coarsening}  and Proposition \ref{prop:middle-scale-regularization}\textup{(\ref{it:middle-scale-plane-tree})} show that every $\mathcal D_{\Delta^a}(\Xi_{\mathcal I}(q,\mathbf F))$ is $(\Delta^a,s,2\Delta^{-C_1\e m/500})_{d,k}$-spread-out.

For each $(\mathbf{F},\Sigma_a)\in(\mathcal D_{\Delta^n}(V_\theta+p),S_a)(\mathcal I)$, assign to each $\mathcal D_{\Delta^{n+a}}(\theta)$ value the weight equal to its fiber cardinality over $(\mathbf F,\Sigma_a)$. Select a dyadic class of weights accounting for at least $m^{-O(1)}$ of the incidences, and let $\mathcal I_1$ consist of the selected incidences. Then $\mathcal D_{\Delta^{n+a}}(\theta)$ is $2$-uniform over $(\mathcal D_{\Delta^n}(V_\theta+p),S_a)$ on $\mathcal I_1$. Summing the spread-out inequalities for $\mathcal D_{\Delta^a}(\Xi_{\mathcal I}(q,\mathbf F))$ and using the $\Delta^{-C_1\e m/1000}$-comparability of the $(p,\mathcal D_{\Delta^n}(V_\theta+p),\mathcal D_{\Delta^{n+a}}(\theta))$ fibers shows that the normalized weights satisfy the spread-out inequality with constant $2\Delta^{-3C_1\e m/1000}$. Comparing weights in the selected class shows that the sets in \textup{(\ref{it:above-threshold-refinement-plane-shadings})}, with $\mathcal I_1$ in place of $\mathcal I'$, are $(\Delta^a,s,\Delta^{-C_1\e m/250})_{d,k}$-spread-out after rescaling by $\theta\mapsto\Delta^{-n}(\theta-\theta_{\mathbf F})$.

Pigeonhole jointly on $\mathcal I_1$ the fiber cardinalities of the six maps
\begin{equation}\label{eq:slablet-above-threshold-regularized-maps}
\begin{gathered}
    p,\qquad (p,\mathcal D_{\Delta^{m-a}}(\theta)),\\
    (\mathcal D_{\Delta^n}(V_\theta+p),S_a),\qquad
    (\mathcal D_{\Delta^n}(V_\theta+p),S_a,\mathcal D_{\Delta^{n+a}}(\theta)),\\
    (\mathcal D_{\Delta^n}(V_\theta+p),\mathcal D_{\Delta^{n+a}}(V_\theta+p)),\qquad
    (\mathcal D_{\Delta^n}(V_\theta+p),\mathcal D_{\Delta^{n+a}}(V_\theta+p),S_a).
\end{gathered}
\end{equation}
There are $m^{O(1)}$ joint classes. Let $\mathcal I_2$ consist of the incidences in a class of largest cardinality; then $\mathcal I_2\prec_{m^{-O(1)}}\mathcal I_1$. Apply Lemma \ref{lem:protected-fiber-refinement} to $\mathcal I_2\subset\mathcal I$ with the maps in \eqref{eq:slablet-above-threshold-regularized-maps}, and denote the resulting refinement by $\mathcal I'$. Then $|\mathcal I'|\ge m^{-O(1)}|\mathcal I|$, and all six maps are $m^{-O(1)}$-protected from $\mathcal I$ to $\mathcal I'$. The dyadic classes of their $\mathcal I_1$-fiber cardinalities and the inclusion $\mathcal I_1\subset\mathcal I$ imply that the nonempty fibers of each map on $\mathcal I'$ have mutually $m^{O(1)}$-comparable cardinalities.

Comparing the two fiber cardinalities in each row of \eqref{eq:slablet-above-threshold-regularized-maps} establishes the cardinality assertions in \textup{(\ref{it:above-threshold-refinement-plane-shadings})}, \textup{(\ref{it:above-threshold-refinement-affine-slablet-fibers})}, and \textup{(\ref{it:above-threshold-refinement-point-planes})}. Since $\theta\in\delta\Z^{k\times(d-k)}$, the $(p,\mathcal D_{\Delta^{m-a}}(\theta))$-fiber cardinalities equal their numbers of $\mathcal D_{\Delta^{m}}(\theta)$ descendants, so the comparability in \textup{(\ref{it:above-threshold-refinement-point-fibers})} follows.

For the uniform fibers in Definition \ref{def:uniform-pair}, protection also controls the loss in the number of distinct values. The map $(\mathcal D_{\Delta^n}(V_\theta+p),S_a)$ is $m^{-O(1)}$-protected from $\mathcal I_1$ to $\mathcal I'$. For each $(\mathbf F,\Sigma_a)\in(\mathcal D_{\Delta^n}(V_\theta+p),S_a)(\mathcal I')$, it follows from $2$-uniformity that the cardinality of each nonempty $\mathcal D_{\Delta^{n+a}}(\theta)$ fiber of $\mathcal I_1$ over $(\mathbf F,\Sigma_a)$ is bounded by twice the average cardinality of those fibers. Hence at least an $m^{-O(1)}$ fraction of the $\mathcal D_{\Delta^{n+a}}(\theta)$ values in $\mathcal I_1$ over $(\mathbf F,\Sigma_a)$ also occur in $\mathcal I'$ over $(\mathbf F,\Sigma_a)$. Lemma \ref{lem:spread-out-subset} establishes the spread-out assertion in \textup{(\ref{it:above-threshold-refinement-plane-shadings})}.

Protection of $p$ from $\mathcal I$ to $\mathcal I'$, Lemma \ref{lem:spread-out-subset}, and Proposition \ref{prop:middle-scale-regularization}\textup{(\ref{it:middle-scale-point-protection})} imply \eqref{eq:slablet-point-fibers-above-threshold} and show that $\Theta_{\mathcal I'}(q)$ is $(\delta,s,\delta^{-2C_1\e})_{d,k}$-spread-out for every $q\in P_{\mathcal I'}$. Lemma \ref{lem:spread-out-coarsening} and the descendant comparisons in \textup{(\ref{it:above-threshold-refinement-point-fibers})} establish the spread-out assertion in \textup{(\ref{it:above-threshold-refinement-point-planes})}.

Finally, $|\mathcal I'|\ge m^{-O(1)}|\mathcal I|$, $|\mathcal I|\ge\delta^{O(\e)}|\mathcal I_0|$, and $M\le|\Theta_{\mathcal I_0}(q)|<2M$ imply $|P_{\mathcal I'}|\ge\delta^{O(\e)}|P|$. The cardinality bound in \textup{(\ref{it:above-threshold-refinement-point-fibers})} follows from the Frostman condition on $P$.
\end{proof}

\begin{proof}[Proof of Proposition \ref{prop:slablet-entropy-above-threshold}]
Let $\mathcal I'$ be the refinement provided by Lemma \ref{lem:slablet-above-threshold-refinement}; \eqref{eq:slablet-point-fibers-above-threshold} holds by \textup{(\ref{it:above-threshold-refinement-point-protection})}.

\textbf{(a) Proof of \eqref{eq:slablet-prefix-collision-target}.}

The gain will come from Lemma~\ref{lem:away-from-critical}\textup{(\ref{it:katz-tao-away-from-critical})}, applied at scale $\Delta^a$ with $t=d-k-\sigma/2$ to the slablets inside each $\mathbf F$. Set
\[
    M_*=
    \min_{(\mathbf F,\Sigma_a)\in(\mathcal D_{\Delta^n}(V_\theta+p),S_a)(\mathcal I')}
    \bigl|\,\mathcal D_{\Delta^{n+a}}(\theta)(\mathcal I'\mid (\mathcal D_{\Delta^n}(V_\theta+p),S_a^{\mathbf{F}})=(\mathbf{F},\Sigma_a))\,\bigr|.
\]
For every $\mathbf{F}\in\mathcal D_{\Delta^n}(\mathcal F_{\mathcal I'})$ and $\Sigma_a\in S_a^{\mathbf{F}}(\mathcal I'\mid \mathcal D_{\Delta^n}(V_\theta+p)=\mathbf{F})$, restrict $\mathcal D_{\Delta^{n+a}}(\theta)(\mathcal I'\mid (\mathcal D_{\Delta^n}(V_\theta+p),S_a^{\mathbf{F}})=(\mathbf{F},\Sigma_a))$ to exactly $M_*$ cells.

The Katz--Tao and sub-critical cardinality hypotheses of Lemma~\ref{lem:away-from-critical}\textup{(\ref{it:katz-tao-away-from-critical})} follow from \eqref{eq:slablet-katz-tao-branching} and \eqref{eq:slablet-prefix-cardinality}. Lemma \ref{lem:slablet-above-threshold-refinement}\eqref{it:above-threshold-refinement-plane-shadings} and Lemma \ref{lem:spread-out-subset} show that the spread-outness hypothesis for the selected plane families also holds.

The inverse of $T_{\mathbf{F}}$ sends an affine cell of side length $\Delta^a$ to a set of affine diameter $O(\Delta^{n+a})$. Hence the affine cells in the conclusion of Lemma~\ref{lem:away-from-critical}\textup{(\ref{it:katz-tao-away-from-critical})} map with $O(1)$ multiplicity into $\mathcal D_{\Delta^{n+a}}(V_\theta+p)(\mathcal I'\mid \mathcal D_{\Delta^n}(V_\theta+p)=\mathbf{F})$, and Lemma~\ref{lem:away-from-critical}\textup{(\ref{it:katz-tao-away-from-critical})} yields
\begin{equation}\label{eq:slablet-prefix-one-scale}
    \bigl|\,\mathcal D_{\Delta^{n+a}}(V_\theta+p)(\mathcal I'\mid \mathcal D_{\Delta^n}(V_\theta+p)=\mathbf{F})\,\bigr|
    \ge
    \Delta^{(\eta-\lambda_0)a+C_1\e m}\cdot
    \bigl|\,S_a^{\mathbf{F}}(\mathcal I'\mid \mathcal D_{\Delta^n}(V_\theta+p)=\mathbf{F})\,\bigr|M_*.
\end{equation}
The exponent in \eqref{eq:slablet-prefix-one-scale} follows from the choice of $\lambda_0$ in \eqref{eq:non-sticky-gain-parameter}, and \eqref{eq:slablet-deficit-scale}.

The tuple $(S_a,\mathcal D_{\Delta^{n+a}}(\theta),\mathcal D_{\Delta^n}(V_\theta+p))$ determines $\mathcal D_{\Delta^{n+a}}(V_\theta+p)$ up to $O(1)$ choices. Double counting and the comparability assertions in Lemma \ref{lem:slablet-above-threshold-refinement}\eqref{it:above-threshold-refinement-plane-shadings} and \eqref{it:above-threshold-refinement-affine-slablet-fibers} convert \eqref{eq:slablet-prefix-one-scale} into a bound on the number of slablets meeting each affine cell: for $\mathbf{F}_n\in\mathcal D_{\Delta^n}(\mathcal F_{\mathcal I'})$,
\[
\begin{aligned}
    &\max_{\substack{\mathbf{F}_{n+a}\in\mathcal D_{\Delta^{n+a}}(V_\theta+p)(\mathcal I'\mid \mathcal D_{\Delta^n}(V_\theta+p)=\mathbf{F}_n)}}
    \bigl|\,S_a^{\mathbf{F}_n}(\mathcal I'\mid (\mathcal D_{\Delta^n}(V_\theta+p),\mathcal D_{\Delta^{n+a}}(V_\theta+p))=(\mathbf{F}_n,\mathbf{F}_{n+a}))\,\bigr|\\
    &\hspace{50mm}
    \le \Delta^{-(\eta-\lambda_0)a-3C_1\e m/2}.
\end{aligned}
\]
For $\mathbf{F}_{m}\in\mathcal D_{\Delta^{m}}(\mathcal F_{\mathcal I'})$, let $\mathbf{F}_n$ and $\mathbf{F}_{n+a}$ be its $\mathcal D_{\Delta^n}(V_\theta+p)$ and $\mathcal D_{\Delta^{n+a}}(V_\theta+p)$ ancestors. Since $a\le m-n$, the definition of $A_{\mathbf{F}_n}$ shows that, on the fiber $\{\mathcal D_{\Delta^{m}}(V_\theta+p)=\mathbf{F}_{m}\}$, $\mathcal D_{\Delta^a}(p)$ and $S_a^{\mathbf{F}_n}$ determine one another up to $O(1)$ choices. Hence
\[
    \bigl|\,\mathcal D_{\Delta^a}(p)(\mathcal I'\mid \mathcal D_{\Delta^{m}}(V_\theta+p)=\mathbf{F}_{m})\,\bigr|
   \lesssim
    \bigl|\,S_a^{\mathbf{F}_n}(\mathcal I'\mid (\mathcal D_{\Delta^n}(V_\theta+p),\mathcal D_{\Delta^{n+a}}(V_\theta+p))
    =(\mathbf{F}_n,\mathbf{F}_{n+a}))\,\bigr|,
\]
so \eqref{eq:slablet-prefix-collision-target} follows.

\textbf{(b) Proof of \eqref{eq:point-fiber-entropy-target}.}

For each $Q\in\mathcal D_{\Delta^a}(P_{\mathcal I'})$, define $\Lambda$ on the fiber $\{\mathcal D_{\Delta^a}(p)=Q\}$ of $\mathcal I'$ by
\[
    \Lambda(p,\theta)=
    \mathcal D_{\Delta^{m-a}}
    \bigl(V_\theta+\Delta^{-a}(p-z(Q))\bigr).
\]
The pair $(\Delta^{-a}(p-z(Q)),\mathcal D_{\Delta^{m-a}}(\theta))$ determines $\Lambda(p,\theta)$ up to $O(1)$ choices.

For $Q\in\mathcal D_{\Delta^a}(P_{\mathcal I'})$, the map $p\mapsto\Delta^{-a}(p-z(Q))$ sends the scale $\delta$ inside $Q$ to $\Delta^{m-a}$. For $0\le i\le m-a$, we have, by the Frostman condition on $P$ and \eqref{eq:Frostman-and-KT},
\[
    f(m)-f(a+i)\le(d-k)(m-a-i)+C_1\e m/1000.
\]
Lemma \ref{lem:branching-translation}, the $(\Delta,m)$-uniformity of $P$, $\mathcal I'\subset\mathcal I_0$, and the fact that each cell in $\mathcal D_{\Delta^{m}}(P)$ contains $O(1)$ points of $P$ therefore show that, for every $Q\in\mathcal D_{\Delta^a}(P_{\mathcal I'})$,
\[
    \Delta^{-a}\bigl((P_{\mathcal I'}\cap Q)-z(Q)\bigr)
    \text{ is }(\Delta^{m-a},d-k,\Delta^{-2C_1\e m})\text{-Katz--Tao}.
\]

For every $Q\in\mathcal D_{\Delta^a}(P_{\mathcal I'})$ and $q\in P_{\mathcal I'}\cap Q$, restrict $\mathcal D_{\Delta^{m-a}}(\Theta_{\mathcal I'}(q))$ to exactly
\begin{equation}\label{eq:point-fiber-plane-truncation}
    \min_{q'\in P_{\mathcal I'}\cap Q}
    |\mathcal D_{\Delta^{m-a}}(\Theta_{\mathcal I'}(q'))|
\end{equation}
cells. It follows from Lemma \ref{lem:spread-out-subset}, Lemma \ref{lem:slablet-above-threshold-refinement}\eqref{it:above-threshold-refinement-point-planes}, and $m-a\ge n\ge\xi m/(2d)$ that the hypotheses of Lemma \ref{lem:katz-tao-induction} are satisfied with $\rho=(\eta-\eta')/8$ for the set $\Delta^{-a}((P_{\mathcal I'}\cap Q)-z(Q))$ and the plane-cell subsets selected with cardinality \eqref{eq:point-fiber-plane-truncation}. The affine cells in the conclusion of Lemma \ref{lem:katz-tao-induction} map with $O(1)$ multiplicity into $\Lambda(\mathcal I'\mid \mathcal D_{\Delta^a}(p)=Q)$. We can therefore apply Lemma \ref{lem:katz-tao-induction} to obtain
\begin{equation}\label{eq:point-fiber-one-scale}
\begin{split}
    \bigl|\,\Lambda(\mathcal I'\mid \mathcal D_{\Delta^a}(p)=Q)\,\bigr|\ge
    \Delta^{(\eta+(\eta-\eta')/4)(m-a)+C_1\e m}
    |P_{\mathcal I'}\cap Q|\cdot
    \min_{q\in P_{\mathcal I'}\cap Q}
    |\mathcal D_{\Delta^{m-a}}(\Theta_{\mathcal I'}(q))|.
\end{split}
\end{equation}

For fixed $Q\in\mathcal D_{\Delta^a}(P_{\mathcal I'})$, the map $\Lambda$ determines $\mathcal D_{\Delta^{m-a}}(\theta)$. Conversely, $(p,\mathcal D_{\Delta^{m-a}}(\theta))$ determines $\Lambda$ up to $O(1)$ choices. If $\mathbf{F}_{m}\in\mathcal D_{\Delta^{m}}(\mathcal F_{\mathcal I'})$ is represented by $(\theta_{\mathbf{F}_{m}},b_{\mathbf{F}_{m}})$, then on the fibers $\{\mathcal D_{\Delta^a}(p)=Q\}$ and $\{\mathcal D_{\Delta^{m}}(V_\theta+p)=\mathbf{F}_{m}\}$ the translation parameter of $\Lambda$ differs by $O(\Delta^{m-a})$ from $\Delta^{-a}(b_{\mathbf{F}_{m}}-\rho_{\theta_{\mathbf{F}_{m}}}(z(Q)))$.
Hence $\Lambda$ and the pair
\[
    \left(\mathcal D_{\Delta^{m-a}}(\theta),
    \mathcal D_{\Delta^{m-a}}
    \bigl(\Delta^{-a}(b_{\mathbf{F}_{m}}-\rho_{\theta_{\mathbf{F}_{m}}}(z(Q)))\bigr)\right)
\]
determine one another up to $O(1)$ choices. The map $\mathcal D_{\Delta^{m}}(\theta)$ refines $\mathcal D_{\Delta^{m-a}}(\theta)$. Consequently,
\begin{equation}\label{eq:rescaled-affine-correspondence}
    (\mathcal D_{\Delta^a}(p),\Lambda,\mathcal D_{\Delta^{m}}(\theta))
    \longleftrightarrow
    (\mathcal D_{\Delta^a}(p),\mathcal D_{\Delta^{m}}(V_\theta+p),\mathcal D_{\Delta^{m}}(\theta))
\end{equation}
has $O(1)$ fibers in both directions: $\mathcal D_{\Delta^{m}}(\theta)$ specifies $\theta$ to accuracy $O(\delta)$, while the $\Delta^{m-a}$-precision of the rescaled translation coordinate becomes $\Delta^a\Delta^{m-a}=\delta$ in the original coordinates.

For fixed $Q\in\mathcal D_{\Delta^a}(P_{\mathcal I'})$, the map $\Lambda$ determines $\mathcal D_{\Delta^{m-a}}(\theta)$. Its values on a fixed $(p,\mathcal D_{\Delta^{m-a}}(\theta))=(q,J)$ fiber lie in $O(1)$ affine $\Delta^{m-a}$-cells. Lemma \ref{lem:slablet-above-threshold-refinement}\eqref{it:above-threshold-refinement-point-fibers}, bounded overlap, \eqref{eq:point-fiber-one-scale}, and \eqref{eq:rescaled-affine-correspondence} yield
\begin{equation}\label{eq:point-fiber-affine-count}
    \bigl|\,\mathcal D_{\Delta^{m}}(V_\theta+p)(\mathcal I'\mid \mathcal D_{\Delta^a}(p)=Q)\,\bigr|
    \ge
    \Delta^{(\eta+(\eta-\eta')/4)(m-a)+3C_1\e m}
    |P_{\mathcal I'}\cap Q|\cdot
    \min_{q\in P_{\mathcal I'}}
    |\mathcal D_{\Delta^{m}}(\Theta_{\mathcal I'}(q))|.
\end{equation}
Sum \eqref{eq:point-fiber-affine-count} over $Q\in\mathcal D_{\Delta^a}(P_{\mathcal I'})$ and use the lower bound for $|P_{\mathcal I'}|$ in Lemma \ref{lem:slablet-above-threshold-refinement}\eqref{it:above-threshold-refinement-point-fibers}. For $\e\ll_{\eta-\eta',C_1}1$, we obtain \eqref{eq:point-fiber-entropy-target}.
\end{proof}

\subsection{Conclusion of the proof of Proposition \ref{prop:non-sticky-furst-gain}}

We now combine Lemma \ref{lem:slablet-entropy-below-threshold} and Proposition \ref{prop:slablet-entropy-above-threshold} to conclude the proof. 

\begin{proof}[Proof of Proposition \ref{prop:non-sticky-furst-gain}]
If $R\le\sigma(m-n)$, let $\mathcal I'=\mathcal I$ and apply Lemma \ref{lem:slablet-entropy-below-threshold}.

If $R>\sigma(m-n)$, let $\mathcal I'$ be the refinement provided by Proposition \ref{prop:slablet-entropy-above-threshold}. By double counting,
\[
    \begin{aligned}
    &\sum_{Q\in\mathcal D_{\Delta^a}(P_{\mathcal I'})}
    \bigl|\,\mathcal D_{\Delta^{m}}(V_\theta+p)(\mathcal I'\mid \mathcal D_{\Delta^a}(p)=Q)\,\bigr|\\
    &\qquad\le|\mathcal D_{\Delta^{m}}(\mathcal F_{\mathcal I'})|
    \cdot
    \max_{\mathbf{F}\in\mathcal D_{\Delta^{m}}(\mathcal F_{\mathcal I'})}
    \bigl|\,\mathcal D_{\Delta^a}(p)(\mathcal I'\mid \mathcal D_{\Delta^{m}}(V_\theta+p)=\mathbf{F})\,\bigr|.
    \end{aligned}
\]
Combining \eqref{eq:slablet-prefix-collision-target} and \eqref{eq:point-fiber-entropy-target}, we deduce
\begin{equation}\label{eq:slablet-entropy-gain-above-threshold}
 \mathcal E_{\mathcal I'}(m)
 \ge(d-k-\eta)m+\lambda_0a
 -\frac{\eta-\eta'}{2}m-2C_1\e m.
\end{equation}
Equations \eqref{eq:non-sticky-scale-lengths}, \eqref{eq:non-sticky-gain-parameter}, and \eqref{eq:slablet-deficit-scale} turn \eqref{eq:slablet-entropy-gain-above-threshold} into 
\[
\mathcal E_{\mathcal I'}(m)\ge\left(d-k-\eta'+\frac{\eta-\eta'}2\right)m-8C_1\e m.
\]
By \eqref{eq:non-sticky-gain-parameter}, both cases yield
\begin{equation}\label{eq:non-sticky-entropy-gain}
    \mathcal E_{\mathcal I'}(m)
    \ge(d-k-\eta')m-20C_1\e m.
\end{equation}

When $R\le\sigma(m-n)$, we have, by Proposition \ref{prop:middle-scale-frostman-gain}\textup{(\ref{it:middle-scale-frostman-point-fibers})},
\[
    |\{(p,\theta)\in\mathcal I':p=q\}|
    \ge \Delta^{C_1\e m}|\{(p,\theta)\in\mathcal I_0:p=q\}|
    \qquad(q\in P_{\mathcal I'}),
\]
and Proposition \ref{prop:slablet-entropy-above-threshold} establishes the same inequality when $R>\sigma(m-n)$. Since $\theta\in\delta\Z^{k\times(d-k)}$, each $p$ fiber has exactly $|\mathcal D_{\Delta^m}(\Theta_{\mathcal I'}(p))|$ incidences. Thus $M_{\mathcal I'}(m)\ge\Delta^{C_1\e m}M$. Moreover, $\mathcal F_{\mathcal I'}\subset\mathcal F$. Equation \eqref{eq:non-sticky-entropy-gain} and the definition of $\mathcal E_{\mathcal I'}(m)$ yield
\[
    \log_{1/\Delta}\frac{|\mathcal F|_\delta}{M}
    \ge(d-k-\eta')m
    -21C_1\e m.
\]
For $m\gg_\rho1$, the right side is at least $(d-k-\eta'-\rho)m$. This concludes the proof.
\end{proof}

\subsection{Proof of Theorem \ref{thm:proj-general}}

We deduce Theorem \ref{thm:proj-general} from Theorem \ref{thm:furst-general}. The argument is standard. In short, if too many projections are small, their fibers form a spread-out configuration with too few planes.

Choose $\e>0$ so that Theorem \ref{thm:furst-general}, applied with  $t$ and  $\eta/2$ in place of $\eta$, is valid for Frostman and spread-out constants $\delta^{-4\e}$, and so that $4\e<\eta$. Without loss of generality, $P$ is $\delta$-discretized.

Call $\mathbf I\in\mathbf\Theta$ bad if there are $\theta\in\mathbf I$ and $P_\theta\subset P$ such that $|P_\theta|_\delta\ge\delta^\e|P|_\delta$ and $|\pi_\theta(P_\theta)|_\delta<\delta^{\eta-t}$. Choose one such pair for each bad cell, and denote the selected parameters by $\Theta_{\mathrm{bad}}$. Suppose that $|\Theta_{\mathrm{bad}}|>\delta^\e|\mathbf\Theta|$. Let $M=\lceil\delta^{2\e}|\mathbf\Theta|/8\rceil$. Double counting yields a $(\delta,t,2\delta^{-3\e})$-Frostman subset $P_0\subset P$ such that, for every $p\in P_0$, at least $M$ bad cells have selected parameters $\theta$ satisfying $p\in P_\theta$. Select exactly $M$ such cells at each $p$. By Lemma \ref{lem:spread-out-subset}, each selected family is $(\delta,s,8\delta^{-3\e})_{d,d-k}$-spread-out.

Apply $(x,y)\mapsto(y,x)$ to the points and $\theta\mapsto-\theta^*$ to the plane cells. These maps preserve the Frostman and spread-out conditions. After relabeling, denote the transformed configuration by $(P_0,\mathbf\Phi)$. From Theorem \ref{thm:furst-general} and \eqref{eq:affine-incidence-coordinates}, we deduce
\[
 \frac18\delta^{\eta/2+2\e-t}|\mathbf\Theta|
 \le\delta^{\eta/2-t}M
 \le|\mathcal F(P_0,\mathbf\Phi)|_\delta
 \lesssim\sum_{\theta\in\Theta_{\mathrm{bad}}}
 |\pi_\theta(P_\theta)|_\delta
 <\delta^{\eta-t}|\mathbf\Theta|.
\]
This is impossible for sufficiently small $\delta$, since $4\e<\eta$. Hence the non-bad cells satisfy the Frostman conclusion.

For the Katz--Tao conclusion, call $\mathbf I\in\mathbf\Theta$ bad if it contains some $\theta$ such that $|\pi_\theta(P)|_\delta<\delta^\eta|P|_\delta$, and choose one such $\theta$ from each bad cell. Denote the selected parameters by $\Theta_{\mathrm{bad}}$. If $|\Theta_{\mathrm{bad}}|>\delta^\e|\mathbf\Theta|$, use the bad cells as the common shading at every point of $P$, apply $(x,y)\mapsto(y,x)$ and $\theta\mapsto-\theta^*$, and denote the transformed common-shading configuration by $(P,\mathbf\Phi)$. Then Lemma \ref{lem:spread-out-subset} and the Katz--Tao conclusion of Theorem \ref{thm:furst-general} yield
\[
 \delta^{\eta/2}|P|_\delta|\Theta_{\mathrm{bad}}|
 \le|\mathcal F(P,\mathbf\Phi)|_\delta
 \lesssim\sum_{\theta\in\Theta_{\mathrm{bad}}}
 |\pi_\theta(P)|_\delta
 <\delta^\eta|P|_\delta|\Theta_{\mathrm{bad}}|,
\]
which is impossible for sufficiently small $\delta$. The non-bad cells therefore satisfy the Katz--Tao conclusion.


\section{Examples and applications}
\label{sec:examples-applications}


\subsection{Non-planar measures on \texorpdfstring{$\mathbf{G}_{d,k}$}{the big cell} are spread-out}

\label{s:non-planar-measures-are-spread-out}

A measure on $\mathbf{G}_{d,k}$ is spread out if it avoids concentration near model hyperplanes at most scales. Proposition \ref{prop:almost-nonplanar-centered-spread-out} quantifies this principle, allowing concentration at a small number of scales and locations. It is a higher dimensional extension of Lemma \ref{lem:line-decay-spread-out}, which applies to uniformly perfect measures on $\R$.
\begin{proposition}\label{prop:almost-nonplanar-centered-spread-out}
Let $1\le k<d$. There exists a constant $\kappa=\kappa(d,k)>0$ such that the following holds.

Fix an integer $L\ge0$, $\eta\in(0,1/2]$, and $\chi\ge0$, and set $\delta=2^{-L}$. Let $\mu$ be a locally finite Borel measure on $\mathbf G_{d,k}$. For $\theta\in\mathbf G_{d,k}$, define
\begin{displaymath}
\begin{aligned}
    &\mathcal N_{\delta,\eta,\kappa}^{\mu}(\theta)
    =\#\bigl\{j\in\{1,\ldots,L+1\}:\;\text{there is }H\in\mathfrak{H}_{d,k}
    \text{ such that}\\
    &\qquad\dist(\theta,H)\le\eta2^{-j}
    \text{ and }\mu\bigl(B(\theta,2^{-j})\cap H(\eta2^{-j})\bigr)
    >\kappa\mu\bigl(B(\theta,2^{-j-1})\bigr)\bigr\}.
\end{aligned}
\end{displaymath}
Set $A_\delta=B(0,1)\cap\{\theta:\mathcal N_{\delta,\eta,\kappa}^{\mu}(\theta)\le\chi L\}$. If $\mu(A_\delta)>0$, then $\mu|_{A_\delta}$ is $(\delta,\eta/8,\mu(B(0,5))\delta^{-\eta\chi/2})_{d,k}$-spread-out.
\end{proposition}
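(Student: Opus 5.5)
We prove this in the same way as Lemma \ref{lem:line-decay-spread-out}: a covering argument that turns each singular scale into a multiplicative mass decay. Fix $\mathfrak s$ and a Borel set $\Theta\subset\spt(\mu|_{A_\delta})\subset A_\delta$ such that every $\theta\in\Theta$ has at least $\mathfrak s$ scales in $2^{-\N}\cap[\delta,1]$ that are $\eta/8$-singular for $(\Theta,\theta)$. The goal is $\mu(\Theta)\le\mu(B(0,5))\delta^{-\eta\chi/2}2^{-\eta\mathfrak s/8}$. If $\mathfrak s\le 4\chi L$ this is immediate, since then $2^{-\eta\mathfrak s/8}\ge\delta^{\eta\chi/2}$. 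So we may assume $\mathfrak s>4\chi L$. Then for each $\theta\in\Theta$ at least $\mathfrak s-\chi L\ge\frac34\mathfrak s$ of its singular scales $r=2^{-j}$ are \emph{good}: they are not counted in $\mathcal N^{\mu}_{\delta,\eta,\kappa}(\theta)$.

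\textbf{Key step (local decay at a good singular scale).} Suppose $r$ is a good singular scale for $\theta\in\Theta$, so $\Theta\cap B(\theta,r)\subset H(\eta r/8)$ for some $H\in\mathfrak H_{d,k}$. Then $\dist(\theta,H)<\eta r/8$, and goodness of the scale (applied to $H$, possibly also at the neighbouring scale $2r$ to absorb the enlargement of the ball) gives $\mu(B(\theta,r)\cap H(\eta r))\le\kappa\mu(B(\theta,r/2))$. For this to become a genuine decay, the factor $\kappa$ has to beat the doubling loss in passing from $B(\theta,r/2)$ to $B(\theta,r)$. Here we use $\kappa=\kappa(d,k)$ small together with a Vitali-type choice of "$K$-doubling" points: at scales where $\mu(B(\theta,r))\ge K\mu(B(\theta,r/2))$ fails for a large dimensional constant $K$, we get $\mu(\Theta\cap B(\theta,r))\le\mu(B(\theta,r)\cap H(\eta r))\le\kappa\mu(B(\theta,r/2))\le\tfrac12\mu(B(\theta,r))$. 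At scales where this doubling fails, mass drops by a factor $K$ anyway. Either way, every good singular scale forces relative decay $\mu(\Theta\cap B)\le c\,\mu(B)$ with $c<1$ absolute.

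\textbf{Multiscale iteration.} Run the standard stopping-time tree. For $i=0,1,\dots$, let $\Theta_i$ be the set of points with at least $i$ good singular scales. Cover $\Theta$ by a Besicovitch/Vitali family of balls at the $i$-th good singular scale of each point, and use bounded overlap (constant depending on $d,k$) to sum the local decays. This is the "covering argument" of \cite[Proposition~6.2]{SpreadOutFurstSticky}. We pay an overlap factor at each step, which is why the exponent degrades from $\eta$ to $\eta/8$. We start from $\mu(B(0,5))$ because the initial balls $B(\theta,1)\subset B(0,5)$ for $\theta\in B(0,1)$.

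\textbf{Main obstacle.} The hard part is the iteration across scales. Singular scales of different points are not aligned, and the ball used at the smaller scale must lie inside a ball already charged at a larger scale. We first try a dyadic-cube version: replace balls by dyadic cubes with $O(1)$ enlargement, and charge a cube $Q$ at scale $r$ when a positive proportion of $\Theta\cap Q$ has $r$ as a good singular scale. The pigeonholing over which $\frac34\mathfrak s$ scales are good costs a factor $\binom{L}{\mathfrak s}$-type entropy. This factor is absorbed by the choice $\kappa\ll_{d,k}1$, leaving decay $2^{-\eta\mathfrak s/8}$.
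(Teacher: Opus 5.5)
There is a genuine gap exactly at the step you flag as the main obstacle, and your proposed fix fails. Your reduction to $\mathfrak s>4\chi L$ is correct.

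The ``key step'' is not the difficulty. Since $\kappa\le\frac12$ and $B(\theta,r/2)\subset B(\theta,r)$, you get $\mu(B(\theta,r)\cap H(\eta r))\le\kappa\mu(B(\theta,r/2))\le\frac12\mu(B(\theta,r))$ with no doubling dichotomy. Local decay is free; what you lack is a way to chain it across non-aligned scales.

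Your sets $\Theta_i$ (points with at least $i$ good scales) all coincide with $\Theta$ for $i\le\frac34\mathfrak s$, so they carry no decay. Bounds of the form $\mu(\Theta\cap B)\le c\,\mu(B)$ do not iterate either: the balls used at step $i$ are not contained in any set whose mass was controlled at step $i-1$.

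The dyadic-cube fallback fails quantitatively. Pigeonholing a common set of $\mathfrak s'=\frac34\mathfrak s$ good scales costs $\binom{L}{\mathfrak s'}\approx(eL/\mathfrak s')^{\mathfrak s'}$. When $\mathfrak s$ is just above $4\chi L$, the target bound $\mu(B(0,5))\delta^{-\eta\chi/2}2^{-\eta\mathfrak s/8}$ is about $\mu(B(0,5))$. So a per-step decay $c^{\mathfrak s'}$ would need $c\lesssim\chi$, which is impossible as $\chi\to0$ because $\kappa$, and hence $c$, depends only on $d,k$. Also, the drop from $\eta$ to $\eta/8$ has nothing to do with overlap constants; those are absorbed by the choice of $\kappa$.

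The missing idea is to track the masses of shrinking neighbourhoods of $\Theta$, with each point keeping its own scales, so nothing is pigeonholed. Call a singular scale $r$ of $\theta$ good if $r/2$ is not counted in $\mathcal N^{\mu}_{\delta,\eta,\kappa}(\theta)$. This is the natural indexing, since $r\in[\delta,1]$ corresponds to $j\in\{1,\dots,L+1\}$.

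Thin each point's good scales greedily to $r_1(\theta)>\dots>r_N(\theta)$ with $r_{j+1}\le\frac\eta8r_j$ and $N\ge\frac\eta8(\mathfrak s-\chi L)$. This thinning, not the overlap, is the source of the exponent $\eta/8$.

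Set $U_0=B(0,5)$ and $U_j=\bigcup_{\theta\in\Theta}B(\theta,\eta r_j(\theta)/8)$. Choose $T_j\subset\Theta$ greedily, each time taking a point of maximal $r_j$ outside the balls $B(\omega,r_j(\omega)/4)$ already chosen. These balls have overlap at most $C(d,k)$, and they lie in $U_{j-1}$ because of the thinning. Every $\theta\in\Theta$ is within $r_j(\omega)/4$ of some $\omega\in T_j$ with $r_j(\theta)\le r_j(\omega)$.

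Singularity of $r_j(\omega)$, witnessed by $H_j^\omega\in\mathfrak H_{d,k}$, then gives $U_j\subset\bigcup_{\omega\in T_j}B(\omega,r_j(\omega)/2)\cap H_j^\omega(\eta r_j(\omega)/4)$. Goodness of $r_j(\omega)$ yields
\[
    \mu(U_j)\le\kappa\sum_{\omega\in T_j}\mu\bigl(B(\omega,r_j(\omega)/4)\bigr)\le C\kappa\,\mu(U_{j-1})=\tfrac12\mu(U_{j-1})
\]
for $\kappa=1/(2C)$. Therefore $\mu(\Theta)\le2^{-N}\mu(B(0,5))\le\delta^{-\eta\chi/8}2^{-\eta\mathfrak s/8}\mu(B(0,5))$.

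The half-radius ball on the right side of the definition of $\mathcal N^{\mu}_{\delta,\eta,\kappa}$ is there precisely so that this step compares with boundedly overlapping balls inside $U_{j-1}$, with no doubling hypothesis.
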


\begin{proof}
In a first read through, the reader may assume that the set in the definition of $\mathcal N_{\delta,\eta,\kappa}^{\mu}(\theta)$ is empty for every $\theta\in\spt\mu$. 

Let $C=C(d,k)\ge1$ be the largest overlap of families of Euclidean balls in $\mathbf{G}_{d,k}$ whose centers lie outside each other's balls, and set $\kappa=1/(2C)$.
Write $\nu=\mu|_{A_\delta}$, and let $\Theta\subset\spt\nu$ be a Borel set such that
\begin{displaymath}
    \mathfrak{s}_{d,k}(\theta\mid\Theta,\eta/8,\delta\to1)\ge\overline{\mathfrak{s}}
    \qquad\text{for all }\theta\in\Theta.
\end{displaymath}
Replacing $\Theta$ by $\Theta\cap A_\delta$ preserves $\nu(\Theta)$ and cannot decrease the number of singular scales. We may therefore assume $\Theta\subset A_\delta$ and $\Theta\ne\varnothing$.

For each $\theta\in\Theta$, discard every singular scale $r$ for which $r/2$ is counted by $\mathcal N_{\delta,\eta,\kappa}^{\mu}(\theta)$. At least $\max\{\overline{\mathfrak{s}}-\chi L,0\}$ singular scales remain. Select the largest remaining scale $r$ and discard the remaining scales in $[\tfrac{\eta}{8}r,r]$. Repeat until no scales remain. Since $\eta\le1/2$, each interval $[\tfrac{\eta}{8}r,r]$ contains at most $\tfrac8\eta$ dyadic scales. Pruning to keep the same number of selected scales at each point, we obtain an integer 
\begin{equation}\label{eq:almost-nonplanar-scale-count}
    \mathfrak{s}\ge\frac\eta8\max\{\overline{\mathfrak{s}}-\chi L,0\}
\end{equation}
and, for each $\theta\in\Theta$, scales $r_j(\theta)$, $1\le j\le\mathfrak{s}$, such that
\begin{equation}\label{eq:almost-nonplanar-centered-scale-subsequence}
    r_{j+1}(\theta)\le\frac\eta8r_j(\theta) \quad (1\le j<\mathfrak{s}).
\end{equation}
 By construction, each $r_j(\theta)$ is $(\eta/8)$-singular for $(\Theta,\theta)$, and $r_j(\theta)/2$ is not counted by $\mathcal N_{\delta,\eta,\kappa}^{\mu}(\theta)$. Choose $H_j^{\theta}\in \mathfrak{H}_{d,k}$ such that
\begin{equation}\label{eq:almost-nonplanar-centered-hyperplane-containment}
    \Theta\cap B(\theta,r_j(\theta))\subset H_j^{\theta}\left(\frac\eta8r_j(\theta)\right).
\end{equation}

We construct nested neighborhoods of $\Theta$ whose $\mu$-masses decrease by a factor of two at each step. For each $1\le j\le\mathfrak{s}$, start with $T_j=\varnothing$. Choose a point of $\Theta$ outside $\bigcup_{\omega\in T_j}B(\omega,r_j(\omega)/4)$ at which $r_j$ is maximal, and add it to $T_j$. Repeat until $\Theta$ is covered; this must happen in finitely many steps since $\Theta\subset B(0,1)$ and all the balls have radius $\ge \delta/4$. By definition, the balls in the cover have overlap at most $C$. For every $\theta\in\Theta$, there is $\omega\in T_j$ with 
\begin{equation} \label{eq:Tj-covering-property}
|\theta-\omega|\le r_j(\omega)/4, \qquad r_j(\theta)\le r_j(\omega).
\end{equation}
Set $U_0=B(0,5)$ and 
\[
    U_j= \bigcup_{\theta\in\Theta} B\bigl(\theta,\eta r_j(\theta)/8\bigr),
    \qquad 1\le j\le\mathfrak{s}.
\]
We have $\Theta\subset U_j\subset U_{j-1}$ and $B(\omega,r_j(\omega)/4)\subset U_{j-1}$ for $\omega\in T_j$, by \eqref{eq:almost-nonplanar-centered-scale-subsequence}. Combining \eqref{eq:almost-nonplanar-centered-hyperplane-containment} with \eqref{eq:Tj-covering-property}, we obtain
\[
    U_j\subset\bigcup_{\omega\in T_j}
    \left[B\left(\omega,\frac12r_j(\omega)\right)
    \cap H_j^{\omega}\left(\frac\eta4r_j(\omega)\right)\right].
\]
Since $\dist(\omega,H_j^{\omega})<\eta r_j(\omega)/8$ by \eqref{eq:almost-nonplanar-centered-hyperplane-containment}, and $r_j(\omega)/2$ is not counted by $\mathcal N_{\delta,\eta,\kappa}^{\mu}(\omega)$ for every $\omega\in T_j$, we estimate
\[
    \mu(U_j)
    \le\kappa\sum_{\omega\in T_j}\mu\bigl(B(\omega,r_j(\omega)/4)\bigr)
    \le C\kappa\mu(U_{j-1})
    =\frac12\mu(U_{j-1}).
\]
As $\Theta\subset U_{\mathfrak{s}}$, we conclude from \eqref{eq:almost-nonplanar-centered-scale-subsequence} that $\nu(\Theta)\le2^{-\mathfrak{s}}\mu(B(0,5))$. In light of \eqref{eq:almost-nonplanar-scale-count}, this shows that $\nu$ is $(\delta,\eta/8,\mu(B(0,5))\delta^{-\eta\chi/2})_{d,k}$-spread-out, as desired.
\end{proof}

We establish a related result, with a similar proof, showing that a simpler but uniform non-concentration near model hyperplanes implies spread-outness. Similar conditions have appeared elsewhere in the literature, especially in the context of Diophantine approximation and homogeneous dynamics; see e.g. \cite[\S 2]{KLW04}.
\begin{lemma}\label{lem:hyperplane-non-concentration-implies-spreadout}
Let $1\le k<d$, let $U\subset\mathbf G_{d,k}$ be open, and let $\mu$ be a nonzero finite Borel measure on $U$. Suppose that, for some $\e\in(0,1/2)$ and $r_0\in(0,1]$,
\begin{equation}\label{eq:model-hyperplane-contraction}
    \mu\bigl(B(\theta,r)\cap H(\e r)\bigr)
    \le(1-\e)\mu(B(\theta,r))
\end{equation}
for $\theta\in\spt\mu\cap U$, $0<r\le r_0$, and $H\in\mathfrak{H}_{d,k}$, whenever $B(\theta,r)\subset U$. For $a\in\spt\mu\cap U$ and $\rho\in2^{-\N}\cap(0,r_0]$ with $B(a,4\rho)\subset U$, the restriction $\mu|_{B(a,\rho)}$ is $(\delta,\beta,C\mu(B(a,2\rho)))_{d,k}$-spread-out for every $\delta\in2^{-\N}$, where $\beta=\beta(d,k,\e)>0$ and $C=C(d,k,\e,\rho)\ge1$.
\end{lemma}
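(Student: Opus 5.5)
The plan is to run the same nested-neighborhood argument as in the proof of Proposition \ref{prop:almost-nonplanar-centered-spread-out}, with the uniform contraction \eqref{eq:model-hyperplane-contraction} in place of the condition on $\mathcal N^{\mu}_{\delta,\eta,\kappa}$. Here we cannot assume a mass drop by a factor $\kappa$ at each scale; we only have a drop by $(1-\e)$. So the first step is to iterate \eqref{eq:model-hyperplane-contraction} at a bounded number of nested scales, producing a mass decay of any prescribed strength near model hyperplanes.

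\textbf{Step 1 (iterated contraction).} Let $\theta\in\spt\mu$ with $B(\theta,r)\subset U$, and let $H$ be a model hyperplane with $\dist(\theta,H)\le\beta r$. Cover $B(\theta,r/2)\cap H(\beta r)$ by balls $B(\omega_i,\e r/4)$ with $\omega_i\in\spt\mu\cap H(\beta r)$ and bounded overlap. Apply \eqref{eq:model-hyperplane-contraction} at $\omega_i$ with radius $\e r/4$ or a comparable radius. Translating $H$ by $O(\beta r)$ keeps it in $\mathfrak H_{d,k}$ and absorbs the offset, provided $\beta\ll\e^2$. This gives
\[
\mu(B(\omega_i,\cdot)\cap H(\beta r))\le(1-\e)\mu(B(\omega_i,\cdot)).
\]
Iterating along $N=N(\e,d,k)$ geometrically shrinking widths $\beta r\ll\e\beta r\ll\cdots$ would give
\[
\mu\bigl(B(\theta,r/4)\cap H(\beta r)\bigr)\le\kappa\,\mu\bigl(B(\theta,r)\bigr)
\]
with $\kappa=1/(2C)$ as in that proof.

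The true issue is a doubling-type comparison: \eqref{eq:model-hyperplane-contraction} compares with $\mu(B(\theta,r))$, but the covering argument needs comparison with $\mu(B(\omega,r/4))$. My plan is to avoid doubling altogether. Every step of the argument will take $\mu$-mass inside $H(\beta r)\cap B(\omega,r/2)$ and compare it with the mass of the \emph{same} ball $B(\omega,r/2)$. This should be possible by applying \eqref{eq:model-hyperplane-contraction} directly to the ball $B(\omega,r/2)$ with width $\e r/2\ge\beta r$. That yields
\[
\mu(B(\omega,r/2)\cap H(\beta r))\le(1-\e)\mu(B(\omega,r/2)).
\]

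\textbf{Step 2 (covering argument with a weaker factor).} Run the nested construction of $U_j$ from the proof of Proposition \ref{prop:almost-nonplanar-centered-spread-out}, taking singular scales separated by factor $\beta/8$ and $\beta\ll\e$. At each step, $\mu(U_j)\le(1-\e)\sum_{\omega\in T_j}\mu(B(\omega,r_j/2))$. Because of the overlap constant $C$, a single step does not immediately beat $\mu(U_{j-1})$. The remedy is to group $K=K(\e,C)$ consecutive selected scales: the $K$-fold nested intersection with slabs should lose a factor $(1-\e)^K$, while the overlap constant is paid only once per group. This uses the separation $r_{j+1}\le(\beta/8)r_j$, which lets the inner balls sit inside a single outer ball with bounded overlap at every level. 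Choosing $K$ with $C(1-\e)^K\le1/2$ gives $\mu(U_{jK})\le 2^{-j}\mu(U_0)$.

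\textbf{Step 3 (bookkeeping).} For $\Theta\subset B(a,\rho)\cap\spt\mu$ with $\ge\mathfrak s$ $\beta$-singular scales at each point, restrict to scales $r\le\rho$. Scales in $[\rho,1]$ number $O(\log(1/\rho))$, which is absorbed into $C(\rho)$. Sparsify to $\gtrsim\beta\mathfrak s$ scales. Take $U_0=B(a,2\rho)$, which is allowed since $B(a,4\rho)\subset U$ and all balls used lie in $U$. We would obtain
\[
\mu(\Theta)\le C\,2^{-c\beta\mathfrak s/K}\mu(B(a,2\rho)),
\]
which is the claim after renaming $\beta$.

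The main obstacle is Step 2: making the factor-$(1-\e)$ losses compound across scales despite the overlap constant and without any doubling hypothesis. I would first try the grouping argument above. If it fails, I would instead replace $\kappa$-counting by iterating \eqref{eq:model-hyperplane-contraction} at $O_\e(1)$ intermediate scales between consecutive singular scales, inside a fixed slab.
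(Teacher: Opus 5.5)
Your proposal has a genuine gap, and it sits in Step 2, which is the heart of the argument. The grouping claim, that $K$ consecutive levels lose $(1-\e)^K$ while the overlap constant is paid only once, has no mechanism behind it. At each level, \eqref{eq:model-hyperplane-contraction} compares $\mu(U_{j+1}\cap B)$ only with the full mass $\mu(B)$ of a level-$(j+1)$ ball $B\subset U_j$. To add these up you need a fresh cover with overlap $C$. Chaining $K$ levels therefore gives $(C(1-\e))^K$, which is useless because $C(1-\e)\ge1$ in general.

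The separation of scales does not change this: whatever tree structure you impose, bounding the total mass of the level-$(j+1)$ balls by the mass of $U_j$ costs the overlap constant again. Without doubling you also cannot switch to a disjoint family of cells, since the hypothesis is only available on balls centered in $\spt\mu$. Your fallback, iterating in a fixed slab as in Step 1, incurs the same overlap loss at every iteration. It also compares with a larger ball, which is the doubling problem you identified yourself.

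The missing idea is to count the mass that escapes rather than the mass that survives. If $B\subset U_{j-1}$ and $\mu(B\cap U_j)\le(1-\e)\mu(B)$, then $\mu(B\setminus U_j)\ge\frac{\e}{1-\e}\mu(B\cap U_j)$. Sum over a cover $\mathcal B_j$ of $\spt\mu\cap U_j$ by such balls with overlap at most $C_0$. The left-hand sides add up to at most $C_0\mu(U_{j-1}\setminus U_j)$, so $\frac{\e}{1-\e}\mu(U_j)\le C_0\bigl(\mu(U_{j-1})-\mu(U_j)\bigr)$. That is, $\mu(U_j)\le\gamma\mu(U_{j-1})$ with $\gamma=\frac{C_0(1-\e)}{C_0(1-\e)+\e}<1$. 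The overlap only pushes $\gamma$ towards $1$. A fixed $\gamma<1$ per selected scale suffices because there are $\gtrsim\e\overline{\mathfrak s}$ selected scales, so no iteration of the hypothesis (your Step 1) is needed.

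This argument does need the contraction for each ball individually: all of $U_j\cap B$ must lie in a single slab of width $\e$ times the radius of $B$. The union containment $U_j\subset\bigcup_{\omega\in T_j}[B(\omega,r_j(\omega)/2)\cap H_j^\omega(\cdot)]$ from Proposition \ref{prop:almost-nonplanar-centered-spread-out} does not give this. A ball $B(\omega,r_j(\omega)/2)$ can meet pieces of $U_j$ coming from points with much larger $r_j$ lying outside $B(\omega,r_j(\omega))$. Moreover, the balls $B(\omega,r_j(\omega)/2)$, $\omega\in T_j$, need not have bounded overlap; only the radius-$r_j(\omega)/4$ balls do.

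The paper's proof handles both points by attaching to every $\theta\in\spt\mu\cap U_j$ the radius $\rho_j(\theta)=\max\{r_j(\omega):\omega\in\Theta,\ |\theta-\omega|<r_j(\omega)/2\}$. Let $\omega$ be a maximizer. Maximality forces every $\omega'\in\Theta$ whose small ball meets $B(\theta,\rho_j(\theta)/8)$ to satisfy $r_j(\omega')\le\rho_j(\theta)$ and $\omega'\in B(\omega,r_j(\omega))$. Hence $U_j\cap B(\theta,\rho_j(\theta)/8)\subset H_j^{\omega}(\e\rho_j(\theta)/8)$. Every point of $\spt\mu\cap U_j$ is now the center of such a ball, so the Besicovitch covering theorem produces the bounded-overlap cover $\mathcal B_j$ needed above.
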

\begin{proof}
Fix $\delta\in2^{-\N}\cap(0,\rho]$, and let $\Theta\subset\spt\mu\cap B(a,\rho)$ be a Borel set such that $\mathfrak{s}_{d,k}(\theta\mid\Theta,\e/16,\delta\to \rho)\ge\overline{\mathfrak{s}}$ for all $\theta\in\Theta$. Select singular scales greedily as in Proposition \ref{prop:almost-nonplanar-centered-spread-out}. For each $\theta\in\Theta$, select a sequence of singular scales $r_j(\theta)$, $j=1,\ldots,\mathfrak{s}$, with $\mathfrak{s}\ge\tfrac\e{128}\overline{\mathfrak{s}}$, such that
\begin{equation} \label{eq:schubert-scale-subsequence}
   r_{j+1}(\theta)\le\frac\e{128}r_j(\theta), \quad 1\le j<\mathfrak{s}.
\end{equation}
For each $\theta$ and $j$, choose  $H_j^{\theta}\in\mathfrak{H}_{d,k}$ such that 
\begin{equation} \label{eq:choice-Hj-theta}
    \Theta\cap B(\theta,r_j(\theta))\subset H_j^{\theta}(\e r_j(\theta)/16).
\end{equation}

Set $U_0=B(a,2\rho)$ and
\[
    U_j=\bigcup_{\theta\in\Theta} \operatorname{int} B\bigl(\theta,\e r_j(\theta)/16\bigr),
    \qquad 1\le j\le\mathfrak{s}.
\]
For $\theta\in U_j$, define
\begin{equation} \label{eq:def-rho-j}
    \rho_j(\theta)=\max\left\{r_j(\omega):\omega\in\Theta,\
    |\theta-\omega|<\frac12r_j(\omega)\right\}.
\end{equation}
This is well-defined since the maximum is over a nonempty finite set of dyadic values.

Fix $1\le j\le\mathfrak{s}$ and $\theta\in U_j$, and choose $\omega\in\Theta$ with 
\[
r_j(\omega)=\rho_j(\theta), \quad  |\theta-\omega|<\rho_j(\theta)/2.
\]
It follows from $U_0=B(a,2\rho)$ for $j=1$ and \eqref{eq:schubert-scale-subsequence} for $j\ge2$ that  $B(\theta,\rho_j(\theta)/8)\subset U_{j-1}$.

For $\theta'\in U_j\cap B(\theta,\rho_j(\theta)/8)$, choose $\omega'\in\Theta$ with $|\theta'-\omega'|<\e r_j(\omega')/16$. From \eqref{eq:def-rho-j} we see that $r_j(\omega')\le\rho_j(\theta)$ and $\omega'\in B(\omega,r_j(\omega))$. By the choice \eqref{eq:choice-Hj-theta} of $H_j^{\omega}$, we have $\dist(\theta',H_j^{\omega})<\e\rho_j(\theta)/8$.

To estimate $\mu(U_j)$, set $B_\theta=B(\theta,\rho_j(\theta)/8)$ for $\theta\in\spt\mu\cap U_j$. The non-concentration assumption \eqref{eq:model-hyperplane-contraction} yields $\mu(U_j\cap B_\theta)\le(1-\e)\mu(B_\theta)$.

Applying the Besicovitch covering theorem \cite[Theorem~2.7]{zbMATH01249699}, we may select an at most countable family $\mathcal B_j$ of the balls $B_\theta$, $\theta\in\spt\mu\cap U_j$, covering $\spt\mu\cap U_j$, with overlap at most $C_0$ depending only on $d,k$. Since every $B\in\mathcal B_j$ lies in $U_{j-1}$,
\begin{equation}\label{eq:non-concentration-neighborhood-mass-loss}
\begin{aligned}
    C_0\mu(U_{j-1}\setminus U_j)
    \ge\sum_{B\in\mathcal B_j}\mu(B\setminus U_j)
    \ge\frac{\e}{1-\e}\sum_{B\in\mathcal B_j}\mu(B\cap U_j)
    \ge\frac{\e}{1-\e}\mu(U_j).
\end{aligned}
\end{equation}
Hence $\mu(U_j)\le\gamma\mu(U_{j-1})$ for some $\gamma=\gamma(\e,C_0)<1$. Since $\Theta\subset U_{\mathfrak{s}}$, we deduce that $\mu(\Theta)\le\gamma^{\mathfrak{s}}\mu(B(a,2\rho))$. Recalling that $\mathfrak{s}\ge \tfrac{\e}{128}\overline{\mathfrak{s}}$, we conclude that $\mu|_{B(a,\rho)}$ is $(\delta,\beta,C\mu(B(a,2\rho)))_{d,k}$-spread-out, with $\beta=\beta(d,k,\e)>0$ and $C=C(d,k,\e,\rho)\ge1$.
\end{proof}

With a very similar proof, we next show that the non-concentration condition near Schubert varieties can be taken on the Grassmannian with its natural Riemannian metric, rather than in a big-cell chart. 
\begin{lemma}\label{lem:schubert-non-concentration-implies-spreadout}
Let $1\le k<d$, and let $\nu$ be a nonzero finite Borel measure on $\G(d,k)$ satisfying \eqref{eq:schubert-contraction} for all $V\in\spt\nu$, $W\in\G(d,d-k)$, and $0<r\le1$, with some $\e\in(0,1/2)$. Fix a big-cell chart. Every $V\in\spt\nu$ in this chart has a neighborhood $U$ such that the coordinate image of $\nu|_U$ is $(\delta,\beta,C)_{d,k}$-spread-out for all $\delta\in2^{-\N}$, with some $\beta>0$ and $C\ge1$.
\end{lemma}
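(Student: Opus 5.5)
The plan is to reduce to Lemma \ref{lem:hyperplane-non-concentration-implies-spreadout}. Fix the big-cell chart. Composing with a rotation $U_j\in O(d)$, which preserves the invariant metric, the Schubert cycles, and hence \eqref{eq:schubert-contraction}, we may assume the chart is the standard one $\theta\mapsto V_\theta$. Let $\phi$ denote the coordinate map, so $\phi(V_\theta)=\theta$, and let $\mu=\phi(\nu|_{\phi^{-1}(B(\theta_0,R))})$, where $\theta_0=\phi(V)$ and $R>0$ is small. We will verify \eqref{eq:model-hyperplane-contraction} for $\mu$ on a small ball $U'=B(\theta_0,r_1)$, with a smaller constant $\e'$ and some $r_0>0$. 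Lemma \ref{lem:hyperplane-non-concentration-implies-spreadout} applied with $a=\theta_0$ and $\rho$ dyadic with $4\rho<r_1$ then gives the conclusion, taking $U=\phi^{-1}(B(\theta_0,\rho))$.

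The verification has two parts. First, on the compact set $\overline{B(\theta_0,2r_1)}$ the chart is bi-Lipschitz with some constant $L$, so Euclidean balls and Riemannian balls are nested up to the factor $L$. Second, for a model hyperplane $H=H_{[x,y,u]}$ we have $H=\phi(\mathcal S_W)\cap\mathbf G_{d,k}$ with $W=W_{[x,y,u]}$, as explained in \S\ref{s:bigcell}. Therefore $H(\e' r)\cap B(\theta,r)$ is contained in the image of $\mathcal S_W(L\e' r)\cap B_{V_\theta}(Lr)$. One check is needed here: the Euclidean $\e' r$-neighbourhood of $H$ inside the chart must correspond to a Riemannian $O(\e' r)$-neighbourhood of $\mathcal S_W$. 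This holds because $H\cap B(\theta_0,2r_1)$ is exactly the chart image of $\mathcal S_W$ there, and the chart is bi-Lipschitz.

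The main obstacle is that the assumption \eqref{eq:schubert-contraction} compares masses at a single radius, while the bi-Lipschitz distortion changes radii. I would \emph{not} try to obtain \eqref{eq:model-hyperplane-contraction} with the same radius. Instead, I would reprove Lemma \ref{lem:hyperplane-non-concentration-implies-spreadout} directly on $\G(d,k)$, which is what "a very similar proof" suggests. The steps are as follows.
\begin{enumerate}
\item Select greedily the singular scales $r_j(\theta)$ for the coordinate image, separated by the factor $\e/(128L^2)$.
\item Define the neighbourhoods $U_j$ as before.
\item At each $\omega$, use the ball $B_\omega=B_{V_\omega}(\rho_j/(8L))$ in the Riemannian metric. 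By the singular-scale containment and the bi-Lipschitz comparison, $U_j\cap B_\omega\subset\mathcal S_W(\e r')\cap B_{V_\omega}(r')$ for $r'=\rho_j/(8L)$, provided the separation constant is small in terms of $L$. Hence \eqref{eq:schubert-contraction} gives $\nu(U_j\cap B_\omega)\le(1-\e)\nu(B_\omega)$.
\item Cover with bounded overlap. Besicovitch is not directly available in the Riemannian metric, but balls of comparable radii on a compact manifold admit a Vitali or Besicovitch-type covering locally, or one can pull back to the chart where the Riemannian balls are comparable to Euclidean ones. Either way one obtains the mass-loss inequality \eqref{eq:non-concentration-neighborhood-mass-loss}.
\item Conclude $\nu(\Theta)\le\gamma^{\mathfrak s}\nu(\cdot)$ as before.
\end{enumerate}
The scales in $[\delta,1]$ above $\rho$ contribute only $O_\rho(1)$ singular scales and are absorbed into $C$. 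This yields $\beta=\beta(d,k,\e,L)>0$ and $C$ depending also on $\rho$.
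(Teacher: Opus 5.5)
Your final plan is the paper's proof (with $M$ in place of your $L$): rerun Lemma \ref{lem:hyperplane-non-concentration-implies-spreadout} with $\e$ replaced by $\e/L^2$, use Riemannian balls of radius $\rho_j/(8L)$ centred at points of $\spt\mu\cap U_j$ so that \eqref{eq:schubert-contraction} applies directly, and conclude from the geometric decay $\mu(U_j)\le\gamma\mu(U_{j-1})$. For the covering step, the paper invokes the Besicovitch covering theorem for Riemannian balls on $\G(d,k)$, which does hold locally on smooth Riemannian manifolds; your fallback of comparing with Euclidean balls in the chart would not by itself bound the overlap of the Riemannian balls, so use the former.
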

\begin{proof}
Let $d$ be the distance induced by the natural $\mathbf{O}(d)$-invariant Riemannian metric on $\G(d,k)$, and let $|\cdot|$ be the Euclidean norm on $\mathbf G_{d,k}$. Denote their balls by $B^d(V,r)$ and $B^{|\cdot|}(\theta,r)$, respectively. The neighborhoods $H(t)$ use $|\cdot|$, and neighborhoods of Schubert cycles use $d$.

Fix $V_a\in\spt\nu$ in the chart, and let $\mu$ be the coordinate image of the restriction of $\nu$ to the chart. Choose $\rho\in2^{-\N}$ sufficiently small and $M\ge1$ such that, for $\theta,\omega\in B^{|\cdot|}(a,4\rho)$,
\begin{equation}\label{eq:schubert-chart-metric-comparison}
    M^{-1}|\theta-\omega|
    \le d(V_\theta,V_\omega)
    \le M|\theta-\omega|.
\end{equation}
Fix $\delta\in2^{-\N}\cap(0,\rho]$ and a Borel set $\Theta\subset\spt\mu\cap B^{|\cdot|}(a,\rho)$ with at least $\overline{\mathfrak{s}}$ $(\e/(16M^2))$-singular dyadic scales in $[\delta,\rho]$ at every point of $\Theta$. Repeat the selection of scales $r_j(\theta)$ and the definitions of $H_j^{\theta},U_j,\rho_j$ from Lemma \ref{lem:hyperplane-non-concentration-implies-spreadout}, with $\e/M^2$ in place of $\e$.

For $1\le j\le\mathfrak{s}$ and $\theta\in\spt\mu\cap U_j$, let $B_\theta$ be the coordinate image of $B^d(V_\theta,\rho_j(\theta)/(8M))$, and choose $\omega$ attaining the maximum in \eqref{eq:def-rho-j}. By \eqref{eq:schubert-chart-metric-comparison}, $B_\theta\subset B^{|\cdot|}(\theta,\rho_j(\theta)/8)\subset U_{j-1}$. By \eqref{eq:schubert-variety} and \eqref{eq:schubert-chart-metric-comparison}, the planes with coordinates in $U_j\cap B_\theta$ lie within $d$-distance $\e\rho_j(\theta)/(8M)$ of the Schubert cycle corresponding to $H_j^{\omega}$. Consequently, \eqref{eq:schubert-contraction} yields $\mu(U_j\cap B_\theta)\le(1-\e)\mu(B_\theta)$.

The Besicovitch covering theorem \cite[Theorem~2.7]{zbMATH01249699} on $\G(d,k)$ establishes \eqref{eq:non-concentration-neighborhood-mass-loss} in the current setting, with overlap bound depending only on $d,k$. Since $\mathfrak{s}\ge\e\overline{\mathfrak{s}}/(128M^2)$, the same argument as in the proof of Lemma \ref{lem:hyperplane-non-concentration-implies-spreadout} shows that $\mu|_{B^{|\cdot|}(a,\rho)}$ is spread out at every dyadic scale, as required.
\end{proof}

\begin{corollary}
\label{cor:small-dim-spread-out}

If $\mu$ is a self-similar measure on $\mathbf G_{d,k}$ which is not supported on a proper affine subspace, then there are $C,s>0$ such that $\mu$ is $(\delta,s,C)_{d,k}$-spread-out for all $\delta\in2^{-\N}$.The same conclusion holds for every non-atomic self-conformal measure on $\mathbf G_{2,1}$.

In particular, there are measures on $\mathbf G_{d,k}$ whose topological support has arbitrarily small Hausdorff dimension and which are $(\delta,s,C)_{d,k}$-spread-out for all $\delta\in2^{-\N}$ and some $s,C>0$.
\end{corollary}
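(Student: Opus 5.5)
The plan is to verify the uniform non-concentration hypothesis \eqref{eq:model-hyperplane-contraction} of Lemma \ref{lem:hyperplane-non-concentration-implies-spreadout} for self-similar measures. The lemma then gives spread-outness locally, and a finite covering argument globalizes it. Let $\mu$ be self-similar for an IFS $\{f_i(x)=r_iO_ix+b_i\}$ on $\mathbf G_{d,k}\cong\R^{k(d-k)}$, with weights $p_i$. The model hyperplanes are affine hyperplanes of $\R^{k(d-k)}$, so it suffices to show the following uniform statement: there is $\e>0$ such that for every $\theta\in\spt\mu$, every $r\le r_0$ and every affine hyperplane $H$,
\[
\mu\bigl(B(\theta,r)\cap H(\e r)\bigr)\le(1-\e)\mu(B(\theta,r)).
\]

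To prove this, I first use a compactness argument at unit scale. Let $K=\spt\mu$ and let $\mathcal M$ be the family of all blow-ups $\mu_{\theta,r}$, meaning $\mu$ restricted to $B(\theta,r)$ and rescaled to the unit ball. By self-similarity, every such blow-up dominates a rescaled copy of a cylinder measure $f_w\mu$ with comparable scale and mass. The standard step is to choose a cylinder $f_w(K)\subset B(\theta,r)$ of diameter $\gtrsim r$ and mass $\gtrsim\mu(B(\theta,r))$. Doubling-type estimates for self-similar measures (with open set condition) give this directly. Without separation, I would instead argue by contradiction and compactness: suppose there are sequences with $\mu(B(\theta_j,r_j)\setminus H_j(\e_j r_j))\le\e_j\mu(B(\theta_j,r_j))$ with $\e_j\to0$. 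Covering the ball by cylinders of size $\sim r_j$ and passing to a limit of the normalized cylinder measures yields a similarity copy $g\mu$, or a finite positive combination of such copies, whose mass on the open unit ball is concentrated on a hyperplane. Since the maps $f_w$ are similarities, $g\mu(H)>0$ restricted to an open set would force $\mu$ to give positive mass to $g^{-1}H\cap(\text{open set})$. I would then conclude, using self-similarity and the fact that a self-similar measure charging a hyperplane relative to an open set must be supported on an affine subspace (the standard ``affine irreducibility'' dichotomy via the pure-type/0–1 law for hyperplanes), that $\spt\mu$ lies in a proper affine subspace. This contradicts the hypothesis.

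The main obstacle is exactly this limiting step. Weak limits of normalized restrictions need not be copies of $\mu$, and mass may escape to the boundary of the ball. I would handle this by working with $B(\theta,r/2)$ versus $B(\theta,r)$ and using the doubling property at comparable scales, which is available for self-similar measures on their supports after restricting to a uniform family of scales. Alternatively, I would restrict to points with bounded local doubling, which suffice after invoking Proposition \ref{prop:almost-nonplanar-centered-spread-out}: that proposition allows bad scales at a proportion $\chi$ of scales, and the exceptional set can be controlled by a large-deviation estimate along the symbolic coding.

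For self-conformal measures on $\mathbf G_{2,1}\cong\R$, hyperplanes are points, so the condition reduces to uniform perfectness of a non-atomic self-conformal measure. This follows from bounded distortion, which gives quasi-self-similarity, together with non-atomicity, after which Lemma \ref{lem:line-decay-spread-out} applies. For the final statement, I would take a self-similar IFS with $k(d-k)+1$ maps whose fixed points are affinely independent and whose contraction ratios tend to $0$. The resulting support has dimension tending to $0$ and is not contained in any proper affine subspace.
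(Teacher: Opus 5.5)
There is a genuine gap. The estimate you set out to prove is \eqref{eq:model-hyperplane-contraction} for $\mu$ itself, at every $\theta\in\spt\mu$ and every small $r$. This is false in general, even under the open set condition.

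Take $\mathbf G_{2,1}\cong\R$, where model hyperplanes are points, and let $\mu$ be the self-similar measure of $\{x/2,\,x/2+1/2\}$ with weights $p_0>p_1$. Then $\mu([\tfrac12,\tfrac12+2^{-j}))=p_1p_0^{j-1}$ and $\mu([\tfrac12-2^{-j},\tfrac12))=p_0p_1^{j-1}$. With $a_i=\log_2(1/p_i)$ we have $a_0<a_1$. For $H=\{\tfrac12\}$ and $\theta=\tfrac12-(1-\e)r\in\spt\mu$,
\[
\mu\bigl(B(\theta,r)\setminus H(\e r)\bigr)\lesssim r^{a_1},\qquad \mu\bigl(B(\theta,r)\cap H(\e r)\bigr)\gtrsim(\e r)^{a_0}.
\]
So for every fixed $\e>0$ the hypothesis of Lemma \ref{lem:hyperplane-non-concentration-implies-spreadout} fails for all small $r$.

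This is a failure of doubling at $\tfrac12$, and it shows where your steps break. There is in general no cylinder inside $B(\theta,r)$ of size $\gtrsim r$ carrying a fixed fraction of $\mu(B(\theta,r))$. In the compactness argument, the mass can sit in cylinders crossing $\partial B(\theta,r)$, and their part inside the ball can concentrate near a hyperplane. Your $0$--$1$ law $\mu(H)=0$ only controls cylinders contained in the ball.

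The fallback through Proposition \ref{prop:almost-nonplanar-centered-spread-out} does not repair this. Its constant $\mu(B(0,5))\delta^{-\eta\chi/2}$ is not uniform in $\delta$ unless $\chi=0$, and it only concerns $\mu|_{A_\delta}$. You give no bound of the form $C2^{-s\mathfrak s}$ for the excluded mass, and its counting condition compares with the half ball, so it is also doubling-sensitive.

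The missing idea is that spread-outness never has to be checked for $\mu$ directly. The defining inequality is linear in the measure, and singular-scale counts can only increase when passing to a subset. So if $\mu=\int\mu_\omega\,d\mathbb P(\omega)$ with every $\mu_\omega$ $(\delta,\beta,C)_{d,k}$-spread-out, then any $\Theta$ with $\mathfrak s$ singular scales at each point satisfies
\[
\mu(\Theta)=\int\mu_\omega(\Theta\cap\spt\mu_\omega)\,d\mathbb P\le C2^{-\beta\mathfrak s}.
\]
The paper uses Baker's disintegration of $\mu$ into measures that are uniformly doubling and satisfy $\mu_\omega(B(x,r)\cap H(\rho r))\le C\rho^\alpha\mu_\omega(B(x,r))$ uniformly in $\omega$. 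Lemma \ref{lem:hyperplane-non-concentration-implies-spreadout} then applies to these with uniform constants. Kleinbock--Lindenstrauss--Weiss absolute decay alone only covers the natural measure under the open set condition.

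Alternatively, your $0$--$1$ law and self-similarity do suffice if you run the nested-neighbourhood argument on the coding space instead of with Euclidean balls. For the Bernoulli measure $\mathbb P$ and coding map $\pi$, one has exactly $\mathbb P([w]\cap\pi^{-1}A)=p_w\,\mu(f_w^{-1}A)$. So each sparsified singular scale, realized by maximal cylinders of comparable size, costs a factor $\sup_H\mu(H(C\beta))<1/2$, with no doubling or covering argument needed.

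Your treatment of $\mathbf G_{2,1}$ is sound once it goes through Lemma \ref{lem:line-decay-spread-out} rather than through \eqref{eq:model-hyperplane-contraction}, which fails even there. That lemma only needs decay of concentric balls, and this does follow from self-similarity (or bounded distortion) with overlaps. Cylinders of size between $cR$ and $R/2$ that meet $B(x,8\e R)$ lie inside $B(x,R)$, and each gives $B(x,8\e R)$ relative mass at most $\sup_y\mu(B(y,C\e))$. Your construction for the final assertion matches the paper's.
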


\begin{proof}
By Lemma \ref{lem:spread-out-scaling}, we may assume that $\spt\mu\subset B(0,1)$. Kleinbock, Lindenstrauss and Weiss \cite[\S 8]{KLW04} proved absolute decay for the natural Hausdorff measures on irreducible self-similar sets satisfying the open set condition. Lemma \ref{lem:hyperplane-non-concentration-implies-spreadout} therefore implies that these measures are spread out. To allow overlaps and arbitrary probability weights, we use Baker's theorem \cite[Theorem~1.1]{Baker26}, which expresses $\mu$ as an average of measures with uniform doubling and hyperplane decay. For self-similar measures, use properties~(2) and~(4); for self-conformal measures on $\mathbf G_{2,1}$, use properties~(2) and~(3), since hyperplanes are points. More precisely, there are a probability space $(\Omega,\mathbb P)$, probability measures $\mu_\omega$, and constants $C,\alpha>0$ such that
\begin{displaymath}
    \mu=\int_\Omega\mu_\omega\,d\mathbb P(\omega),
\end{displaymath}
and, uniformly in $\omega$,
\begin{displaymath}
    \mu_\omega(B(x,2r))\le C\mu_\omega(B(x,r)),
    \qquad
    \mu_\omega(B(x,r)\cap H(\rho r))
    \le C\rho^\alpha\mu_\omega(B(x,r))
\end{displaymath}
for $x\in\spt\mu_\omega$, $r\in(0,1]$, $\rho>0$, and every affine hyperplane $H$.

Applying Lemma \ref{lem:hyperplane-non-concentration-implies-spreadout} with non-concentration parameter $\min\{1/4,(2C)^{-1/\alpha}\}$ and summing over a finite cover of $\spt\mu_\omega\subset B(0,1)$ by balls of radius $1/2$, we obtain, after increasing $C$, that every $\mu_\omega$ is $(\delta,\beta,C)_{d,k}$-spread-out for some $\beta>0$, uniformly in $\omega$ and $\delta\in2^{-\N}$.

If $\Theta\subset\spt\mu$ has at least $\mathfrak{s}$ $\beta$-singular scales at every point, then
\begin{displaymath}
    \mathfrak{s}_{d,k}(\theta\mid\Theta\cap\spt\mu_\omega,\beta,\delta\to1)
    \ge\mathfrak{s}_{d,k}(\theta\mid\Theta,\beta,\delta\to1)
    \ge\mathfrak{s}
\end{displaymath}
for $\theta\in\Theta\cap\spt\mu_\omega$. Hence
\begin{displaymath}
    \mu(\Theta)
    =\int_\Omega\mu_\omega(\Theta)\,d\mathbb P(\omega)
    \le C2^{-\beta\mathfrak{s}}.
\end{displaymath}
This proves the first assertion.

To obtain supports of small dimension, choose $k(d-k)+1$ affinely independent points in $\mathbf G_{d,k}$ and similarities fixing these points, all with contraction ratio $r$. For all sufficiently small $r$, the IFS satisfies the strong separation condition, its attractor is not contained in a proper affine subspace, and its Hausdorff dimension is arbitrarily small.
\end{proof}

\begin{remark}
    For self-similar measures, a variant of Baker's argument requires only that $\mu$ not be supported on a translate of a plane in $G\cdot \mathcal{H}$, where $G$ is the closed subgroup generated by the orthogonal parts of the similarities in the IFS, and $\mathcal{H}$ is the set of model hyperplanes.
\end{remark}

\subsection{Continuous results and the proofs of Theorems \ref{thm:projection-ADR}, \ref{thm:projection-non-conc-Schubert-cycle} and \ref{thm:Furstenberg-continuous}}

\label{s:continuous-versions}

\subsubsection{Continuous Furstenberg sets and the proof of Theorem \ref{thm:Furstenberg-continuous}}

We being by deducing from Theorem \ref{thm:furst-general} an optimal continuous (dual) Furstenberg estimate under a spread-out assumption on the shadings.
\begin{theorem} \label{thm:mainGeneralContinuous}
    Fix parameters $t\in (0,d]$ and $s\in(0,k(d-k)]$.

    Let $E\subset\R^d$ be a Borel set with $\hdim(E)\ge t$. For each $x\in E$, let $\nu_x$ be a Borel probability measure on $\mathbf{G}_{d,k}$ which is $(\delta,\beta_x,C_x)_{d,k}$-spread-out for all $\delta\in2^{-\N}$, and some $\beta_x>0, C_x\ge 1$. Assume also that $\nu_x$ satisfies the Frostman condition
    \[
        \nu_x(B(\theta,r))\le C_x r^s \qquad(\theta\in\spt\nu_x, r>0).
    \]
    Then,
    \[
        \hdim\left(\bigcup_{x\in E}\{x+V:V\in\spt\nu_x\}\right)\ge s+\min\{t,d-k\}.
    \]
\end{theorem}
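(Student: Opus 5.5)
We argue by contradiction and discretize, reducing to Theorem \ref{thm:furst-general}. Since $s$ may exceed $1$ while Theorem \ref{thm:furst-general} is stated for spread-out exponent in $(0,1]$, note that the spread-out exponent of $\nu_x$ is $\beta_x$; the Frostman exponent $s$ only enters through the size $M\approx\delta^{-s}$ of the shadings. Replace $t$ by $\min\{t,d-k\}$ and slightly smaller, and fix $\eta>0$. By countable stability we may restrict to a set $E'\subset E$ with $\mathcal H^{t'}_\infty(E')>0$ on which $\beta_x\ge\beta$ and $C_x\le C$ for fixed $\beta\in(0,1]$, $C\ge1$ (pigeonhole over countably many values). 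We may also assume $E'\subset[0,1]^d$ and, after rotating by one of finitely many charts, that the relevant planes lie in the big cell with coordinates in $[-1,1]^{k\times(d-k)}$.

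Suppose the union $\mathcal F=\bigcup_{x\in E'}\{x+V:V\in\spt\nu_x\}$ has $\hdim\mathcal F<s+t-2\eta$. Then for each small $\delta_0$ there is a cover of $\mathcal F$ by dyadic cubes $\mathcal D_{2^{-j}}(\mathbf A_{d,k})$, $2^{-j}\le\delta_0$, with $\sum_j N_j2^{-j(s+t-2\eta)}<1$. Define, for $x\in E'$, $\Theta_{x,j}$ as the set of $\theta\in\spt\nu_x$ with $V_\theta+x$ in a scale-$2^{-j}$ cube of the cover. Since $\nu_x(\bigcup_j\Theta_{x,j})=1$, for each $x$ some $j$ has $\nu_x(\Theta_{x,j})\ge j^{-2}/10$. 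Pigeonholing over $j$ (using $\sum j^{-2}<\infty$ and a Frostman measure on $E'$), we find a single scale $\delta=2^{-j}$ and a set $E_\delta\subset E'$ with $\mathcal H^{t'}_\infty$-content $\gtrsim j^{-2}$ on which $\nu_x(\Theta_{x,\delta})\gtrsim(\log1/\delta)^{-2}$, while $|\mathcal F(\cdot)|_\delta\lesssim N_j\le\delta^{-(s+t-2\eta)}$.

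Next we build a discrete configuration. By a standard lemma (Fässler–Orponen style), $E_\delta$ contains a $\delta$-separated $(\delta,t',\delta^{-\e})$-Frostman set $P$, perhaps after a $\lessapprox_\delta1$ loss. For each $p\in P$ (with a chosen $x\in E_\delta$ within $\delta$), Lemma \ref{lem:uniform-spread-out-discretization} applied to $\nu_x$ and $E=\Theta_{x,\delta}$ yields a $(\delta,\beta,\delta^{-\e})$-spread-out family $\mathcal J_p\subset\mathcal D_\delta(\Theta_{x,\delta})$ with $|\mathcal J_p|\gtrapprox_\delta\delta^{-s}$. Pigeonholing cardinalities dyadically and truncating with Lemma \ref{lem:spread-out-subset} gives a $(\delta,\beta,\delta^{-2\e},M)$-spread-out configuration with $M\gtrapprox\delta^{-s}$. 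Moving $x$ to $p$ and $\theta$ to its lower-left corner changes affine coordinates by $O(\delta)$, so $|\mathcal F(P,\mathbf\Theta)|_\delta\lesssim N_j$.

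Finally, Theorem \ref{thm:furst-general} (with exponent $\beta$ in place of $s$, and $t'\le d-k$) gives $|\mathcal F(P,\mathbf\Theta)|_\delta\ge\delta^{\eta-t'}M\gtrapprox\delta^{\eta-t'-s}$, contradicting $N_j\le\delta^{-(s+t'-2\eta)}$ for small $\delta$. Letting $\eta\to0$ and $t'\to\min\{t,d-k\}$ concludes. The main obstacle is the multiscale pigeonholing step: ensuring a single scale $\delta$ works simultaneously for a large (Frostman) portion of points, with only logarithmic losses, so that both the Frostman constant of $P$ and the spread-out constant of the shadings remain $\delta^{-\e}$. This is handled via weights $j^{-2}$ and a Frostman measure on $E'$.
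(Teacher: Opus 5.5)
Your proposal is correct and follows essentially the same route as the paper. You make $\beta,C$ uniform and the supports bounded, pigeonhole a single scale $\delta$ from a near-optimal cover with only logarithmic losses, extract a Frostman set by the Fässler--Orponen lemma and spread-out shadings via Lemma \ref{lem:uniform-spread-out-discretization}, and apply Theorem \ref{thm:furst-general} with spread-out exponent $\beta$ and $M\gtrapprox_\delta\delta^{-s}$. The differences from the paper are cosmetic: you argue by contradiction and pigeonhole the scale with weights $j^{-2}$, whereas the paper directly bounds $\sum_Q\ell(Q)^u$ from below.
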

\begin{proof}
Set
\[
    \mathcal F=\bigcup_{x\in E}\{x+V:V\in\spt\nu_x\}.
\]
Fix $0<u<s+\min\{t,d-k\}$ and choose $\underline{t}, u,\eta>0$ such that
\begin{equation} \label{eq:parameters-for-continuous-furstenberg}
    \underline t<\min\{t,d-k\},
    \qquad u<s+\underline t-3\eta.
\end{equation}
By pigeonholing and a change of coordinates, we may assume that $\mathcal{H}^{\underline t}_\infty(E)>0$, and there are constants $\beta>0,C\ge 1$ such that every $\nu_x$ is $(s,C)$-Frostman and $(\delta,\beta,C)_{d,k}$-spread-out for all $\delta\in2^{-\N}$, with $\spt\nu_x\subset [0,1]^{k\times(d-k)}$.

Let $\Delta_0=\Delta_0(\beta,\underline t,\eta)$ be sufficiently small.
Let $\mathcal Q$ be a dyadic cover of $\mathcal F$ by cubes of side length at most $\Delta_0$. It suffices to prove
\begin{equation}\label{form114a}
    \sum_{Q\in\mathcal Q}\ell(Q)^u\gtrsim_{C,\e}1.
\end{equation}
Write $\mathcal Q_\delta=\{Q\in\mathcal Q:\ell(Q)=\delta\}$. Dyadic pigeonholing yields $\delta\in2^{-\N}\cap(0,\Delta_0]$ and $E_\delta\subset E$ such that
\begin{equation}\label{form114b}
    \mathcal H^{\underline t}_\infty(E_\delta)\gtrapprox_{\delta} 1,
    \qquad
    \nu_x\bigl(\{\theta:x+V_\theta\in\textstyle\bigcup\mathcal Q_\delta\}\bigr)
    \gtrapprox_{\delta} 1\quad(x\in E_\delta).
\end{equation}
By \cite[Proposition 3.13]{FasslerOrponen14}, $E_{\delta}$ contains a $(\delta,\underline t,\lessapprox_{\delta} 1)$-Frostman set $F_{\delta}$. Let $P$ consist of the lower-left endpoints of the dyadic $\delta$-cubes meeting $F_{\delta}$. For $p\in P$, choose $\overline p\in F_{\delta}\cap \mathcal{D}_{\delta}(p)$.
 
For $p\in P$, apply Lemma \ref{lem:uniform-spread-out-discretization} to $\nu_{\overline p}$ and the set in the second part of \eqref{form114b} with $x=\overline p$. We obtain a $(\delta,\beta,\lessapprox_{\delta} 1)_{d,k}$-spread-out family $\mathbf\Theta(p)$ of dyadic $\delta$-cubes. By the Frostman condition on $\nu_{\overline p}$, $|\mathbf\Theta(p)|\gtrapprox_\delta\delta^{-s}$ for all $p\in P$. A further dyadic pigeonholing reduces to an integer $M\gtrapprox_\delta\delta^{-s}$ such that $M\le|\mathbf\Theta(p)|<2M$ for every $p\in P$; the restricted set $P$ remains $(\delta,\underline t,\lessapprox_{\delta} 1)$-Frostman.

For sufficiently small $\Delta_0$, the $\lessapprox_{\delta} 1$ factors are bounded by $\delta^{-\e}$. We may therefore apply Theorem \ref{thm:furst-general} with parameters $(\beta,\underline t,\eta)$ to obtain
\[
    |\mathcal Q_\delta|
    \gtrsim |\mathcal F(P,\mathbf\Theta)|_\delta
    \ge\delta^{\eta-\underline t}M
    \gtrapprox_\delta\delta^{\eta-s-\underline t}.
\]
In light of the parameter choice \eqref{eq:parameters-for-continuous-furstenberg}, \eqref{form114a} holds for $\Delta_0$ sufficiently small. Therefore $\hdim\mathcal F\ge u$. Since $u< s+\min\{t,d-k\}$ was arbitrary, this completes the proof.
\end{proof}

We can now easily deduce Theorem \ref{thm:Furstenberg-continuous}.
\begin{proof} [Proof of Theorem \ref{thm:Furstenberg-continuous}]
    Considering a finite cover by big cells, we may reduce to the case in which all $\nu_x$ are supported on a single big cell.
   
    Fix $0<s'<s$. By Lemma \ref{lem:schubert-non-concentration-implies-spreadout}, each $\nu_x$ can be restricted to a subset of positive mass so that this restriction is spread out at every dyadic scale. Since $\hdim\nu_x\ge s$, a further restriction to a set of relative mass $1/2$ is $s'$-Frostman, and still spread-out. Apply Theorem \ref{thm:mainGeneralContinuous} and let $s'\uparrow s$.
\end{proof}

\subsubsection{Continuous projections and proof of Theorems \ref{thm:projection-ADR} and \ref{thm:projection-non-conc-Schubert-cycle}}

Recall that the Hausdorff dimension of a measure $\mu$ was defined in \eqref{eq:hdim-measure}. We deduce from Theorem \ref{thm:proj-general} a continuous projection theorem for measures. Theorems \ref{thm:projection-ADR} and \ref{thm:projection-non-conc-Schubert-cycle} will follow as corollaries.
\begin{theorem}\label{thm:continuous-projection}
    Let $\nu$ be a Borel probability measure on $\mathbf{G}_{d,k}$ which is $(\delta,s,C)_{d,k}$-spread-out for every $\delta\in2^{-\N}$, where $s>0$ and $C\ge 1$. Then, for every Borel probability measure $\mu$ on $\R^d$,
    \[
        \hdim(\pi_\theta\mu)=\min\{\hdim\mu,k\} \quad\text{for $\nu$-almost every $\theta\in\mathbf{G}_{d,k}$}.
    \]
\end{theorem}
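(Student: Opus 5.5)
Since $\pi_\theta$ is Lipschitz on bounded sets, $\hdim(\pi_\theta\mu)\le\min\{\hdim\mu,k\}$ for every $\theta$, so only the lower bound needs proof. The plan is a standard discretization argument that feeds the Frostman form of Theorem \ref{thm:proj-general} with the roles of $k$ and $d-k$ swapped. That theorem is stated for $\theta\in\R^{(d-k)\times k}$ with spread-outness in the $(d,d-k)$ sense and $\pi_\theta$ taking values in $\R^{d-k}$. We therefore relabel $d-k\leftrightarrow k$, which matches the present $\pi_\theta(x,y)=x+\theta y\in\R^k$ with $\nu$ being $(\delta,s,C)_{d,k}$-spread-out.

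\textbf{Reductions.} We argue by contradiction. Fix $0<t<\min\{\hdim\mu,k\}$ and suppose that the Borel set $B=\{\theta:\hdim\pi_\theta\mu<t-2\eta\}$ has $\nu(B)>0$ for some $\eta>0$. We may assume that $\mu$ is supported in $[0,1]^d$. By Frostman's lemma and the definition \eqref{eq:hdim-measure}, after restricting $\mu$ to a set of positive measure and renormalizing, we may also assume that $\mu$ is $(t,C_\mu)$-Frostman. Restricting $\mu$ does not increase the dimension of the projections in any harmful way. Indeed, if $\hdim\pi_\theta(\mu|_A)\ge t-\eta$ for all $A$ with $\mu(A)$ bounded below, then the same bound holds for $\mu$. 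We use the following standard fact. If $\hdim\pi_\theta\mu<t-2\eta$, then for every $\delta_0$ there are a dyadic scale $\delta\le\delta_0$ and a set $G_\theta\subset\R^k$ with $|G_\theta|_\delta\le\delta^{\eta-t}$ and $\mu(\pi_\theta^{-1}G_\theta)\ge c\,(\log 1/\delta)^{-2}$. This follows by taking $A\subset\R^k$ with $\pi_\theta\mu(A)>0$ and $\hdim A<t-2\eta$, covering $A$ by dyadic cubes with $\sum\ell^{t-2\eta}$ small, and pigeonholing a scale. A second pigeonholing over $\theta\in B$ gives a single scale $\delta$, arbitrarily small, and a set $B_\delta\subset B$ with $\nu(B_\delta)\gtrsim(\log1/\delta)^{-2}$ on which the above holds.

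\textbf{Discretization.} Apply Lemma \ref{lem:uniform-spread-out-discretization} to $\nu$ and $B_\delta$. This produces a family $\mathbf\Theta\subset\mathcal D_\delta(B_\delta)$ that is $(\delta,s,\lessapprox_\delta C)$-spread-out, with each cube $\mathbf I\in\mathbf\Theta$ containing a point $\theta_{\mathbf I}\in B_\delta$. Next, let $P=\mathcal D_\delta(\spt\mu)$, pigeonholed to cubes of roughly equal $\mu$-mass that carry a $\gtrapprox_\delta1$ fraction of $\mu$. By the Frostman property of $\mu$, $P$ is $(\delta,t,\lessapprox_\delta1)$-Frostman with $|P|\gtrsim\delta^{-t}$. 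For each $\mathbf I$, let $P'_{\mathbf I}=P\cap\pi_{\theta_{\mathbf I}}^{-1}(G_{\theta_{\mathbf I}}(\delta))$. By equal masses, $|P'_{\mathbf I}|\gtrapprox_\delta|P|$, while $|\pi_{\theta_{\mathbf I}}(P'_{\mathbf I})|_\delta\lesssim\delta^{\eta-t}$. For $\delta$ small enough, the factor $\gtrapprox_\delta1$ exceeds $\delta^{\e}$ and $\delta^{\eta-t}$ is below $\delta^{\eta/2-t}$. Theorem \ref{thm:proj-general}, applied with $\eta/2$, then says that all but a $\delta^\e$ fraction of $\mathbf\Theta$ have $|\pi_\theta(P')|_\delta\ge\delta^{\eta/2-t}$ for every $P'$ of relative size $\delta^\e$. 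This contradicts the existence of $P'_{\mathbf I}$ for every cube. Note that the theorem quantifies over all $\theta\in\mathbf I$, which covers the chosen $\theta_{\mathbf I}$, and moving $\theta$ within a $\delta$-cube changes $\pi_\theta$ by only $O(\delta)$ on $[0,1]^d$. Hence $\nu(B)=0$.

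\textbf{Main obstacle.} The delicate step is the first pigeonholing: we need to extract a single scale $\delta$ that works for a $\nu$-large set of $\theta$, with losses only polylogarithmic in $\delta$, so that they are absorbed by $\delta^{-\e}$. Because only $\lessapprox_\delta$ losses are allowed, the covering of $A$ should be taken with $\sum_j\ell_j^{t-2\eta}\le1$. Then at each scale $2^{-j}$ the number of cubes is at most $2^{j(t-2\eta)}$, and summing masses over $j$ gives a scale carrying mass $\gtrsim j^{-2}$. This is the standard argument, for example as in \cite{FasslerOrponen14}. A secondary point is the measurability of $\theta\mapsto\hdim\pi_\theta\mu$, which is standard.
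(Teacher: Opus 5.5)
Your outline is the same as the paper's proof. You reduce to a $t$-Frostman $\mu$, pigeonhole one scale $\delta$ and a $\nu$-large set of bad parameters, discretize $\nu$ with Lemma \ref{lem:uniform-spread-out-discretization}, and contradict the Frostman form of Theorem \ref{thm:proj-general} with $k$ and $d-k$ interchanged.

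\textbf{The step that fails: choosing the mass class of $P$.} You fix the mass class of $P$ globally, asking only that $P$ carry a $\gtrapprox_\delta 1$ fraction of $\mu$. You then assert $|P'_{\mathbf I}|\gtrapprox_\delta|P|$ ``by equal masses''. That would require $\mu\bigl(\pi_{\theta_{\mathbf I}}^{-1}(G_{\theta_{\mathbf I}})\cap\bigcup P\bigr)\gtrapprox_\delta 1$, and nothing links your global choice to $\pi_\theta^{-1}(G_\theta)$. All you know is $\mu(\pi_\theta^{-1}G_\theta)\gtrsim(\log 1/\delta)^{-2}$, and that mass may lie entirely in cubes of other mass classes. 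For example, let $\mu$ be half Lebesgue measure and half a $t'$-regular measure, let $P$ be the Lebesgue class, and let $G_\theta$ cover only the projection of the singular part. Then $|P'_{\mathbf I}|/|P|\lesssim\delta^{k+\eta-t}$.

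The repair, which is what the paper does, is to choose the class per parameter. Discard cubes of mass $\le\delta^{d+1}$. For each $\theta$, pick $m(\theta)$ so that the cubes with $\mu(Q)\sim 2^{-m}$ meeting $\pi_\theta^{-1}(G_\theta)$ carry $\gtrapprox_\delta 1$ mass. Then pigeonhole over the $O(\log(1/\delta))$ values of $m$ to get a single class on a subset of $\nu$-measure $\gtrapprox_\delta 1$, before invoking Lemma \ref{lem:uniform-spread-out-discretization}. Both $|P'_\theta|\gtrapprox_\delta|P|$ and the Frostman property of $P$ then follow.

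\textbf{Smaller repairs.}

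First, restricting $\mu$ to one set of positive measure never decreases $\hdim\pi_\theta\mu$ and may increase it, so it can destroy the bad set. Your justifying sentence does not address this. Instead, write $\mu=\sum_j\mu|_{A_j}$ with countably many $t$-Frostman pieces and $\mu(\R^d\setminus\bigcup_j A_j)=0$. Then use $\hdim\pi_\theta\mu=\inf_j\hdim\pi_\theta(\mu|_{A_j})$ together with a countable union of $\nu$-null sets. This is what the paper means by restricting $\mu$ to Borel sets whose union has full measure.

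Second, $\nu$ may have unbounded support. Lemma \ref{lem:uniform-spread-out-discretization} needs bounded support, and Theorem \ref{thm:proj-general} needs $\mathbf\Theta\subset\mathcal D_\delta([-1,1]^{k\times(d-k)})$. Restrict the bad set to a unit cube $a+[0,1)^{k\times(d-k)}$ of positive $\nu$-measure. Recenter it with the shear $(x,y)\mapsto(x+ay,y)$, which satisfies $\pi_{\theta-a}(x+ay,y)=\pi_\theta(x,y)$. Lemmas \ref{lem:spread-out-subset} and \ref{lem:spread-out-scaling} keep the measure spread-out. This also gives the uniform Lipschitz bound on $\pi_\theta$ that you use when comparing $\pi_{\theta_{\mathbf I}}(P'_{\mathbf I})$ with $G_{\theta_{\mathbf I}}$.

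Third, your constant $c$ depends on $\theta$ through $\pi_\theta\mu(A)$. Before the second pigeonholing, restrict to parameters where this mass is bounded below, as the paper does.
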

\begin{proof}
The upper bound follows from Lipschitz continuity. For the lower bound, fix $0<t<\hdim\mu$. By restricting $\mu$ to Borel sets whose union has full $\mu$-measure, we may assume that $\mu$ is $t$-Frostman.

Fix $0<u<t$ and suppose that the Borel set $\Theta=\{\theta\in\mathbf G_{d,k}:\hdim(\pi_\theta\mu)<u\}$ has positive $\nu$-measure. Restrict $\Theta$ to a unit dyadic cube $a+[0,1)^{k\times(d-k)}$ of positive $\nu$-measure. Replace $\mu$ by its push-forward under $(x,y)\mapsto(x+ay,y)$ and $\nu$ by the normalized push-forward of $\nu|_\Theta$ under $\theta\mapsto\theta-a$. The identity $\pi_{\theta-a}(x+ay,y)=\pi_\theta(x,y)$ preserves the definition of $\Theta$, while Lemmas \ref{lem:spread-out-subset} and \ref{lem:spread-out-scaling} show that spread-outness is preserved up to a rescaling of $C$. Thus, after relabeling, we may assume that $\Theta\subset[0,1)^{k\times(d-k)}$. Likewise, we may assume that $\mu([0,1)^d)=1$.

For each $\theta\in\Theta$, choose a Borel set $A_\theta\subset\R^k$ with $(\pi_\theta\mu)(A_\theta)>0$ and $\mathcal H^u(A_\theta)=0$. By restricting $\Theta$ again, we may assume that $\inf_{\theta\in\Theta}(\pi_\theta\mu)(A_\theta)>0$. Let $\e>0$ be provided by Theorem \ref{thm:proj-general}, applied in dimensions $(d,d-k)$ with parameters $s,t,(t-u)/2$.

Use dyadic covers $\mathcal Q_\theta$ of $A_\theta$ with $\sum_{Q\in\mathcal Q_\theta}\ell(Q)^u\le1$ and arbitrarily small maximal side length. Group each $\mathcal Q_\theta$ by side length. Selecting a side length by dyadic pigeonholing the $\pi_\theta\mu$-mass of these groups, then the planes by the selected side length, yields arbitrarily small $\delta\in2^{-\N}$ and $\Theta_\delta\subset\Theta$ with $\nu(\Theta_\delta)\gtrapprox_\delta1$ such that
\[
    (\pi_\theta\mu)\Bigl(\bigcup\{Q'\in\mathcal Q_\theta:\ell(Q')=\delta\}\Bigr)
    \gtrapprox_\delta1\quad(\theta\in\Theta_\delta).
\]
Partition the cubes $Q\in\mathcal D_\delta([0,1)^d)$ with $\mu(Q)>\delta^{d+1}$ into $O(\log(1/\delta))$ classes $\mathcal{D}^{(m)}$ according to $2^{-m-1}<\mu(Q)\le2^{-m}$. For each $\theta\in\Theta_\delta$, select $m$ such that the cubes in $\mathcal{D}^{(m)}$ meeting the inverse images of the side-$\delta$ cubes in $\mathcal Q_\theta$ have total $\mu$-mass $\gtrapprox_\delta1$. Pigeonholing the planes again, we may pass to a further subset, still denoted by $\Theta_\delta$, with $\nu(\Theta_\delta)\gtrapprox_\delta1$, so that the same $m$ works for all $\theta\in\Theta_\delta$. Define
\[
    P=\{z(Q):Q\in\mathcal D_\delta([0,1)^d),\
    2^{-m-1}<\mu(Q)\le2^{-m}\}.
\]
For $\theta\in\Theta_\delta$, let $P_\theta$ consist of those $z(Q)\in P$ for which $Q$ meets $\pi_\theta^{-1}(Q')$ for some $Q'\in\mathcal Q_\theta$ with $\ell(Q')=\delta$. Thus $\sum_{z(Q)\in P_\theta}\mu(Q)\gtrapprox_\delta1$ for every $\theta\in\Theta_\delta$, and these sets satisfy
\begin{equation}\label{eq:continuous-projection-discretization}
\begin{aligned}
    &\nu(\Theta_\delta)\gtrapprox_\delta1,
    \quad P \text{ is }(\delta,t,\lessapprox_\delta1)\text{-Frostman},\\
    &|P_\theta|  \gtrapprox_\delta|P|,
    \quad |\pi_\theta(P_\theta)|_\delta\lesssim\delta^{-u}
    \quad(\theta\in\Theta_\delta).
\end{aligned}
\end{equation}
The last bound follows from the definition of $u$, because $\mathcal Q_\theta$ contains at most $\delta^{-u}$ cubes of side length $\delta$.

Lemma \ref{lem:uniform-spread-out-discretization}, applied to $\nu$ and $\Theta_\delta$, yields a $(\delta,s,\lessapprox_\delta1)_{d,k}$-spread-out family $\mathbf\Theta\subset\mathcal D_\delta(\Theta_\delta)$. For sufficiently small $\delta$, Theorem \ref{thm:proj-general} provides a subfamily $\mathbf\Theta'\subset\mathbf\Theta$ with $|\mathbf\Theta'|\ge(1-\delta^\e)|\mathbf\Theta|$ such that
\[
    |\pi_{z(\mathbf I)}(P')|_\delta\ge\delta^{-\frac{t+u}{2}}
    \quad\text{for }\mathbf I\in\mathbf\Theta',\ P'\subset P,
    \ |P'|\ge\delta^\e|P|.
\]
Choose $\mathbf I\in\mathbf\Theta'$ and $\theta\in\Theta_\delta\cap\mathbf I$. For sufficiently small $\delta$, we have 
\[
    |\pi_{z(\mathbf I)}(P_\theta)|_\delta
    \lesssim_{d,k}|\pi_\theta(P_\theta)|_\delta
    \lesssim_{d,k}\delta^{-u}.
\]
This is a contradiction since $u<t$. Thus $\nu(\Theta)=0$. Letting $u\uparrow t$ completes the proof.
\end{proof}

We record a set version of Theorem \ref{thm:continuous-projection} for later reference.
\begin{corollary}\label{cor:continuous-projection}
Under the assumptions on $\nu$ in Theorem \ref{thm:continuous-projection}, for every Borel set $E\subset\R^d$ we have
\[
    \hdim\pi_\theta(E)=\min\{\hdim E,k\} \quad\text{for $\nu$-almost every $\theta\in\mathbf G_{d,k}$}.
\]
\end{corollary}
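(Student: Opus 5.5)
We deduce Corollary \ref{cor:continuous-projection} from Theorem \ref{thm:continuous-projection} by the standard passage from sets to measures via Frostman's lemma. The upper bound $\hdim\pi_\theta(E)\le\min\{\hdim E,k\}$ holds for every $\theta$, since $\pi_\theta$ is Lipschitz and takes values in $\R^k$. For the lower bound we may assume $\hdim E>0$.

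Fix $0<t<\hdim E$. By Frostman's lemma for Borel (indeed analytic) sets, there is a compact $K\subset E$ and a Borel probability measure $\mu$ supported on $K$ with $\mu(B(x,r))\le Cr^t$ for all $x,r$. Then $\hdim\mu\ge t$, since any Borel set $A$ with $\mu(A)>0$ has $\mathcal H^t(A)>0$ by the mass distribution principle.

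Apply Theorem \ref{thm:continuous-projection} to $\mu$. We obtain a $\nu$-null set $N_t$ such that $\hdim(\pi_\theta\mu)\ge\min\{t,k\}$ for all $\theta\notin N_t$. Since $\pi_\theta\mu$ is supported on the compact set $\pi_\theta(K)\subset\pi_\theta(E)$, and $\pi_\theta(K)$ has full, and hence positive, $\pi_\theta\mu$-measure, definition \eqref{eq:hdim-measure} gives
\[
\hdim\pi_\theta(E)\ge\hdim\pi_\theta(K)\ge\hdim(\pi_\theta\mu)\ge\min\{t,k\}\qquad(\theta\notin N_t).
\]

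Finally, take a sequence $t_j\uparrow\hdim E$. The set $N=\bigcup_j N_{t_j}$ is still $\nu$-null, and for every $\theta\notin N$ we get $\hdim\pi_\theta(E)\ge\min\{\hdim E,k\}$. The only point requiring care is that Frostman's lemma applies to Borel sets. Measurability of $\theta\mapsto\hdim\pi_\theta(E)$ is not needed, since we exhibit an explicit null exceptional set.
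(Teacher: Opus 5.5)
Your proposal is correct and is essentially the paper's argument: the paper's proof simply cites Theorem \ref{thm:continuous-projection} together with Frostman's lemma. You have filled in the routine details the paper leaves implicit, namely the mass distribution principle for $\hdim\mu\ge t$, the compact set $\pi_\theta(K)$ carrying $\pi_\theta\mu$, and the countable union of exceptional null sets along $t_j\uparrow\hdim E$.
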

\begin{proof}
    This follows from Theorem \ref{thm:continuous-projection} and Frostman's Lemma.
\end{proof}

\begin{proof}[Proof of Theorems \ref{thm:projection-ADR} and \ref{thm:projection-non-conc-Schubert-cycle}]
In Theorem \ref{thm:projection-ADR}, $\nu$ is uniformly perfect, so it has no atoms and any fixed slope chart has full $\nu$-measure. Let $\mu$ be its push-forward to the chart. For each $a\in\spt\mu$, local bi-Lipschitz comparison of the metrics and iteration of uniform perfectness allow us to choose $R\in2^{-\N}$ and $0<\e<1/8$ such that
\begin{displaymath}
    \mu(B(x,8\e r))\le\frac14\mu(B(x,r))
    \qquad\text{whenever }B(x,r)\subset B(a,5R).
\end{displaymath}
Choose $R$ so that the corresponding Grassmannian balls do not contain $\spt\nu$, and then choose $\e$ to obtain the factor $1/4$ by iteration. Apply Lemma \ref{lem:line-decay-spread-out} after the change of variables $\theta\mapsto(\theta-a)/R$. Lemma \ref{lem:spread-out-scaling} then shows that $\mu(B(a,R))^{-1}\mu|_{B(a,R)}$ is spread-out at every dyadic scale, with fixed parameters. These intervals $B(a,R)$ admit a countable subcover of $\spt\mu$.

Under the assumptions of Theorem \ref{thm:projection-non-conc-Schubert-cycle}, Lemma \ref{lem:schubert-non-concentration-implies-spreadout} provides a finite cover of $\spt\nu$ by big-cell neighborhoods whose normalized coordinate restrictions are spread-out at every dyadic scale.

For either theorem, fix $0<t<\hdim E$ and choose a $t$-Frostman probability measure carried by a compact subset of $E$. Apply Theorem \ref{thm:continuous-projection} to this measure and each normalized coordinate restriction of $\nu$. The coordinate projections and the corresponding orthogonal projections have images of the same Hausdorff dimension, so $\hdim\Pi_V(E)\ge\min\{t,k\}$ for $\nu$-almost every $V$, with $k=1$ in Theorem \ref{thm:projection-ADR}. Let $t$ increase to $\hdim E$ along a sequence. Lipschitz continuity implies the reverse inequality and settles $\hdim E=0$.

For the final assertion of Theorem \ref{thm:projection-non-conc-Schubert-cycle}, apply Theorem \ref{thm:continuous-projection} and Frostman's lemma to normalized positive-mass compact restrictions of the measures in Corollary \ref{cor:small-dim-spread-out}. Lemma \ref{lem:spread-out-subset} preserves spread-outness, and the big-cell parametrization preserves the Hausdorff dimension of each compact support.
\end{proof}

\begin{remark}
As stated in \cite[Remark~1.4]{SpreadOutFurstSticky}, the assumption $\hdim E=\pdim E$ in the planar Furstenberg estimate \cite[Theorem~1.3]{SpreadOutFurstSticky} can be removed. For each $x\in E$, restrict $\nu_x$ to a sufficiently small positive-mass neighborhood in a fixed slope chart. The normalized coordinate images remain $s$-Frostman and, by Lemmas \ref{lem:spread-out-scaling} and \ref{lem:line-decay-spread-out}, are spread-out at every dyadic scale, with constants allowed to depend on $x$. Theorem \ref{thm:mainGeneralContinuous} then applies when $\hdim E>0$; when $\hdim E=0$, the conclusion follows from the Frostman condition on any one $\nu_x$.
\end{remark}

\subsection{Restricted projections for surfaces in \texorpdfstring{$\G(4,2)$}{G(4,2)}}

\label{s:restricted-projections-surfaces}

The results in this section arose from conversations with ChatGPT 5.6 sol pro aimed to understand which surfaces in $\G(d,k)$ can be foliated by Gan--Guo--Wang curves. 

Which curves in $\G(d,k)$ satisfy a Marstrand-type theorem? Gan, Guo and Wang \cite[Theorem 1.2]{GGW22}, building on a series of earlier developments, answered this question for the osculating planes of non-degenerate curves.
\begin{theorem}[Gan--Guo--Wang]\label{thm:GGW}
    Fix $1\le k<d$. Let $\gamma:[0,1]\to\R^d$ be a smooth non-degenerate curve, that is, the vectors $\gamma'(t),\ldots,\gamma^{(d)}(t)$ are linearly independent for all $t\in[0,1]$. Let $V_t\in\G(d,k)$ be the $k$-dimensional subspace spanned by $\gamma'(t),\ldots,\gamma^{(k)}(t)$. Then, for any Borel set $E\subset\R^d$ with $\hdim(E)\le k$,
    \[
        \hdim(\Pi_{V_t}(E))=\min\{\hdim(E),k\}\quad\text{for a.e. }t\in[0,1].
    \]
\end{theorem}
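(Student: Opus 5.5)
Since this is an external result, the plan would follow the Fourier-analytic route of \cite{GGW22} rather than the spread-out machinery of this paper. Indeed, the parametrizing measure here is Lebesgue measure on a curve in $\G(d,k)$, and it is highly concentrated near Schubert cycles, so Theorem \ref{thm:projection-non-conc-Schubert-cycle} does not apply.

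\emph{Reduction to a discretized estimate.} Fix $0<u<\hdim E\le k$ and a $u$-Frostman probability measure $\mu$ on a compact subset of $E$. Standard pigeonholing reduces the claim to a discretized exceptional-set estimate. Suppose $\mathcal T\subset[0,1]$ is a $\delta$-separated set of parameters with $|\mathcal T|\ge\delta^{-\e}$ relative to its expected size. Suppose also that for each $t\in\mathcal T$ a set of $\mu$-mass $\gtrapprox_\delta1$ is covered by at most $\delta^{-u+\e}$ slabs $\Pi_{V_t}^{-1}(B)$, with $B$ a $\delta$-ball in $V_t$. We want to derive a contradiction for small $\delta$. Each such slab is a $\delta$-neighbourhood of a translate of $V_t^\perp$. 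So the aim is an incidence bound between a $u$-Frostman $\delta$-set and families of slabs whose normal directions $V_t$ are osculating planes of $\gamma$.

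\emph{Fourier side.} Write the incidence count as $\sum_t\int\sum_{S\in\mathcal S_t}1_S\,d\mu$ and pass to frequency space. The Fourier transform of the slab indicators for a fixed $t$ lives on a $\delta^{-1}$-neighbourhood of $V_t$. As $t$ varies, these frequency regions form a neighbourhood of the surface $\bigcup_t V_t$, a ``generalized cone'' swept out by the $k$-dimensional osculating spaces of $\gamma$. The key steps, in order, are as follows.
\begin{enumerate}
\item Prove an $\ell^2$ (or small-cap) decoupling inequality for this cone at each dyadic frequency scale $2^j\le\delta^{-1}$. The plan is to do this by induction on $k$, using the Bourgain--Demeter--Guth decoupling for non-degenerate curves as the base case and a rescaling/induction-on-scales argument in $t$.
\item Combine the decoupling with the $u$-Frostman energy bound on each wave-packet piece, in the spirit of Kaufman's energy argument. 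This yields $\int_0^1 I_{u'}(\Pi_{V_t}\mu)\,dt<\infty$ for every $u'<u$, or at least its discretized analogue with $\delta^{-\e}$ losses.
\item Conclude the almost-every statement by the usual Borel--Cantelli and energy argument, then let $u\uparrow\hdim E$. The upper bound is immediate from Lipschitz continuity.
\end{enumerate}

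\emph{Main obstacle.} The decoupling step is where I expect the difficulty. The cone $\bigcup_tV_t$ is degenerate: it is ruled by $k$-planes and has vanishing curvature in $k$ directions. Its curvature in the remaining directions comes only from the non-degeneracy of $\gamma'(t),\ldots,\gamma^{(d)}(t)$. I would first try the Pramanik--Yang--Zahl / Gan--Guth--Maldague style induction: freeze $t$ in a short interval of length $\sigma$, observe that after an anisotropic rescaling adapted to $\gamma$ the local piece looks like the cone for the curve of $(k-1)$-osculating spaces in lower dimension, and iterate. The restriction $\hdim E\le k$ enters only in step 2, where the Frostman exponent must not exceed the dimension of the target for the energy to be controlled.
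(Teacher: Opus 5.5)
The paper gives no proof of this statement. It quotes \cite[Theorem~1.2]{GGW22} and uses it as a black box in Proposition~\ref{prop:G42-hyperbolic-surfaces}, so your outline has to stand on its own. Your first paragraph and step~1 are plausible in spirit, but step~2 fails.

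Step~2 claims $\int_0^1 I_{u'}(\Pi_{V_t}\mu)\,dt<\infty$ for all $u'<u$, and this is false even when the theorem's conclusion holds trivially. Take $d=3$ and $k=1$. Fix $t_0\in(0,1)$ and a unit vector $z_0\perp\gamma'(t_0),\gamma''(t_0)$. Then $f(t)=\gamma'(t)\cdot z_0$ satisfies $|f(t)|\lesssim|t-t_0|^2$. By non-degeneracy, the zeros of $f$ have order at most $2$, so there are finitely many. Let $\mu$ be the image of a $u$-Frostman probability measure $\lambda$ on $[0,1]$, with $u\in(1/2,1]$, under $s\mapsto sz_0$. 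Then $I_{u'}(\Pi_{V_t}\mu)=(|f(t)|/|\gamma'(t)|)^{-u'}I_{u'}(\lambda)$. Hence $\int_0^1 I_{u'}(\Pi_{V_t}\mu)\,dt\gtrsim\int_0^1|t-t_0|^{-2u'}\,dt=\infty$ for every $u'\in[1/2,u)$. Yet $\Pi_{V_t}$ is injective on the segment for all but finitely many $t$. Replacing $|\cdot|^{-u'}$ by $\max\{|\cdot|,\delta\}^{-u'}$, the same computation gives a discretized averaged energy $\gtrsim\delta^{-(u'-1/2)}$, so your fallback with $\delta^{-\e}$ losses fails as well. For general $d\ge3$, choosing $z_0\perp\gamma'(t_0),\ldots,\gamma^{(d-1)}(t_0)$ gives $|\Pi_{V_t}z_0|\sim|t-t_0|^{d-k}$ near $t_0$, and divergence for $u'\ge1/(d-k)$.

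This is structural; a stronger decoupling inequality will not fix it. By Plancherel, the averaged energy is a constant times $\int|\widehat\mu(\xi)|^2|\xi|^{u'-k}\,d\sigma(\xi)$, where $\sigma$ is a fixed measure on the cone $\bigcup_tV_t$. This is an $L^2$ quantity, and decoupling, an $L^p$ statement for $p>2$, has no purchase on it. In the example it diverges because the cone is tangent to the hyperplane $z_0^\perp$ along $\R\gamma'(t_0)$, and $\widehat\mu\equiv1$ on $z_0^\perp$. This is why energy arguments for curves in $\G(d,k)$ stop at partial thresholds, such as $1/2$ here.

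The missing idea is to keep the exceptional-set formulation from your first paragraph and never average energies over $t$. For $t\in\mathcal T$, let $\mathcal S_t$ be the at most $\delta^{-u+\e}$ slabs carrying $\gtrapprox_\delta1$ of the mass, so that $\int\sum_{t\in\mathcal T}\sum_{S\in\mathcal S_t}1_S\,d\mu\gtrapprox_\delta|\mathcal T|$. Then bound the left side from above by an $L^p$ estimate for $\sum_{t,S}1_S$: decompose into dyadic frequency pieces, apply decoupling for the cone $\bigcup_tV_t$ to each piece, and use the Frostman condition on $\mu$. The contradiction comes from the cardinality bound on each individual $\mathcal S_t$, which $\int_0^1 I_{u'}(\Pi_{V_t}\mu)\,dt$ cannot see.

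A smaller slip: the Fourier transform of a slab indicator is concentrated on an $O(1)$-neighbourhood of $V_t\cap B(0,\delta^{-1})$, not on a $\delta^{-1}$-neighbourhood of $V_t$.
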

For $k\in\{1,d-1\}$, this family is generic in the space of smooth curves in $\G(d,k)$. For intermediate values of $k$, it is very special. Understanding which curves satisfy a Marstrand-type theorem remains a difficult open problem, even for polynomial curves; see \cite{Du25RestrictedMarstrand, Wang26RestrictedProjections}. Some progress for polynomial curves in $\G(4,2)$ appears in \cite{Du25RestrictedMarstrand}.

For smooth surfaces in $\G(4,2)$, we prove a Marstrand-type theorem by combining two ideas. Elliptic surfaces are spread out. Hyperbolic surfaces satisfying an additional derivative condition are locally foliated by curves to which Theorem \ref{thm:GGW} applies. These two cases cover a generic surface except for a set of zero area.  By comparison, generic $3$-manifolds in the four-dimensional space $\G(4,2)$ can be treated by the classical transversality method \cite{MR1749437}, although this fact does not seem to be recorded in the literature (the special case of isotropic planes in $\G(4,2)$ was established by Hovila \cite[Theorem~1.1]{Hovila14} using transversality). That method breaks down in dimensions less than $3$, or less than $d-1$ in $\G(d,k)$.

A big-cell chart represents a plane in $\G(4,2)$ by a $2\times2$ matrix. In this chart, a smooth surface $\Sigma$ has a local parametrization $F(u,v)$ with linearly independent partial derivatives. At $V=F(u,v)$, its tangent plane is $T_V\Sigma=\operatorname{span}\{\partial_uF,\partial_vF\}\subset \R^{2\times 2}$. Keep the two planes distinct: $V$ is a plane in $\R^4$, while $T_V\Sigma$ consists of the possible first derivatives of curves on $\Sigma$ through $V$. 

The function $(a,b)\mapsto\det(a\partial_uF+b\partial_vF)$ is a quadratic form in two variables. We call $V$ \emph{elliptic} if this form is definite (strictly positive or strictly negative away from the origin), \emph{hyperbolic} if it is indefinite (it takes both signs), and \emph{parabolic} if it is degenerate (a scalar multiple of the square of a linear form, including the zero form). At an elliptic point every nonzero tangent matrix is invertible. At a hyperbolic point the determinant vanishes along exactly two lines in $T_V\Sigma$; these are the \emph{characteristic directions}, and their nonzero matrices have rank one. Changes of big-cell chart act on tangent matrices by invertible left and right multiplication. They multiply the determinant by a nonzero factor, so the classification does not depend on the chart.

\begin{proposition}\label{prop:G42-elliptic-surfaces}
Let $K$ be a compact $C^2$ coordinate patch contained in the elliptic locus of a surface $\Sigma\subset \G(4,2)$. Then $\mathcal H^2|_K$ is spread out. Consequently, for every Borel set $E\subset\R^4$,
\[
    \hdim\Pi_V(E)=\min\{\hdim E,2\}
    \quad\text{for $\mathcal H^2$-a.e. }V\in K.
\]
\end{proposition}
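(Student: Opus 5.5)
The plan is to verify the hypothesis of Lemma \ref{lem:hyperplane-non-concentration-implies-spreadout} for $\mu=\mathcal H^2|_K$ in a big-cell chart. The projection statement then follows from Corollary \ref{cor:continuous-projection}, applied to the normalized restrictions of $\mu$ to finitely many small balls covering $K$. Passing between the chart and $\G(4,2)$ only changes sets by bi-Lipschitz maps, and the class of Schubert divisors does not depend on the chart. In $\mathbf G_{4,2}=\R^{2\times2}$ the model hyperplanes are $H_{[x,y,u]}=\{\theta: x\cdot\theta\cdot y=u\}$. Their normal is the rank-one matrix $x y^{*}$, so the linear part of each model hyperplane is $\{A: \langle A, xy^*\rangle=0\}$.

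\textbf{Key geometric step.} At an elliptic point $V=F(u_0,v_0)$, the tangent plane $T_V\Sigma$ contains no nonzero rank-one matrix, because $\det$ is definite on it. I claim $T_V\Sigma$ is then never contained in the linear part of a model hyperplane, and is uniformly transverse to all of them. Suppose $\langle A,xy^*\rangle=x^*Ay=0$ for all $A\in T_V\Sigma$. The map $A\mapsto x^*Ay$ is a linear functional on $T_V\Sigma$, which is $2$-dimensional. The condition $x^*Ay=0$ means $Ay\perp x$. Pick $A_1,A_2$ spanning $T_V\Sigma$. Then $A_1y$ and $A_2y$ both lie in the line $x^\perp$, so some nonzero combination $A=aA_1+bA_2$ has $Ay=0$. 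This $A$ is a nonzero singular tangent matrix, contradicting ellipticity.

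Hence the angle between $T_V\Sigma$ and the linear part of any model hyperplane is bounded below. By compactness of $K$ and of the set of unit rank-one directions $\{xy^*:|x|=|y|=1\}$, this lower bound $c_0>0$ is uniform over $V\in K$.

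\textbf{Non-concentration estimate.} Fix $\theta\in K$ and $r\le r_0$ small. By $C^2$ regularity, $\Sigma\cap B(\theta,r)$ lies within $O(r^2)$ of the affine tangent plane $\theta+T_\theta\Sigma$. Moreover $\mathcal H^2|_K$ is comparable to $2$-dimensional Lebesgue measure on that plane, and $\mu(B(\theta,r))\gtrsim r^2$ for $\theta$ in the interior of the patch. To handle boundary points, I would apply the argument to a slightly larger patch still inside the elliptic locus, where the density bounds hold uniformly.

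For any $H\in\mathfrak H_{2,2}$, uniform transversality shows that $H(\e r)\cap(\theta+T_\theta\Sigma)$ is a strip of width $O(\e r/c_0)$. Adding the $O(r^2)$ error, the set $B(\theta,r)\cap H(\e r)\cap\Sigma$ lies in the preimage of a strip of width $O(\e r/c_0+r^2)$ in the parameter domain. Its measure is therefore $O((\e/c_0+r)r^2)$. Choosing $\e$ small in terms of $c_0$ and the $C^2$ constants, and then $r_0$ small, gives $\mu(B(\theta,r)\cap H(\e r))\le(1-\e)\mu(B(\theta,r))$. This is exactly \eqref{eq:model-hyperplane-contraction}.

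Lemma \ref{lem:hyperplane-non-concentration-implies-spreadout} then yields spread-outness of $\mu|_{B(a,\rho)}$ for every ball of a finite cover of $K$. A finite sum of spread-out measures is spread out, with constants added. This gives the first assertion, and the second follows from Corollary \ref{cor:continuous-projection}.

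The main obstacle is the linear-algebra step linking ellipticity to the absence of rank-one annihilating normals. The argument above is short, but it must be carried out uniformly; the rest is standard $C^2$ approximation by the tangent plane.
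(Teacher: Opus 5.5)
Your proposal is correct and follows essentially the same route as the paper: show that no rank-one normal $xy^*$ of a model hyperplane is orthogonal to an elliptic tangent plane, get uniform transversality by compactness, bound $\mathcal H^2(K\cap B(V,r)\cap H(\lambda r))\lesssim\lambda r^2$, and conclude with Lemma \ref{lem:hyperplane-non-concentration-implies-spreadout} and Corollary \ref{cor:continuous-projection}. The only differences are minor: the paper derives the linear-algebra step from the fact that a two-plane in $\R^{2\times2}$ contains a rank-one matrix iff its orthogonal complement does (your direct argument via $A_1y,A_2y\in x^\perp$ is a bit shorter), and it gets the strip estimate from the coarea formula rather than from your tangent-plane linearization.
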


\begin{proof}
All implicit constants may depend on $K$. We use the Euclidean inner product on the matrix entries. A two-plane $T\subset \R^{2\times 2}$ contains a rank-one matrix if and only if $T^\perp$ does. To see this, suppose $T$ contains a rank-one matrix. After orthogonal changes of bases, write $T=\operatorname{span}\{E_{11},B\}$, where $E_{11}$ has entry $1$ in position $(1,1)$ and $0$ elsewhere. Matrices supported on the second row form a two-dimensional space orthogonal to $E_{11}$. Orthogonality to $B$ imposes at most one more linear condition, so this space contains a nonzero matrix in $T^\perp$, necessarily of rank one. Apply the same implication to $T^\perp$ to obtain the converse, since $(T^\perp)^\perp=T$.

Normals to the model hyperplanes in a big-cell chart are rank-one matrices. Write a model hyperplane as $H=f^{-1}(0)$, where $f$ is affine and $|\nabla f|=1$. Since $T_V\Sigma$ contains no rank-one matrix, neither does $(T_V\Sigma)^\perp$. By compactness of $K$ and the unit rank-one matrices, $|\nabla_\Sigma f|\gtrsim1$ uniformly on $K$ and over all model hyperplanes; here $\nabla_\Sigma f$ is the tangential gradient. The uniform $C^2$ bounds imply that, for some $r_0>0$, one partial derivative of $f$ is bounded away from zero on a rotated coordinate rectangle of diameter $O(r)$ containing $K\cap B(V,r)$, for $V\in K$ and $0<r\le r_0$. Each level curve in the rectangle is therefore a graph with uniformly bounded slope. Hence
\[
    \mathcal H^1\bigl(K\cap B(V,r)\cap f^{-1}(t)\bigr)\lesssim r
    \qquad(t\in\R).
\]
Applying the weighted coarea formula \cite[Chapter 2, \S6, (6.6)]{Simon14GMT} to the $C^1$ map $f|_\Sigma$, we obtain
\[
\begin{aligned}
    \mathcal H^2\bigl(K\cap B(V,r)\cap H(\lambda r)\bigr)
    &=\int_{-\lambda r}^{\lambda r}
      \int_{K\cap B(V,r)\cap f^{-1}(t)}
      \frac{1}{|\nabla_\Sigma f|}\,d\mathcal H^1\,dt\\
    &\lesssim\lambda r^2
    \lesssim\lambda\,\mathcal H^2\bigl(K\cap B(V,r/2)\bigr),
\end{aligned}
\]
for $0<\lambda\le1$. For sufficiently small $\lambda\in(0,1/2)$, apply Lemma \ref{lem:hyperplane-non-concentration-implies-spreadout} in a finite cover of $K$ by balls of radius at most $r_0$. Thus $\mathcal H^2|_K$ is spread out. The projection conclusion follows from Corollary \ref{cor:continuous-projection} and the Lipschitz continuity of orthogonal projections.
\end{proof}

Say that a smooth curve $V(t)\in \G(4,2)$ is a \emph{Gan--Guo--Wang curve} if $V(t)=\operatorname{span}\{\gamma'(t),\gamma''(t)\}$ for a smooth curve $\gamma$ in $\R^4$ such that $\gamma'(t),\ldots,\gamma^{(4)}(t)$ are linearly independent. Theorem \ref{thm:GGW} shows that Gan--Guo--Wang curves satisfy a Marstrand-type theorem. 

For a smooth family of planes $V(t)$, its velocity is the linear map $A(t)=\dot V(t):V(t)\to\R^4/V(t)$ defined by $A(t)x(t)=x'(t)\bmod V(t)$ for every smooth choice of vector $x(t)\in V(t)$. Taking the quotient by $V(t)$ records only motion out of the plane: vectors differing by an element of $V(t)$ are identified. This is the identification $T_V\G(4,2)\cong\operatorname{Hom}(V,\R^4/V)$.

Suppose $A(t)$ has rank one. Define the line $K(t)=\ker A(t)$ and the three-plane $W(t)\supset V(t)$ by $W(t)/V(t)=\operatorname{im}A(t)$. The line $K(t)$ consists of vectors whose derivatives stay in $V(t)$, while $W(t)$ includes the one additional direction in which $V(t)$ moves. Differentiation induces maps
\begin{equation}\label{eq:G42-flag-derivatives}
    K(t)\longrightarrow V(t)/K(t),
    \qquad
    W(t)/V(t)\longrightarrow\R^4/W(t).
\end{equation}
The first map sends $x(t)$ to $x'(t)\bmod K(t)$ for a smooth choice $x(t)\in K(t)$; the second sends $y(t)+V(t)$ to $y'(t)\bmod W(t)$ for a smooth choice $y(t)\in W(t)$. These are well-defined because $x'(t)\in V(t)$ and the derivative of any smooth choice in $V(t)$ belongs to $W(t)$. Nonvanishing of the two maps means, respectively, that the line $K(t)$ and the three-plane $W(t)$ change to first order.

On a hyperbolic surface, the two characteristic directions vary smoothly. Choose a smooth nonzero vector in either direction locally and solve the resulting ordinary differential equation. Its integral curves form a local foliation.

\begin{proposition}\label{prop:G42-hyperbolic-surfaces}
Let $U$ be a hyperbolic open subset of a smooth surface in $\G(4,2)$. Suppose that, for one characteristic foliation, both maps in \eqref{eq:G42-flag-derivatives} are nonzero on $U$. Then its leaves are locally Gan--Guo--Wang curves. Consequently, for every Borel set $E\subset\R^4$,
\[
    \hdim\Pi_V(E)=\min\{\hdim E,2\}
    \quad\text{for $\mathcal H^2$-a.e. }V\in U.
\]
\end{proposition}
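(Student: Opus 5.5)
The plan has two parts: build $\gamma$ along each leaf directly from the flag $K(t)\subset V(t)\subset W(t)$, then integrate Theorem \ref{thm:GGW} over the leaves with Fubini.

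\textbf{Constructing $\gamma$.} Fix a leaf $V(t)$ of the chosen characteristic foliation, parametrized smoothly with nonvanishing velocity. Since the tangent direction is characteristic, the velocity $A(t)=\dot V(t)$ has rank one, so $K(t)$ and $W(t)$ are defined and vary smoothly on $U$. Choose a smooth nonzero $x(t)\in K(t)$ and set $\gamma(t)=\int_0^t x$, so that $\gamma'=x$. We check the four derivatives in turn.
\begin{enumerate}[label=\textup{(\arabic*)}]
\item Since $x\in K=\ker A$, we have $\gamma''=x'\in V(t)$. The first map in \eqref{eq:G42-flag-derivatives} is nonzero, so $x'\notin K$. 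Hence $\gamma',\gamma''$ are independent and $\operatorname{span}\{\gamma',\gamma''\}=V(t)$.
\item Because $\gamma''$ is a smooth section of $V$, its derivative satisfies $\gamma'''\in W(t)$ and $\gamma'''\bmod V=A(\gamma'')$. This is nonzero because $\gamma''\notin\ker A$. So $\gamma',\gamma'',\gamma'''$ span $W(t)$.
\item The class $\gamma'''+V$ is a nonzero element of the line $W/V$. The second map in \eqref{eq:G42-flag-derivatives} is nonzero, hence injective on $W/V$. Applying it to $\gamma'''+V$ gives $\gamma''''\notin W(t)$.
\end{enumerate}
Thus $\gamma',\ldots,\gamma^{(4)}$ are linearly independent, and $V(t)=\operatorname{span}\{\gamma',\gamma''\}$. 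So every leaf is locally a Gan--Guo--Wang curve.

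\textbf{Reducing to $\hdim E\le 2$.} The upper bound $\hdim\Pi_V(E)\le\min\{\hdim E,2\}$ is trivial. If $\hdim E>2$, for each $\e>0$ pick a Borel (compact) $E_\e\subset E$ with $2-\e<\hdim E_\e\le 2$. Then Theorem \ref{thm:GGW} applies to $E_\e$, and monotonicity gives $\hdim\Pi_V(E)\ge 2-\e$ off a null set. Letting $\e\downarrow0$ along a sequence handles this case.

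\textbf{Fubini over the leaves.} Cover $U$ by countably many flow boxes, i.e.\ smooth diffeomorphisms $\Phi:(t,\sigma)\in I\times J\to U$ whose $t$-lines are leaves. On each box, $\mathcal H^2|_U$ is comparable to $\Phi_*(\mathcal L^2)$ with smooth positive density. For fixed $\sigma$, Theorem \ref{thm:GGW} (with $k=2$, $d=4$) shows that the set of $t$ with $\hdim\Pi_{\Phi(t,\sigma)}(E)<\min\{\hdim E,2\}$ is Lebesgue-null. Fubini then gives that the bad set has $\mathcal H^2$-measure zero in each box, hence in $U$.

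\textbf{Main obstacle.} I expect the difficulty to be measurability of the bad set $\{V:\hdim\Pi_V(E)<u\}$, which Fubini needs. I would avoid it as in the proof of Theorem \ref{thm:continuous-projection}. Work with a $t$-Frostman measure $\mu$ on a compact subset of $E$, and use the measurable function $V\mapsto$ the lower energy or local-dimension bound for $\Pi_V\mu$ (energies are lower semicontinuous in $V$). Alternatively, argue by outer measure: Theorem \ref{thm:GGW} gives null sections, and a Borel hull of the bad set has null sections by the same theorem applied to the measure version.
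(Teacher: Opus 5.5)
Your proposal is correct and follows the paper's proof. The paper makes the same choice $\gamma'=x\in K(t)$, uses the two maps in \eqref{eq:G42-flag-derivatives} in the same way to get $\gamma''\in V\setminus K$, $\gamma'''\in W\setminus V$ and $\gamma''''\notin W$, and runs the same Fubini argument over flow boxes, while your reduction to $\hdim E\le 2$ makes explicit a point the paper leaves implicit. On measurability, which the paper does not discuss, your energy and outer-measure suggestions would need a measure version of Theorem \ref{thm:GGW} that is not available (and null sections alone do not give Fubini for non-measurable sets), but the compact reduction you already use settles it: for compact $K$ each map $V\mapsto\mathcal H^{s}_\infty(\Pi_V(K))$ is upper semicontinuous, so $\{V:\hdim\Pi_V(K)<u\}$ is Borel and Tonelli applies.
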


\begin{proof}
Let $V(t)$ be a leaf of the specified foliation, and choose a smooth nonzero vector $x(t)\in K(t)$. Since the first map in \eqref{eq:G42-flag-derivatives} is nonzero, $x(t),x'(t)$ span $V(t)$. The identity $x''(t)\bmod V(t)=A(t)x'(t)$ and $K(t)=\ker A(t)$ show that $x''(t)$ modulo $V(t)$ spans $\operatorname{im}A(t)$. Hence $x(t),x'(t),x''(t)$ span $W(t)$. Nonvanishing of the second map says that $x'''(t)\notin W(t)$. Locally define $\gamma$ by $\gamma'=x$. Then $V(t)=\operatorname{span}\{\gamma'(t),\gamma''(t)\}$ and $\gamma'(t),\ldots,\gamma^{(4)}(t)$ are linearly independent.

Cover $U$ by countably many surface coordinate patches in which the leaves are obtained by fixing one coordinate. Theorem \ref{thm:GGW} applies on every leaf segment. Fubini's theorem in these coordinates proves the asserted conclusion for $\mathcal H^2$-almost every $V\in U$.
\end{proof}

Hyperbolicity alone is insufficient in Proposition \ref{prop:G42-hyperbolic-surfaces}: the surface $(u,v)\mapsto\operatorname{diag}(u,v)$ is hyperbolic, but the kernel and image lines of the velocity are constant along both characteristic foliations, so both maps in \eqref{eq:G42-flag-derivatives} vanish. Since these maps are nonzero for every Gan--Guo--Wang curve, neither foliation consists of Gan--Guo--Wang curves.

Propositions \ref{prop:G42-elliptic-surfaces} and \ref{prop:G42-hyperbolic-surfaces} cover a generic surface in $\G(4,2)$ up to a set of zero area. Here is the reason. Exclude parabolic points and hyperbolic points where one map in \eqref{eq:G42-flag-derivatives} vanishes along either characteristic direction. At every other point, Proposition \ref{prop:G42-elliptic-surfaces} applies locally, or Proposition \ref{prop:G42-hyperbolic-surfaces} applies locally to both characteristic foliations. Each excluded condition imposes at least one equation on the first two derivatives. After a small perturbation, these equations hold only on curves or isolated points.

\begin{theorem}\label{thm:G42-generic-surfaces}
Let $M$ be a compact $C^\infty$ surface, and equip $\operatorname{Emb}^\infty(M,\G(4,2))$ with the compact-open $C^\infty$ topology. There is an open dense subset $\mathcal U\subset\operatorname{Emb}^\infty(M,\G(4,2))$ such that, for every $F\in\mathcal U$, there is a closed set $Z\subset M$ with $\mathcal H^2(F(Z))=0$ such that every $p\in M\setminus Z$ has a neighborhood $U$ satisfying one of the following:
\begin{enumerate}[label=\textup{(\roman*)}]
    \item $F(U)$ is elliptic;
    \item $F(U)$ is hyperbolic, and each characteristic foliation of $F(U)$ consists of Gan--Guo--Wang curves.
\end{enumerate}
Consequently, for every Borel set $E\subset\R^4$,
\[
    \hdim\Pi_{F(x)}(E)=\min\{\hdim E,2\}
    \quad\text{for $\mathcal H^2$-a.e. }x\in M.
\]
\end{theorem}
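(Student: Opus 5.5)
The plan is a jet transversality argument, followed by a local application of Propositions \ref{prop:G42-elliptic-surfaces} and \ref{prop:G42-hyperbolic-surfaces}.

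\textbf{Step 1: the bad subset of jet space.} Work in the bundle $J^3(M,\G(4,2))$ of $3$-jets; all invariants below are chart-independent. Define $\Sigma\subset J^3$ as the union of the following sets:
\begin{enumerate}[label=\textup{(\alph*)}]
    \item jets whose tangent quadratic form $(a,b)\mapsto\det(a\partial_uF+b\partial_vF)$ is degenerate;
    \item hyperbolic jets for which one of the two maps in \eqref{eq:G42-flag-derivatives} vanishes along one of the two characteristic directions.
\end{enumerate}
Both maps are computed from the jet along the characteristic direction, using the second and third derivatives of the curve.

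In local coordinates, (a) is the vanishing of the discriminant of a binary quadratic form. This is one equation in the $1$-jet. At a hyperbolic jet, the characteristic directions depend smoothly on the $1$-jet. Each map in (b) is then a linear map between line bundles, so its vanishing is one scalar equation. That equation involves the $2$- or $3$-jet through the derivatives along the leaf.

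I would verify that $\Sigma$ is a finite union of submanifolds of codimension $\ge1$. Away from a further set of codimension $\ge2$ it is a codimension-one submanifold. To do this I would check that each defining function is a submersion: it has a nonzero derivative in some higher-order jet coordinate, which can be varied independently of the lower-order ones. For (b), note that $\det$ of the velocity along a characteristic direction already vanishes. The map $K(t)\to V(t)/K(t)$ depends on the second derivative along the leaf, and perturbing $\partial^2_{\text{char}}F$ by a rank-one matrix changes it to first order. The map $W/V\to\R^4/W$ is handled similarly.

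\textbf{Step 2: Thom transversality.} By Thom's theorem, the embeddings $F$ whose jet extension $j^3F$ is transverse to every stratum of $\Sigma$ form a residual set. Since $M$ is compact and the strata are finitely many locally closed submanifolds with closed union, transversality is an open condition, so this set is open and dense. Call it $\mathcal U$; this uses compactness of $M$ and the Whitney stratification of $\Sigma$.

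For $F\in\mathcal U$, the set $Z=(j^3F)^{-1}(\Sigma)$ is a closed, finite union of submanifolds of $M$ of dimension at most $1$. Hence $\mathcal H^2(F(Z))=0$, because $F$ is smooth. At each $p\notin Z$, the jet is either elliptic or hyperbolic with both maps nonzero for both characteristic directions. These are open conditions, so they persist on a neighborhood $U$. This gives (i) or (ii), using the leaf construction in the proof of Proposition \ref{prop:G42-hyperbolic-surfaces}.

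\textbf{Step 3: the projection statement.} Cover $M\setminus Z$ by countably many such neighborhoods, shrinking them to compact coordinate patches in the elliptic case.
\begin{itemize}
    \item On elliptic patches, Proposition \ref{prop:G42-elliptic-surfaces} gives the conclusion for $\mathcal H^2$-a.e. point of $F(U)$.
    \item On hyperbolic patches, Proposition \ref{prop:G42-hyperbolic-surfaces} gives the same conclusion.
\end{itemize}
Pulling back through the local diffeomorphism $F$ preserves null sets. Since $Z$ is null, the countable union yields the claim for $\mathcal H^2$-a.e. $x\in M$.

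\textbf{Main obstacle.} I expect the hardest step to be the codimension count in Step 1(b), namely showing that the two flag-derivative conditions are genuinely independent equations on jets rather than identities. My first attempt would use a normal form. By an $\mathrm{O}(4)$ change of chart and reparametrization, take $F(u,v)=\operatorname{diag}(u,v)+\text{higher order}$ at a hyperbolic jet, with characteristic directions along the axes. Then compute directly how the second-order coefficients enter $x'\bmod K$, and how the third-order coefficients enter $y'\bmod W$, for the leaf $v=0$. This should show that each map is a nonconstant affine function of free jet coordinates. The example $\operatorname{diag}(u,v)$ shows why these conditions are not automatic.
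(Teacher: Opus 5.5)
Your proposal is correct and matches the paper's proof: jet transversality to a closed, Whitney-stratified bad set consisting of parabolic jets and hyperbolic jets with a vanishing map in \eqref{eq:G42-flag-derivatives}, a normal-form computation showing each condition has nonzero differential, and then Propositions \ref{prop:G42-elliptic-surfaces} and \ref{prop:G42-hyperbolic-surfaces} away from $Z$. The differences are minor: the paper works in $J^2$, because with $\partial_xF=E_{11}$, $\partial_yF=E_{22}$ the two maps are governed by $(\partial_{xx}F)_{12}$ and $(\partial_{xx}F)_{21}$, so no third-order coefficients enter (working in $J^3$ is nonetheless harmless); it reaches that normal form by general linear changes of basis, which are needed here because $\mathrm{O}(4)$ changes are not enough; and it obtains the Whitney stratification from the semialgebraicity of the bad set, a justification your proposal should also supply.
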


\begin{proof}
The exceptional conditions depend only on the first two derivatives. We record these derivatives, together with $p$ and $F(p)$, in the \emph{two-jet} $j^2F(p)$. In local coordinates, these data form a finite-dimensional space, denoted $J^2_{\mathrm{imm}}(M,\G(4,2))$ when the first derivative is injective. The map $j^2F$ assigns this data to each $p\in M$.

The tangent planes $T\subset \R^{2\times 2}$ form a four-dimensional space. A plane is parabolic precisely when the discriminant of $\det|_T$ vanishes. The parabolic locus is therefore a proper algebraic subset, of positive codimension.

Fix a hyperbolic two-jet at $p$, represented by a local parametrization $F$, and one of its characteristic directions. Choose coordinates $(x,y)$ on $M$ and bases in the domain and range of the tangent matrices such that $\partial_xF=E_{11}$ and $\partial_yF=E_{22}$ at $p$, with the selected characteristic direction equal to $\partial_x$. In big-cell coordinates centered at $F(p)$, let $V(t)$ be the characteristic leaf through $F(p)$ and write $B(t)=\dot V(t)$, with $V(0)=F(p)$ and $B(0)=E_{11}$. By the chain rule, $\dot B(0)-\partial_{xx}F\in\operatorname{span}\{E_{11},E_{22}\}$. Differentiating the rank-one condition for $B(t)$ at $t=0$, we find
\begin{equation}\label{eq:G42-flag-jet}
    \dot B(0)=
    \begin{pmatrix}
        *&(\partial_{xx}F)_{12}\\
        (\partial_{xx}F)_{21}&0
    \end{pmatrix}.
\end{equation}
Differentiating the kernel and image of $B(t)$ shows that the first map in \eqref{eq:G42-flag-derivatives} is nonzero exactly when $(\partial_{xx}F)_{12}\ne0$, and the second is nonzero exactly when $(\partial_{xx}F)_{21}\ne0$. The other characteristic direction is treated by replacing $\partial_{xx}F$ with $\partial_{yy}F$.

Define $\mathcal B\subset J^2_{\mathrm{imm}}(M,\G(4,2))$ to consist of the parabolic jets and the hyperbolic jets for which at least one map in \eqref{eq:G42-flag-derivatives} vanishes along at least one characteristic direction. Locally on the hyperbolic locus, the two characteristic lines can be chosen as smooth functions of the jet. For either choice, we see from \eqref{eq:G42-flag-jet} that the vanishing of either map is one equation with nonzero differential: the relevant off-diagonal second derivative can be varied while fixing $F(p)$ and its first derivatives.

The set $\mathcal B$ is closed. Indeed, a limit of hyperbolic jets in $\mathcal B$ is either parabolic or hyperbolic with a vanishing map in \eqref{eq:G42-flag-derivatives}, by passing to a convergent subsequence of characteristic directions.

Several exceptional conditions can meet, so $\mathcal B$ need not be smooth. We partition it into smooth pieces using a standard fact about \emph{semialgebraic sets}, which are finite unions of sets defined by polynomial equations and inequalities. Such sets have finite \emph{Whitney stratifications}; see \cite[\S9.7, pp.~236--243]{BochnakCosteRoy98}. The smooth pieces, called \emph{strata}, satisfy compatibility conditions at their boundaries. One condition says that if points of one stratum converge to a point of another and their tangent spaces converge, the limiting tangent space contains the tangent space of the stratum at the limit point. This compatibility ensures that transversality to all strata persists under perturbation.

In local two-jet coordinates, $\mathcal B$ is semialgebraic. The parabolic condition is polynomial. The characteristic directions and the vanishing conditions in \eqref{eq:G42-flag-derivatives} are also described by polynomial equations and inequalities if we introduce unit vectors for the characteristic, kernel, and image lines. Eliminating these vectors preserves semialgebraicity. Finite coordinate atlases therefore allow a finite Whitney stratification of $\mathcal B$. Every stratum has positive codimension, since each parabolic or hyperbolic exceptional condition does.

Transversality has a concrete consequence here. If $j^2F$ is transverse to a smooth stratum $S$, then the independent local equations for $S$, evaluated at $j^2F(p)$, have independent gradients in the surface coordinates at every solution $p$. By the implicit function theorem, $(j^2F)^{-1}(S)$ has dimension $2-\operatorname{codim}S$, and is empty if $\operatorname{codim}S>2$. The codimension in jet space thus bounds the dimension of the exceptional set on the surface.

Let $\mathcal U$ consist of the embeddings $F$ for which $j^2F$ is transverse to every stratum of $\mathcal B$. The jet transversality theorem \cite[Theorem~4.5.6, pp.~107--108]{Wall16} achieves transversality to the finitely many strata by arbitrarily small smooth perturbations. Embeddings are open in the compact-open $C^1$ topology, so $\mathcal U$ is dense. Since $\mathcal B$ is closed, its stratification is Whitney, and $M$ is compact, transversality to all strata persists under small $C^1$ perturbations of $j^2F$ \cite[Proposition~1.2(b) and proof]{Trivedi13}. Small $C^3$ perturbations of $F$ induce such perturbations of $j^2F$. Thus $\mathcal U$ is open in the compact-open $C^3$ topology, and hence in the compact-open $C^\infty$ topology.

Fix $F\in\mathcal U$ and set $Z=(j^2F)^{-1}(\mathcal B)$. Then $Z$ is closed and is a finite union of smooth pieces of dimension at most one. Since $F$ is Lipschitz on $M$, $\mathcal H^2(F(Z))=0$. If $p\in M\setminus Z$ is elliptic, ellipticity holds in a neighborhood $U$ of $p$. If $p$ is hyperbolic, both maps in \eqref{eq:G42-flag-derivatives} are nonzero along each characteristic direction throughout a neighborhood $U$ of $p$. Proposition \ref{prop:G42-hyperbolic-surfaces} therefore applies to both characteristic foliations of $F(U)$. Cover $M\setminus Z$ by countably many coordinate patches satisfying one of these two conditions. Propositions \ref{prop:G42-elliptic-surfaces} and \ref{prop:G42-hyperbolic-surfaces}, together with $\mathcal H^2(F(Z))=0$, establish the projection conclusion.
\end{proof}

\begin{remark}
Propositions \ref{prop:G42-elliptic-surfaces} and \ref{prop:G42-hyperbolic-surfaces} have an interpretation in terms of the pencils $\mathcal P_W$ from \cite[Definition~1.4]{Wang26RestrictedProjections}. For a fixed line $K$ inside a fixed three-plane $H$, the family $\{V':K\subset V'\subset H\}$ is called a \emph{Grassmannian line}: choosing $V'$ amounts to choosing a line in the two-dimensional quotient $H/K$. If $A:V\to\R^4/V$ has rank one, set $K=\ker A$ and define $H\supset V$ by $H/V=\operatorname{im}A$. Then $A$ is tangent to both $\mathcal P_{K^\perp}=\{V':K\subset V'\}$ and $\mathcal P_{H^\perp}=\{V':V'\subset H\}$, and to their intersection, the Grassmannian line just defined. Conversely, every nonzero tangent direction to a Grassmannian line has rank one.

Thus an elliptic tangent plane contains no direction tangent to a Grassmannian line. Equivalently, the surface is transverse to every pencil $\mathcal P_W$ through $V$ with $\dim W\in\{1,3\}$: the tangent spaces of the surface and the pencil together span $T_V\G(4,2)$. The surface is also transverse to every pencil $\mathcal P_W$ with $\dim W=2$ at its smooth points. These transversality statements express geometrically the nonvanishing tangential derivatives underlying Proposition \ref{prop:G42-elliptic-surfaces}.

A hyperbolic tangent plane instead contains exactly two directions tangent to Grassmannian lines. Along a characteristic leaf, $K(t)$ and $W(t)$ determine the pencils $\mathcal P_{K(t)^\perp}$ and $\mathcal P_{W(t)^\perp}$. The two maps in \eqref{eq:G42-flag-derivatives} test whether $K(t)$ and $W(t)$, respectively, change to first order. If $K(t)$ is locally constant, the leaf is contained in a fixed pencil $\mathcal P_{K(t)^\perp}$; if $W(t)$ is locally constant, it is contained in a fixed pencil $\mathcal P_{W(t)^\perp}$. Hence Theorem \ref{thm:G42-generic-surfaces} says that, outside an $\mathcal H^2$-null set, a generic surface either has no tangent pencil direction or has two such directions along whose characteristic leaves both associated pencils change to first order.
\end{remark}


\subsection{Distance sets with spread-out pins}
\label{S:distance-sets}

One form of the Falconer distance set problem asks whether $\hdim E\ge d/2$ forces the distance set $\{|x-y|:x,y\in E\}$ to have Hausdorff dimension $1$. The threshold $d/2$ would be sharp. The problem remains open in every dimension $d \ge 2$ \footnote{Well after a full draft of this paper was completed, OpenAI posted a file claiming that a closely related problem has been solved. The claim, even if accepted by the mathematical community, would not recover our estimates in dimension $d\ge 3$.}, but the conclusion and related statements are known for special classes of sets; see \cite{FiedlerStull24PinnedDistances, ShmerkinWang25, Liu26DistanceSets} and the references there.

A well-known lifting to the paraboloid converts pinned distances into linear projections (see e.g. \cite[\S 4.1]{Pham26DistanceSets}). Corollary \ref{cor:continuous-projection} therefore has the following consequence.
\begin{corollary}
\label{cor:distance-sets}
 Let $\mu$ be a spread-out measure on $\R^d$. For every Borel set $E\subset\R^d$,
 \[
      \hdim\{ |x-y| : x\in E \} = \min\{\hdim E,1\} \quad \text{for $\mu$-almost every }y\in\R^d.
 \]
\end{corollary}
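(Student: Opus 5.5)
The plan is to reduce pinned distances to orthogonal projections by lifting to the paraboloid, and then apply Corollary~\ref{cor:continuous-projection} with $k=1$ in $\R^{d+1}$.

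\emph{Step 1: the lift.} For $x,y\in\R^d$,
\[
|x-y|^2=|x|^2-2x\cdot y+|y|^2 .
\]
Define the lift $\Phi(x)=(x,|x|^2)\in\R^{d+1}$. Then
\[
|x-y|^2-|y|^2=\langle \Phi(x),(-2y,1)\rangle .
\]
So $x\mapsto|x-y|^2$ equals, up to an additive constant, the linear functional $\langle\cdot,(-2y,1)\rangle$ composed with $\Phi$. On any compact set avoiding the pin $y$, the map $r\mapsto\sqrt{r}$ is locally bi-Lipschitz, so dimensions are preserved. Points $x$ near $y$ can be handled by splitting $E$ into countably many pieces at positive distance from $y$. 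Since $\Phi$ is bi-Lipschitz on bounded sets, $\hdim\Phi(E)=\hdim E$. Hence it suffices to show that, for $\mu$-a.e.\ $y$, the projection of $\Phi(E)$ onto the line spanned by $(-2y,1)$ has dimension $\min\{\hdim E,1\}$.

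\emph{Step 2: the parameter measure.} Work in the big cell of $\G(d+1,1)$. Under the paper's convention with $k=1$, $\theta\in\mathbf G_{d+1,1}=\R^{1\times d}$ and $\pi_\theta(x',t)=x'+\theta\cdot t$. Reordering coordinates as $(t,x)$ with the last coordinate $|x|^2$ first, the functional $\langle(x,|x|^2),(-2y,1)\rangle$ has the form $\pi_\theta$ with $\theta=-2y$. The parameter measure is therefore the push-forward of $\mu$ under the dilation $y\mapsto-2y$. For $\G(d+1,1)$ the model hyperplanes are exactly the affine hyperplanes in $\R^d$, matching the definition of spread-out measures on $\R^d$ (Definition~\ref{def:spread-out-measure-k}). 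By Lemma~\ref{lem:spread-out-scaling}, together with a finite cover and Lemma~\ref{lem:spread-out-subset} to normalize to pieces in the unit cube, each normalized restriction is spread out at all dyadic scales. Here ``spread-out measure on $\R^d$'' is read as $\beta$-spread out with uniform constants, possibly locally.

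\emph{Step 3: apply the corollary.} Apply Corollary~\ref{cor:continuous-projection} to the Borel set $\Phi(E)$. It gives $\hdim\pi_\theta\Phi(E)=\min\{\hdim E,1\}$ for a.e.\ $\theta$, hence for $\mu$-a.e.\ $y$. The upper bound $\le\min\{\hdim E,1\}$ is immediate because $x\mapsto|x-y|$ is Lipschitz.

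The main obstacle is the bookkeeping in Steps~1 and~2. Matching the parametrization $\pi_\theta$ to the direction $(-2y,1)$ requires care with the coordinate swap and transpose. One must also confirm that the countable decomposition away from the pin, together with the square-root change of variables, preserves dimension for almost every pin simultaneously. This holds because the decomposition can be taken along a countable family of pieces at rational distances from $y$. Everything else is routine.
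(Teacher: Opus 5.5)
Your proposal is correct and follows the paper's proof essentially verbatim: lift $E$ to the paraboloid, identify $|x-y|^2-|y|^2$ with $\pi_{-2y}$ on $\mathbf G_{d+1,1}$, push $\mu$ forward under $y\mapsto-2y$ using Lemma~\ref{lem:spread-out-scaling}, apply Corollary~\ref{cor:continuous-projection}, and undo the squaring. Your concern about splitting $E$ away from the pin ``simultaneously'' for a.e.\ $y$ is unnecessary, because the squaring step is deterministic for each fixed good pin: $r\mapsto r^2$ is locally Lipschitz on $[0,\infty)$, so $\hdim\{|x-y|^2:x\in E\}\le\hdim\{|x-y|:x\in E\}$ directly.
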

\begin{proof}
Let $\widehat{E}=\{(|x|^2,x):x\in E\}$, so $\hdim\widehat{E}=\hdim E$. In the projection coordinates of Section \ref{s:bigcell},
\[
    \pi_{-2y}(|x|^2,x)=|x|^2-2y\cdot x=|x-y|^2-|y|^2.
\]
By Lemma \ref{lem:spread-out-scaling}, the push-forward of $\mu/\mu(\R^d)$ under $y\mapsto-2y$ is a spread-out probability measure on $\mathbf G_{d+1,1}$. Corollary \ref{cor:continuous-projection} now yields
\[
    \hdim\{|x-y|^2:x\in E\}=\min\{\hdim E,1\}
    \quad\text{for $\mu$-almost every }y\in\R^d.
\]
Since $t\mapsto t^2$ is locally bi-Lipschitz on $(0,\infty)$, squaring the distances preserves their Hausdorff dimension.
\end{proof}
In particular, if $\hdim E\ge1$, then $\hdim\{|x-y|:x\in E\}=1$ for $\mu$-almost every $y$, and hence for some $y\in\spt(\mu)$. This is the threshold $d/2$ in the plane and is smaller in higher dimensions. As for linear projections, sets of pins with arbitrarily small Hausdorff dimension can therefore satisfy a Marstrand-type theorem for pinned distances.
For comparison, Fiedler and Stull \cite[Theorem~6]{FiedlerStull24PinnedDistances} proved that every compact planar Ahlfors-regular set of dimension greater than $1$ contains, for each Borel set $E\subset\R^2$, a pin whose distance set has dimension $\min\{\hdim E,1\}$.


\subsection{Discretized sum-product}
\label{s:discretized-sum-product}

Theorem \ref{thm:furst-general} extends the discretized sum-product theorem of \cite[Theorem~6.5]{SpreadOutFurstSticky} beyond Ahlfors-regular sets. For background on the problem, see \cite[\S 6.2]{SpreadOutFurstSticky}.
\begin{theorem} \label{thm:discretized-SP}
    For $\beta\in(0,1]$ and $\eta>0$, there is $\e=\e(\beta,\eta)>0$ such that the following holds for every dyadic $\delta\ll_{\beta,\eta}1$.
    
    Let $A\subset[1,2]$ be a nonempty $(\delta,1/2,\delta^{-\e})$-Katz--Tao set which is $(\delta,\beta,\delta^{-\e})_{2,1}$-spread-out. Then,
    \begin{equation} \label{eq:discretized-SP-strong}
        |A+A|_{\delta}\cdot |A\cdot A|_{\delta} \ge \delta^{\eta}\cdot |A|_{\delta}^3.
    \end{equation}
    In particular, 
    \[
        \max\{|A+A|_{\delta}, |A\cdot A|_{\delta}\} \ge \delta^{\eta} \cdot |A|_{\delta}^{3/2}.
    \]
\end{theorem}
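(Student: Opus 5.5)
The plan is to deduce Theorem \ref{thm:discretized-SP} from the Katz--Tao case of Theorem \ref{thm:furst-general} with $d=2$, $k=1$, following the standard Elekes-type incidence argument, as in \cite[Theorem~6.5]{SpreadOutFurstSticky}. We may replace $A$ by its $\delta$-discretization, so $A\subset\delta\Z\cap[1,2]$ and $|A|=|A|_\delta$.

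Consider the point set $P=(A+A)\times(A\cdot A)$, discretized at scale $\delta$; up to constants $|P|_\delta\lesssim|A+A|_\delta|A\cdot A|_\delta$. For $a\in A$, the lines $\ell_{a,b}=\{(x,y):y=a(x-b)\}$, $b\in A$, pass through the points $(b+c,ac)$ with $c\in A$. Dually, we want a configuration in which points are indexed by pairs and shadings by $A$. The natural choice is to take the points $p_{b,c}=(b,\,\cdot\,)$ appropriately so that the Furstenberg set lands inside a set controlled by $P$. Concretely, I would take the point set $X=\{(b,c):b\in A,\,c\in A\}$ (scaled to lie in $[0,1]^2$ after an affine change), which is $(\delta,1,\delta^{-2\e})$-Katz--Tao because $A$ is $(\delta,1/2,\delta^{-\e})$-Katz--Tao and the Katz--Tao exponents add under products. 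The shading at each $(b,c)$ is the family $\mathbf\Theta(b,c)=\mathcal D_\delta(A')$, where $A'$ is an affine image of $A$ (for instance slopes $\theta=a$ or $1/a$). This family is $(\delta,\beta,\delta^{-O(\e)})$-spread-out by Lemma \ref{lem:spread-out-scaling}, and a smooth change such as $a\mapsto 1/a$ on $[1,2]$ preserves spread-outness up to constants. This holds because locally it is bi-Lipschitz with bounded distortion of strips, which is enough since in $\mathbf G_{2,1}$ the model hyperplanes are points. Set $M=|A|$. The Furstenberg set consists of lines $y=\theta x+(c-\theta b)$ through $(b,c)$. The aim is to choose the parametrization so that each such line, in affine coordinates, is determined up to $O(1)$ choices by a pair in $(A\cdot A)\times(A+A)$ or similar. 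For instance, with slope $a$ and intercept $-ab+ac$-type expressions, rearrange so that the intercept becomes $a(c-b)$; a cleaner choice is points $(b,c)\mapsto$ line $\{(x,y): y=ax+ac\}$ through $(-c,0)$, which is not of the right form. So I will carefully pick the Elekes encoding: the line through the point $(1/a',\,b')$ … The correct bookkeeping step is to arrange $|\mathcal F(X,\mathbf\Theta)|_\delta\lesssim|A+A|_\delta|A\cdot A|_\delta$.

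Granting that encoding, Theorem \ref{thm:furst-general} (Katz--Tao version with $t=d-k=1$) gives
\[
|A+A|_\delta|A\cdot A|_\delta\gtrsim|\mathcal F(X,\mathbf\Theta)|_\delta\ge\delta^{\eta}|X|_\delta M=\delta^{\eta}|A|^3,
\]
which is \eqref{eq:discretized-SP-strong}. The final assertion follows by taking square roots and replacing $\eta$ by $2\eta$.

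The main obstacle is the encoding step. We need an incidence configuration in which the points form a product set $A\times A$ (so that Katz--Tao is inherited), the slopes range over a smooth image of $A$ (so that spread-outness is inherited), and every resulting line, up to $\delta$-precision, is parametrized by an element of $(A+A)\times(A\cdot A)$.

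My first try is to use the lines $y=a^{-1}(x)-\,$… in the dual formulation. I would take points $(c,\,ac)$ in the primal form, i.e. dualize: the lines $\{y=a(x-b)\}$ with $(a,b)\in A^2$ each contain $|A|$ points of $(A+A)\times(A\cdot A)$. Then apply point-line duality to turn this family into points $(a,-ab)$ with shading given by the points $(b+c,ac)$. Dualizing converts the product $A\times A$ of lines and the spread-out family of points on each line into exactly the shape of Definition \ref{def:planeConfigurations}. The remaining checks are that the dual point set $\{(a,ab)\}$ is Katz--Tao (it is a bi-Lipschitz image of $A\times A$) and that the shading, namely the slopes $\approx b+c$ as $c$ ranges over $A$, is a translate of $A$ and hence spread-out.
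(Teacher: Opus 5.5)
Your final construction is exactly the paper's argument: dualize Elekes' lines $y=a(x-b)$, $(a,b)\in A^2$, so that the points $(a,-ab)$ form a bi-Lipschitz image of $A\times A$ (hence $(\delta,1,\delta^{-O(\e)})$-Katz--Tao), the shadings are translates of $-A$, and the Furstenberg set lies in an affine image of $(A+A)\times(A\cdot A)$; then invoke the Katz--Tao case of Theorem~\ref{thm:furst-general} with $t=1$. The one routine step you omit is the affine renormalization placing the points in $[0,1]^2$ and the slopes in $[-1,1]$ (the paper rescales so the shadings become re-discretized translates of $-A/4$, spread-out with exponent $\beta/16$), and you should delete the abandoned encodings in the middle, including the unneeded claim about $a\mapsto 1/a$.
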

\begin{proof}
Use the Elekes construction \cite{Elekes97} in dual coordinates from \cite[Theorem~6.5]{SpreadOutFurstSticky}. The point set $P$ is the $\delta$-discretization of a bi-Lipschitz image of $A\times A$, so $|P|\sim|A|_\delta^2$ and $P$ is $(\delta,1,\delta^{-O(\e)})$-Katz--Tao. By \cite[Lemmas~2.10--2.11]{SpreadOutFurstSticky} and Lemmas  \ref{lem:spread-out-subset}--\ref{lem:spread-out-scaling}, discretizing the translates of $-A/4$ gives $(\delta,\beta/16,\delta^{-O(\e)})$-spread-out shadings with common cardinality $M\gtrapprox_\delta|A|_\delta$. As in that construction,
\[
    |A+A|_\delta\,|A\cdot A|_\delta
    \gtrsim |\mathcal F(P,\mathbf\Theta)|_\delta
    \ge\delta^{\eta/2}|P|M
    \gtrapprox_\delta\delta^{\eta/2}|A|_\delta^3,
\]
where Theorem \ref{thm:furst-general} applies with shading exponent $\beta/16$, $t=1$, and loss $\eta/2$ once $\e\ll_{\beta,\eta}1$. For sufficiently small $\delta$, this proves \eqref{eq:discretized-SP-strong}.
\end{proof}

\begin{remark}
Theorem \ref{thm:discretized-SP} recovers \cite[Theorem~6.5]{SpreadOutFurstSticky} for $s$-Ahlfors regular sets with $s\le1/2$. After decreasing $\e$, \eqref{eq:discretized-SP-strong} also holds for $\delta^\e$-dense subsets of $A$, since they preserve the spread-outness and Katz--Tao conditions.
\end{remark}

A set $A\subset\R$ with $\diam A>0$ is \emph{uniformly perfect} if there exists $D>1$ such that
\begin{displaymath}
    A\cap\bigl(B(a,Dr)\setminus B(a,r)\bigr)\ne\varnothing
    \quad\text{whenever }a\in A,\ r>0,\ A\not\subset B(a,Dr).
\end{displaymath}
For compact uniformly perfect sets, the projection theorem also has a continuous sum-product corollary; this extends the Ahlfors-regular case considered in \cite[Corollary~6.8]{SpreadOutFurstSticky}.
\begin{corollary}
    Let $A\subset[1,2]$ be a compact uniformly perfect set with Hausdorff dimension $s\in(0,1)$. Then $\hdim(A+A\cdot A)\ge \min\{2\hdim(A),1\}$.
\end{corollary}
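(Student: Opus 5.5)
The plan is to realize $A+A\cdot A$ as a union of projections of $A\times A$, and then apply the planar projection theorem with a uniformly perfect measure of directions. For $a\in A$, the map $(x,y)\mapsto x+ay$ is $\pi_a$ in the notation of \S\ref{s:bigcell} with $d=2$, $k=1$. Hence
\[
    A+a\cdot A=\pi_a(A\times A)\subset A+A\cdot A\qquad(a\in A).
\]
It therefore suffices to find a single $a\in A$ with $\hdim\pi_a(A\times A)\ge\min\{2\hdim A,1\}$. Under the slope chart, $\pi_a$ corresponds (up to a bi-Lipschitz rescaling of the image) to the orthogonal projection onto a line in $\G(2,1)$.

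The first step is to produce a uniformly perfect measure on $A$. For compact uniformly perfect sets there is a standard construction (going back to results on uniformly perfect sets supporting uniformly perfect, or even doubling-type, measures). Concretely, one builds $\nu$ by splitting mass at each scale among the pieces that uniform perfectness guarantees: inside every ball $B(a,Dr)$ not containing $A$, there is a point of $A$ in the annulus $B(a,Dr)\setminus B(a,r)$. This gives $\mu(B(x,r))\le\beta\mu(B(x,D'r))$ with $\beta<1$. Alternatively, one can bypass the measure and verify the decay hypothesis of Lemma \ref{lem:line-decay-spread-out} directly for such a $\nu$.

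The second step is to push $\nu$ to the chart. In the chart, $\nu$ becomes a uniformly perfect measure on directions. Since the chart map $a\mapsto$ line of slope $a$ is bi-Lipschitz on $[1,2]$, uniform perfectness is preserved up to constants. Theorem \ref{thm:projection-ADR}, or more directly the argument in its proof through Lemma \ref{lem:line-decay-spread-out}, Lemma \ref{lem:spread-out-scaling} and Corollary \ref{cor:continuous-projection}, then gives
\[
    \hdim\pi_a(E)=\min\{\hdim E,1\}\quad\text{for }\nu\text{-a.e. }a.
\]
We apply this with $E=A\times A$.

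The third step, and the main obstacle, is the dimension of the product: $\hdim(A\times A)$ need not equal $2\hdim A$ in general; only $\hdim(A\times A)\ge 2\hdim A$ holds. That lower bound is exactly what we need, since
\[
    \min\{\hdim(A\times A),1\}\ge\min\{2\hdim A,1\}.
\]
This follows from Frostman's lemma, taking a product of $t$-Frostman measures on $A$ for every $t<\hdim A$. Because $\nu(A)>0$, some $a\in A$ achieves the almost-everywhere conclusion, and the corollary follows. The only genuine care is in the first step, where one must confirm that a uniformly perfect compact set carries a uniformly perfect measure. If a clean reference is not at hand, I would construct $\nu$ by a Cantor-type scheme: by iterating the annulus condition, pick inside each selected interval two well-separated sub-intervals of comparable length meeting $A$, and split mass equally between them. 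The decay $\nu(B(x,8\e r))\le\frac14\nu(B(x,r))$ then holds on the support of this sub-Cantor measure, which suffices for Lemma \ref{lem:line-decay-spread-out}.
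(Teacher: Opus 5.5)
Your proposal is correct and follows the paper's proof. The paper also places a uniformly perfect measure on $A$, pushes it to $\G(2,1)$ via the bi-Lipschitz slope chart, and applies Theorem~\ref{thm:projection-ADR} to $A\times A$ with $\hdim(A\times A)\ge 2\hdim A$. It then concludes from $A+aA\subset A+A\cdot A$. The only difference is that the paper cites Rossi--Shmerkin for the uniformly perfect measure. Your Cantor-type construction (two separated children of relative size $\sim 1/D$, centered at points of $A$, with mass split equally) instead yields an Ahlfors-regular, hence uniformly perfect, measure on a subset of $A$, which serves equally well.
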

\begin{proof}
By \cite[\S 5]{MR4163999}, the set $A$ supports a uniformly perfect probability measure. Its push-forward under $a\mapsto\operatorname{span}(1,a)$ is uniformly perfect, since this map is bi-Lipschitz on $[1,2]$. Applying Theorem \ref{thm:projection-ADR} to $A\times A$, we find $a\in A$ such that
\begin{displaymath}
    \hdim(A+aA)=\min\{\hdim(A\times A),1\}\ge\min\{2s,1\}.
\end{displaymath}
Since $A+aA\subset A+A\cdot A$, the conclusion follows. 
\end{proof}

\subsection{Dimension of planar self-affine sets and measures}
\label{s:self-affine}

Let $T_1,\ldots,T_m:\R^d\to\R^d$ be contracting affine maps. The associated self-affine set is the unique nonempty compact set $E\subset\R^d$ satisfying $E=\bigcup_{i=1}^m T_i(E)$. Probability weights $p_1,\ldots,p_m$ determine a unique Borel probability measure $\mu$ supported on $E$ satisfying $\mu=\sum_{i=1}^m p_i\,T_i\mu$.
A central question is whether the dimensions of self-affine sets and measures can be computed from the affine maps. Falconer's \emph{affinity dimension} \cite{Falconer88SelfAffine} is a natural upper bound for both Hausdorff and Minkowski dimension. To define it, let $A_i$ be the linear part of $T_i$, and let $\alpha_1(A)\ge \alpha_2(A)\ge\ldots\ge \alpha_d(A)$ be the singular values of $A:\R^d\to\R^d$. For $s\in [0,d]$, the \emph{singular value function} $\phi^s(A)$ is
\[
    \phi^s(A)=\alpha_1(A)\cdots\alpha_{\lfloor s\rfloor}(A)\alpha_{\lfloor s\rfloor+1}(A)^{s-\lfloor s\rfloor},\qquad 0\le s<d.
\]
At $s=d$, we set $\phi^d(A)=|\det A|$. When $s>d$, we let $\phi^s(A)=|\det(A)|^{s/d}$. The affinity dimension of the self-affine set $E$ is defined as
\[
    \dim_{\mathrm{aff}}(E)=\inf\Bigl\{s>0:\sum_{n=1}^{\infty}\sum_{i_1,\ldots,i_n=1}^m\phi^s(A_{i_1}\cdots A_{i_n})<\infty\Bigr\},   
\]
Strictly speaking, the affinity dimension depends on the iterated function system, since different systems can have the same attractor $E$. Falconer \cite{Falconer88SelfAffine} proved that
\[
\hdim(E)\le \min\{\dim_{\mathrm{aff}}(E),d\}
\]
for every self-affine set, with equality for almost every choice of translation vectors under certain assumptions. B\'ar\'any, Hochman and Rapaport \cite{BHR} proved equality for all planar self-affine sets satisfying the strong open set condition and mild irreducibility and non-conformality assumptions on the linear parts; see \cite[Theorem 1.1]{BHR}. The main step is a projection theorem. Under an additional positivity assumption, a self-affine measure $\mu$ and its Furstenberg measure $\nu$ on the projective line, induced by the linear parts, satisfy
\begin{equation} \label{eq:proj-thm-furst-measure}
\hdim(\Pi_V\mu)=\min\{\hdim\mu,1\} \quad \text{for $\nu$-almost every $V\in\mathbb{RP}^1$}.
\end{equation}
Combining \eqref{eq:proj-thm-furst-measure} with the Ledrappier--Young formula of Käenmäki and Bárány \cite{BaranyKaenmaki17} and a reduction of Morris and the second author \cite{MorrisShmerkin19} proves equality of Hausdorff and affinity dimension. See \cite{BHR} for the history and details.

Spread-outness provides another route to \eqref{eq:proj-thm-furst-measure}. Under the positivity assumption, the linear parts contract uniformly on the projective line, so the Furstenberg measure is self-conformal. Irreducibility ensures that it is not supported on a single point. By Corollary \ref{cor:small-dim-spread-out}, $\nu$ is spread out. Theorem \ref{thm:continuous-projection} now yields \eqref{eq:proj-thm-furst-measure} for the self-affine measure $\mu$. Applying the positive subsystem approximation of \cite[Theorem~1.1]{MorrisShmerkin19}, together with the Ledrappier--Young formula, we recover \cite[Theorem 1.1]{BHR}.

Hochman and Rapaport \cite{HochmanRapaport22} extended the results of B\'ar\'any, Hochman and Rapaport to overlapping systems, and Rapaport \cite{Rapaport24SelfAffine} extended them to higher dimensions. The general problem remains open for $d\ge 4$. Our method has an obstacle in higher dimensions: the Furstenberg measures are invariant under \emph{non-conformal} dynamics and can concentrate locally near hyperplanes, so spread-outness can fail.


\begin{thebibliography}{10}

\bibitem{Baker26}
Simon Baker.
\newblock Disintegration results for fractal measures and applications to {D}iophantine approximation.
\newblock {\em Ergodic Theory Dynam. Systems}, 46(4):885--902, 2026.

\bibitem{BHR}
Bal\'{a}zs B\'{a}r\'{a}ny, Michael Hochman, and Ariel Rapaport.
\newblock Hausdorff dimension of planar self-affine sets and measures.
\newblock {\em Invent. Math.}, 216(3):601--659, 2019.

\bibitem{BaranyKaenmaki17}
Bal\'{a}zs B\'{a}r\'{a}ny and Antti K{\"a}enm{\"a}ki.
\newblock Ledrappier--{Y}oung formula and exact dimensionality of self-affine measures.
\newblock {\em Adv. Math.}, 318:88--129, 2017.

\bibitem{BochnakCosteRoy98}
Jacek Bochnak, Michel Coste, and Marie-Fran\c{c}oise Roy.
\newblock {\em Real algebraic geometry}, volume~36 of {\em Ergebnisse der Mathematik und ihrer Grenzgebiete. 3. Folge}.
\newblock Springer-Verlag, Berlin, 1998.

\bibitem{Bo2}
Jean Bourgain.
\newblock The discretized sum-product and projection theorems.
\newblock {\em J. Anal. Math.}, 112:193--236, 2010.

\bibitem{Chen18RestrictedProjections}
Changhao Chen.
\newblock Restricted families of projections and random subspaces.
\newblock {\em Real Anal. Exchange}, 43(2):347--358, 2018.

\bibitem{CsornyeiStull26Hyperplanes}
Marianna Cs{\"o}rnyei and D.~M. Stull.
\newblock Hyperplane incidences and distance sets in higher dimensions.
\newblock {\em Preprint}, 2026.
\newblock arXiv:2609.38742v1.

\bibitem{DemeterWang25}
Ciprian Demeter and Hong Wang.
\newblock Szemer\'edi-{T}rotter bounds for tubes and applications.
\newblock {\em Ars Inven. Anal.}, pages Paper No. 1, 46, 2025.

\bibitem{Du25RestrictedMarstrand}
Jiahan Du.
\newblock Restricted {M}arstrand's projection theorem for general families of linear subspaces.
\newblock {\em Preprint}, 2025.
\newblock arXiv:2510.16671v2.

\bibitem{Elekes97}
Gy\"{o}rgy Elekes.
\newblock On the number of sums and products.
\newblock {\em Acta Arith.}, 81(4):365--367, 1997.

\bibitem{Falconer88SelfAffine}
Kenneth~J. Falconer.
\newblock The {H}ausdorff dimension of self-affine fractals.
\newblock {\em Math. Proc. Cambridge Philos. Soc.}, 103(2):339--350, 1988.

\bibitem{FasslerOrponen14}
Katrin F\"{a}ssler and Tuomas Orponen.
\newblock On restricted families of projections in $\mathbb{R}^3$.
\newblock {\em Proc. Lond. Math. Soc. (3)}, 109(2):353--381, 2014.

\bibitem{FiedlerStull24PinnedDistances}
Jacob~B. Fiedler and D.~M. Stull.
\newblock Pinned distances of planar sets with low dimension.
\newblock {\em Trans. Amer. Math. Soc. (to appear)}, 2024.
\newblock arXiv:2408.00889v2.

\bibitem{GGGHMW22}
Shengwen Gan, Shaoming Guo, Larry Guth, Terence L.~J. Harris, Dominique Maldague, and Hong Wang.
\newblock On restricted projections to planes in $\mathbb{R}^3$.
\newblock {\em Amer. J. Math.}, 148(4):915--960, 2026.

\bibitem{GGW22}
Shengwen Gan, Shaoming Guo, and Hong Wang.
\newblock A restricted projection problem for fractal sets in $\mathbb{R}^n$.
\newblock {\em Camb. J. Math.}, 12(3):535--561, 2024.

\bibitem{GanGuthMaldague24}
Shengwen Gan, Larry Guth, and Dominique Maldague.
\newblock An exceptional set estimate for restricted projections to lines in $\mathbb{R}^3$.
\newblock {\em J. Geom. Anal.}, 34(1):Paper No. 15, 13, 2024.

\bibitem{GuthWangZahl26}
Larry Guth, Hong Wang, and Joshua Zahl.
\newblock A streamlined proof of the {K}akeya set conjecture in $\mathbb{R}^3$.
\newblock {\em Preprint}, 2026.
\newblock arXiv:2601.14411v1.

\bibitem{He20}
Weikun He.
\newblock Orthogonal projections of discretized sets.
\newblock {\em J. Fractal Geom.}, 7(3):271--317, 2020.

\bibitem{HochmanRapaport22}
Michael Hochman and Ariel Rapaport.
\newblock {H}ausdorff dimension of planar self-affine sets and measures with overlaps.
\newblock {\em J. Eur. Math. Soc. (JEMS)}, 24(7):2361--2441, 2022.

\bibitem{Hovila14}
Risto Hovila.
\newblock Transversality of isotropic projections, unrectifiability, and {H}eisenberg groups.
\newblock {\em Rev. Mat. Iberoam.}, 30(2):463--476, 2014.

\bibitem{KaenmakiOrponenVenieri25}
Antti K\"{a}enm\"{a}ki, Tuomas Orponen, and Laura Venieri.
\newblock A {M}arstrand-type restricted projection theorem in $\mathbb{R}^3$.
\newblock {\em Amer. J. Math.}, 147(1):81--123, 2025.

\bibitem{MR407248}
R.~Kaufman and P.~Mattila.
\newblock Hausdorff dimension and exceptional sets of linear transformations.
\newblock {\em Ann. Acad. Sci. Fenn. Ser. A I Math.}, 1(2):387--392, 1975.

\bibitem{KLW04}
Dmitry Kleinbock, Elon Lindenstrauss, and Barak Weiss.
\newblock On fractal measures and {D}iophantine approximation.
\newblock {\em Selecta Math. (N.S.)}, 10(4):479--523, 2004.

\bibitem{Liu26DistanceSets}
Bochen Liu.
\newblock {L}ebesgue measure of distance sets with regular pins and multi-scale {M}izohata--{T}akeuchi-type estimates.
\newblock {\em Preprint}, 2026.
\newblock arXiv:2603.15328v3.

\bibitem{Mar}
John~M. Marstrand.
\newblock Some fundamental geometrical properties of plane sets of fractional dimensions.
\newblock {\em Proc. London Math. Soc. (3)}, 4:257--302, 1954.

\bibitem{MR409774}
Pertti Mattila.
\newblock Hausdorff dimension, orthogonal projections and intersections with planes.
\newblock {\em Ann. Acad. Sci. Fenn. Ser. A I Math.}, 1(2):227--244, 1975.

\bibitem{zbMATH01249699}
Pertti {Mattila}.
\newblock {\em {Geometry of sets and measures in Euclidean spaces. Fractals and rectifiability. 1st paperback ed.}}
\newblock Cambridge: Cambridge University Press, 1st paperback ed. edition, 1999.

\bibitem{MorrisShmerkin19}
Ian~D. Morris and Pablo Shmerkin.
\newblock On equality of {H}ausdorff and affinity dimensions, via self-affine measures on positive subsystems.
\newblock {\em Trans. Amer. Math. Soc.}, 371(3):1547--1582, 2019.

\bibitem{OS23}
Tuomas Orponen and Pablo Shmerkin.
\newblock On the {H}ausdorff dimension of {F}urstenberg sets and orthogonal projections in the plane.
\newblock {\em Duke Math. J.}, 172(18):3559--3632, 2023.

\bibitem{SpreadOutFurstSticky}
Tuomas Orponen and Pablo Shmerkin.
\newblock Spread-out {Furstenberg} sets {I}: the sticky case.
\newblock arXiv preprint, 2026.

\bibitem{MR1749437}
Yuval Peres and Wilhelm Schlag.
\newblock Smoothness of projections, {B}ernoulli convolutions, and the dimension of exceptions.
\newblock {\em Duke Math. J.}, 102(2):193--251, 2000.

\bibitem{Pham26DistanceSets}
Minh-Quy Pham.
\newblock On {F}alconer type functions and the distance set problem.
\newblock {\em Preprint}, 2025.
\newblock arXiv:2510.15118v2.

\bibitem{PYZ22}
Malabika Pramanik, Tongou Yang, and Joshua Zahl.
\newblock A {F}urstenberg-type problem for circles, and a {K}aufman-type restricted projection theorem in $\mathbb{R}^3$.
\newblock {\em Amer. J. Math. (to appear)}, 2022.
\newblock arXiv:2207.02259v3.

\bibitem{Rapaport24SelfAffine}
Ariel Rapaport.
\newblock On self-affine measures associated to strongly irreducible and proximal systems.
\newblock {\em Adv. Math.}, 449:Paper No. 109734, 116, 2024.

\bibitem{23arXiv230808819R}
Kevin Ren and Hong Wang.
\newblock {F}urstenberg sets estimate in the plane.
\newblock {\em Preprint}, 2023.
\newblock arXiv:2308.08819v3.

\bibitem{MR4163999}
Eino Rossi and Pablo Shmerkin.
\newblock On measures that improve {$L^q$} dimension under convolution.
\newblock {\em Rev. Mat. Iberoam.}, 36(7):2217--2236, 2020.

\bibitem{ShmerkinWang25}
Pablo Shmerkin and Hong Wang.
\newblock On the distance sets spanned by sets of dimension {$d/2$} in {$\mathbb{R}^d$}.
\newblock {\em Geom. Funct. Anal.}, 35(1):283--358, 2025.

\bibitem{Simon14GMT}
Leon Simon.
\newblock Introduction to geometric measure theory.
\newblock Lecture notes, Stanford University, 2014.

\bibitem{MR2289012}
Terence Tao and Van Vu.
\newblock {\em Additive combinatorics}, volume 105 of {\em Cambridge Studies in Advanced Mathematics}.
\newblock Cambridge University Press, Cambridge, 2006.

\bibitem{Trivedi13}
Saurabh Trivedi.
\newblock Transversality theorems for the weak topology.
\newblock {\em Proc. Amer. Math. Soc.}, 141(6):2181--2192, 2013.

\bibitem{Wall16}
C.~T.~C. Wall.
\newblock {\em Differential topology}, volume 156 of {\em Cambridge Studies in Advanced Mathematics}.
\newblock Cambridge University Press, Cambridge, 2016.

\bibitem{Wang26RestrictedProjections}
Hong Wang.
\newblock Restricted families of orthogonal projections in $\mathbb{R}^n$.
\newblock In {\em Proceedings of the {I}nternational {C}ongress of {M}athematicians 2026. Volume 4: Invited lectures (Sections 5--8)}, pages 785--800. Society for Industrial and Applied Mathematics, Philadelphia, PA, 2026.

\bibitem{WangWu24Restriction}
Hong Wang and Shukun Wu.
\newblock Restriction estimates using decoupling theorems and two-ends {F}urstenberg inequalities.
\newblock {\em Preprint}, 2024.
\newblock arXiv:2411.08871v3.

\bibitem{WangWu26TwoEnds}
Hong Wang and Shukun Wu.
\newblock Two-ends {F}urstenberg estimates in the plane.
\newblock {\em Math. Ann.}, 395(4):91, 2026.

\bibitem{WangZahl25Kakeya}
Hong Wang and Joshua Zahl.
\newblock Volume estimates for unions of convex sets, and the {K}akeya set conjecture in three dimensions.
\newblock {\em Preprint}, 2025.
\newblock arXiv:2502.17655v1.

\bibitem{Wolff99}
Thomas Wolff.
\newblock Recent work connected with the {K}akeya problem.
\newblock In {\em Prospects in mathematics ({P}rinceton, {NJ}, 1996)}, pages 129--162. Amer. Math. Soc., Providence, RI, 1999.

\end{thebibliography}

\def\cprime{$'$}

\end{document}